\g@addto@macro{\endabstract}{\@setabstract}
\newcommand{\authorfootnotes}{\renewcommand\thefootnote{\@fnsymbol\c@footnote}}%
\newcommand{\ncmd}{\newcommand}
\newtheorem{theorem}{Theorem}
\newtheorem{lemma}{Lemma}
\newtheorem{corollary}{Corollary}
\newtheorem{proposition}{Proposition}
\newtheorem{conjecture}{Conjecture}
\theoremstyle{definition}
\newtheorem{remarkimp}{Remark}
\newtheorem*{remark}{Note}
\newtheorem*{example}{Example}
\ncmd{\A}{\mathcal{A}}
\ncmd{\Ftau}{\mathcal{F}^{\tau}}
\ncmd{\SAF}{\mathrm{SAF}}
\ncmd{\Fone}{\mathcal{F}_1}
\ncmd{\aaa}{\boldsymbol{a}}
\ncmd{\AR}{\mathrm{AR}(\Sph^1)}
\ncmd{\Ftwo}{\mathcal{F}_2}
\ncmd{\IET}{\mathrm{IET}}
\ncmd{\FET}{\mathrm{FET}}
\ncmd{\IETFn}{\mathrm{IETF}^n}
\ncmd{\IETFplus}{\mathrm{IETF}^{n+1}}
\ncmd{\IETFtwo}{\mathrm{IETF}^2}
\ncmd{\IETFthree}{\mathrm{IETF}^3}
\ncmd{\IETFfour}{\mathrm{IETF}^4}
\ncmd{\IETFfive}{\mathrm{IETF}^5}
\ncmd{\FETthree}{\mathrm{FET}^3}
\ncmd{\FETn}{\mathrm{FET}^n}
\ncmd{\FETfive}{\mathrm{FET}^5}
\ncmd{\wind}{\mathrm{wd}}
\ncmd{\midd}{\mathrm{mid}}
\ncmd{\FETplus}{\mathrm{FET}^{n+1}}
\ncmd{\CETn}{\mathrm{CET}^n_{\tau}}
\ncmd{\gammain}{\gamma_{\mathrm{in}}}
\ncmd{\gammaout}{\gamma_{\mathrm{out}}}
\ncmd{\deltain}{\delta_{\mathrm{in}}}
\ncmd{\deltaout}{\delta_{\mathrm{out}}}
\ncmd{\CETthree}{\mathrm{CET}^3_{\tau}}
\ncmd{\CETfive}{\mathrm{CET}^5_{\tau}}
\ncmd{\CETthreehalf}{\mathrm{CET}^3_{\frac{1}{2}}}
\ncmd{\CETfourhalf}{\mathrm{CET}^4_{\frac{1}{2}}}
\ncmd{\CETnhalf}{\mathrm{CET}^n_{\frac{1}{2}}}
\ncmd{\Rauzy}{\mathcal{R}}
\ncmd{\CEThalf}{\mathrm{CET}^{3}_{\frac{1}{2}}}
\ncmd{\CETfour}{\mathrm{CET}^4_{\tau}}
\ncmd{\E}{\mathbb{E}}
\ncmd{\Oc}{\mathbb{O}}
\ncmd{\Ha}{\mathbb{H}}
\def\R{\mathbf{R}}
\def\Q{\mathbf{Q}}
\def\N{\mathbf{N}}
\ncmd{\Z}{\mathbb{Z}}
\ncmd{\Sph}{\mathbb{S}}
\ncmd{\T}{\mathbb{T}}
\ncmd{\D}{\mathbb{D}}
\ncmd{\re}{\mathrm{Re}}
\ncmd{\im}{\mathrm{Im}}
\ncmd{\sing}{\mathrm{sing}}
\ncmd{\reg}{\mathrm{reg}}
\ncmd{\red}{\mathrm{red}}
\ncmd{\ttop}{\mathrm{top}}
\ncmd{\bbot}{\mathrm{bot}}
\ncmd{\bs}{\backslash}
\ncmd{\ov}{\overline}
\ncmd{\noi}{\noindent}
\ncmd{\di}{\displaystyle}
\ncmd{\ra}{\rightarrow}
\ncmd{\lra}{\longrightarrow}
\newcommand{\Addresses}{{
  \bigskip
  \footnotesize
  Olga~Paris-Romaskevich, \textsc{Univ Rennes, CNRS, IRMAR - UMR 6625, F-35000 Rennes}\par\nopagebreak
  \textit{E-mail address}, O.~Paris-Romaskevich: \texttt{olga@pa-ro.net, olga.romaskevich@univ-rennes1.fr}
}}
\title{Trees and flowers on a billiard table}
\author{Olga Paris-Romaskevich}
\date{July 2019}
\begin{document}

\maketitle
\begin{center}
\emph{to Manya and Katya}
\end{center}

\smallskip

\begin{abstract}
In this work we completely describe the dynamics of triangle tiling billiards. In the first part of this work, we propose a geometric approach of dynamics by introducing natural foliations associated to it. In the second part, we exploit the relationship between triangle tiling billiards and a family of fully flipped $3$-interval exchange transformations on the circle. We give a combinatorial approach of dynamics via renormalization. By uniting the two approaches, we prove several conjectures on the dynamics of triangle tiling billiards. First, we prove the Tree Conjecture and the 4n+2 Conjecture, both concerning the symbolic dynamics of periodic trajectories, and both stated by Baird-Smith, Davis, Fromm and Iyer. Second, we study a family of exceptional trajectories which are closely related to the orbits of minimal Arnoux-Rauzy maps. We prove that all of these exceptional trajectories pass by all tiles, which confirms our own conjecture with P. Hubert on their non-linear escape. Moreover, we use tiling billiards to prove the convergence, up to rescaling, of arithmetic orbits of the Arnoux-Yoccoz map to the Rauzy fractal, conjectured by Hooper and Weiss. All of these conjectures have been stated in print in the last three years.
\end{abstract} 
\begin{center}
{\textbf{Introduction, motivation and overview of results.}}
\end{center}

\bigskip

A tiling billiard is a model of movement of light in a heterogeneous medium that is constructed as a union of homogeneous pieces, see \cite{DDRSL16} and \cite{G16} for the first mathematical approaches of the subject and definitions. The defintion of a tiling billiard is the following. Take any tiling of a plane by polygons and define a billiard on it such that a point particle moves in a straight line till the moment when it reaches a border of a tile. Then it passes to a neighboring tile, and its direction follows Snell's law with a fixed local refraction coefficient $k$. In this work, we only consider the case where $k \equiv -1$, see Figure \ref{fig:Snellslaw}. We are interested in the dynamics of particles in such a class of dynamical systems, the so-called  \textbf{tiling billiards} \cite{DDRSL16}. The dynamics of a tiling billiard depends very strongly on the underlying tiling, see Figure \ref{fig:examples_of_tiling_billiards} for examples.

\begin{figure}
\centering
\includegraphics[scale=0.05]{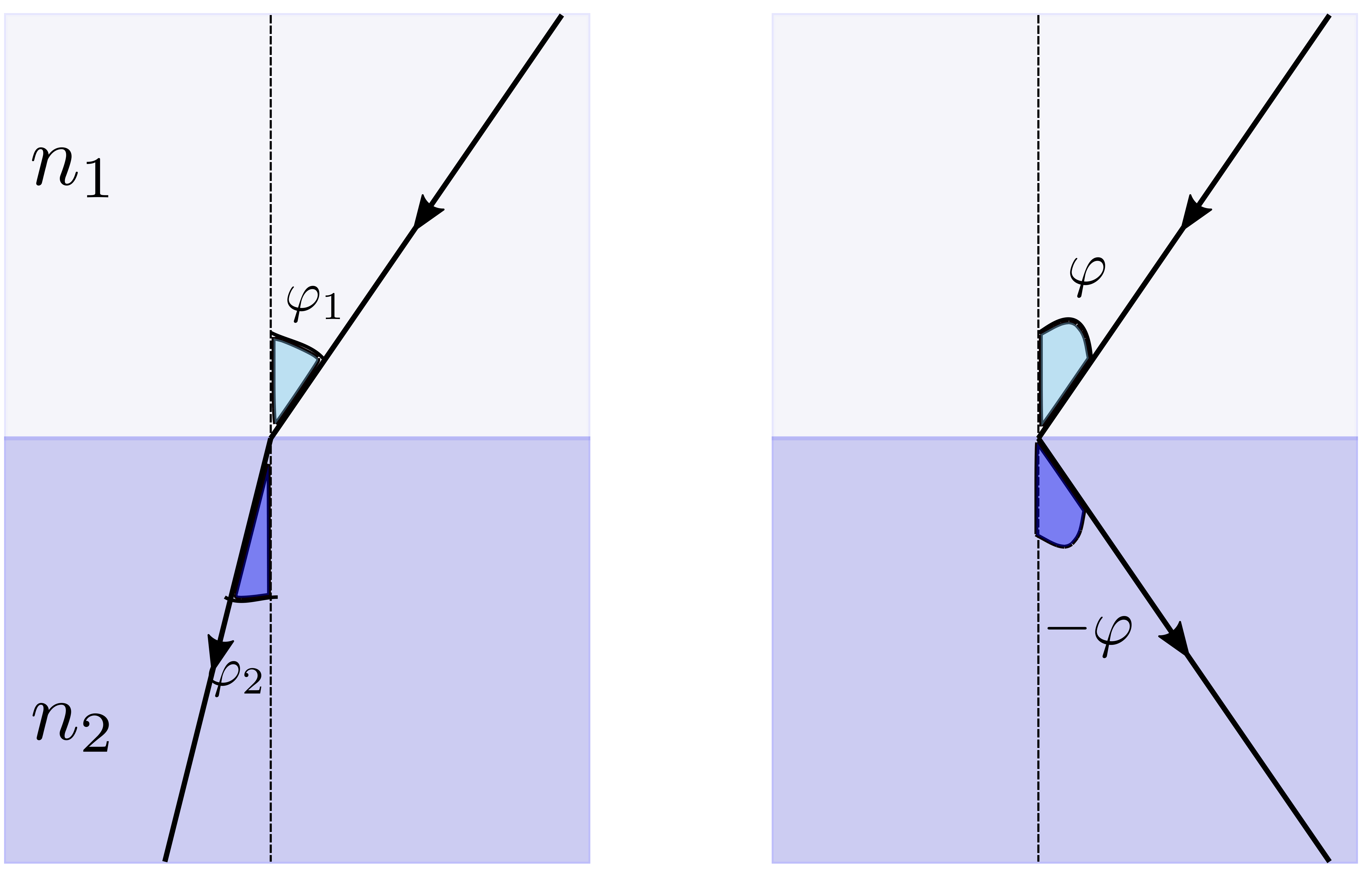}
\caption{\emph{Snell's law of refraction.} On the left: a ray of light crosses the boundary between two homogeneous materials with refraction indices $n_1, n_2 \in \R$. Then for any ray of light, the relationship between the angles $\varphi_1$ and $\varphi_2$ is defined by the Snell's law of refraction $\frac{\sin \varphi_1}{\sin \varphi_2}=\frac{n_2}{n_1}=:k$. For example, for the air and the water correspondingly, $n_1 \approx 1, n_2 \approx 1.333$. On the right: the behavior of a ray of light in the case where the refraction coefficient $k=-1$.}\label{fig:Snellslaw}
\end{figure}
\smallskip

The mathematical study of tiling billiards was proposed in \cite{DDRSL16} several years ago. The study of tiling billiards is quite a new subject in mathematics. Although tiling billiards have already proven their richness and interest from the point of view of dynamics, see \cite{BDFI18, DH18, Olga}. The study of tiling billiards stays for now a highly unexplored area even though its interest for mathematics is straightforward. Indeed, such a dynamics is related to the dynamics of geodesic flows on non-orientable flat surfaces which is an unexplored area of the general theory. The only non-trivial examples of tiling billiards for which the dynamics has been studied in some detail are that of a tiling billiard on a trihexagonal tiling \cite{DH18} and on a periodic triangle tiling \cite{BDFI18, Olga}.

\bigskip

Concerning physical relevance of tiling billiards, the materials having the refraction index equal to $-1$ can be quite easily constructed \footnote{Most of usual plastic or glass materials have indices of refraction bigger than $1$, and metamaterials with negative indices of refraction are usually artificially constructed.} (for example, as slabs of photonic crystals) even though it would necessarily imply for these materials to be strongly dispersive with frequency. This implies that an even more physically relevant (and more complicated...) model of a billiard in a tiling should include an additional parameter $f$ which corresponds to the light frequency, with the refraction coefficient $k=k(f)$ depending on it. 

\smallskip

There has been quiet a big body of research in physics of metamaterials related to tiling billiards.  In particular, we send our readers to the works \cite{Guenneau1, Liu, Guenneau2, Ramakrishna}.
The periodic trajectories in tiling billiards model (with $k(f)=-1$) correspond to the resonances in the full wave picture (where $k=k(f)$ is a function of the initial frequency), which are important for super resolution.  Negative refraction materials, as well as complementary media, remain active areas of research for modern physics, with numerous possible applications. One of such applications could be the construction of invisibility cloaks, see \cite{Wood} and references within. We hope that a subject of tiling billiards could potentially reunite mathematics and physics communities around this fascinating dynamics.

\bigskip

This work considers tiling billiards on two tilings that have many common features. These are a \textbf{periodic triangle tiling} and a \textbf{periodic cyclic quadrilateral tiling}, and are defined as follows.
Each of these two tilings consists of congruent triangles (or cyclic quadrilaterals\footnote{A \textbf{cyclic quadrilateral} is a quadrilateral inscribed into a circle.}) and has a property that each of two neighbouring tiles are centrally symmetric to each other with respect to the middle of their common side, see Figure  \ref{fig:triangle_tiling_and_cyclic_quadrilateral_tiling}.  Such a periodic tiling of a plane by quadrilaterals always exists, whatever the form of a quadrilateral. Although in this work we are interested only in the special case of cyclic quadrilateral tilings, since these are the only ones that admit the \emph{folding construction}. We discuss this construction in Section \ref{sec:locally foldable tilings and associated billiard foliations}.

\smallskip

Whenever we refer to a tiling billiard, we suppose that this is a tiling billiard in a periodic triangle or cyclic quadrilateral tiling. We call these two tilings simply \textbf{triangle} and \textbf{quadrilateral tilings} and corresponding dynamical systems \textbf{triangle (quadrilateral) tiling billiards}.

\begin{figure}
\centering
\includegraphics[scale=0.43]{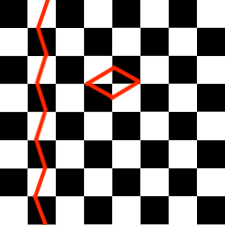}
\includegraphics[scale=0.18]{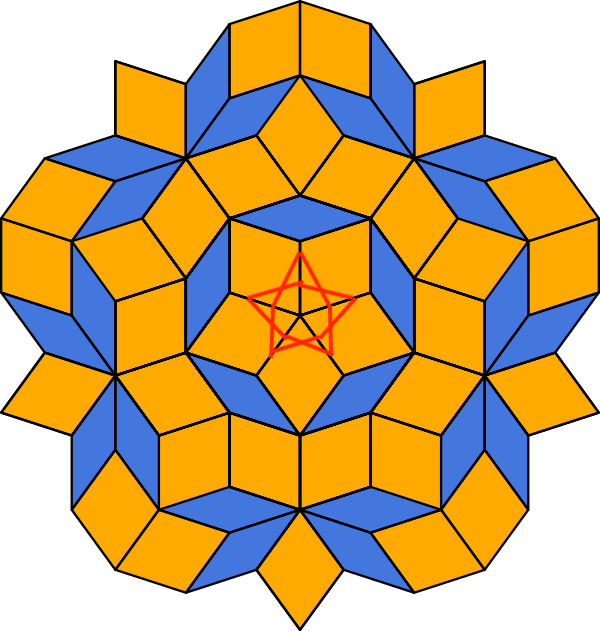}
\includegraphics[scale=0.13]{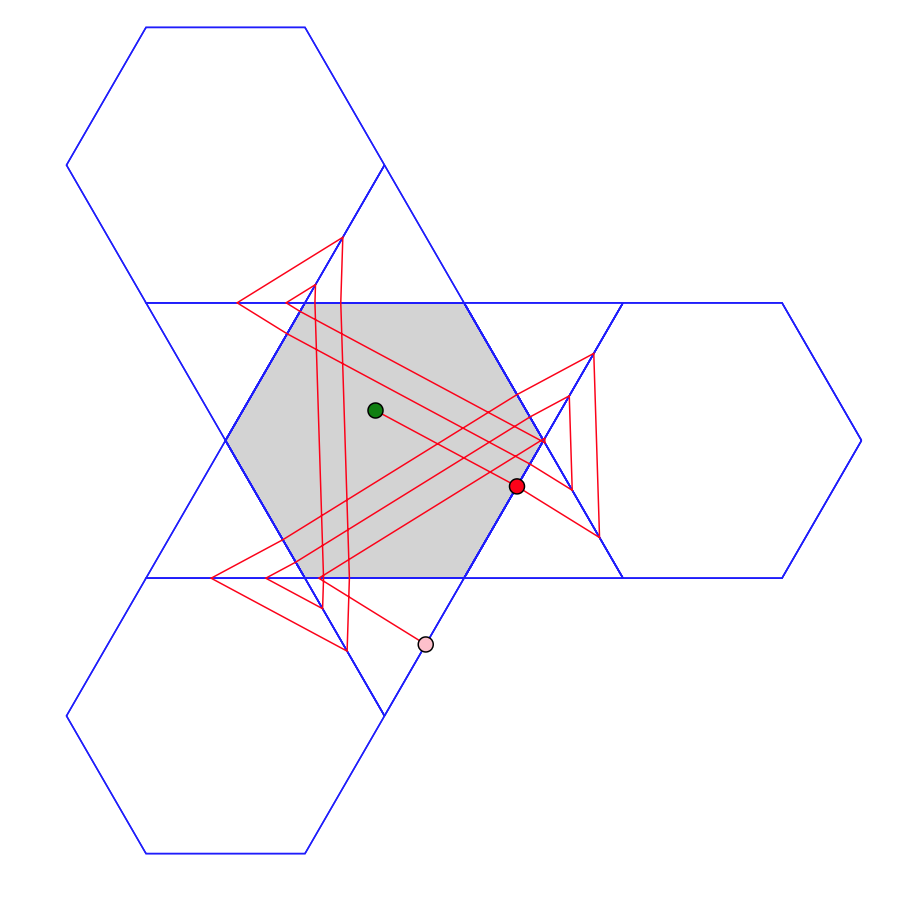}
\includegraphics[scale=0.2]{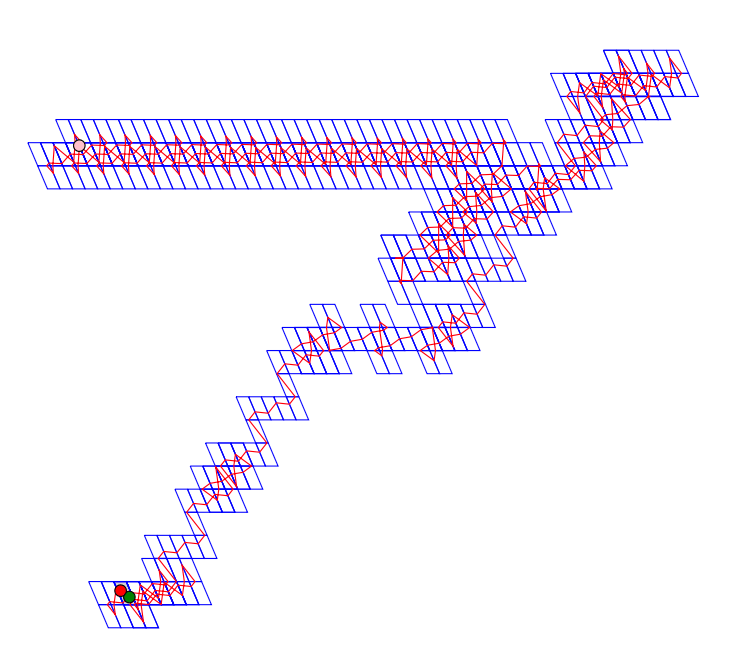}
\caption{Examples of tiling billiards and related questions. From left to right: 1. A \emph{square tiling} with trivial behavior of trajectories: all trajectories are either of period $4$ or are vertically (or horizontally)
shift $2$-periodic. All bounded trajectories are periodic. Does this property persist for a larger class of tilings? 2. \emph{Penrose tiling} and a periodic trajectory in it.  How often do periodic trajectories occur in Penrose tilings? 3. \emph{Trihexagonal tiling billiard}'s trajectories exhibit ergodic properties as was shown in \cite{DH18}. Are the ergodic properties preserved in the bifurcation to a periodic triangle tiling, where all of the positively oriented triangles grow bigger, all of the negatively oriented triangles grow smaller, and hexagons converge to triangles? 4. Do all of the trajectories in a \emph{parallelogram tiling} escape linearly to infinity?}
\label{fig:examples_of_tiling_billiards}
\end{figure}

\begin{figure}
\centering
\includegraphics[scale=0.8]{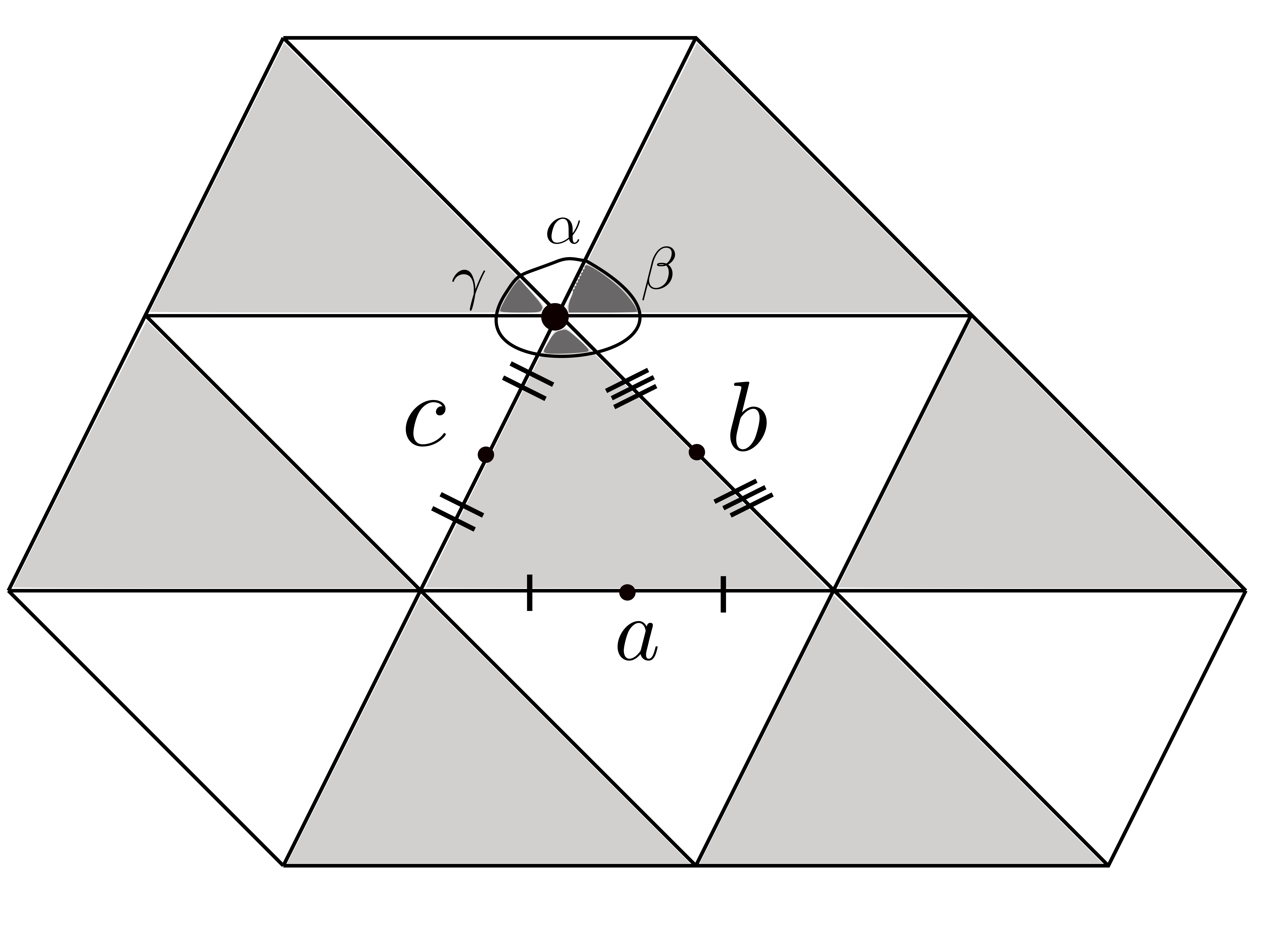}
\includegraphics[scale=0.04]{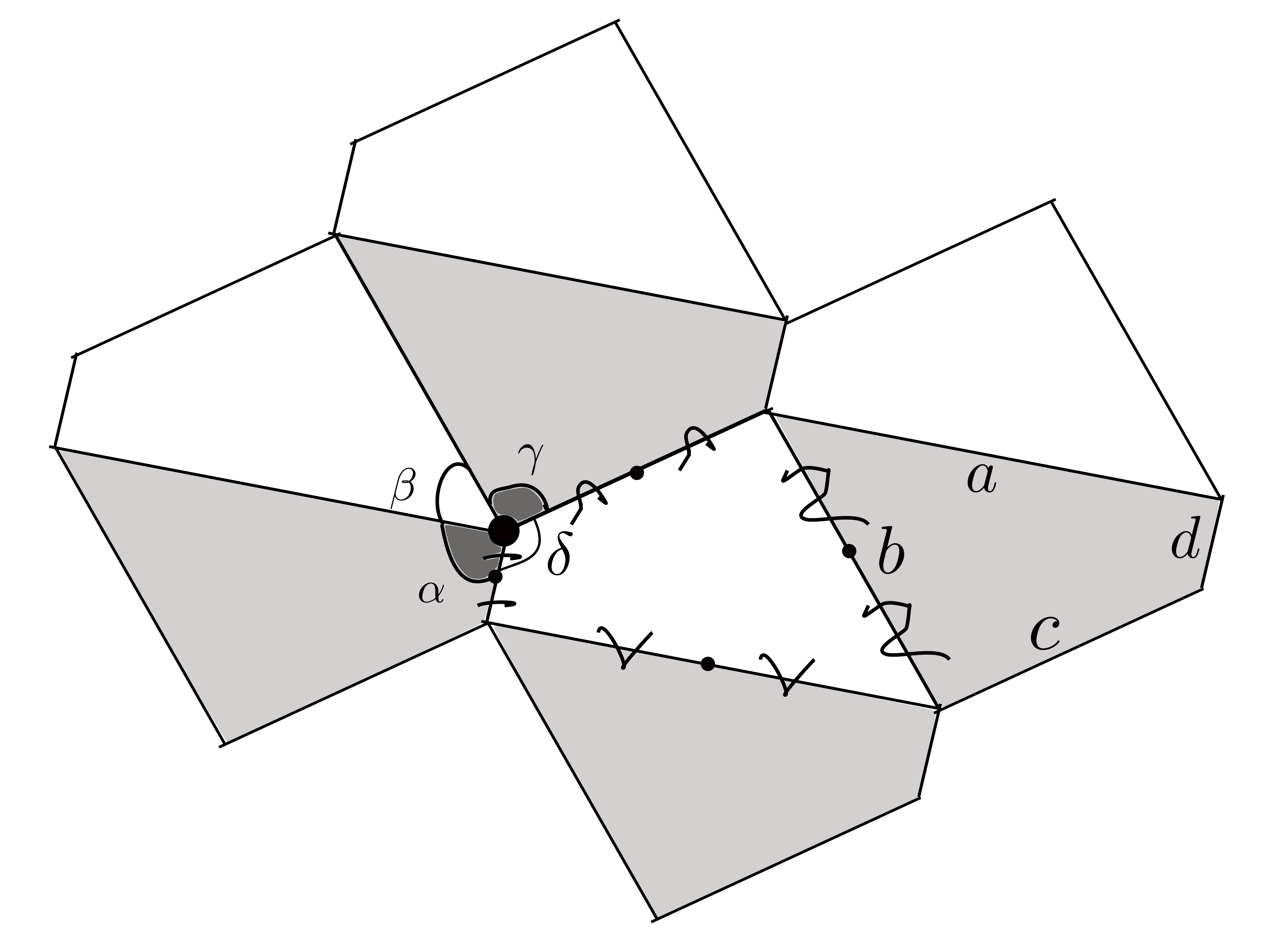}
\caption{On the left - a \emph{triangle tiling}, on the right - a \emph{cyclic quadrilateral tiling}. In both tilings, the neighbouring tiles are centrally symmetric with respect to the middle of their common side. Both tilings are colorable in two alternating colors and for each vertex a sum of angles of tiles of each color containing it is equal to $\pi$. For the triangle tiling such a relation on the angles is trivial, and for a general quadrilateral tiling this relationship is equivalent to the fact that the tile is cyclic.}
\label{fig:triangle_tiling_and_cyclic_quadrilateral_tiling}
\end{figure}

\bigskip

The dynamics of a triangle (quadrilateral) tiling billiard is equivariant under homothety of the plane. The parameters of such dynamics hence encode a form of a tile but not its size. Denote the angles of a tile by $\alpha, \beta$, $\gamma$ (and $\delta$, in case of a cyclic quadrilateral tiling). Let the corresponding sides be $a,b$ and $c$ (and $d$ \footnote{For a triangle tiling, the sides corresponding to the angles are opposite sides. For a quadrilateral tiling, the correspondance of sides and angles is reflected on Figure \ref{fig:triangle_tiling_and_cyclic_quadrilateral_tiling}.}). Moreover, suppose that any tile in the tiling is oriented in such a way that a counterclockwise tour of its boundary reads the letters in the alphabetical order. Both triangle and quadrilateral tilings can be colored  into two colors in such a way that neighbouring tiles have different colors and that tiles with the same color can be identified by a translation. We call the tiles of one of the colors \textbf{positively oriented}, and of another color \textbf{negatively oriented}, in an arbitrary way, see Figure \ref{fig:triangle_tiling_and_cyclic_quadrilateral_tiling}.

\smallskip

Triangle tiling billiards with the refraction index equal to $-1$ were introduced in \cite{DDRSL16} by D. Davis, K. DiPietro, J. Rustad and A. St Laurent. They were subsequently studied in much more detail by P. Baird-Smith, D. Davis, E. Fromm and S. Iyer in \cite{BDFI18}. In particular, the authors show the relationship between triangle tiling billiards and fully flipped $3$-interval exchange transformations on the circle. With P. Hubert we continued their study. In our work \cite{Olga} we have given a qualitative description of generic trajectories as well as have described the set of trajectories with non-generic behavior. Even though some understanding of the dynamics of triangle tiling billiards was achieved in \cite{BDFI18} and \cite{Olga}, a precise description of symbolic dynamics of trajectories was far from being complete. 

In the present work we describe completely the dynamics of triangle tiling billiards. A first ingredient in our description are \emph{tiling billiard foliations}, to which is dedicated the first part of this work. With the use of these foliations, we prove the Tree Conjecture (formulated in \cite{BDFI18}) on the symbolic dynamics of periodic trajectories as a main result of the first part.\footnote{A pinch of the second ingredient also appears in the first part, since $4n+2$ Conjecture is used to prove the Tree Conjecture.}

A second ingredient in the complete description of triangle tiling billiard dynamics is a renormalization process for fully flipped $3$-interval exchange transformations that we describe in the second part of this work. By putting these two ingredients together we give simpler proofs of the main results in \cite{Olga} and prove several additional results.

\smallskip

In the following two sections of this Introduction, we remind our reader on previously discovered results on triangle tiling billiards. We also provide the context by giving the definitions of classical objects that reveal themselves related to triangle tiling billiards. In these two sections we also present our main results, athough in the body of the article most of these results are formulated in a greater generality. Finally, in the Section \ref{sec:plan} of the Introduction, we give a detailed plan of this article.

\section{Symbolic dynamics of triangle tiling billiards.}\label{subs:symbolic_intro}

\subsection{Triangle tiling billiards: known results.}\label{subs:triangle tiling - known results}

The results of this paragraph come entirely from \cite{BDFI18} and \cite{Olga}.\footnote{We make a following bibliographical remark. Even though the article \cite{Olga} has been published (and even, appeared online) before \cite{BDFI18}, we have studied in detail an early draft by Baird-Smith, Davis, Fromm and Iyer from $2017$, and our work \cite{Olga} (as well, as this work) is based on their results and ideas.} 

\smallskip

A \textbf{symbolic code} of an oriented curve on the plane with respect to some triangle tiling is defined as a word in the alphabet of sides $\mathcal{A}_{\Delta}:=\{a,b,c\}$. This symbolic code corresponds to the sequence of sides, crossed by this curve.  For example, a code of a curve $\delta$ making a  clock-wise circular tour of a vertex in a tiling is $abcabc$. This code can be considered as an infinite word in  $\mathcal{A}_{\Delta}^{\N}$ (and in this case we write it as $\overline{abcabc}=\overline{abc}$\footnote{In our notations, a word under the bar in such a representation is a period of an infinite word in $\mathcal{A}_{\Delta}^{\N}$.}) or as a periodic cyclic word.

In the following, we also use another coding for an oriented curve in the triangle tiling which is defined in the alphabet of couples of sides $\mathcal{A}_{\Delta}^2:= 
\{ab,ba, bc, cb, ca, ac\}$. The \textbf{accelerated symbolic code} of an oriented curve on the plane with a tiling is defined by a sequence of couples of crossed edges. For example, the accelerated symbolic code of the circular curve $\delta$ is now $ab\; bc \; ca \; ab \; bc \; ca$. Of course, a symbolic code in $\mathcal{A}_{\Delta}$ and an accelerated symbolic code in $\mathcal{A}_{\Delta}^2$ obviously translate one into another. The accelerated symbolic code is a redundant notation for a symbolic code of a periodic trajectory but it happens to be more convenient in some situations as we will see in the future. 

\smallskip

One can now speak about the \textbf{symbolic dynamics of triangle tiling billiards} by defining a shift map on the subset of possible symbolic codes of trajectories. 

\begin{example}
For a periodic trajectory of a triangle tiling billiard depicted on Figure \ref{fig:example_traj_symb},\footnote{The Figures \ref{fig:example_traj_symb}, \ref{fig:tree_conjecture_picture} and \ref{fig:drifty} representing triangle tiling billiard trajectories are drawn with the help of the program by P. Hooper and A. St Laurent accessible online. Our reader can go and play with triangle tiling billiards by following the link \url{http://awstlaur.github.io/negsnel/}. This program doesn't only model triangle tiling billiards but also parallelogram, two-square, trihexagon and octagon-square tiling billiards.} its symbolic code in the alphabet $\mathcal{A}_{\Delta}$ is given by a periodic word $w=\overline{abacacacbacac}$.
\end{example}

One of our main interests in this work is the symbolic dynamics of triangle tiling billiards. We are interested in a following question.
What words in the alphabet $\mathcal{A}_{\Delta}$ (and $\mathcal{A}_{\Delta}^2$) correspond to triangle billiard trajectories? We answer this question in the second part of this work, see in particular Proposition \ref{prop:SYMBOLIC}. 

\begin{figure}
\centering
\includegraphics[scale=0.02]{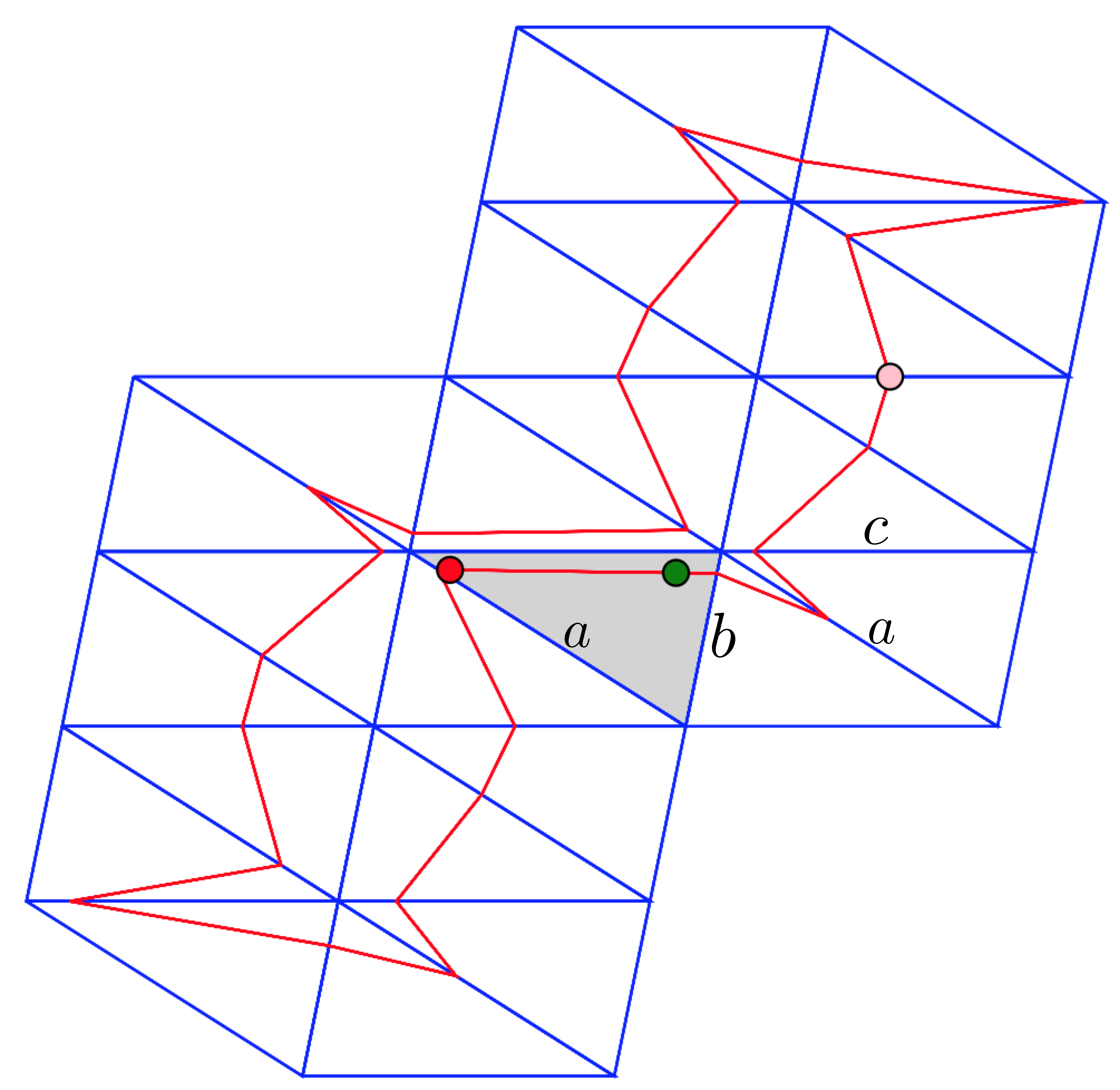}
\caption{\emph{Example of a periodic trajectory of a triangle tiling billiard. The first four letters $abac$ in a symbolic code label the corresponding crossed sides, one contines along a trajectory to form a complete symbolic code.}}\label{fig:example_traj_symb}
\end{figure}

\bigskip

The state of art on the symbolic behavior of triangle tiling billiard trajectories can be summarized in a following 

\begin{theorem}\cite{BDFI18, Olga}\label{thm:triangle_tiling_billiards_info}
Consider a triangle tiling billiard. Then the following holds:
\begin{itemize}
\item[1.] Every trajectory passes by each tile at most once. Additionally, the oriented distance between a segment of a trajectory in some tile and its circumcenter is preserved along all the trajectory;
\item[2.] all bounded trajectories are periodic and simple closed curves;
\item[3.] all bounded trajectories are stable under small perturbations (form of a tile, initial condition), i.e. they deform to bounded trajectories with the same symbolic code in $ \mathcal{A}_{\Delta}^{\N}$;
\item[4.*] the period of any periodic trajectory belongs to the set $\left\{4n+2 \left| \right. n \in \N^*\right\}$;
\item[5.*] the symbolic code $w \in \mathcal{A}_{\Delta}^{\N}$ of any periodic trajectory has its smallest period $s \in \mathcal{A}_{\Delta}$ of odd length. A complete period of a periodic trajectory is then described by the word $s^2$, and $w=\overline{s^2}$.
\end{itemize}
\end{theorem}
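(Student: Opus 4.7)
The plan is to separate the theorem into two blocks: the geometric statements 1--3, which follow from the folding construction afforded by the central symmetry of neighbouring tiles, and the arithmetic statements 4--5, which require the reduction to fully flipped $3$-interval exchange transformations.

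For item 1, the idea is to exploit the fact that two adjacent tiles are related by a central symmetry across the midpoint of their common edge and that refraction with $k=-1$ is mirror reflection across that edge. Composing, along any trajectory, the reflections across successive crossed edges produces an isometric identification of each visited tile with a fixed model tile $T_0$, under which the trajectory becomes a chord of $T_0$. The resulting isometry sends the circumcenter of each visited tile to the circumcenter of $T_0$, so the signed distance from the trajectory segment to the circumcenter of its host tile is an invariant of the whole trajectory. Inside any triangle there is at most one straight chord at a prescribed signed distance to the circumcenter with a prescribed incoming direction, so the trajectory can enter each tile at most once.

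Item 2 then follows immediately: a bounded trajectory only visits finitely many tiles, so by determinism and reversibility it must close into a periodic loop, and its simplicity comes from the unique-chord property inside each tile. Item 3 is an openness argument, because each crossing is transverse and finite in number, so small perturbations of tile shape or initial condition preserve the entire accelerated symbolic code.

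For items 4 and 5, the plan is to use the identification from \cite{BDFI18} of the first-return map of the billiard on a suitable transversal with a fully flipped $3$-IET $T$ on a circle. Since each branch of $T$ reverses orientation, any odd iterate of $T$ reverses the cyclic orientation of the circle, so a periodic orbit of the billiard, which must return to its initial position with its initial orientation restored, has even combinatorial period. To upgrade this to the sharper claim that the period belongs to $\{4n+2 \mid n \in \N^*\}$ together with the decomposition $w = \overline{s^{2}}$ with $|s|$ odd, I would analyse the Rauzy-type renormalization of fully flipped $3$-IETs introduced in the second part of the paper and show that a periodic orbit is assembled from an odd-length ``half-period'' $s$ iterated twice. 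The main obstacle I expect is precisely this last step: items 1--3 are essentially geometric and local, but teasing the total period into two copies of an odd-length minimal word is genuinely combinatorial and is the content that this paper establishes via renormalization.
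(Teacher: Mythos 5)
Your treatment of items 1--3 is essentially the paper's own: composing the reflections across the successively crossed edges is precisely the folding map $\mathcal{F}$ of Lemma \ref{lemma:folding}, and the at-most-one-visit, periodicity, simplicity and stability claims all come from the folded trajectory lying on a single line. Two small precisions. First, under folding the visited tiles become distinct congruent triangles inscribed in one circumcircle $\mathcal{C}$ (that of the base tile), and the trajectory becomes a chord of $\mathcal{C}$, not a chord of the model tile; the conserved quantity is the signed distance from that chord to the center of $\mathcal{C}$, which is the common circumcenter of all the folded tiles. Second, you tacitly need the isometry attached to a given tile to be independent of the chain of tiles used to reach it (otherwise two visits to the same tile could a priori unfold from two different chords); this path-independence is the content of Lemma \ref{lemma:folding} and holds for triangle tilings because the angles of tiles of each color around a vertex sum to $\pi$.

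For items 4--5 your strategy is the right one (reduction to $\CETthree$ plus renormalization), but the decisive step is only announced, not carried out, and your parity observation yields only evenness of the period, neither the $4n+2$ form nor the square structure $w=\overline{s^2}$. The missing mechanism (Theorem \ref{thm:one-more-time}, resting on Theorem \ref{thm:renormalization_process} and Lemma \ref{lem:integrability}) is the following. A periodic billiard trajectory produces a periodic interval for the associated $F\in\CETthree$, so $F$ is not minimal and the renormalization process stops at an integrable map; for such maps one checks explicitly that every periodic interval either is \emph{flipped onto itself} --- some power $F^{q}$ maps the interval to itself as an orientation-reversing involution, with $q$ necessarily odd because each branch of $F$ reverses orientation --- or arises from a periodic orbit of a rational rotation. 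The rotation case must be excluded: it corresponds to drift-periodic trajectories, and those intervals are destroyed by arbitrarily small perturbations of $(l_1,l_2,l_3)$, contradicting the stability of item 3. Since the renormalized map is a first-return map of $F$, a flipped periodic interval of the former is a flipped periodic interval of $F$ itself, for an odd power $q=2n+1$; the interval then returns to itself after $2n+1$ symbols, so the itinerary over a full period splits as $s\cdot s$ with $|s|=2n+1$ odd and total period $4n+2$. Without this identification of periodic trajectories with intervals flipped onto themselves by an odd power of $F$, the square decomposition of the symbolic word does not follow from anything you have written.
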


This Theorem implies that the periodic trajectory on Figure \ref{fig:example_traj_symb} is not exotic but generic and stable, since the periodicity of trajectories is an open property in triangle tiling billiards.

The statements 1.--3. have been proven and 4. has been conjectured in \cite{BDFI18}. The first three statements are consequences of an important folding idea, see Section \ref{sec:locally foldable tilings and associated billiard foliations}. The point 4. is a simple consequence of 5. 

The statements 4.--5. have been announced to be proven in \cite{Olga} by P. Hubert and myself.  Our proof of 4. and 5. presented in \cite{Olga} is based in a crucial way on the relation of triangle tiling billiards with interval exchange transformations with flips that was discovered in \cite{BDFI18}. This proof is quite technical (it uses the explicit construction of Nogueira-Rauzy graphs), and, unfortunately, incomplete as we have discovered while working on this paper. The proof could be easily completed and finished combinatorially, along the lines and methods of the initial article. In this work, we give an alternative and much simpler proof of 4. and 5. (see Theorem \ref{thm:one-more-time}) and hence give a first complete proof of these two statements that are known as $4n+2$ Conjecture and were initially formulated in \cite{BDFI18}. See the Appendix for more comments on the work \cite{Olga}.

\bigskip

We say that a triangle tiling billiard trajectory is \textbf{escaping to infinity} (or simply \textbf{escaping}) if it is not periodic. This name makes sense since by the point 1. in Theorem \ref{thm:triangle_tiling_billiards_info}, any trajectory which is not periodic, is not \emph{"spiraling"} in and out in a bounded domain of a plane\footnote{The absence of spiraling for tiling billiards is \emph{a priori} possible, see for example the dynamics of tiling billiards in trihexagonal tilings.} but genuinely escapes to infinity. A trajectory is \textbf{linearly escaping} if it escapes to infinity and stays in a bounded distance from a fixed straight line. Any triangle tiling billiard trajectory is either \textbf{periodic}, \textbf{linearly escaping} or \textbf{non-linearly escaping}, as follows from 1.--2. in Theorem \ref{thm:triangle_tiling_billiards_info} and was proven in \cite{BDFI18}. As proven in \cite{Olga}, \emph{almost any} trajectory of a tiling billiard in a fixed triangle tiling  \emph{is either periodic or linearly escaping}. In order to make this statement more precise, we need one more definition. We start by defining a set of measure $1$ of triangle tilings in which all trajectories are periodic or linearly escaping.

\smallskip

Let $\Delta_2:=\left\{(x_1, x_2, x_3)|x_i \geq 0, x_1+x_2+x_3=1\right\} \subset \R^3$. If $x_j>\frac{1}{2}$ for some $j$, one maps a triple $(x_1, x_2, x_3)$ to a new one where $x_j':=2x_j-1$ and the other two coordinates $x_i, i \neq j$, stay unchanged. Then we normalize by $x_j$ to get back to $\Delta_2$. In projective coordinates, this is equivalent to subtracting the sum of two smaller coordinates from the biggest one. We call this operation on $\Delta_2$ the \textbf{Rauzy subtractive algorithm}. The subset $\overline{\mathcal{R}}\subset \Delta_2$ of triples on which the Rauzy subtractive algorithm can be applied infinitely, was defined in \cite{AS13} by P. Arnoux and S. Starosta. They have also proven that the set $\overline{\mathcal{R}}$ is homeomorphic to the Sierpinsky triangle. The questions related to it were studied in many works, see for example \cite{AR91, AHS16, AHS16-1}. See Figure \ref{fig:rauzygasket} for the illustration of the set $\overline{\mathcal{R}}$. We define $\mathcal{R} \subset \overline{\mathcal{R}}$ as a set on which for the Rauzy subtractive algorithm $x_j \neq \frac{1}{2}$ at each step, in other words the inequality $x_j > \frac{1}{2}$ is strict. In the following we call this set $\mathcal{R}$ the \textbf{Rauzy gasket} (even though one usually calls $\overline{\mathcal{R}}$ the Rauzy gasket but in this work we exclude its boundary to define $\mathcal{R}$). 

The set $\mathcal{R}$, seemingly unnatural if introduced as above, appears to be a set of parameters for the set of \emph{interesting} maps in various dynamical contexts, see for example the works by Avila-Hubert-Skripchenko on systems of isometries \cite{AHS16, AHS16-1}, by Dynnikov-DeLeo \cite{DDL09} on sections of $3$-periodic surfaces, by Arnoux-Rauzy \cite{AR91} on $6-$interval exchange transformations on the circle. These works and many others show that the set $\mathcal{R}$ represents a great interest for modern dynamics. It is still not completely understood, for example it is an open question to calculate  its Hausdorff dimension. 

The set $\mathcal{R}$ is related to triangle tiling billiards, as shown in \cite{Olga}. Indeed, this set parametrizes the rare forms of tiles for which the corresponding triangle tiling billiards admit trajectories escaping in a non-linear way.
 
\begin{figure}
\centering
\includegraphics[scale=0.3]{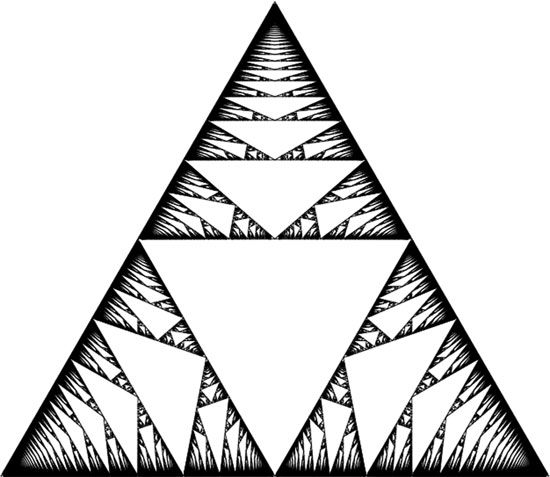}
\caption{The sett $\overline{\mathcal{R}} \subset \Delta_2$ is a fractal set homeomorphic to Sierpinsky triangle.}
\label{fig:rauzygasket}
\end{figure} 
 
\smallskip

The forms of tiles in triangle tilings are parametrized by their angles.
Consider the set of triangular tiles such that the point 

\begin{equation}\label{eq:rho_delta}
\rho_{\Delta}:=\left(1-\frac{2}{\pi}\alpha, 1-\frac{2}{\pi}\beta, 1-\frac{2}{\pi}\gamma \right) \in \Delta_2
\end{equation}

belongs to the Rauzy gasket $\mathcal{R}$, ${\rho}_{\Delta} \in \mathcal{R}$. Of course, this set  is just an affine re-parametrization of $\mathcal{R}$. A trajectory of a triangle tiling billiard is called \textbf{exceptional} if first, a corresponding $\rho_{\Delta} \in \mathcal{R}$ and second, this trajectory passes through the circumcenter of some tile (and hence, by point 1. of Theorem \ref{thm:triangle_tiling_billiards_info}, of \emph{any} tile it crosses). 

\begin{theorem}\cite{Olga}\label{thm:exceptional-zero measure}
Fix a triangle tiling. If the angles of the tiles are such that ${\rho}_{\Delta} \notin \mathcal{R}$ then all of the trajectories in such a tiling are either periodic or linearly escaping. On the contrary, if ${\rho}_{\Delta} \in \mathcal{R}$, a trajectory escapes to infinity non-linearly \emph{only if} it passes by the circumcenters of tiles. 
\end{theorem}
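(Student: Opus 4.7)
The plan is to reduce the dynamics of a triangle tiling billiard to a one-dimensional map and then attack it through renormalization. By point 1 of Theorem \ref{thm:triangle_tiling_billiards_info}, the oriented distance from the circumcenter is preserved along any trajectory, so after identifying all tiles with a single reference triangle inscribed in its circumcircle, the passage from one tile to the next is described by a single fully flipped $3$-interval exchange transformation $T$ on $\Sph^1$ (this is the correspondence from \cite{BDFI18}). The three arc lengths of the discontinuity intervals of $T$ are, up to normalization, exactly $(1-\frac{2}{\pi}\alpha, 1-\frac{2}{\pi}\beta, 1-\frac{2}{\pi}\gamma)=\rho_{\Delta}$. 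Thus the orbit structure of the billiard trajectory starting at a point at signed distance $d$ from the circumcenter is encoded by the $T$-orbit of the corresponding point on the circle of radius $|d|$.

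Next I would introduce the natural renormalization on the space of fully flipped $3$-IETs: induce on the complement of the largest interval and rescale. A direct computation (recording which arc is largest and how the three lengths redistribute after two applications of $T$, to absorb the flips) shows that the induced map is again a fully flipped $3$-IET and that the renormalization acts on the parameter $\rho_\Delta \in \Delta_2$ precisely as the Rauzy subtractive algorithm: the largest coordinate is replaced by itself minus the sum of the other two. Consequently, $\rho_\Delta \in \overline{\Rauzy}$ if and only if the renormalization can be iterated indefinitely, and $\rho_\Delta \in \Rauzy$ if and only if no iterate ever gives a coordinate equal to $\frac{1}{2}$.

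For the first statement, assume $\rho_\Delta \notin \Rauzy$. Then either the subtractive algorithm terminates after finitely many steps or some iterate lies on the boundary ($x_j=\frac{1}{2}$). In the first case, the induced $3$-IET is after finitely many steps conjugate to a rotation (all three ``flip-pairings'' collapse because one interval has become too short to sustain the flipped structure); a rotation orbit on the circle is either periodic or equidistributed on a subinterval, and unrolling through the folding construction turns this into a periodic billiard trajectory or one confined to a bounded strip around a line, i.e.\ linear escape. The boundary case $x_j=\frac{1}{2}$ is analyzed separately and corresponds either to degenerate periodic closures or to a trajectory parallel to a side of the tile, again linearly escaping.

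For the second statement, assume $\rho_\Delta \in \Rauzy$. Then renormalization runs forever without ever landing on $x_j=\frac{1}{2}$. A trajectory that never meets a circumcenter corresponds to a point on a circle of positive radius that can be followed through every stage of renormalization; the infinite product of the corresponding Rauzy renormalization matrices converges to a rank-one projector (this is the Arnoux--Rauzy type contraction on $\Rauzy$), which produces an asymptotic direction and a uniform bound on the deviation from the associated line, yielding linear escape. Conversely, a trajectory through a circumcenter of some tile corresponds to a point at distance $0$, where the coding degenerates and no such invariant direction is produced, leaving open the possibility of non-linear escape. The main obstacle will be to carry out the bookkeeping of the flips through renormalization rigorously, and in particular to show that the convergence of the renormalization cocycle to rank one is strong enough (and uniform in the orbit) to force bounded deviation from a line for every non-circumcenter trajectory; the folding construction and the preservation of the signed circumradial distance from Theorem \ref{thm:triangle_tiling_billiards_info} are what allow this bound to be transferred from the circle back to the plane.
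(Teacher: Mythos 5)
Your overall strategy --- reduce to the family $\CETthree$ via folding, renormalize, and identify the induction on the length parameters with the Rauzy subtractive algorithm so that $\mathcal{R}$ is exactly the locus where it never terminates --- is the same as the paper's. The first half (for $\rho_{\Delta}\notin\mathcal{R}$) is essentially the paper's argument (Lemma \ref{lem:integrability} and Step 1 of the proof of Theorem \ref{thm:complete_classification}): once the induction stops one lands either in a completely periodic map or in one whose square restricts to a rotation on an interval, giving periodicity or linear escape. (A small caveat: the correct induction interval is the union $I_j^{+}\cup I_j^{-}$ attached to the \emph{smallest} $l_j$, of length $x_j=1-2l_j$, not the complement of the largest interval of continuity of $F$; but this does not affect the structure of the argument.)

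The genuine gap is in the second half. You never say how the second parameter --- the signed distance to the circumcenter, encoded by $\tau$ (equivalently $r=\frac{1}{2}-\tau$) --- transforms under renormalization, and this is the crux. The paper computes $r^{(k)}=r^{(0)}/|S^{(k)}|$ (proof of Theorem \ref{thm:minimality}), where $|S^{(k)}|$ is the length of the $k$-th induction interval. For $\rho_{\Delta}\in\mathcal{R}$ one has $|S^{(k)}|\to 0$, so if $r^{(0)}\neq 0$ (the trajectory misses the circumcenters) the quantity $r^{(k)}$ would leave $[0,\frac{1}{2}]$; hence the \emph{dynamical} renormalization must terminate after finitely many steps even though the subtractive algorithm on the lengths alone runs forever. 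The terminal map is then completely periodic, so such trajectories are in fact \emph{periodic} --- not linearly escaping, as you claim. Your substitute argument, that the product of Rauzy matrices contracts to a rank-one projector and that this forces bounded deviation from a line, is both unjustified and points the wrong way: the circumcenter trajectories themselves sit over a minimal, uniquely ergodic $6$-IET governed by exactly that rank-one contraction, and they escape \emph{non}-linearly (Theorem \ref{thm:exceptional_trajectories}); convergence of the length cocycle to rank one says nothing about bounded deviation of orbits. To close the argument you need the transformation law for $r$, which converts ``$\rho_{\Delta}\in\mathcal{R}$ and no circumcenter'' into ``the renormalization stops'', after which Lemma \ref{lem:integrability} finishes the proof.
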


In the second part of this work, we give an alternative proof of a stronger version of this Theorem. We prove that in Theorem \ref{thm:exceptional-zero measure} the \emph{only if} can be replaced by \emph{ if and only if}. The \emph{only if} direction has already been proven in \cite{Olga} for almost all ${\rho}_{\Delta} \in \mathcal{R}$ with respect to the Avila-Hubert-Skripchenko measure on the Rauzy gasket defined in \cite{AHS16, AHS16-1}. Here we prove it for \emph{all} angle parameters ${\rho}_{\Delta} \in \mathcal{R}$, see points 1. and 2. in Theorem \ref{thm:complete_classification} and Theorem \ref{thm:exceptional_trajectories_intro}.

Exceptional trajectories of triangle tiling billiards are of great interest because of their relationship to arithmetic orbits of a famous Arnoux-Rauzy family of interval exchange transformations. The better understanding of the behavior of exceptional trajectories (and their density properties) is achieved in this work by apporaching these trajectories by bigger and bigger periodic trajectories. 

The next paragraph discusses a beautiful property of periodic trajectories of triangle tiling billards that revealed itself to be not only beautiful but useful for the global understanding of the dynamics.

\subsection{Tree Conjecture: formulation and motivation.}
The Tree Conjecture concerns the symbolic behavior of \emph{any} periodic trajectory of a triangle tiling billiard, see Figure \ref{fig:tree_conjecture_picture}.
\smallskip

First, for any periodic trajectory $\delta$ in a tiling billiard denote a \textbf{domain} of the plane \textbf{that it encloses} by $\Omega^{\delta} \subset \R^2$, $\partial \Omega^{\delta}=\delta$.

Consider a triangle tiling. Denote by $\Lambda_{\Delta}:=(V,E)$ an abstract graph such that the set $V$ consists of the vertices of tiles in the plane, two vertices in $V$ being connected by an edge in $E$ if they are connected in the tiling. The abstract graph $\Lambda_{\Delta}$  comes with its embedding in the plane, it is a graph we see when we look at the triangle tiling. 

\begin{conjecture}[Tree Conjecture for triangle tilings]\label{conj:tree}
Take any periodic trajectory $\delta$ of a triangle tiling billiard. Then the graph $G_{\Delta}^{\delta}:={\Omega}^{\delta} \cap \Lambda_{\Delta}$ (as a subgraph of $\Lambda_{\Delta}$) is a \emph{tree}. In other words, a trajectory $\delta$ passes by all the tiles that intersect its interior ${\Omega}^{\delta}$.
\end{conjecture}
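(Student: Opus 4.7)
For the Euler reduction, let $p$ denote the period of $\delta$, i.e.\ the number of times $\delta$ crosses a tiling edge. Form the planar graph $\Gamma$ by overlaying $\delta$---viewed as a closed polyline with one vertex per crossing---on the tiling edges contained in $\overline{\Omega^\delta}$, truncating each crossed tiling edge at the crossing. Point 1 of Theorem \ref{thm:triangle_tiling_billiards_info} guarantees that each tiling edge is crossed at most once (a second crossing would force $\delta$ to revisit an adjacent tile), so each visited tile contributes exactly one arc of $\delta$ and two truncated half-edges. Denoting by $V_{\text{in}}$, $E_{\text{in}}$, $F_{\text{in}}$ the numbers of tiling vertices in $\Omega^\delta$, of tiling edges inside $\overline{\Omega^\delta}$, and of tiles fully enclosed by $\delta$, one counts $V(\Gamma) = V_{\text{in}} + p$, $E(\Gamma) = E_{\text{in}} + 2p$, and $F(\Gamma) = F_{\text{in}} + p + 1$. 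Connectedness of $\Gamma$ together with Euler's formula yields
\[
V_{\text{in}} - E_{\text{in}} + F_{\text{in}} = 1.
\]
Because $\delta$ lies in the unbounded face of $G_\Delta^\delta$, no tile crossed by $\delta$ can sit inside a bounded face of $G_\Delta^\delta$; bounded faces of $G_\Delta^\delta$ therefore consist of fully enclosed tiles, and conversely each fully enclosed tile contributes a triangular bounded face. Hence $G_\Delta^\delta$ is a forest iff $F_{\text{in}} = 0$, in which case the Euler identity forces a single connected component, making $G_\Delta^\delta$ a tree. The Tree Conjecture reduces to $F_{\text{in}} = 0$.

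To rule out $F_{\text{in}} > 0$, suppose for contradiction that a tile $T \subset \Omega^\delta$ is fully enclosed. Shoot a trajectory $\delta_T$ from a point of $T$. Being a leaf of the tiling billiard foliation $\mathcal{F}$ constructed in the first part of this work, $\delta_T$ is trapped inside $\overline{\Omega^\delta}$; by point 2 of the Theorem it is a periodic simple closed curve, with $T$ among its visited tiles. By point 3, $\delta$ lies in a cylinder $\mathcal{C}_\delta$ of parallel closed leaves of $\mathcal{F}$, all sharing $\delta$'s symbolic code and hence visiting the same tiles as $\delta$, so $T$ lies outside $\mathcal{C}_\delta$; analogously $\delta_T$ lies in a cylinder $\mathcal{C}_T \subset \Omega^\delta$ with $\mathcal{C}_T \neq \mathcal{C}_\delta$. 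I would then carry out a Poincar\'e--Hopf style index computation for $\mathcal{F}$ on the disk $\Omega^\delta$: every cylinder of closed leaves encloses a ``center'' of index $+1$, so two nested cylinders inside $\Omega^\delta$ would contribute a total index $\geq 2$, contradicting $\chi(\overline{\Omega^\delta}) = 1$. The $4n+2$ parity from points 4 and 5 enters in pinning down the individual singular-point indices, essentially because the half-to-half symmetry $\phi$ of $\delta$ (acting on its symbolic code $\overline{s^2}$ with $|s|$ odd) is a central symmetry of the plane swapping the two tile-color classes, which forces the foliation to have the necessary central structure.

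The main obstacle will be the index computation itself. One must analyse the singular locus of $\mathcal{F}$ inside $\Omega^\delta$---at tiling vertices, and possibly at circumcenters of visited tiles---and verify that each ``cylinder center'' contributes $+1$ to the index sum, while no other singularities with negative index compensate. The $4n+2$ parity from points 4 and 5 constrains the combinatorial types of singular leaves and thereby determines the indices; without it, one could conceivably arrange a cancellation that would save the counterexample. Setting up the foliation $\mathcal{F}$ rigorously and computing these indices is the real heart of the proof; once done, the contradiction $1 = \chi(\overline{\Omega^\delta}) \geq 2$ is immediate, and the Tree Conjecture follows.
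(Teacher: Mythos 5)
Your Euler-characteristic reduction is sound: the identity $V_{\text{in}}-E_{\text{in}}+F_{\text{in}}=1$ together with the observation that a cycle in $G_{\Delta}^{\delta}$ would trap an uncrossed tile does show that the Tree Conjecture is exactly the statement $F_{\text{in}}=0$. (This is also how the paper reads the conjecture, though it reduces instead to a local statement about separatrix loops, the Bounded Flower Conjecture of Proposition \ref{prop:treetoflower}.) The gap is in the second half: the Poincar\'e--Hopf argument cannot produce the contradiction you want. A foliation of a disk may contain arbitrarily many pairwise disjoint cylinders of closed leaves, provided they are separated by saddle-type singular leaves; the total index is still $1$, because the multi-pronged singularities have index $1-s/2\le 0$ and exactly compensate the extra ``centers.'' And such configurations genuinely occur here: by Proposition \ref{prop:list of possible local behaviours PTT} a vertex can carry $s=4$ or $6$ separatrix segments, i.e.\ a flower with $2$ or $3$ petals, each petal bounding its own cylinder inside $\Omega^{\delta}$. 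So ``two cylinders inside the disk'' is the \emph{generic} situation, not a contradiction. More pointedly, every one of the bad configurations that would create an enclosed-but-uncrossed tile (a petal running through two opposite tiles at $v$, or a ``hungry'' petal swallowing the opposite tile) is index-consistent: e.g.\ a one-petal flower is a $2$-prong singularity of index $0$ sitting between two cylinders, total index $1+0=1$. No amount of index bookkeeping, with or without the $4n+2$ parity, distinguishes these from the legal configurations.

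What actually excludes them in the paper is combinatorial and metric, not homological: one contracts $\delta$ onto a flower and rules out each forbidden petal topology using (i) the fact that the accelerated sign code of every periodic trajectory is a \emph{square} of an odd-length word (Theorem \ref{thm:one-more-time}, which itself requires the renormalization of Part II), applied to nearby periodic leaves $\deltain,\deltaout$ whose codes differ in a controlled way near $\Theta_v$; and (ii) the central symmetry of the ray foliation $\mathcal{R}_{\mathcal{F}(v)}$ about $v$ (Proposition \ref{prop:nohungry}), which kills the ``hungry'' configurations. Your proposal would need an input of this kind at the point where you write that the parity ``forces the foliation to have the necessary central structure''; as it stands that sentence is carrying the entire weight of the theorem.
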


\begin{figure}
\centering
\includegraphics[scale=0.3]{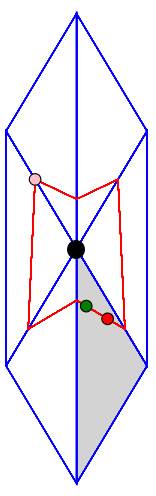}
\includegraphics[scale=0.3]{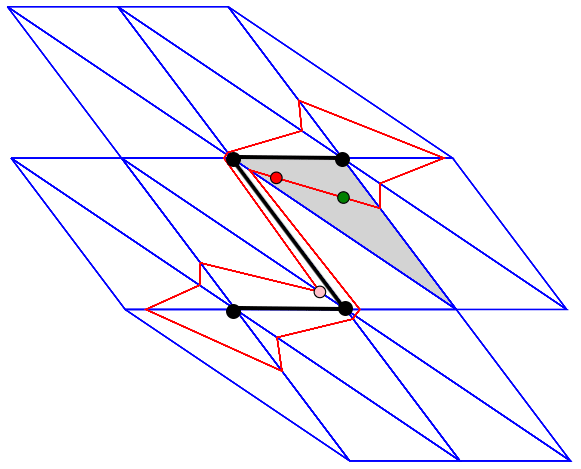}
\includegraphics[scale=0.32]{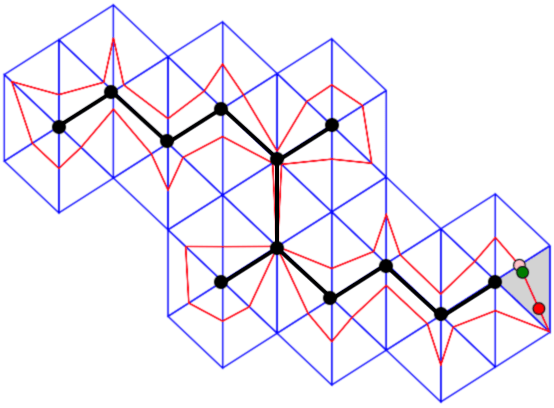}
\includegraphics[scale=0.2]{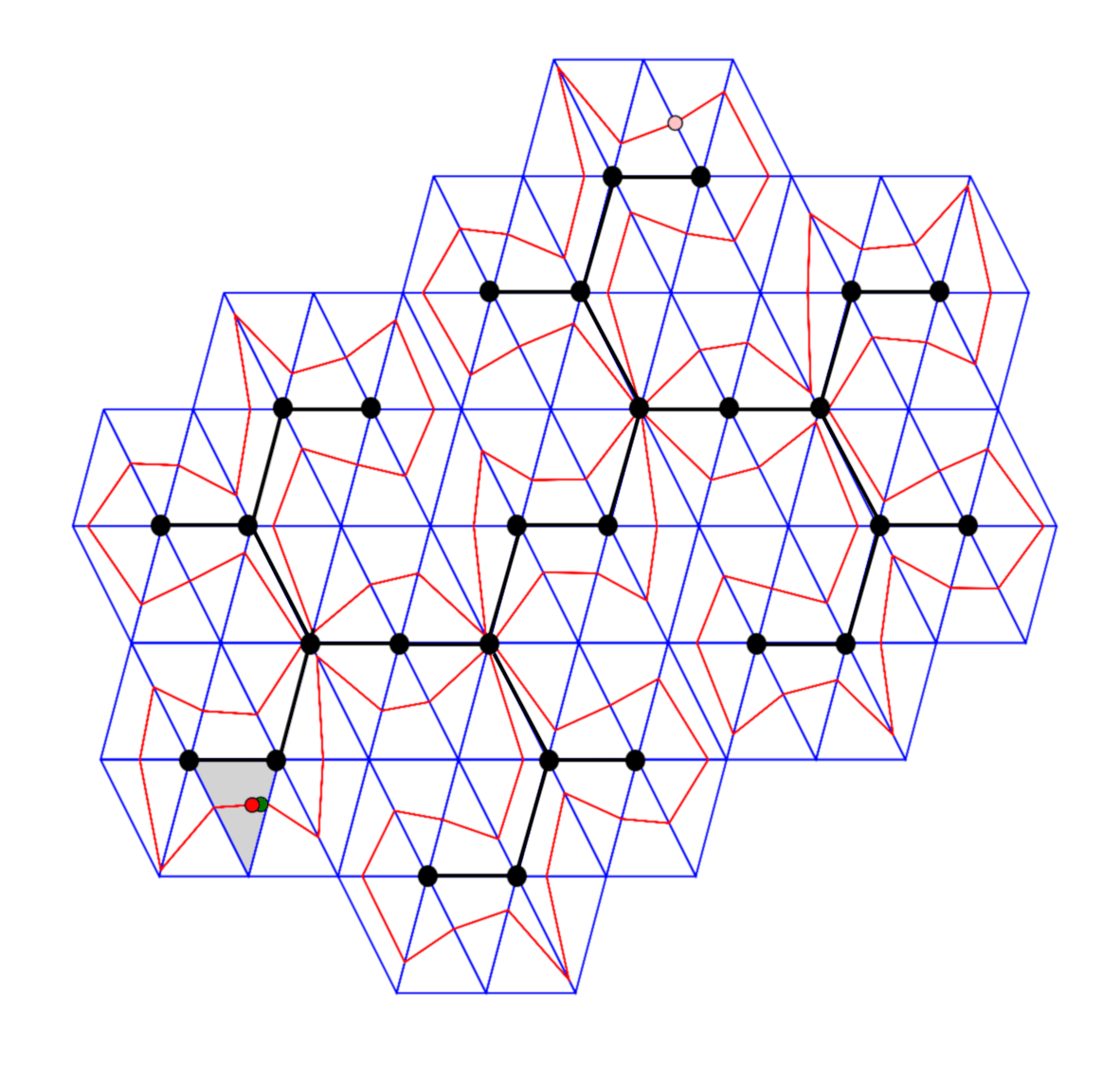}
\includegraphics[scale=0.25]{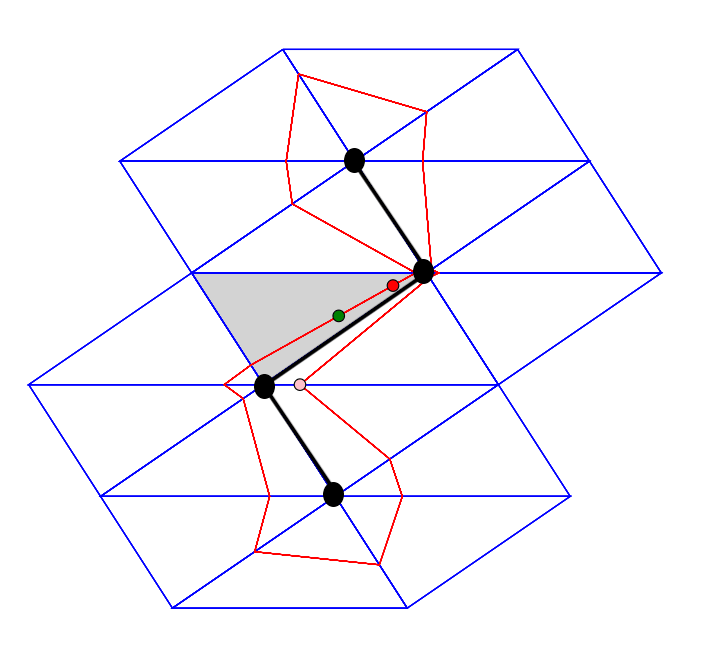}
\caption{From left to right, from top to bottom: several examples of triangle tiling trajectories and the corresponding trees. First, the simplest trajectory is a six-periodic trajectory and the corresponding graph is a simplest tree with only one vertex. Second, for the obtuse triangle tilings, the corresponding trees are always paths. We then provide three more examples of trees for acute triangles: the forms of the trees can be quite different. This Figure is based on the program by Patrick Hooper and Alexander St Laurent.}
\label{fig:tree_conjecture_picture}
\end{figure}

This conjecture was first formulated three years ago in \cite{BDFI18} and proven there for the case of tilings by \emph{obtuse} triangles, a graph $G_{\Delta}^{\delta}$ in question is in this case a chain. 

\smallskip

Our interest in the Tree Conjecture comes from its relationship to the density properties of other interesting and already studied objects, putting tiling billiards in a larger perspective. Indeed, the \emph{Tree conjecture} is a first step in our approach of the arithmetic orbits of the Arnoux-Yoccoz map (and other minimal maps in the Arnoux-Rauzy family). These orbits are fractal curves related to the Peano curve studied in \cite{A88} by P. Arnoux, and another Peano curve studied in \cite{M12} by C. McMullen and in \cite{LPV07} by J. Lowenstein, G. Poggiaspalla and F. Vivaldi. We discuss more on these curves in paragraph \ref{subs:rrdeformAR}. Of course, the Tree Conjecture is interesting in itself since it gives a partial description of the symbolic dynamics of tiling billiards.

\bigskip

The main result of the first part of this work is

\begin{theorem}\label{thm:main}
Conjecture \ref{conj:tree} holds.
\end{theorem}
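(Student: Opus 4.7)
My plan is to reduce Conjecture~\ref{conj:tree} to the non-existence of tiles of $\Lambda_\Delta$ entirely contained in $\Omega^\delta$, via an Euler-characteristic count on $\overline{\Omega}^\delta$, and then to rule such tiles out using the billiard foliation $\mathcal{F}^\tau$ introduced in the first part of the paper.

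For the reduction, since $\delta$ is a simple closed curve (point 2 of Theorem~\ref{thm:triangle_tiling_billiards_info}), $\overline{\Omega}^\delta$ is a closed topological disk; by point 4 of Theorem~\ref{thm:triangle_tiling_billiards_info}, $\delta$ is the concatenation of $4n+2$ chord segments crossing the $1$-skeleton of $\Lambda_\Delta$ at $4n+2$ generic points. Writing $V_{\mathrm{int}}$, $E_{\mathrm{int}}$ for the numbers of tiling vertices, resp.\ tiling edges, strictly inside $\Omega^\delta$, and $N_0$ for the number of tiles of $\Lambda_\Delta$ strictly inside $\Omega^\delta$, a count of $0$-, $1$- and $2$-cells of the CW-decomposition of $\overline{\Omega}^\delta$ cut out by $\delta\cup(\Lambda_\Delta\cap\overline{\Omega}^\delta)$ reduces $V-E+F=1$ to
\[
V_{\mathrm{int}}-E_{\mathrm{int}}+N_0=1.
\]
Any interior tile closes a $3$-cycle in $G^\delta_\Delta$, so the tree property demands $N_0=0$; conversely, if $N_0=0$ then any cycle in $G^\delta_\Delta$ would by Jordan bound a nonempty union of interior tiles, contradicting $N_0=0$, so $G^\delta_\Delta$ is acyclic with $V_{\mathrm{int}}-E_{\mathrm{int}}=1$, i.e.\ a tree. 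Conjecture~\ref{conj:tree} is therefore equivalent to the single numerical statement $N_0=0$.

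To prove $N_0=0$, let $\tau_0$ be the oriented distance from $\delta$ to the circumcenters of its crossed tiles (constant by point 1 of Theorem~\ref{thm:triangle_tiling_billiards_info}) and consider the billiard foliation $\mathcal{F}^{\tau_0}$ of Section~\ref{sec:locally foldable tilings and associated billiard foliations}. All tiles in $\Lambda_\Delta$ being congruent, each tile carries a chord at oriented distance $\tau_0$ from its circumcenter and hence meets a leaf of $\mathcal{F}^{\tau_0}$. If an interior tile $T\subset\Omega^\delta$ existed, the leaf $\ell\neq\delta$ through $T$, being disjoint from $\delta$, would be trapped in $\overline{\Omega}^\delta$ and hence bounded; by point 2 of Theorem~\ref{thm:triangle_tiling_billiards_info}, $\ell$ would then be periodic with $\Omega^\ell\subsetneq\Omega^\delta$.

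The hard part, and the main obstacle I anticipate, is producing a contradiction from this nested pair $\delta\supset\ell$ of distinct periodic leaves of the same foliation. My plan is to iterate the Euler identity on $\overline{\Omega}^\ell$ and on the annulus $\overline{\Omega}^\delta\setminus\Omega^\ell$, which is itself foliated by bounded (hence periodic) leaves of $\mathcal{F}^{\tau_0}$, and to combine the resulting integer identities with the $s^2$-symbolic symmetry of point 5 of Theorem~\ref{thm:triangle_tiling_billiards_info}. This symmetry realises $\delta$ as invariant under a canonical involution coming from the folding construction, under which any nested $\ell$ must also be invariant; the foliated annulus between $\delta$ and $\ell$, being then filled by regular closed leaves of $\mathcal{F}^{\tau_0}$, can contain no singular leaf of $\mathcal{F}^{\tau_0}$ and hence no tiling vertex of $V_{\mathrm{int}}$, which is impossible in a locally finite triangle tiling with positive-area annulus between $\delta$ and $\ell$. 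The delicate part is making this obstruction rigorous via the precise geometric dictionary between the planar trajectory and its image under the folding map, which is exactly where the billiard-foliation machinery of the first part must be used in full force.
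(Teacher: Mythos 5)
Your reduction of the Tree Conjecture to the single statement $N_0=0$ (no tile of the tiling entirely contained in $\Omega^{\delta}$ and uncrossed by $\delta$) via the Euler count is fine, and is essentially the equivalence already built into the statement of Conjecture~\ref{conj:tree} (``a trajectory passes by all the tiles that intersect its interior''). The genuine gap is in the step you yourself flag as ``the hard part'': you propose to derive a contradiction from the mere existence of a nested pair $\Omega^{\ell}\subsetneq\Omega^{\delta}$ of distinct periodic leaves of the same parallel foliation. No such contradiction exists. By point 5 of Lemma~\ref{lemma:use_me_on parallel and radial}, $\Omega^{\delta}$ is always foliated by leaves of $\mathcal{P}^{\delta}$, and whenever $\Omega^{\delta}$ contains more than one tile this foliation contains many shorter periodic leaves nested inside $\delta$ (see Figure~\ref{fig:full}); the annulus between two nested periodic leaves with different symbolic codes \emph{must} contain singular leaves and hence tiling vertices, so your claimed obstruction (``the foliated annulus \dots can contain no singular leaf \dots and hence no tiling vertex, which is impossible'') is false as a general principle, and the appeal to an ``involution'' induced by the $s^2$ symbolic symmetry does not repair it. What actually has to be excluded is not nesting per se but the specific local configurations in which a separatrix loop of $\mathcal{P}^{\delta}$ encloses a tile it does not cross --- a petal passing by two opposite tiles, or a ``hungry'' petal swallowing the tile opposite to one it crosses.

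Concretely, the missing content is: (i) the contraction of $\delta$ onto a flower (union of separatrix loops at a single vertex) and the recursive decomposition of $G^{\delta}_{\Delta}$ into pistils and petals, which reduces the global statement to the Bounded Flower Conjecture (Proposition~\ref{prop:treetoflower}); (ii) the finite enumeration of the topological obstructions to that local statement (configurations \textbf{2.1}--\textbf{6.3}); and (iii) their exclusion, which requires two tools specific to the triangle tiling that your sketch never brings to bear in a usable form: the fact that the accelerated sign code of every periodic trajectory is a square of a word of odd half-length (used to rule out the opposite-tile configurations by a word-combinatorial parity argument), and the central symmetry of the ray foliation $\mathcal{R}_{p}$ about its pistil (used to rule out the hungry-flower configurations). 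Note also that the square property is itself not free at this stage --- it is established independently by the renormalization of Part~II --- so a proof of the Tree Conjecture cannot rest on the foliation picture alone.
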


The Tree Conjecture has a stronger form that we call \emph{Density property}, see Section \ref{subs:Density conjecture}. This Density property is a generalization of the Tree conjecture for any trajectory, not necessarily periodic. We prove that this property holds in Theorem \ref{thm:forest} of this work. 

Of course, analogously to the definitions of the sets $\Lambda_{\Delta}$ and $G_{\Delta}^{\delta}$ for triangle tiling billiards, one can define  $\Lambda_{\square}$ and $G_{\square}^{\delta}$ for cyclic quadrilateral tiling billiards. We suspect the analogue of the Tree Conjecture to hold for cyclic quadrilateral tilings as well but we haven't manage to prove it yet, see the discussion in Section \ref{subs:Q}. 

\smallskip

The idea of the proof of the \emph{Tree Conjecture} is as follows. In order to study the symbolic behavior of one trajectory, it is helpful to study an entire \emph{foliation} of parallel in each tile trajectories that comes with it.  Thanks to this study, the \emph{Tree Conjecture }(which deals with \emph{global} behavior of trajectories) is reduced to the \emph{Bounded Flower Conjecture} which deals with the \emph{local} behavior of separatrices in associated foliations, see paragraph \ref{subs:FLOW} for its formulation.

\smallskip

In the second part of this work, we reinforce the methods used in the proof of the Tree Conjecture with some additional renormalization arguments, in order to prove that exceptional trajectories in triangle tiling billiards pass by all triangles of the tiling. 

\begin{theorem}\label{thm:exceptional_trajectories_intro}
An exceptional trajectory of a triangle tiling billiard passes by all tiles if and only if it doesn't hit any vertex.
\end{theorem}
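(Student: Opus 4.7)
The ``only if'' direction is immediate: if $\delta$ meets a vertex $v$, the billiard flow is undefined or ambiguous past $v$, and the infinitely many tiles lying on the far side of $v$ relative to $\delta$'s approach are never entered, so $\delta$ fails to visit every tile.

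For the main direction, let $\delta$ be an exceptional trajectory that avoids all vertices. The plan is to combine the Forest theorem (Theorem \ref{thm:forest}), which generalises the Tree Conjecture to non-periodic trajectories, with the renormalisation machinery for fully flipped $3$-IETs developed in the second part of the paper. The Forest theorem should already identify the subgraph $G_{\Delta}^{\delta}\subset\Lambda_{\Delta}$ of tiles met by $\delta$ with a subforest of the tiling graph, so that the remaining work is to show this subforest is the whole of $\Lambda_{\Delta}$. For this I would approximate $\delta$ by a sequence of periodic billiard trajectories $\delta_n$ produced by the renormalisation: since $\rho_{\Delta}\in\mathcal{R}$, the Rauzy subtractive algorithm iterates indefinitely on the parameters of the fully flipped $3$-IET associated with $\delta$, and at every level one can extract a periodic orbit of the induced IET that lifts through the folding construction to a periodic tiling billiard trajectory $\delta_n$ with enclosed region $\Omega^{\delta_n}$ growing with $n$. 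By Theorem \ref{thm:main}, each $\delta_n$ visits every tile that meets $\Omega^{\delta_n}$.

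The main obstacle is to control this growth quantitatively, showing that $\bigcup_n \Omega^{\delta_n}=\R^2$. This is a hyperbolicity statement for the Rauzy subtractive algorithm on $\mathcal{R}$: each renormalisation step should dilate the base combinatorial data by a definite factor, and the hypothesis that $\delta$ meets no vertex should exclude the degenerate configurations in which this growth is frustrated and the $\delta_n$ remain trapped in a bounded portion of the plane. Once the exhaustion is established, every tile of the tiling is enclosed by some $\delta_n$, hence visited by it. Transferring ``visited by $\delta_n$'' to ``visited by $\delta$'' is the last step: by stability of periodic trajectories under small perturbations (point 3 of Theorem \ref{thm:triangle_tiling_billiards_info}) together with the combinatorial convergence $\delta_n\to\delta$ furnished by the renormalisation, on any fixed tile $T$ the leaves corresponding to $\delta_n$ accumulate on the leaf of $\delta$, and since $\delta$ hits no singularity of the associated foliation it must actually enter $T$, completing the argument.
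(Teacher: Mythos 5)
Your outline shares the paper's general shape (approximate $\delta$ by nested periodic leaves of $\mathcal{P}^{\delta}$, each of which visits every tile it encloses by Theorem \ref{thm:main}), but the step you flag as ``the main obstacle'' --- showing $\bigcup_n \Omega^{\delta_n}=\R^2$ --- is exactly where the content of the theorem lives, and the mechanism you propose for it does not work. There is no uniform dilation factor to appeal to: the Rauzy subtractive algorithm is not uniformly expanding on $\mathcal{R}$ (this is precisely why the Hausdorff dimension and Lyapunov spectrum of the gasket are delicate), so no quantitative ``definite factor per step'' estimate is available. More importantly, the exhaustion statement is essentially equivalent to what you are trying to prove: a priori the parallel foliation $\mathcal{P}^{\delta}$ could contain \emph{several} distinct escaping (circumcenter) leaves, each bounding its own nested family of periodic leaves, in which case the family containing $\delta$ exhausts only one ``sector'' of the plane. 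Your last step silently assumes this cannot happen (``the leaves corresponding to $\delta_n$ accumulate on the leaf of $\delta$''), but the leaf through the circumcenter of a given tile $T$ could in principle be a different bi-infinite leaf $\delta'$ disjoint from $\delta$. The paper's proof is devoted to ruling this out, by a local, non-quantitative argument: if $\delta$ crosses a tile $\theta$ but misses its neighbour $\theta^e$, take $\delta'$ to be the leaf through the circumcenter of $\theta^e$; the singular segments of $\mathcal{P}^{\delta}$ in $\theta$ and $\theta^e$ must meet at a common vertex $v\in e$, the corresponding separatrix is periodic (by point 2 of Theorem \ref{thm:complete_classification} --- this is where renormalization actually enters), and then $\delta$ and $\delta'$ are both limits of the \emph{same} nested family $\{\delta_{\tau}\}$, forcing $\delta\cap\delta'\neq\emptyset$ and hence $\delta=\delta'$ in the non-singular case, a contradiction. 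Without some version of this coincidence argument your proof does not close.

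Two smaller points. First, invoking Theorem \ref{thm:forest} here is circular in the paper's logical order: the exceptional case of the Density property is \emph{deduced from} Theorem \ref{thm:exceptional_trajectories}, not the other way around. Second, your ``only if'' direction is acceptable in spirit but should be phrased via the flower structure: a singular exceptional ray is one of six separatrix rays at the vertex $v$, and it is the \emph{union} of all six that covers the plane (point 2 of Theorem \ref{thm:exceptional_trajectories}); a single ray stays in its sector and therefore misses tiles, which is the precise reason it fails to visit everything.
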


This Theorem is given in a slightly more general form in the text. See Theorem \ref{thm:exceptional_trajectories}, where we also cover the case of singular trajectories.

It is interesting to compare the resul t of Theorem \ref{thm:exceptional_trajectories_intro} with the results of Lowensten-Poggiaspala-Vivaldi \cite{LPV07} on the density behaviour of algebraic dynamics of the Arnoux-Yoccoz map.

\subsection{Complete description of the dynamics of triangle tiling billirds.}

Any triangle tiling defines a point in a simplex by simply taking a vector of its normalized angles 
\begin{equation}\label{eq:correspondance}
(l_1, l_2, l_3):=\left(\frac{\alpha}{\pi}, \frac{\beta}{\pi}, \frac{\gamma}{\pi}\right) \in \Delta_2.
\end{equation}

The renormalization we define in the Section \ref{sec:trop_cool} of
this work can be seen as the algorithm of induction on the orbits of triangle tiling billiards. To any orbit of a triangle tiling billiard one associates another orbit in an \emph{a priori} \emph{different} triangle tiling billiard. It happens that the renormalization process we introduce  on triangles coincides with the fully subtractive algorithm. 

\bigskip

Define a following algorithm on the triples $(l_1, l_2, l_3) \in \Delta_2$. Suppose that for some $j\ \in \mathcal{N}_{\Delta}:=\{1,2,3\}$, $l_j<l_k, k \neq j$. Then to the initial triple $(l_1, l_2, l_3) \in \Delta_2$ one associates a new triple $(l_1', l_2', l_3') \in \Delta_2$ by linear relations $l_k':=l_k-l_j$ for $k \neq j$ and $l_j'=l_j$ and subsequent rescaling. This algorithm is called a \textbf{fully subtractive algoritm}. The boundary $\partial \Delta_2$ is its set of fixed points, and the fully subtractive algorithm is not well defined when two (or more) of $l_j$ are equal, see the work \cite{AS13} of P. Arnoux and S. Starosta and Section \ref{sec:trop_cool} here for more details. 

Let $\mathcal{E} \subset \Delta_2$ be the set 

Let $\mathcal{E} \subset \Delta_2$ be the set of points $\rho_{\Delta}$ such that a corresponding triple of lengths $(l_1, l_2, l_3)$ is a pre-image of a point $\left(1/3, 1/3, 1/3\right)$ under some iteration of the fully subtractive algorithm. The correspondance is assured by the relations \eqref{eq:rho_delta} and \eqref{eq:correspondance}.

\begin{theorem}\label{thm:complete_classification}
For any triangle tiling billiard with angle parameters $\alpha, \beta, \gamma$,  the following holds:
\begin{enumerate}
\item[1.] if $\rho_{\Delta} \notin \mathcal{R} \cup \mathcal{E}$ then any trajectory on a corresponding tiling is either linearly escaping or periodic, and both behaviors are possible. Moreover, first, the list of words in the alphabet $\mathcal{A}_{\Delta}$ realized by periodic trajectories on such a tiling is finite;  second, there exist two functions $\omega_1, \omega_2: \Delta_2 \setminus  \mathcal{R} \cup \mathcal{E} \rightarrow \mathcal{A}_{\Delta}^{\N}$ such that the symbolic behaviour of any linearly escaping trajectory on the underlying tiling is an infinite concatenation two finite subwords $\omega_1(\rho_{\Delta})$ and $ \omega_2(\rho_{\Delta})$;
\item[2.] if $\rho \in \mathcal{R}$ then  any trajectory on a corresponding tiling escapes to infinity (is periodic) if and only if it passes (doesn't pass) through a circumcenter of a tile. Moreover, a list of symbolic codes of periodic trajectories is infinite (countable), as well as a corresponding list of trees; 
\item[3.] if $\rho \in \mathcal{E}$, then all the trajectories on a corresponding tiling are periodic;
\item[4.] drift-periodic trajectories exist on tilings for which $\rho \in \Q^3 \setminus \mathcal{E}_{\Delta}$ and only on them.
\end{enumerate}
\end{theorem}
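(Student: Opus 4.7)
The overall plan is to reduce the classification to tracking the orbit of $\rho_{\Delta}$ under the renormalization process of Section~\ref{sec:trop_cool}, which acts on the parameter simplex $\Delta_2$ as the fully subtractive algorithm $T$. The three key properties I will use are: (i) the renormalization is a bijection, modulo finitely many singular orbits, between trajectories in a tiling with parameter $\rho_{\Delta}$ and trajectories in the renormalized tiling with parameter $T(\rho_{\Delta})$; (ii) it preserves the qualitative orbit type (periodic, linearly escaping, non-linearly escaping); (iii) its effect on symbolic codes is given by a substitution $\sigma = \sigma(\rho_{\Delta})$ on $\mathcal{A}_{\Delta}^{\N}$ determined solely by which coordinate of $\rho_{\Delta}$ is largest. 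Granted these, the four parts of the theorem correspond to the four possible fates of the forward $T$-orbit of $\rho_{\Delta}$, and each part reduces to a base case plus a finite pullback through substitutions.

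For case 3 ($\rho_{\Delta} \in \mathcal{E}$), the base case is the equilateral tiling, where point~1 of Theorem~\ref{thm:triangle_tiling_billiards_info} together with the three-fold rotational symmetry forces every trajectory to be a $6$-periodic loop around a vertex. Pulling back through the finitely many substitutions needed to reach $(1/3,1/3,1/3)$ gives that every trajectory on the initial tiling is periodic. For case 2 ($\rho_{\Delta} \in \mathcal{R}$), Theorem~\ref{thm:exceptional_trajectories_intro} already handles trajectories through circumcenters: they escape and visit every tile. For a trajectory avoiding all circumcenters, point~1 of Theorem~\ref{thm:triangle_tiling_billiards_info} shows that the oriented distance to the circumcenters is nonzero and preserved; combined with the way renormalization contracts the geometry while keeping this distance bounded away from zero, the renormalized trajectory must close at some finite level, so the original trajectory is periodic. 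The infinite list of trees in this case reflects the infinitely many combinatorial types produced along the never-terminating $T$-orbit.

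Case 1 is the main obstacle. Here $T^N(\rho_{\Delta}) \in \partial \Delta_2$ for some finite $N$ without passing through $(1/3,1/3,1/3)$, so the renormalized tiling degenerates to a boundary configuration (a right-angled or extremal isoceles triangle). At the boundary the associated fully flipped $3$-IET on the circle simplifies, and a direct analysis (in the spirit of classical IET arguments) shows that each trajectory is either periodic with symbolic code drawn from a finite list, or linearly escaping with code equal to the infinite concatenation of two explicit finite words $\tilde\omega_1, \tilde\omega_2$. Applying the finite composition of substitutions $\sigma_1\circ\cdots\circ\sigma_N$ to these base codes produces both the finite list of periodic codes and the two functions $\omega_1(\rho_{\Delta}) = \sigma_1\circ\cdots\circ\sigma_N(\tilde\omega_1)$, $\omega_2(\rho_{\Delta}) = \sigma_1\circ\cdots\circ\sigma_N(\tilde\omega_2)$ claimed in the statement. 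The technical core of this case is verifying that each $\sigma_j$ preserves the two-word concatenation structure of linearly escaping codes and that the boundary IET analysis exhausts all trajectory types; this is where I expect most of the bookkeeping to lie.

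Finally, case 4 combines the previous three cases with the arithmetic nature of drift. A drift-periodic trajectory returns to a translate of itself after crossing finitely many tiles, which in the IETF description of Section~\ref{subs:symbolic_intro} forces the return data to be rational, hence $\rho_{\Delta} \in \Q^3$. On $\mathcal{E}$ all trajectories are closed by case~3, so drift-periodic trajectories cannot occur there. Conversely, a rational $\rho_{\Delta} \notin \mathcal{E}$ falls into case~1, since a rational $T$-orbit reaches $\partial \Delta_2$ in finitely many steps; the linearly escaping trajectories produced there have a rational drift vector and are therefore drift-periodic, yielding the "if and only if" in the statement.
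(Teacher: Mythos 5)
Your overall strategy --- classifying trajectories by the fate of the renormalization orbit of $\rho_{\Delta}$ under the fully subtractive algorithm, with base cases pulled back through the substitutions of Proposition \ref{prop:SYMBOLIC} --- is exactly the strategy of the paper's proof (which rests on Lemma \ref{lem:integrability} and Theorem \ref{thm:renormalization_process}). Your cases 2 and 3 match the paper: for $\rho_{\Delta} \in \mathcal{R}$ and a trajectory off the circumcenters the key point is that $r^{(k)} = r^{(0)}/|S^{(k)}|$ would leave $[0,\frac{1}{2}]$ if the process never stopped, so it stops at an acute stage and point 1 of Lemma \ref{lem:integrability} gives complete periodicity; for $\mathcal{E}$ one pulls back the equilateral base case through finitely many substitutions.

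Case 1, however, rests on a false premise. For $\rho_{\Delta} \notin \mathcal{R} \cup \mathcal{E}$ the forward orbit of $\rho_{\Delta}$ under the fully subtractive algorithm does \emph{not} reach $\partial \Delta_2$ in finitely many steps: generically it converges to a vertex of the simplex without ever landing on the boundary. What terminates is the \emph{renormalization process}, whose continuation requires both $\max_j l_j^{(k)} \le \frac{1}{2}$ and $\tau^{(k)} > \max_j l_j^{(k)}$; it stops either because a $2$-periodic interval appears (point 1 of Lemma \ref{lem:integrability}, which yields \emph{only periodic} trajectories for that $\tau$) or because the renormalized triangle becomes obtuse (point 2, which yields the rotation $R_{\kappa}$ and hence linear escape or drift). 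Two things your sketch therefore omits. First, the periodic/linearly-escaping dichotomy within a fixed tiling is governed by $\tau$, not by $\rho_{\Delta}$ alone, since $\tau^{(k)} = \frac{1}{2} - r^{(0)}/|S^{(k)}|$ decreases along the process and determines which stopping regime is hit. Second, to get ``both behaviors are possible'' one must actually prove that for $\tau = \frac{1}{2}$ the obtuse regime is reached; this is the computation $l_{t_k}^{(k)} > \frac{1}{4}|S^{(k)}| \Leftrightarrow l_{t_k}^{(k+1)} > \frac{1}{2}|S^{(k+1)}|$ in Step 2 of the paper's proof, together with the separate treatment of the degenerate case where two of the $l_j^{(k)}$ coincide (which is precisely where the period-$4$ drift-periodic orbits needed for point 4 come from). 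Without this, your case 1 establishes at most that linear escape \emph{may} occur, not that it does, and the terminal configuration is an obtuse triangle strictly inside $\Delta_2$, not a boundary point of the simplex.
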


The proof of this theorem uses both of the main tools that we introduce in this article - tiling billiard foliations (Section \ref{sec:foliations}) and renormalization for fully flipped $3$-interval exchange transformations (Section \ref{sec:trop_cool}).

This section presented some of our results from the point of view of tiling billiards. In the following section, we precise the connection between tiling billiards and fully flipped interval exchange transformations on the circle, and hence give another point of view on the study of the \emph{a priori }new object, triangle tiling billiards. This point of view is that of a study of parametric families of locally isometric maps, a classical topic in dynamics.

\section{Fully flipped interval exchange transformations on the circle.}\label{sec:fully_flipped_intro}

Fix $(l_1, \ldots, l_n) \in \Delta_n:=\left\{(l_1, \ldots, l_n) \in \R_+^n \left|\right. l_1+\ldots+l_n=1\right\}$. Define a family $\CETn$ of interval exchange transformations \emph{with flips} on the circle as follows. Cut the circle $\Sph^1$ of length $1$ into $n$ disjoint intervals $I_j$ of lengths $l_j, j=1, \ldots,n$. 

Define a map $F_0$ as a global involution of $\Sph^1$ which is a composition of $n$ (commuting) involutions on each one of $n$ intervals of continuity. We say that a map $F$ belongs to the family $\CETn$ if it is a composition $F=R_{\tau} \circ F_0$,  where $R_{\tau}$ is a rotation by an angle $\tau \in \Sph^1$. See Figure \ref{fig:fullyflipped} for an illustration.  The family $\CETn$ is a family of \textbf{fully flipped $n$-interval exchange transformations on the circle with trivial combinatorics.} In the following we often write $F=F_{\tau}^{l_1, \ldots, l_n}$ in order to stress the corresponding parameters. 

Note that the map $F=R_{1/2} \circ F_0$ is a composition of two non-commuting involutions.

\smallskip

For the family $\CETthree$ of maps acting on the circle $\Sph^1$ of unit length, we mark a point $0 \in\Sph^1$ as a beginning of the first interval of continuity. Then the three intervals of continuity are $I_1:=(0, l_1), I_2:=(l_1, l_1+l_2)$ and $I_3:=(l_1+l_1, l_1+l_2+l_3)$. We consider the bijection between the alphabets $\mathcal{N}_{\Delta}=\{1,2,3\}$ and  $\mathcal{A}_{\Delta}=\{a,b,c\}$ defined by the alphabetical order. This defines the symbolic dynamics for any map $F \in \CETthree$ with respect to the alphabet
$\mathcal{A}_{\Delta}=\{a,b,c\}$ in a standard way by associating to any point $p \in \Sph^1$ a sequence of labels in $\mathcal{A}_{\Delta}$ corresponding to the labels $j \in \mathcal{N}_{\Delta}$ of the intervals $I_j$ visited by its orbit $\{F^{\circ k}(p)\}_{k \in \N}$.

\smallskip
In this work we study in detail the dynamics (and the symbolic dynamics) of the family $\CETthree$. This dynamics reunites the dynamics of the Arnoux-Rauzy family, with that of rel deformations of Arnoux-Rauzy surfaces and of the triangle tiling billiards, as we show in the following paragraphs.

\subsection{Family $\CETn$ and tiling billiards.}\label{subs:CETandTriangles}

We defined the symbolic dynamics of the maps in $\CETthree$ with the help of the same alphabet $\mathcal{A}_{\Delta}$ 
as that for the dynamics of triangle tiling billiards. This notation is intentional: indeed, as has been proven in \cite{BDFI18}, the study of the dynamics of a tiling billiard in a triangle tiling defined by a tile with angles $\alpha, \beta, \gamma$ can be reduced to the study of the dynamics of a subfamily of maps 
\begin{equation*}
\left\{F^{l_1, l_2, l_3}_{\tau} \in \CETthree \left|\right. \tau \in [0,1]\right\}
\end{equation*}
with $(l_1, l_2, l_3)$ defined by \eqref{eq:correspondance}. The parameter $\tau$ corresponds to the position of a segment of the trajectory in the circumcircle of a tile it crosses, and it doesn't change along the trajectory by point 1. of Theorem \ref{thm:triangle_tiling_billiards_info}. Such a correpondance follows from the process of folding of a tiling along a trajectory of a tiling billiard that we descrive in Section \ref{sec:locally foldable tilings and associated billiard foliations}. In the "folded coordinates", a triangle moves while a direction of the trajectory doesn't change (modulo orientation) and is encoded by $\tau \in \Sph^1$ coordinates. 

Analogically to the case of triangle tiling billiards, the behavior of cyclic quadrilateral tiling billiards is completely described by the family $\CETfour$. For any cyclic quadrilateral and a trajectory of some parameter $\tau$, a corresponding map $ F\in \CETfour$ is defined by
the lengths $l_j, j\in \mathcal{N}_{\square}:=\{1,2,3,4\}$ corresponding to the angles $(\alpha_1, \alpha_2, \gamma_1, \gamma_2$ in which the diagonal of a tile splits the opposite angles of the quadrilateral, see Figure \ref{fig:angles of a quadrilateral} for the definition of these angles. Any cyclic quadrilateral is defined by the quadruple of angles $(\alpha_1, \alpha_2, \gamma_1, \gamma_2)$ up to homothety.\footnote{Indeed, each of the angles $\alpha_1, \alpha_2$, $\gamma_1, \gamma_2$ bounds an arc corresponding to the chords of length $b,c,d$ and $a$. In other words, a cyclic quadrilateral is defined by the lenghts of its sides up to homothety. Although the quadruple $(\alpha, \beta, \gamma, \delta)$ doesn't define uniquely the form of a cyclic quadrilateral: for example a hyperplane $\alpha=\beta=\gamma=\delta=\frac{\pi}{2}$ defines all the rectangles.}

\begin{figure}
\centering
\includegraphics[scale=0.5]{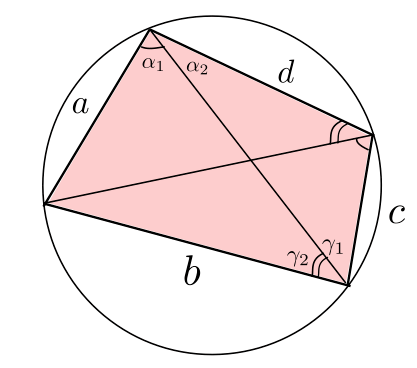}
\caption{\emph{Cyclic quadrilateral tile and the corresponding angle parameters.} The angles $\alpha_1, \alpha_2, \gamma_1, \gamma_2$ define the quadruple $(\alpha, \beta, \gamma, \delta)$ by the relations: $\alpha_1+\alpha_2=\alpha, \beta=\gamma_1+\alpha_2, \gamma=\gamma_1+\gamma_2, \delta=\alpha_1+\gamma_2$.  The  corresponding map is a map $F=F_{\tau}^{\frac{\alpha_1}{\pi}, \frac{\gamma_2}{\pi}, \frac{\gamma_1}{\pi}, \frac{\alpha_2}{\pi}} \in \CETfour$.}
\label{fig:angles of a quadrilateral}
\end{figure}

The \textbf{symbolic code} and the \textbf{accelerated symbolic code }for quadrilateral tiling billiard trajectories (and for maps in $\CETfour$) are defined analogically to the case of triangle tiling billiards.  The alphabets for the symbolic codes of trajectories in quadrilateral tiling billiards are $\mathcal{A}_{\square}:=\{a,b,c,d\}$ and $\mathcal{A}_{\square}^2:=\{ab,ac,ad,ba,bc,bd,ca,cb,cd,da,db,dc\}$ correspondingly.  

\bigskip

The take-away from this paragraph is that the study of symbolic dynamics of a map in $\CETn$ for $n=3$ and $4$ is equivalent to the  study of a related tiling billiard.

The question of symbolic dynamics in the family $\CETn$ is interesting in itself and can be studied for \emph{any} $n$. In this work, we concentrate on the case of the maps in $\CETthree$ simply because it is the only case that we were able to treat. See Section \ref{subs:Q} for the discussion of the family $\CETn$ for $n \geq 4$ and open questions. 

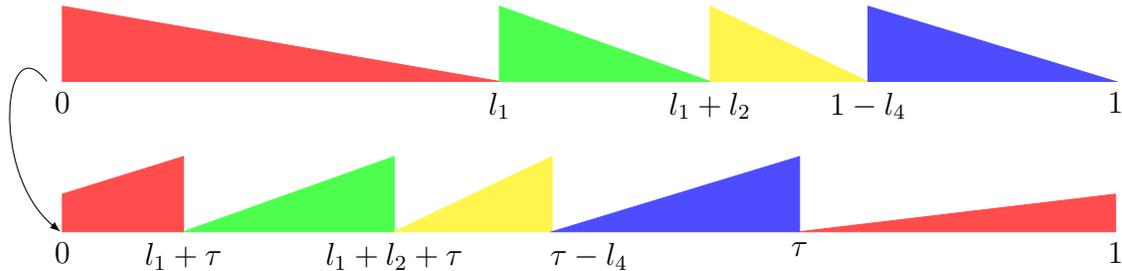
\begin{figure}
\centering
\begin{tikzpicture}[xscale=14]
\path[draw, fill=red,red, opacity=0.7] (0,0)--(0,1)--(0.415,0)--cycle;
\path[draw, fill=green, green, opacity=0.7] (.415,0)--(.415,1)--(0.615,0)--cycle;
\path[draw, fill=yellow, yellow, opacity=0.7] (.615,0)--(.615,1)--(0.765,0)--cycle;
\path[draw, fill=blue, blue, opacity=0.7] (.765,0)--(1,0)--(.765,1)--cycle;
\path[draw] (0.615,0) node[below]{$l_1+l_2$} --cycle;
\path[draw] (0.765,0) node[below]{$1-l_4$} --cycle;
\path[draw] (0,0) node[below] (1){$0$} --cycle;
\path[draw] (1,0) node[below]{$1$} --cycle;
\path[draw] (.415,0) node[below]{$l_1$} --cycle;
 \tikzset{
        arrow/.style={
            color=black,
            draw=black,
            -latex,
                font=\fontsize{12}{12}\selectfont},
        }
\path[draw] (0,-2) node[below] (2){$0$} --cycle;
\path[draw] (1,-2) node[below]{$1$} --cycle;
\path[draw, fill=red,red, opacity=0.7] (0,-2)--(0,0.5-2)--(.115,1-2)--(.115,0-2)--cycle;
\path[draw, fill=green, green, opacity=0.7] (.115,0-2)--(.315,1-2)--(0.315,0-2)--cycle;
\path[draw, fill=yellow, yellow, opacity=0.7] (.315,0-2)--(.465,1-2)--(0.465,0-2)--cycle;
\path[draw, fill=blue, blue, opacity=0.7] (.465,0-2)--(.7,1-2)--(.7,0-2)--cycle;
\path[draw] (0.7,0-2) node[below]{$\tau$} --cycle;
\path[draw, fill=red,red, opacity=0.7] (0.7,-2)--(1,-2)--(1,0.5-2)--cycle;
\path[draw] (.115,0-2) node[below]{$l_1+\tau$} --cycle;
\path[draw] (.315,0-2) node[below]{$l_1+l_2+\tau$} --cycle;
\path[draw] (.5,0-2) node[below]{$\tau-l_4$} --cycle;
\draw[arrow](1) to [out=94,in=95]  (2);
\end{tikzpicture}
\caption[]{\emph{A pictorial representation of a map }$F_{\tau}^{l_1, \ldots, l_4} \in \CETfour$. The shapes above the intervals are drawn in order to facilitate the understanding. This representation visualizes the action of the map $F: \Sph^1 \rightarrow \Sph^1$ on the circle and shows that the \emph{beginning} of each of the intervals $I_j$ maps to the \emph{end} of each interval $F(I_j), j\in \mathcal{A}_{\square}$. The idea of such a pictorial representation comes from [Figure 9, \cite{BDFI18}].}\label{fig:fullyflipped}
\end{figure}

\subsection{Arnoux-Rauzy family.}\label{subs:ARfamily}
By a classical Keane's Theorem proven in \cite{K75}, almost every $n$-interval exchange transformation (IET) with irreducible combinatorics is minimal. A very interesting question and generally not solved question is to study the minimality in the $k$\emph{-parametric families of }$n$\emph{-IET} for $k<n$.  Many recent works shead some light on the partial answers, see for example \cite{ST18, AHS16-1}.

\smallskip

One of the examples of parametric families for which the question of minimality has been explicitely solved is the so-called \textbf{Arnoux-Rauzy family} $\AR$ of $6$-IET on the circle of unit length, with the set of parameters being a $2$-simplex.

Take $(x_1, x_2, x_3) \in \Delta_2$. Then a map $T=T^{x_1,x_2,x_3} \in \AR$ is defined as follows. Cut the circle $\Sph^1$ into six disjoint intervals of lengths $\frac{x_j}{2}, j=1,2,3$ such that intervals of equal length are neighbouring. Then a map $T^{x_1,x_2,x_3} \in \AR$ is a composition of two involutions: first, a simultaneous exchange of intervals of equal length and second, the rotation $R_{\frac{1}{2}}$. The family $\AR$ was first defined and studied by P. Arnoux and G. Rauzy in \cite{AR91} and subsequently in \cite{ABB11, AS13, AD, BCS} and many other works.

\begin{example}
A map $T^{\aaa}:=T^{\aaa,\aaa^2,\aaa^3}$ with $\aaa \in \R$ such that 
\begin{equation}\label{eq:a}
\aaa+\aaa^2+\aaa^3=1,
\end{equation}
 is called the \textbf{Arnoux-Yoccoz map}. It was first introduced and studied in \cite{AY, A81}. This map is the simplest minimal map in the family $\AR$, and has many autosimilarity properties.
\end{example}

The family $\AR$ happens to be related to the Rauzy gasket $\mathcal{R}$. By a result in \cite{AR91}, the Rauzy gasket coincides with the set of parameters $(x_1, x_2, x_3)$ for which the maps $T^{x_1, x_2, x_3}$ are minimal. 

\begin{theorem}\cite{AR91}\label{thm:Arnoux-Rauzy}
A map in the Arnoux-Rauzy family is minimal, if and only if $(x_1, x_2, x_3) \in \mathcal{R}$.
\end{theorem}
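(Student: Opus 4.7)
The plan is to renormalize the Arnoux-Rauzy family by a first-return construction whose projection to parameter space is exactly the Rauzy subtractive algorithm, and then to translate (non-)termination of the algorithm into (non-)minimality of the map.

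\emph{Step 1: the induction.} Write $T=T^{x_1,x_2,x_3}$ as $R_{1/2}\circ\sigma$, where $\sigma$ simultaneously exchanges each pair $I_j\leftrightarrow I_j'$ of neighbouring intervals of length $x_j/2$. Suppose some $x_j>1/2$, so $|I_j\cup I_j'|>1/2$. I would take the first-return map of $T$ to a carefully chosen subinterval of $I_j\cup I_j'$, of length $2x_j-1$ before rescaling, and verify that after affine renormalization to unit length this first-return map again belongs to $\AR$ with parameter triple $(x_1',x_2',x_3')$ obtained from $(x_1,x_2,x_3)$ by a single step of the Rauzy subtractive algorithm. Concretely, one checks that the six new intervals of continuity still split into three adjacent pairs of equal length and that the returned map is again the composition of a within-pair swap with a half-rotation. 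This is the combinatorial heart of the argument.

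\emph{Step 2: the direction $(x_1,x_2,x_3)\in\mathcal{R}\Rightarrow$ minimality.} If the subtractive algorithm never terminates with strict inequalities, iterating Step 1 produces a nested sequence of subintervals $J_0\supset J_1\supset J_2\supset\cdots$ on which the successive first-return maps are themselves Arnoux-Rauzy maps. I would then argue that $|J_n|\to 0$, using that each strict subtraction contracts length by a definite factor smaller than one. Since every orbit of $T$ visits each $J_n$, and the finite $T$-iterates of $J_n$ cover $\Sph^1$ by the first-return partition, every orbit is dense, establishing minimality.

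\emph{Step 3: the converse direction.} If $(x_1,x_2,x_3)\notin\mathcal{R}$, the algorithm halts after finitely many steps, either by hitting $\partial\Delta_2$ (one coordinate becomes zero) or by reaching a triple with $\max x_j\le 1/2$. In both degenerate cases I would exhibit explicitly a proper closed invariant set for the induced Arnoux-Rauzy map: when a coordinate vanishes, the corresponding pair of intervals collapses and the residual dynamics decouples into two invariant halves; when $\max x_j=1/2$, the involutions $\sigma$ and $R_{1/2}$ become sufficiently aligned that their composition preserves a proper subset. Pulling such an invariant set back through the finitely many Step 1 inductions gives a non-trivial closed $T$-invariant subset of $\Sph^1$, ruling out minimality.

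\emph{Main obstacle.} The delicate point is Step 1: choosing the correct induction subinterval and verifying that the returned map preserves the Arnoux-Rauzy combinatorial structure, namely three neighbouring pairs of equal-length intervals together with a half-rotation. All equality-of-length and adjacency conditions must be preserved under rescaling, and the resulting parameter transformation must match one step of the subtractive algorithm exactly. Once this correspondence is set up, both directions of the equivalence reduce to standard arguments on interval-exchange induction.
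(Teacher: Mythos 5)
Your strategy is the classical Arnoux--Rauzy one: induce the $6$-IET $T$ itself on the pair of largest intervals and read off the Rauzy subtractive algorithm. This is genuinely different from the route taken here, which renormalizes the ``square roots'' $F\in\CETthree$ of the maps in $\AR$ (Theorem \ref{thm:renormalization_process}) and deduces the statement from Proposition \ref{prop:the_squares_are_Rauzy} and Theorem \ref{thm:minimality}; the payoff of that detour is that the same induction simultaneously controls the extra parameter $\tau$ (the real-rel direction), which your argument does not see. The plan is viable, but as written it has concrete errors precisely at the point you flag as the heart of the matter. In Step 1 the induction domain cannot be a subinterval of $I_j\cup I_j'$ of length $2x_j-1$: the renormalized triple before rescaling is $(2x_j-1,x_k,x_l)$, whose coordinates sum to $x_j$, so the first-return system lives on a set of measure exactly $x_j$, i.e.\ on all of $I_j\cup I_j'$ (with its two endpoints reglued into a circle). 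The quantity $2x_j-1$ is the new $j$-th coordinate, not the size of the Poincar\'e section; with your choice of section the rescaled return map cannot have the parameters you need.

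In Step 2 the justification for $|J_n|\to 0$ does not work: the contraction factor at step $n$ is the current largest normalized coordinate, which lies in $(\tfrac12,1)$ but is not bounded away from $1$, and an infinite product of factors each $<1$ need not tend to $0$. One must argue separately, e.g.\ by noting that the unnormalized coordinates are nonincreasing and strictly positive along the algorithm, so if their sum stayed bounded below then some index would be subtracted from infinitely often by a quantity bounded below, forcing that coordinate to $-\infty$ (this is essentially what is packaged into Lemma \ref{lem:standard} here). Finally, in Step 3 the case analysis is off: under the strict algorithm a coordinate never actually vanishes, since that would require applying the step at $x_j=\tfrac12$, where the algorithm has already halted; the only halting mode is $\max x_j\le\tfrac12$, and the correct obstruction there is the appearance of a periodic interval (in the coordinates of this paper, $\min l_j\ge\tfrac14$ forces a $2$-periodic interval for the induced map, point 3 of Theorem \ref{thm:renormalization_process}), not a decoupling into two invariant halves --- the degenerate four-interval map does not split the circle into invariant pieces of equal measure. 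With the section corrected and these two arguments supplied, your outline does recover the theorem along the original Arnoux--Rauzy lines.
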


The proof by P. Arnoux and G. Rauzy is based on a process of renormalization which is defined as a first return map on the union of two intervals of continuity of the biggest (and equal) length. In this work we give a new proof of this theorem by defining a renormalization process  on a family of the natural "square roots" of the maps in $\AR$ which happens to be a subfamily in $\CETthree$.

\begin{proposition}\label{prop:the_squares_are_Rauzy}\cite{Olga}
The following sets of $6$-IET on the unit circle coincide:
\begin{equation*}
\left\{ 
T^{x_1,x_2,x_3} \in \AR, (x_1,x_2,x_3) \in \Delta_2
\right\}=
\left\{ 
F^2 \mid \; F_{\frac{1}{2}}^{l_1, l_2, l_3} \in \CETthreehalf, (l_1, l_2, l_3) \in \Delta_2, \max(l_j)< \frac{1}{2}
\right\}.
\end{equation*}
Moreover, the correspondance between parameters is given by linear relations:
\begin{equation}\label{eq:l and x}
l_j=\frac{1-x_j}{2}, \; \; x_j=1-2l_j, \; \; j=1,2,3.
\end{equation}
\end{proposition}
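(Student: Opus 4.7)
The plan is to write $F = R_{1/2} \circ F_0$, compute $F^2$ directly as a composition of two involutions, and recognize the result as manifestly belonging to $\AR$, reading off the parameter correspondence as a byproduct.

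First, expand $F^2 = R_{1/2} F_0 R_{1/2} F_0$. Since $R_{1/2}$ is itself an involution, the conjugate $\widetilde{F}_0 := R_{1/2} F_0 R_{1/2}$ is also an involution, and it acts as the reversal on each shifted interval $I_j + \tfrac{1}{2}$, $j = 1, 2, 3$. Hence $F^2 = \widetilde{F}_0 \circ F_0$: a composition of two involutions each of which is a union of three reversals, which already signals the $\AR$ structure.

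The second step is to identify the six intervals of continuity of $F^2$. Its breakpoints come in two groups: the three breakpoints of $F_0$, namely $0,\, l_1,\, l_1 + l_2$; and the three $F_0$-preimages of the breakpoints of $\widetilde{F}_0$. The hypothesis $\max l_j < \tfrac{1}{2}$ is exactly what guarantees that the three breakpoints of $\widetilde{F}_0$, reduced mod $1$ as $1/2,\, 1/2 + l_1,\, 1/2 - l_3$, lie strictly inside three \emph{distinct} intervals $I_2, I_3, I_1$ respectively. Pulling each back through the reversal $F_0|_{I_j}$ places one extra breakpoint strictly inside each $I_j$. Sorting on $\Sph^1$ yields the six breakpoints $0,\, 1/2 - l_2,\, l_1,\, 2l_1 + l_2 - 1/2,\, l_1 + l_2,\, l_2 + 1/2$, cutting the circle into six intervals of cyclic lengths
\[
\tfrac{1}{2} - l_2,\; l_1 + l_2 - \tfrac{1}{2},\; l_1 + l_2 - \tfrac{1}{2},\; \tfrac{1}{2} - l_1,\; \tfrac{1}{2} - l_1,\; \tfrac{1}{2} - l_2.
\]
Setting $x_j := 1 - 2l_j$ these become $\tfrac{x_2}{2},\tfrac{x_3}{2},\tfrac{x_3}{2},\tfrac{x_1}{2},\tfrac{x_1}{2},\tfrac{x_2}{2}$, i.e.\ three pairs of cyclically adjacent intervals of equal length, which is precisely the combinatorial data of a map $T^{x_1, x_2, x_3} \in \AR$.

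The third step is to check that $F^2$ acts on these six pieces exactly as $T^{x_1, x_2, x_3} = R_{1/2} \circ E$, where $E$ is the orientation-preserving simultaneous exchange of the three equal-length pairs. Using the commutation $R_{1/2} \widetilde{F}_0 = F_0 R_{1/2}$, the identity $F^2 = R_{1/2} \circ E$ is equivalent to $E = F_0 \circ R_{1/2} \circ F_0$. The right-hand side is a manifest involution (its square telescopes to $F_0^2 = \id$ using $R_{1/2}^2 = \id$), and a short direct computation shows it is a translation on each of the six subintervals sending one element of a pair to its partner. For instance on $[0, 1/2 - l_2]$: $F_0(x) = l_1 - x$, then $R_{1/2}$ gives $l_1 + 1/2 - x \in I_3$, then $F_0$ yields $(l_1 + l_2) + 1 - (l_1 + 1/2 - x) = x + l_2 + 1/2$, which is exactly the orientation-preserving translation onto the paired interval $[l_2 + 1/2, 1]$; the remaining two pairs are treated identically.

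This establishes the forward inclusion together with the parameter relation $x_j = 1 - 2l_j$. For the converse, given $T^{x_1, x_2, x_3} \in \AR$ with $(x_1, x_2, x_3)$ in the (open) simplex, set $l_j := (1 - x_j)/2$; then $l_j \in (0, 1/2)$ with $\sum l_j = 1$, so $F_{1/2}^{l_1, l_2, l_3} \in \CETthreehalf$ satisfies $\max l_j < 1/2$ and squares to $T^{x_1, x_2, x_3}$ by the forward direction. The only real obstacle is the bookkeeping around the six breakpoints in the second step: one must check carefully that the three shifted breakpoints of $\widetilde{F}_0$ fall inside distinct intervals $I_j$ and not at their endpoints, which is the precise content of the strict inequality $\max l_j < 1/2$ and is what prevents degeneration of $F^2$ from a $6$-IET to something smaller.
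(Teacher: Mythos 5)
Your proof is correct, and it is essentially the same direct computation the paper relies on: the six breakpoints and interval lengths you find are exactly the intervals $I_j^{\pm}$ with $|I_j^{\pm}|=\frac{x_j}{2}\pm r$ computed in the proof of Lemma \ref{lem:ARconnection}, specialized to $r=0$ (the paper itself defers the $r=0$ case to \cite{Olga} rather than reproving it). Your packaging of $F^2=\widetilde{F}_0\circ F_0$ with $E=F_0\circ R_{1/2}\circ F_0$ the pair-exchange involution is a slightly cleaner way to organize the same verification, and your remark that the strict inequality $\max l_j<\tfrac{1}{2}$ is precisely what places one new breakpoint in the interior of each $I_j$ correctly isolates where the hypothesis is used.
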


This Theorem has been proven in \cite{Olga} by following the ideas in \cite{BDFI18}. In this work, we extend the equality in Proposition \ref{prop:the_squares_are_Rauzy} in  a way that the set on the right is enlarged to contain the maps for any $\tau$, and on the left the family  $\AR$ is enlarged to the family of its real-rel deformations. 

\subsection{Real-rel deformations of Arnoux-Rauzy maps.}\label{subs:rrdeformAR}
For any translation surface $X$, one can consider local deformations of $X$ in its stratum in such a way that the singularities are moving one with respect to another while keeping the translational holonomies of closed curves on $X$ fixed. This defines a \textbf{rel-foliation} in the stratum. The rel-foliations have been studied, among others, in \cite {Sch, McM, HW18} (under different terminologies). In the following we use the terminology from \cite{HW18}, so we refer our reader there for more details.  

In this work we are interested in a family of translation surfaces $X=X^{x_1,x_2,x_3}$ constructed  as suspensions of maps $T=T^{x_1,x_2,x_3}  \in \AR$ with $(x_1, x_2, x_3) \in \Delta_2$. We study the corresponding real-rel foliations constructed by variation of only horizontal holonomies. All of the surfaces $X^{x_1,x_2,x_3}$ belong to the stratum $\mathcal{H}(2,2)$, have genus $3$ and two singularities. Hence for a fixed point $(x_1, x_2, x_3) \in \Delta_2$, the real-rel leaf $\{X^{x_1,x_2,x_3}_r\}$ of the surface $X^{x_1,x_2,x_3}$ is parametrized by one real parameter $r \in \R$. Here $X^{x_1,x_2,x_3}_0=X^{x_1,x_2,x_3}$. Naturally, the surface $X_0$ is a double cover of a non-orientable surface  constructed as a suspension of a map in $\CETthreehalf$ by Proposition \ref{prop:the_squares_are_Rauzy}. Hence, its real-rel deformation corresponds through the first-return map to the subset of maps in $\CETthree$ and hence, to triangle tiling billiards. Moreover, $\tau=\frac{1}{2}-r$ and the parameters $(x_1,x_2,x_3)$ do not change on a real-rel leaf. This connection has already been noticed in \cite{BDFI18} for the Arnoux-Yoccoz map $T^{\boldsymbol{a}}$.

We are especially interested in the real-rel deformations of \emph{minimal} Arnoux-Rauzy maps and their symbolic dynamics. From the discussion above follows that the symbolic dynamics and arithmetic orbits of these maps are in direct correspondence with the dynamics of triangle tiling billiard trajectories. In particular, by describing the symbolic dynamics of maps in $\CETthree$ we manage to understand it for their squares, and hence to prove the fractal properties of arithmetic orbits of the Arnoux-Yoccoz map. In particular, we prove the following conjecture by P. Hooper and B. Weiss from their work \cite{HW18} where they studied real-rel deformations of the surface $X^{\boldsymbol{a}}$.

\begin{conjecture}\label{thm:convergence to the Rauzy fractal}
Any arithmetic orbit of the Arnoux-Yoccoz map $T^{\boldsymbol{a}}$ converges up to rescaling and uniform affine coordinate change to the Rauzy fractal\footnote{The Rauzy fractal is a famous classical fractal set that we define in paragraph \ref{subs:Arnoux-Yoccoz_Rauzy}.}  in the Hausdorff topology.
\end{conjecture}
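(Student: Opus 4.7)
The plan is to encode arithmetic orbits of $T^{\aaa}$ by triangle tiling billiard trajectories at the Arnoux-Yoccoz parameter $\rho_{\Delta}=(\aaa,\aaa^2,\aaa^3)\in\mathcal{R}$, and then to use the self-similarity coming from the renormalization of $\CETthree$ to identify the rescaled Hausdorff limit with the Rauzy fractal. By Proposition~\ref{prop:the_squares_are_Rauzy} and the discussion of paragraph~\ref{subs:rrdeformAR}, each surface $X^{\aaa}_r$ on the real-rel leaf through $X^{\aaa}$ is a double cover of the suspension of $F_{1/2-r}^{\aaa,\aaa^2,\aaa^3}\in\CETthree$. Hence any arithmetic orbit of $T^{\aaa}$ is realized, up to a fixed affine change of coordinates, as a planar trajectory of the triangle tiling billiard with tile angles $\aaa\pi,\aaa^2\pi,\aaa^3\pi$ and direction parameter $\tau=\tfrac{1}{2}-r$.

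Next I would control the geometry of these trajectories along the rel-family. Since $(\aaa,\aaa^2,\aaa^3)\in\mathcal{R}$, Theorems~\ref{thm:complete_classification} and~\ref{thm:exceptional_trajectories_intro} guarantee that at $r=0$ a generic trajectory is exceptional and passes through every tile, while for small $r\neq 0$ every trajectory is periodic. For such a periodic $\delta(r)$, the Tree Conjecture (Theorem~\ref{thm:main}) identifies the enclosed domain $\Omega^{\delta(r)}$ with a finite subtree $G_{\Delta}^{\delta(r)}\subset\Lambda_{\Delta}$, and these trees grow without bound as $r\to 0$. This is the family of discrete objects whose limit we wish to compare with the Rauzy fractal.

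The self-similarity comes from the renormalization of Section~\ref{sec:trop_cool}. Because $(\aaa,\aaa^2,\aaa^3)$ is a fixed point of the fully subtractive algorithm by~\eqref{eq:a}, the renormalization preserves the underlying triangle tiling (up to rescaling) and acts on the $\tau$-parameter by linear contraction of ratio $\aaa$. This induces a substitution on the accelerated symbolic codes of periodic trajectories whose incidence matrix is the Tribonacci matrix, and through the folding correspondence an explicit affine inflation $\Phi$ carrying $G_{\Delta}^{\delta(r)}$ to the appropriate window of $G_{\Delta}^{\delta(\aaa^{-1}r)}$. Iterating $\Phi^{-1}$ and rescaling by $\aaa^{n}$, the resulting sequence of rescaled trees converges in the Hausdorff topology to the unique compact attractor of the IFS on the contracting eigenplane of the Tribonacci matrix, which is the classical Rauzy fractal.

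The hard part will be passing from convergence of the boundary trees $G_{\Delta}^{\delta(r)}$ to convergence of arithmetic orbits themselves, uniformly in the choice of orbit. This requires two ingredients: first, a quantitative bound on the Hausdorff distance between $\delta(r)$ and $G_{\Delta}^{\delta(r)}$ as $r\to 0$, for which the exponential contraction rate $\aaa$ of the renormalization should suffice; second, identification of the contracting eigenplane of the Tribonacci substitution with the tangent plane to the rel-leaf at $X^{\aaa}$, so that the ``uniform affine coordinate change'' appearing in the statement is seen to be exactly the projection defining the classical Rauzy fractal. Together these give the Hausdorff convergence up to rescaling and uniform affine change asserted by Hooper and Weiss.
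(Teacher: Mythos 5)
Your architecture is the one the paper itself follows in Theorem \ref{thm:big}: pass to the Tribonacci tiling billiard, approximate the exceptional trajectory $\delta_{AY}$ by nested periodic trajectories, and extract self-similarity from the $3$-periodicity of the renormalization. The genuine gap is the step you compress into one sentence --- ``the resulting sequence of rescaled trees converges \dots to the unique compact attractor of the IFS \dots which is the classical Rauzy fractal.'' As written this is an assertion, not an argument: nothing you have set up produces an iterated function system on the plane, nor identifies its attractor with Rauzy's set rather than with some other self-affine limit. The paper obtains this by a chain of combinatorial facts: (i) the only admissible periods are the doubled Tribonacci numbers $2T_{j+3}$, via Theorem \ref{thm:renormalization_process}, Proposition \ref{prop:SYMBOLIC} and the factorization $\varsigma_R^{*}=\sigma_R$ of Proposition \ref{prop:periods}; (ii) because the Tribonacci numbers form a number system, the flower $\gamma_k$ obtained at the $k$-th contraction step is forced to have three petals with combinatorics $w_{k-1}$, $w_{k-2}$, $w_{k-3}$; (iii) the associated unions of tiles therefore satisfy the exact recursion $\Omega_{k+1}^{1}=\Omega_k^{1}\cup\Omega_k^{2}\cup\Omega_k^{3}$, $\Omega_{k+1}^{2}=A\,\Omega_k^{1}$, $\Omega_{k+1}^{3}=A\,\Omega_k^{2}$, where $A$ is the inverse of the matrix in Rauzy's original construction. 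Only this recursion matches the rescaled tiles to the standard three-cell partition of the Rauzy fractal; your ``affine inflation $\Phi$'' has to be built from it, not postulated. (A minor point: $(\aaa,\aaa^2,\aaa^3)$ is a $3$-periodic point of the subtractive algorithm, fixed only up to relabelling, which is why the relevant substitution is $\varsigma_R=\upsilon_{\mathrm{rel}}\circ\sigma_3$ and not a single renormalization step.)

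A second, smaller issue concerns what you call the hard part. You index the periodic approximants by the rel-parameter $r$, so that the $\delta(r)$ a priori live on different surfaces $X^{\aaa}_r$, and you then need a quantitative comparison between $\delta(r)$, its tree, and the orbit at $r=0$. The paper removes this difficulty by placing everything in the single parallel foliation $\mathcal{P}^{\delta_{AY}}$ of one tiled plane: by the vocabulary of paragraph \ref{subs:vocabularly}, the leaves other than $\delta_{AY}$ realize exactly the maps with the various $r\neq 0$, they are all periodic, they are nested around $\delta_{AY}$, and $\delta_{AY}$ is their monotone Hausdorff limit by the argument of Theorem \ref{thm:exceptional_trajectories}. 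With that observation the trajectory and the approximating trees live in the same picture and converge to the same set, so no separate quantitative bound is needed.
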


We prove this conjecture in the following Theorem \ref{thm:big}. The idea of the proof is to first, replace an arithmetic orbit by an exceptional billiard trajectory in the tiling defined by ${\rho}_{\Delta}=(\aaa, \aaa^2, \aaa^3) \in \mathcal{R}$. Then, one approaches such a trajectory by a family of periodic trajectories with growing periods included in the same global foliation of the tiled plane. This construction is based on the periodicity of vertical flows for any surface $X^{\aaa}_r$ in a real-rel leaf of $X^{\aaa}$ for $r \neq 0$.\footnote{This periodicity has been proven in \cite{HW18} but in this work we reprove it with the use of tiling billiard foliations.} The periods of growing periodic trajectories are calculated via renormalization and coincide with the set of doubled Tribonacci numbers.

\bigskip

In addition to its arithmetic orbits, a few other fractal objects may be associated to the map $T^{\boldsymbol{a}}$. Initially, P. Arnoux in \cite{A88} constructed a semi-conjugacy $h$ between the map $T^{\boldsymbol{a}}$ and a translation on the torus with a translation verctor equal to $(\boldsymbol{a},\boldsymbol{a}^2)$. A curve defined as $h(\Sph^1)$ is a Peano curve on the torus which can be approximated by a sequence of piecewise linear curves (since the map $h$ maps the  $T^{\boldsymbol{a}}$-orbit of $\frac{1}{2}$ to the orbit of $0$ under the translation on the torus).

Moreover, to the Arnoux-Yoccoz map $T^{\boldsymbol{a}}$ one can also associate its algebraic dynamics: for any $p \in \Q[\boldsymbol{a},\boldsymbol{a}^2]$ its image $F^{\boldsymbol{a}}(p) \in \Q[\boldsymbol{a},\boldsymbol{a}^2]$. The field $\Q[\boldsymbol{a},\boldsymbol{a}^2]$ can be seen as a three-dimensional vector space with basis $1,\boldsymbol{a},\boldsymbol{a}^2$. For any point $p$ one draws a piece-wise linear curve connecting the subsequent points in its orbit. It happens that such a curve is contained in a small slice of space between two parallel planes. By projecting it on one of these planes, for a typical point $p \in \Q[\boldsymbol{a},\boldsymbol{a}^2]$, one obtains a fractal curve, see [Figure 5 in \cite{M12}] for its representation by C. McMullen. In \cite{LPV07}, J. Lowenstein, F. Poggiaspala and F. Vivaldi study the density properties of such a curve.
For more details, see \cite{LPV07} and \cite{M12}. 

The algebraic Peano curve (associated to the work of McMullen and Lowenstein-Poggiaspala-Vivaldi) converges, up to reparametrization, to the Peano curve constructed by Arnoux which in its turn converges to the Rauzy fractal (up to rescaling), as proven in \cite{ABB11} by P. Arnoux, J. Bernat and X. Bressaud.

Even though the arithmetic orbit of the Arnoux-Yoccoz map is not exactly the same as the algebraic Peano curve studied in \cite{M12} and {LPV07}, they converge one to another after rescaling, as follows from the results in \cite{ABB11} and our results in this article, see Theorems \ref{thm:exceptional_trajectories_intro} and \ref{thm:convergence to the Rauzy fractal}. Even more, we think that 
possibly by using the results of the work \cite{ABB11} connecting algebraic orbits and the Rauzy fractal, and the results of this work connecting the Rauzy fractal with the arithmetic orbits (see paragraph \ref{subs:Arnoux-Yoccoz_Rauzy} in the following), one could possibly prove the stronger density results for algebraic Peano curves associated to the Arnoux-Yoccoz map than those proven in \cite{LPV07}. We hope to provide the formalization of these connections in our future work.

Of course, the Arnoux-Yoccoz map is the first and simplest example of a minimal map in the family $\AR$. Although, all of the points in the Rauzy gasket that give rise to the translation surfaces  $X^{x_1,x_2,x_3}$ admitting a pseudo-Anosov map are interesting. The questions are many for each of these surfaces: what fractal curves arise as arithmetic orbits? (as algebraic orbits?)  what are possible dilatation coefficients of corresponding pseudo-Anosov maps? Our methods can simply be generalized for the periodic points of the Rauzy subtractive algorithm. For other points in $\mathcal{R}$, additional work has to be done. 

\section{Plan of the article.}\label{sec:plan}
The dynamics of the maps in the family $\CETthree$ and the dynamics of triangle tiling billiards are closely related, and can be seen as the same dynamical system. We have split this work into two parts, each of which gives different tools to study this same system. At the end of the second part, we reunite these tools. We now give a more detailed plan.

\bigskip

In the first part of this work, we present a geometric approach to tiling billiards via folding and foliations. In Section \ref{sec:locally foldable tilings and associated billiard foliations} we remind the standard folding argument and generalize it. In Section \ref{sec:foliations} we define and study tiling billiard foliations. In Section \ref{sec:proof of the tree conjecture} we prove the Tree Conjecture for triangle tiling billiards.

In the second part of the work, we study the symbolic dynamics of the family $\CETthree$ and its subfamily, the Arnoux-Rauzy family. In Section \ref{sec:arithmetic_orbits}, we precise the connection between the arithmetic orbits of the real-rel leaves of Arnoux-Rauzy surfaces and triangle tiling billiards that we touched on in paragraph \ref{subs:rrdeformAR}. In Section \ref{sec:trop_cool}, central to the second part of the work, we introduce the renormalization process on $\CETthree$
and use it in order to characterize the symbolic dynamics and prove minimality results. By reuniting the geometric and combonatorial approaches to triangle tiling billiards, in Section \ref{sec:tiling billiards} we finally give a complete classification of their trajectories. In Section \ref{sec:arithmetic_orbits_and_exceptional_trajectories} we study the exceptional trajectories of triangle tiling billiards and prove the density results. In particular, we show the convergence of arithmetic orbits of the Arnoux-Yoccoz map to the Rauzy fractal.

Finally, in the third part of this work, i.e. in Section \ref{subs:Q}, we state the open questions, with a focus on the cyclic quadrilateral tiling billiards. 

In the Appendix, we give several comments on our previous work \cite{Olga} with P. Hubert concerning tiling billiards.

\bigskip

Our methods are elementary, no prerequisits are needed to understand the proofs.
\newpage

\begin{center}
{\textbf{ Part I.-- On a proof of the Tree Conjecture for triangle tiling billiards}}
\end{center}

In this part, we introduce tiling billiard foliations and \emph{flowers} (unions of singular leaves) in these foliations. In a nutshell, the main message of this part is the following. The symbolic dynamics of every periodic trajectory is defined by the symbolic dynamics of a sequence of flowers on which it is contracted. This gives a strategy of the proof of the Tree Conjecture. In the following three sections, we give the necessary definitions and arguments to realize this strategy.

\section{Folding in triangle and cyclic quadrilateral tiling billiards.}
\label{sec:locally foldable tilings and associated billiard foliations}
Tiling billiards on triangle and cyclic quadrilateral tilings have unusual (for generic tiling billiards) rigidity properties. These properties are explained by the folding construction  which has a central place in this work.

\bigskip
 
Folding for triangle tiling billiards was proposed in \cite{BDFI18}. In this Section we present their construction, although our proof is more general and doesn't use in an explicit way the structure of the tiling (triangle or quadrilateral). 

\begin{lemma}[\cite{BDFI18}]\label{lemma:folding}
Consider a periodic triangle (cyclic quadrilateral) tiling and some tile $\theta_0$ in it. Let $\Lambda=(V,E)$ be a corresponding graph ($\Lambda=\Lambda_{\Delta}$ or $\Lambda_{\square}$). Then there exists a unique map $\mathcal{F}=\mathcal{F}(\theta_0): \R^2 \rightarrow \mathcal{F}(\R^2) \subset \R^2$ such that
\begin{enumerate}
\item[1.] for any tile $\theta$ the restriction $\mathcal{F}|_{\theta}$ is an isometry and $\mathcal{F}|_{\theta_0}=\mathrm{id}$; 
\item[2.] for any two  tiles $\theta$ and $\theta$ sharing an edge $e \in E$ their images $\mathcal{F}(\theta)$ and $\mathcal{F}(\theta)^e$ are symmetric one to each other with respect to a line bisector of $\mathcal{F}(e)$,
\item[3.] two different folding maps of the same tiling (with different $\theta_0$) differ by a global isometry of $\R^2$.
\end{enumerate}
Moreover, $\mathcal{F}(V) \subset \mathcal{C}$, where $\mathcal{C}$ is a circumcircle of $\theta_0$. 
\end{lemma}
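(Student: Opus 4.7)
The plan is to construct $\mathcal{F}$ by propagation along the dual graph of the tiling, starting from $\theta_0$, and then verify that the construction is independent of the chosen path. First I would set $\mathcal{F}|_{\theta_0} := \mathrm{id}$. If $\mathcal{F}|_\theta$ is already defined and $\theta'$ is a neighbor of $\theta$ sharing edge $e$, condition (2) forces $\mathcal{F}(\theta')$ to be the mirror image of $\mathcal{F}(\theta)$ across the perpendicular bisector of $\mathcal{F}(e)$, which uniquely determines the isometry $\mathcal{F}|_{\theta'}$. Uniqueness in the lemma is then automatic once well-definedness is established.

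The key technical step is to rewrite the propagation in tractable form. Using that $\theta$ and $\theta'$ are central symmetric in the original tiling with respect to the midpoint $M$ of $e$, together with the elementary identity $r^\perp_{\mathcal{F}(e)} \circ \sigma_{\mathcal{F}(M)} = r_{\mathcal{F}(e)}$ (reflection across the perpendicular bisector of $\mathcal{F}(e)$ followed by a half-turn about a point on that bisector equals reflection across the perpendicular line through the same point), and the conjugation rule for reflections under isometries, I would derive
\begin{equation*}
\mathcal{F}|_{\theta'} \;=\; \mathcal{F}|_\theta \circ r_e,
\end{equation*}
where $r_e$ denotes reflection across the original edge $e$ in $\R^2$. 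Iterating along any dual path $\theta_0 \to \theta_1 \to \cdots \to \theta_n$ crossing edges $e_0, \ldots, e_{n-1}$ then gives $\mathcal{F}|_{\theta_n} = r_{e_0} \circ r_{e_1} \circ \cdots \circ r_{e_{n-1}}$.

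The main obstacle is to prove that this composition depends only on $\theta_n$, not on the path. Since $\R^2$ is simply connected, it suffices to check that the dual loop encircling any single tiling vertex $v$ yields the identity. Along such a loop, all $2k$ edges $e_0, \ldots, e_{2k-1}$ pass through $v$, so each $r_{e_i}$ fixes $v$ and the total composition is a rotation about $v$. Pairing the factors yields $k$ two-reflection products, each a rotation about $v$ by (plus or minus) twice the angle $\alpha_{2j+1}$ of the intermediate tile $\theta_{2j+1}$. These $k$ tiles form one of the two color classes around $v$, and the standing hypothesis (trivial for triangles, equivalent to cyclicity for quadrilaterals) asserts that the sum of such angles equals $\pi$. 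The total rotation is therefore $\pm 2\pi$, i.e.\ the identity.

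The two remaining items follow quickly. For the inclusion $\mathcal{F}(V) \subset \mathcal{C}$, I would induct on the dual-graph distance from $\theta_0$: the perpendicular bisector of any chord of $\mathcal{C}$ passes through its center, so folding a new neighbor preserves $\mathcal{C}$; cyclicity of the tile then guarantees that the remaining vertices of the newly folded tile also lie on $\mathcal{C}$. For item (3), if $\mathcal{F}'$ denotes the folding based at another tile $\theta_0'$, then $\mathcal{F}|_{\theta_0'} \circ \mathcal{F}'$ satisfies condition (1) at $\theta_0$ and the same propagation rule, so equals $\mathcal{F}$ by uniqueness; hence $\mathcal{F}$ and $\mathcal{F}'$ differ by the global isometry $\mathcal{F}|_{\theta_0'}$.
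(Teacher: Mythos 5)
Your proposal is correct and follows essentially the same route as the paper: build $\mathcal{F}$ tile by tile along dual paths, reduce path-independence to loops around a single vertex, and use the alternating-color angle condition (sum $=\pi$ for each color class, trivial for triangles and equivalent to cyclicity for quadrilaterals) to see that the holonomy around a vertex is a rotation by $\pm 2\pi$, with the inclusion $\mathcal{F}(V)\subset\mathcal{C}$ and item (3) handled by the same inductions. Your write-up is in fact more explicit than the paper's, in that you record the propagation as the product of reflections $r_{e_0}\circ\cdots\circ r_{e_{n-1}}$ in the original edges and compute the vertex holonomy directly from it, whereas the paper phrases the same computation as the vanishing of the difference of angle sums of the two color classes.
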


\begin{proof}
For any tile $\theta$, we construct its image $\mathcal{F}(\theta)$ as follows. Take a \textbf{sequence} of tiles $\theta_0,\theta_1, \ldots, \theta_n=\theta$ \textbf{connecting} $\theta_0$ to $\theta$: the tiles $\theta_k$ and $\theta_{k+1}$ share an edge. Then, fold the union $\theta_1\cup\ldots \cup \theta_n$ by a global isometry on $\theta_0$. This defines $\mathcal{F}(\theta_1)$. Then, we fold $\theta_k \cup \ldots \cup \theta_n$ on $\theta_{k-1}$ for $k=2, \ldots, n$. At the end of the process, one defines $\mathcal{F}(\theta)$ with $\mathcal{F}|_{\theta}$ an isometry.

It is left to prove that $\mathcal{F}(\theta)$ doesn't depend on the connecting sequence $\{\theta_k\}$, or equivalently, $\mathcal{F}(\theta_0)=\theta_0$ for any connecting \textbf{loop} ($\theta_0=\theta_N$). First, when one folds one polygon on another in a tour around a vertex, the difference between the angles of positively and negatively oriented tiles in the vertex defines the displacement of the initial tile $\theta_0$ with respect to its initial position. Since this difference is zero (see Figure \ref{fig:triangle_tiling_and_cyclic_quadrilateral_tiling}), $\mathcal{F}\mid_{\theta_0}=\mathrm{id}$. By breaking any loop into a sum of loops around vertices, one finishes the proof. Clearly, two folding maps differ by an isometry.

Let us now prove that $\mathcal{F}(V) \subset \mathcal{C}$. Indeed, $\mathcal{F}(v) \in \mathcal{C}$ obviously for the vertices of $\theta_0$, and by folding for all the vertices of the tiles sharing an edge with $\theta_0$ (see Figure \ref{fig:picture_for_circle_and_triangle_tb}). Hence, $\mathcal{F}(v) \in \mathcal{C}$ for any $v \in V$ by recurrence. 
\end{proof}

We call the map $\mathcal{F}$ a \textbf{folding map}, or simply, a \textbf{folding}. We call the image of the plane by a folding map a \textbf{bellow}, $\mathcal{B}:=\mathcal{F}(R^2)$. A name \emph{bellow} comes from accordeon bellows.

\begin{figure}
\centering
\includegraphics[scale=0.4]{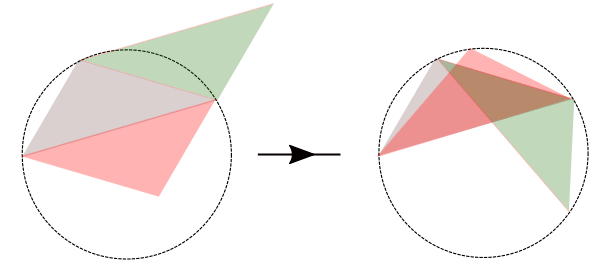}
\includegraphics[scale=0.4]{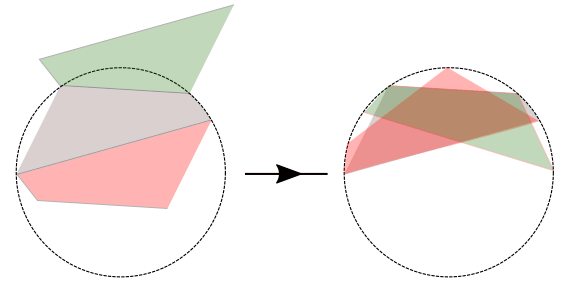}
\caption{Folding on a circle for a patch of a triangle (and cyclic quadrilateral) tiling. A tile $\theta_0$ maps to itself, and the other tiles map inside its circumcircle $\mathcal{C}$ under the folding map $\mathcal{F}(\theta_0)$.}
\label{fig:picture_for_circle_and_triangle_tb}
\end{figure}

\begin{remarkimp}\label{rem:locally foldable}
The two tilings we study in this work belong to a much bigger class of tilings, the so-called locally foldable tilings, for which the statements 1.-3. of Lemma \ref{lemma:folding} directly apply. The \textbf{locally foldable tiling} is a two-colorable tiling with the equilibrium of the
angles preserved at every vertex, i.e. the sum of the angles of tiles of one color around any vertex is equal to $\pi$. This class has been known for centuries in the origami community, and it also appears in the discrete complex analysis for the dimer model. The arguments of Lemma \ref{lemma:folding} are not new, and are used for example in  \cite{Hull, A, dimandcirc, Dima} in different contexts.  In this paper we concentrate ourselves on triangle and quadrilateral tilings. We hope to develop the general theory of tiling billiards in locally foldable tilings in the future.\footnote{Although some properties of tiling billiards in locally foldable tilings are already clear. Indeed, the Tree Conjecture, as well as the the Flower and the  Bounded Flower Conjectures in paragraph \ref{subs:FLOW} can be restated for locally foldable tiling billiards and associated foliations. Moreover, Theorem \ref{thm:cyclic_quadrilateral_tiling_billiards_info} and Proposition \ref{prop:treetoflower} can be proven in this more general context.}
\end{remarkimp}

\subsection{Basic orbit properties in triangle and cyclic quadrilateral tiling billiards.}\label{subsub:basicQ}
In this paragraph we copy the proof from \cite{BDFI18} for triangle tiling billiards, in order to apply it to the case of cyclic quadrilateral tilings.  This result was announced without an explicit proof in \cite{Olga}. The main idea is that a tiling billiard trajectory folds into a subset of a \emph{straight chord} in $\mathcal{C}$. The proof is written in a way to apply to any locally foldable tiling.

\begin{theorem}\label{thm:cyclic_quadrilateral_tiling_billiards_info}
The points 1.-3. of Theorem \ref{thm:triangle_tiling_billiards_info} hold for any cyclic quadrilateral tiling (locally foldable tiling).
\end{theorem}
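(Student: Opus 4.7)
The plan is to apply the folding construction of Lemma \ref{lemma:folding} essentially verbatim, exploiting the fact that this lemma has already been proved in the full generality of locally foldable tilings. Fix a trajectory $\delta$ of the tiling billiard and a tile $\theta_0$ through which it passes. Let $\mathcal{F}=\mathcal{F}(\theta_0)$ be the folding map provided by Lemma \ref{lemma:folding}, with bellow $\mathcal{B}=\mathcal{F}(\R^2)$ and associated circumcircle $\mathcal{C}$ of $\theta_0$. The central claim is that $\mathcal{F}(\delta)$ is a straight chord of $\mathcal{C}$. To see this, consider two neighboring tiles $\theta_k$ and $\theta_{k+1}$ along $\delta$, sharing an edge $e$. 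Snell's law with refraction coefficient $k=-1$ says precisely that the outgoing ray in $\theta_{k+1}$ is the reflection of the (prolongation of the) incoming ray in $\theta_k$ across the line containing $e$. Since $\mathcal{F}$ folds $\theta_{k+1}$ onto $\theta_k$ by exactly this reflection, the two segments line up into a single straight line after folding. Proceeding tile by tile along $\delta$, the image $\mathcal{F}(\delta)$ is a straight line inside $\mathcal{B}$. Because $\mathcal{F}(V)\subset \mathcal{C}$ and each tile is the convex hull of its (cyclically ordered) vertices, every $\mathcal{F}(\theta)$ sits inside the closed disk bounded by $\mathcal{C}$, so $\mathcal{F}(\delta)$ is indeed a chord of $\mathcal{C}$.

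From this one chord picture, point~1 drops out almost immediately. The distance from the chord to the center of $\mathcal{C}$ is constant, and $\mathcal{F}|_\theta$ is an isometry sending the circumcircle of $\theta$ to $\mathcal{C}$ (since it sends the four/three vertices of $\theta$ onto $\mathcal{C}$) and therefore its circumcenter to the center of $\mathcal{C}$; hence the oriented distance of $\delta\cap\theta$ from the circumcenter of $\theta$ is a conserved quantity. For the "visited at most once" part: if $\delta$ entered the same tile $\theta$ at two different times, both intersection segments would fold to $\mathcal{F}(\theta)\cap\mathcal{F}(\delta)$, i.e. to the \emph{same} chord segment in the bellow. As $\mathcal{F}|_\theta$ is a bijective isometry, the two intersections in $\theta$ must coincide both as sets and, after tracking the time orientation inherited from the chord, as oriented segments; the trajectory then closes up into a periodic orbit whose fundamental period visits $\theta$ exactly once.

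Points~2 and 3 then follow in the same way as in \cite{BDFI18}. A bounded trajectory meets only finitely many tiles, so by pigeonhole it must re-enter some tile; the preceding argument forces it to be periodic, and, since on a minimal period every tile is visited at most once, the resulting closed curve has no self-intersections, i.e.\ it is a simple closed curve. Stability under small perturbations of the tile shape and the initial data is then an open condition: a periodic, simple closed trajectory avoids the vertices of $\Lambda_\square$, and the combinatorics of which edges are crossed, in what cyclic order, is preserved under any perturbation small enough that the straight chord $\mathcal{F}(\delta)$ still meets the same sequence of folded edges in the bellow.

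The only nontrivial ingredient is the folding picture itself, and this is exactly what Lemma \ref{lemma:folding} already furnishes in the cyclic quadrilateral (and locally foldable) setting; once it is in hand, there is no step that uses the triangular structure in an essential way, so the $\cite{BDFI18}$ argument transfers without modification. The main obstacle is thus not a conceptual one but a bookkeeping one: checking that the "$\mathcal{F}|_\theta$ is an isometry to a tile inscribed in $\mathcal{C}$" statement really needs only the four vertices of a convex cyclic quadrilateral to lie on its circumcircle, which is precisely the defining property of such tiles and is the reason the cyclic hypothesis cannot be dropped.
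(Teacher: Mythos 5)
Your proposal is correct and follows essentially the same route as the paper: fold the trajectory into a chord of $\mathcal{C}$ via Lemma \ref{lemma:folding}, observe that each folded tile meets the chord in at most one segment, and deduce periodicity of bounded orbits, simplicity, and stability exactly as in \cite{BDFI18}. The extra detail you supply (Snell's law with $k=-1$ being precisely the reflection across the shared edge, and the circumcenter-distance invariant coming from $\mathcal{F}|_\theta$ mapping the circumcircle of $\theta$ onto $\mathcal{C}$) is what the paper leaves implicit, but the argument is the same.
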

\begin{proof}
Consider a trajectory $\delta$ of a tiling billiard starting in some tile $\theta_0$, and a folding map $\mathcal{F}=\mathcal{F}(\theta_0)$. Then $\mathcal{F}(\delta)$ is a subset of a segment in the bellow $\mathcal{B}$ given by the intersection of  $\mathcal{B}$ with some line $l$.

Hence for any tile $\theta$ the intersection $l \cap \mathcal{F}(\theta)$ is equal to at most one segment. If $\delta$ is bounded then at some moment $\delta$ comes back to the same tile, and hence $\delta$ is periodic.

A periodic trajectory $\delta$ can't intersect itself in a transverse way inside a tile $\theta$, since it intersects this tile in a \emph{segment} equal to $\mathcal{F}^{-1}\left(\mathcal{F}(\theta) \cap l\right)$. 

Finally, a periodic trajectory is stable under a small enough perturbation since a sequence of tiles crossed by its perturbation $\delta'$ is the same as that for $\delta$. Hence this sequence is a loop, and $\delta'$ is periodic with the same symbolic dynamics as that of $\delta$.
\end{proof}

\begin{remark}
In the context of Hamiltonian dynamics, Arnold-Lioville integrability implies the existence of additional integrals of motion, or the laws of preservation of energy. For tiling billiards we consider here, the direction of a trajectory in "folded coordinates" is a first integral of the system. The folding map reduces the dimension of the phase space, and the dynamics on the plane is reduced to the dynamics on the circle, that of the family $\CETn$ of fully flipped maps on the circle (see paragraph \ref{subs:CETandTriangles}) for $n=3$ and $4$.
\end{remark}

\section{Tiling billiard foliations.}\label{sec:foliations}
Any tiling billiard trajectory may be folded into a segment of a line in the bellow. We now do an inverse procedure. Fix some tile $\theta_0$ and the folding $\mathcal{F}(\theta_0)$. Denote also by $\mathcal{D}$ a disk bounded by $\mathcal{C}$. Slice up the disk $\mathcal{D}$ in a union of non-intersecting segments by either a family of\emph{ parallel} chords, or a family of chords \emph{emanating from one point }on the boundary of $\mathcal{C}$. Finally, pull this slicing back to the tiled plane by $\mathcal{F}^{-1}$.

This defines two families of \emph{foliations} on the plane, and tiling billiard trajectories can be included in these naturally defined foliations. 

\subsection{What happens when a trajectory hits a corner of a tile?}\label{subs:separatrices}
In a classic setting of a billiard in a bounded domain with piecewise smooth boundary, a billiard trajectory that arrives to a non-regular point on the boundary, stops (or is not well defined). In the context of tiling billiards, as in that of geodesic flows on flat surfaces, one can correctly define, although possibly branching, singular trajectories as boundaries of cylinders of parallel trajectories.

\bigskip

A piece-wise linear simple curve $\gamma$ on the tiled plane that passes through at least one vertex of a tiling is called a \textbf{singular tiling billiard trajectory}, if the Snell's refraction law with coefficient $k=-1$ holds in all non-regular points of such a trajectory (even if a non-regular point is a vertex $v \in V$ of a tiling). We call the segment $\theta \cap \gamma$ of a singular trajectory $\gamma$ in the tile $\theta$ a \textbf{separatrix segment} if $\gamma \cap \theta \ cap V \neq \emptyset$, i. e. $\gamma$ passes by a vertex of $\theta$. If a singular trajectory is a closed curve, we call it a \textbf{separatrix loop}. 

Consider a singular trajectory $\gamma$ with at least one singular point $v \in V$. One associates to it a finite number of singular trajectories passing by $v$, via folding. Indeed, $\gamma$ folds into some chord $l$ in the disk $\mathcal{D}$ such that $l \cap \mathcal{C}=\mathcal{F}(v)$. One considers the connected components of the set $\mathcal{F}^{-1}(l \cap \mathcal{D}) \setminus \{v\}$ such that their intersection with $\cup_{\theta: \theta \ni v} \theta$ is non-empty. These connected components (eventually united with a point $\{v\}$) are exactly the separatrix curves passing by $v$ that fold into the same chord as $\gamma$.
 
We call the union of \emph{all} separatrices passing by a fixed vertex $v \in V$ and mapping to the same chord under folding, a \textbf{flower} in $v$. We call each of the separatrix loops in one flower a \textbf{petal} of this flower.  We call $v \in V$ a \textbf{pistil}. A flower is \textbf{bounded} if all of its separatrices are petals. To any line $l$ that cuts out a non-empty chord in $\mathcal{D}$ and passes by $\mathcal{F}(v)$, one may associate a flower.

 \smallskip
As we show in the following, the symbolic dynamics of any trajectory can be described in terms of dynamics of singular trajectories on which it is contracted in the parallel tiling billiard foliation that we define right away.

\subsection{Parallel and ray tiling billiard foliations.}\label{subs:parrad}

We call a foliation of a plane with a tiling a \textbf{tiling billiard foliation} if it is an oriented foliation with all of its leaves being tiling billiard trajectories.  We define two tiling billiard foliations for triangle and  quadrilateral tilings as preimages of two sheaves of lines on the plane containing the bellow. 

\smallskip

Take a tiling, fixe some base tile $\theta_0$ and a corresponding folding map $\mathcal{F}$.

Then for any $\tau \in \Sph^1$ consider a foliation of the plane by parallel lines with a common direction $\exp(i \tau)$. One considers the intersections of the leaves of this foliation with the bellow $\mathcal{B}$. Then, by applying $\mathcal{F}^{-1}$ to these intersections, one obtains a \textbf{parallel foliation} $\mathcal{P}_{\tau}$ (or simply, $\mathcal{P}$) of the plane with a tiling.

Now, take a point $p \in \mathcal{C}$. Consider all the chords in $\mathcal{D}$ passing by $p$, slicing up the bellow $\mathcal{B}$. By unfolding these slices back to the plane with a tiling one obtains the \textbf{ray foliation} $\mathcal{R}_p$ (or simply, $\mathcal{R}$). The set $\mathcal{F}^{-1}(p)$ is non-empty if and only if $p=\mathcal{F}(v)$ for some $v \in V$. Moreover, if the angles of tiles are ratinonally independent, in this case $\mathcal{F}^{-1}(p)=\{v\}$. In this work we restrict the class of ray foliations to those with $p \in \mathcal{F}(V) \subset \Sph^{1}$.

\begin{example}
On Figure \ref{fig:squaretiling_foliations} we give an example of a (very symmetric) square tiling with \emph{periodic} parallel and ray foliations. Although in general these two foliations are not periodic but \emph{quasiperiodic}. Moreover, for the square tiling, the set $V$ of vertices maps to a \emph{finite} subset of a circle $\mathcal{C}$ (consisting of four points) which is also non-generic for triangle and cyclic quadrilateral tiling billiards. Indeed, generically, the set $\mathcal{F}(V)$ is a dense subset of the circle.
\end{example}

\begin{figure}
\centering
\includegraphics[scale=0.15]{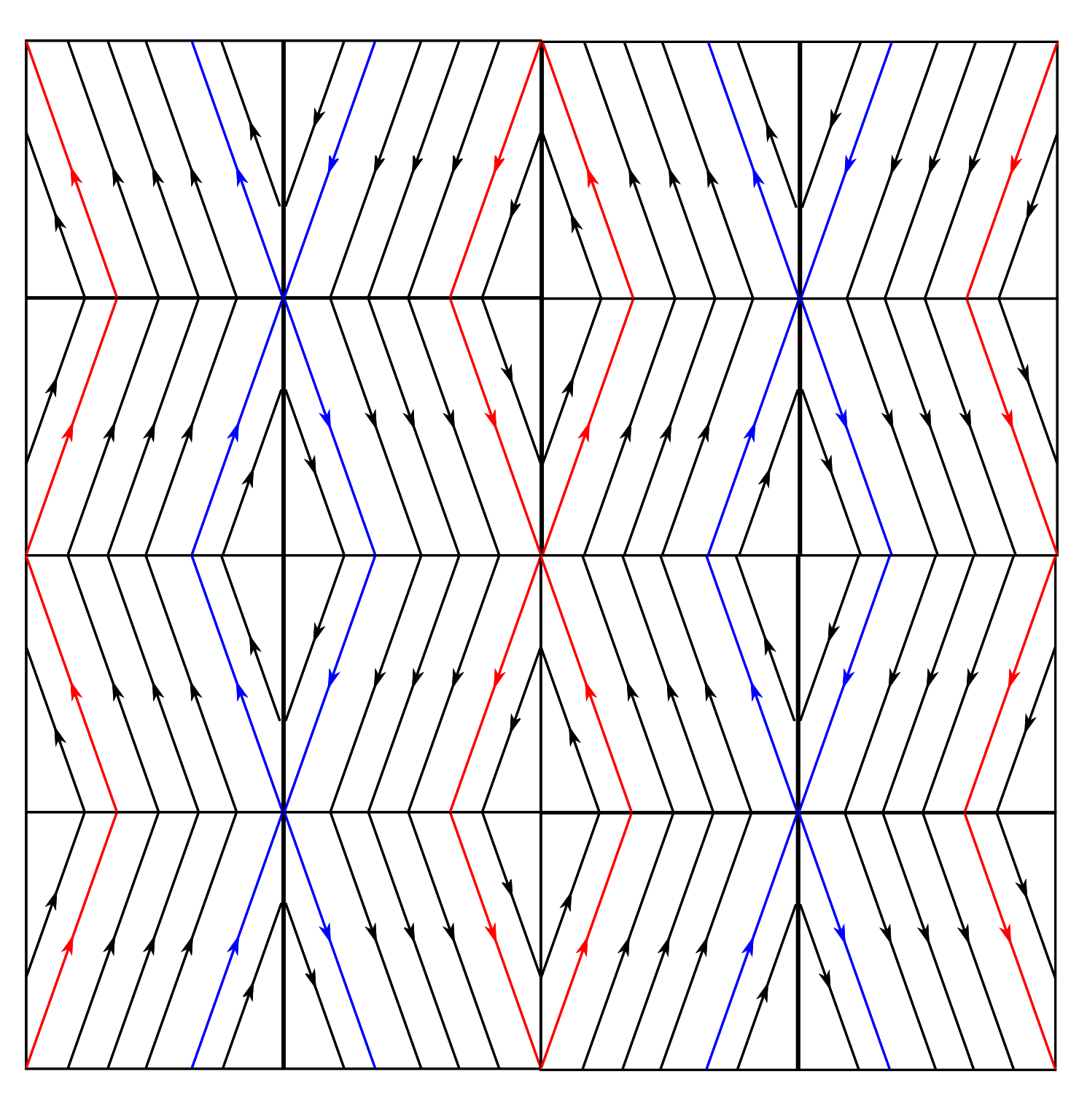}
\includegraphics[scale=0.15]{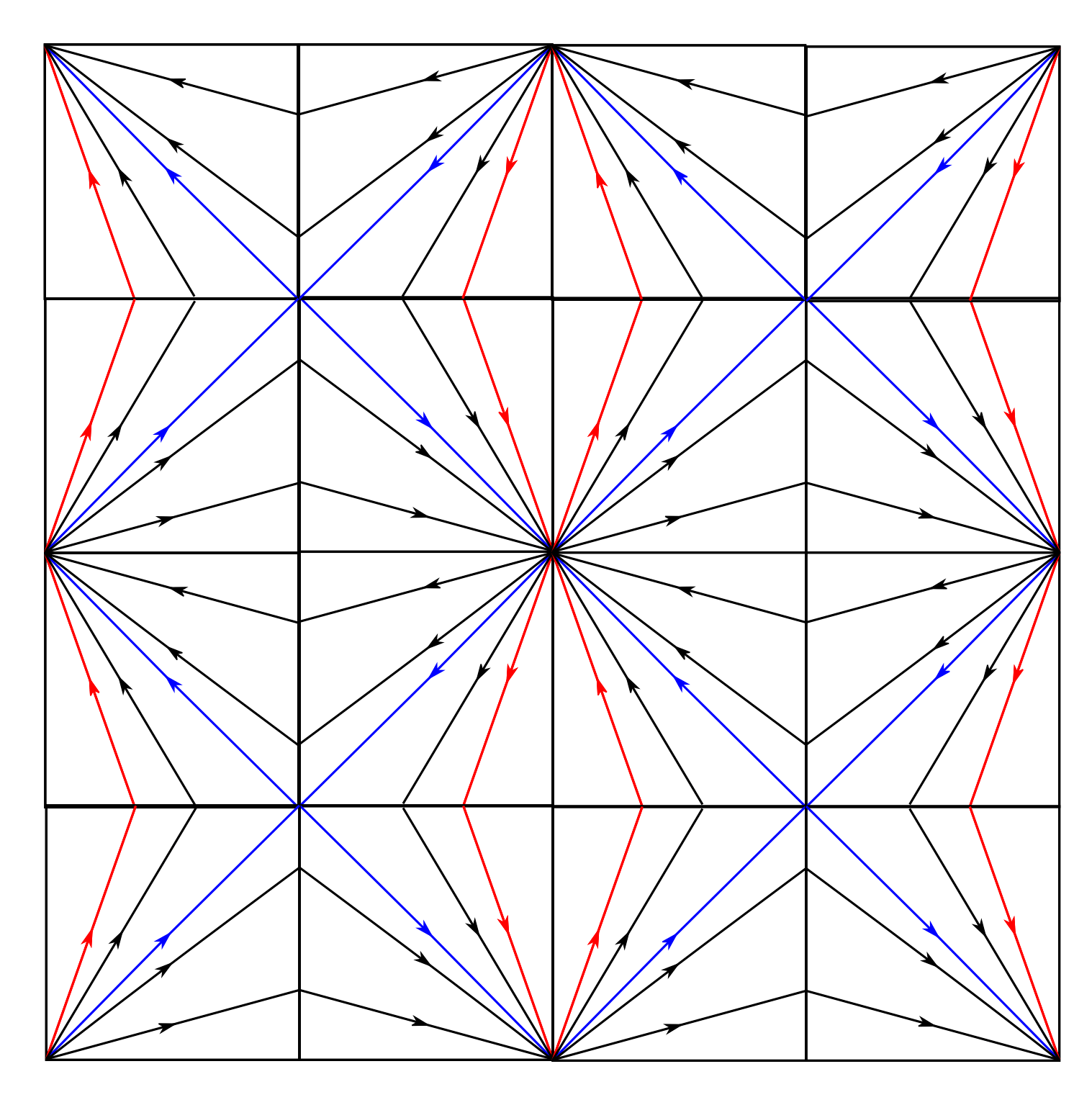}
\caption{\emph{Parallel and radial foliations in a square tiling.} On the left the parallel foliation $\mathcal{P}_{\tau}$ for some $\tau \in  \Sph^1$ and on the right the radial foliation $\mathcal{R}_p, p=\mathcal{F}(v), v\in V$. Every flower in either foliation consists of $0$ or $2$ petals. Red trajectories are contained in both $\mathcal{P}_{\tau}$ and $\mathcal{R}_p$. }
\label{fig:squaretiling_foliations}
\end{figure}

\begin{lemma}\label{lemma:use_me_on parallel and radial}
Fix some tile $\theta_0$ in a triangle (cyclic quadrilateral) tiling. Take any $\tau \in \Sph^1$ and $p \in \mathcal{C} \simeq \Sph^1$ such that $p=\mathcal{F}(v)$ for some $v \in V$. Then, the parallel and ray foliations $\mathcal{P}_{\tau}$ and $\mathcal{R}_p$ verify the following properties:
\begin{itemize}
\item[1.] the  foliations $\mathcal{P}_{\tau}$ and $\mathcal{R}_p$ are well defined and orientable. Moreover, their oriented connected leaves define tiling billiard trajectories;
\item[2.] the set of singularities of each of these foliations coincides with the set $V$;
\item[3.] for any $v \in \mathcal{F}^{-1}(p)$, there exists a finite number of singular leaves in $\mathcal{P}_{\tau}$ passing by $v$, at most one by each tile $\theta$ such that $v \in \theta$. Conversely, two 
separatrices in $\mathcal{P}_{\tau}$ passing by $v$ belong to the same flower;
\item[4.] take any (possibly singular, not necessarily periodic) trajectory $\delta$. Then there exists a unique $\tau$ such that $\delta$ is a leaf of $\mathcal{P}_{\tau}$. We denote this foliation $\mathcal{P}^{\delta}$. If under folding $\delta$ folds into a chord $l$ that intersects $\mathcal{F}(V)$, then $\delta$ can be included in a radial foliation $\mathcal{R}_p$ for each (of at most two) $p \in \mathcal{F}(V) \cap l$. We denote such a foliation $\mathcal{R}^{\delta}$;
\item[5.] for any periodic trajectory $\delta$ its interior $\Omega^{\delta}$ is foliated by the leaves of $\mathcal{P}^{\delta}$ (and of $\mathcal{R}^{\delta}$, if it exists).
\end{itemize}
\end{lemma}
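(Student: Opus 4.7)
The plan is to exploit the folding map $\mathcal{F}$ systematically. Since $\mathcal{F}|_\theta$ is an isometry for every tile $\theta$ by Lemma~\ref{lemma:folding} and the bellow $\mathcal{B}=\mathcal{F}(\R^2)$ is contained in the disk $\mathcal{D}$ bounded by $\mathcal{C}$, the pullback by $\mathcal{F}^{-1}$ of any foliation of $\mathcal{B}$ by straight line segments automatically gives a foliation of the plane which is piecewise linear on each tile, and whose refraction across any edge $e \in E$ amounts to reflection across the perpendicular bisector of $\mathcal{F}(e)$ by Lemma~\ref{lemma:folding}(2), i.e.\ exactly Snell's law with $k=-1$.

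For points~1 and~2, I would observe that inside any single tile $\theta$ the chords of either family (parallel of direction $\exp(i\tau)$, or through a fixed point $p \in \mathcal{C}$) meet $\mathcal{F}(\theta)$ in disjoint straight segments, so $\mathcal{P}_\tau|_\theta$ and $\mathcal{R}_p|_\theta$ are genuine foliations by segments; orientability is inherited from the orientation of the chords in $\mathcal{D}$; and away from $V$ the foliation locally splits as a product neighbourhood. Conversely, every $v \in V$ lies on exactly one chord of the parallel family and on at least one chord of the ray family through $p$, each of which unfolds to a singular leaf ending at $v$, so $V$ is exactly the singular locus of either foliation, and the connected leaves are tiling billiard trajectories by construction.

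For point~3, fix $v \in V$ and let $\theta_1,\dots,\theta_m$ be the tiles containing $v$; their folded images $\mathcal{F}(\theta_j)$ share the corner $\mathcal{F}(v) \in \mathcal{C}$. The chord $l$ of the given family through $\mathcal{F}(v)$ meets each $\mathcal{F}(\theta_j)$ in at most one segment emanating from $\mathcal{F}(v)$, so at most $m$ separatrix segments leave $v$, one per adjacent tile. Any two of them fold onto pieces of the same chord $l$, which by the definition recalled in Section~\ref{subs:separatrices} places them in a common flower with pistil $v$. For point~4, by the argument of Theorem~\ref{thm:cyclic_quadrilateral_tiling_billiards_info} any tiling billiard trajectory $\delta$ folds into a subset of a single line $l \subset \R^2$; its direction $\tau \in \Sph^1$ is therefore a well-defined invariant of $\delta$, so $\delta$ is a connected leaf of the unique $\mathcal{P}_\tau$, and if $l$ meets $\mathcal{F}(V)$ at a point $p$ then the chord $l \cap \mathcal{D}$ belongs to the ray family at $p$ and $\delta$ is also a leaf of $\mathcal{R}_p$, with at most two such $p$ since $l \cap \mathcal{C}$ has at most two points. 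Finally, for point~5, $\delta$ is a boundary leaf of $\mathcal{P}^\delta$ and the folded direction is preserved along the foliation, so leaves of $\mathcal{P}^\delta$ cannot cross $\delta$ transversally; every leaf meeting $\Omega^\delta$ therefore stays inside $\overline{\Omega^\delta}$, which partitions $\Omega^\delta$ into leaves of $\mathcal{P}^\delta$, and the same reasoning applies to $\mathcal{R}^\delta$ when it is defined.

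The main obstacle I anticipate is the careful bookkeeping at a vertex in point~3: one has to verify that the separatrix segments coming from all the adjacent tiles $\theta_j$, together with their subsequent refractions when extended, fold onto pieces of the \emph{same} chord through $\mathcal{F}(v)$ rather than onto several distinct parallel chords, so that they really assemble into a single flower in the sense of Section~\ref{subs:separatrices}. Everything else reduces mechanically to the isometric folding picture already set up in Section~\ref{sec:locally foldable tilings and associated billiard foliations}.
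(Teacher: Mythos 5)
Your proof is correct and follows essentially the same route as the paper, which simply cites the folding lemma (Lemma~\ref{lemma:folding}) and the basic orbit properties (Theorems~\ref{thm:triangle_tiling_billiards_info} and~\ref{thm:cyclic_quadrilateral_tiling_billiards_info}) and notes the orientation alternation between positively and negatively oriented tiles. The "main obstacle" you flag in point~3 is in fact already resolved by your own observation that there is a unique chord of the given family through $\mathcal{F}(v)$, so all separatrices at $v$ fold into it.
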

\begin{proof}
This follows from Lemma \ref{lemma:folding} and Theorems \ref{thm:triangle_tiling_billiards_info} (points 1.-3.) and \ref{thm:cyclic_quadrilateral_tiling_billiards_info}. If a tile $\theta_0$ is positively oriented, then the orientation of $\mathcal{R}_p$ and $\mathcal{P}_{\tau}$ coincides with (is opposite to) the orientation of sheaves of lines on the bellow on positively oriented triangles.
\end{proof}

\subsection{Local behavior of separatrices of triangle tiling billiards.}\label{subs:classify}
In the following, we describe the possible combinatorics of local behavior of separatrix segments in flowers for triangle tiling billiards.

\begin{proposition}\label{prop:list of possible local behaviours PTT}
Fix  some $\tau \in \Sph^1$, a vertex $v \in V$ and a tile $\theta_0 \ni v$.  This defines a flower $\gamma$ in $\mathcal{P}_{\tau}$ with a pistil in $v \in V$ in a triangle tiling. Denote the number of its  separatrix segments containing $v$ by $s$.

Then $s \in \{0,2,4,6\}$ and each tile $\theta$, $\theta \ni v$ contains at most one separatrix segment of $\gamma$. Moreover, up to a possible change of orientation $\tau \mapsto -\tau$, in the restriction to the union $\Theta_v:=\cup_{\theta \ni v} \theta$ , the flower $\gamma$ has one of the combinatorial behaviors  represented on Figure \ref{fig:possible_local_behavior of separatrices in PTT}. 
\end{proposition}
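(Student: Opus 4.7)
The plan is to analyze the flower $\gamma$ in the folded picture around $p := \mathcal{F}(v) \in \mathcal{C}$, where $\gamma$ corresponds to a chord $l$ of $\mathcal{D}$ through $p$. By Lemma \ref{lemma:folding}, each of the six tiles $\theta \ni v$ folds isometrically to a triangle $\mathcal{F}(\theta) \subset \mathcal{D}$ having $p$ as a vertex, with the angle at $p$ equal to the angle of $\theta$ at $v$. A crucial consequence of condition (2) of Lemma \ref{lemma:folding} is that the folded images of two adjacent tiles around $v$ lie on the \emph{same} side of their common folded edge, since the reflection is across the perpendicular bisector of that edge, which preserves half-planes. Hence the six folded sectors at $p$ form a nested/overlapping arrangement, rather than a tiling of a neighborhood of $p$.

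First I would establish the ``at most one separatrix segment per tile'' claim: since $\mathcal{F}(\theta)$ is convex and $l$ is a straight chord, their intersection is at most one segment, which pulls back via $\mathcal{F}|_{\theta}^{-1}$ to at most one separatrix segment in $\theta$. This segment contains $v$ exactly when $l \cap \mathcal{F}(\theta)$ is non-degenerate at $p$, equivalently when the direction of $l$ at $p$ lies in the closed angular sector of $\mathcal{F}(\theta)$. Therefore $s$ equals the multiplicity of the chord direction $\tau$ in the arrangement of six sectors at $p$.

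The key combinatorial input is that this multiplicity is always even. Each boundary ray of the arrangement is the image $\mathcal{F}(vw)$ of some edge $vw$ at $v$; since $vw$ is shared by exactly two of the six tiles around $v$, two sectors have this ray as a common boundary, and by the ``same side'' observation above they both have it as their \emph{lower} boundary, or both as their \emph{upper} boundary. When several edges at $v$ happen to fold to the same ray (which can happen when $\mathcal{F}$ identifies several neighbors of $v$ on $\mathcal{C}$), the contributions add up, but still in multiples of two. Crossing any boundary ray therefore changes the multiplicity by an even integer, and at a boundary ray itself the multiplicity is again even. Starting from multiplicity $0$ outside the arrangement and being bounded above by $6$, one concludes $s \in \{0, 2, 4, 6\}$.

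For the final combinatorial enumeration, I would perform a case analysis on $s$ and on whether $\tau$ lies strictly in the interior of the sectors or on a boundary ray, matching each configuration to one of the pictures in Figure \ref{fig:possible_local_behavior of separatrices in PTT}. The orientations $\tau$ and $-\tau$ give the same unoriented chord $l$ and hence the same flower, which accounts for the ``up to change of orientation'' clause. The most delicate case is $s = 6$: it requires $\tau$ to lie along a boundary ray shared by four sectors while also lying in the interior of the remaining two, which is the degenerate configuration where $\mathcal{F}$ identifies several neighbors of $v$ on $\mathcal{C}$; checking that this configuration yields exactly the picture listed in the figure will be the main technical step.
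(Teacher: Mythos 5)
Your folded-picture setup is sound and is essentially the unoriented version of what the paper does: your ``same side'' observation (that $\mathcal{F}(\theta)$ and $\mathcal{F}(\theta^e)$ lie on the same side of the line through $\mathcal{F}(e)$, because the gluing reflection is across the perpendicular bisector of $\mathcal{F}(e)$) corresponds to the paper's statement that the leaves of the ray foliation $\mathcal{R}_p$ alternate direction from one tile of $\Theta_v$ to the next; and where you derive evenness of $s$ from a multiplicity-crossing argument on the arrangement of the six folded sectors at $p=\mathcal{F}(v)$, the paper gets it in one line from the orientability of $\mathcal{P}_{\tau}$ (the prongs at a singularity of an oriented foliation alternate in/out, hence come in even number). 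The convexity argument for ``at most one separatrix segment per tile'' is fine. Your parity argument does need an anchor, namely a direction of multiplicity $0$: this follows from the fact that the six sectors, although their angles sum to $2\pi$, overlap accordion-style and are all contained in a common arc of length strictly less than $\pi$; that same fact shows that at most one of $u$, $-u$ lies in any given sector, so no tile is counted twice.

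The genuine error is your description of the case $s=6$. It is not a degenerate configuration requiring the chord direction to lie on a boundary ray. Going around $v$ the angles read $\alpha,\beta,\gamma,\alpha,\beta,\gamma$, and the accordion structure gives (normalizing the first sector to $[0,\alpha]$) the six angular intervals $[0,\alpha]$, $[\alpha-\beta,\alpha]$, $[\alpha-\beta,\alpha+\gamma-\beta]$, $[\gamma-\beta,\alpha+\gamma-\beta]$, $[\gamma-\beta,\gamma]$, $[0,\gamma]$. Their common intersection has nonempty interior precisely when $\alpha,\beta,\gamma<\frac{\pi}{2}$, i.e.\ when the tile is acute (for the equilateral tiling all six sectors coincide). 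Hence for every acute triangle there is an \emph{open} set of chord directions for which all six tiles around $v$ carry a separatrix segment through $v$: this non-degenerate $s=6$ configuration is exactly the generic three-petal flower of Figure \ref{fig:petals}, which is central to the renormalization in Part II. As sketched, your case analysis would misclassify or miss this configuration, so the enumeration matching Figure \ref{fig:possible_local_behavior of separatrices in PTT} --- which is part of the statement to be proved --- is not established.
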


\begin{proof}
Finiteness of $s$ follows from the point 3. in Lemma \ref{lemma:use_me_on parallel and radial}, and $s$ is even since the foliation $\mathcal{P}_{\tau}$ is oriented.

The separatrices passing by $v$ are leaves of both $\mathcal{R}_{p}$ and $\mathcal{P}_{\tau}$. Moreover, the ray foliation $\mathcal{R}_{p}$ is orientable and has a very simple form in restriction to the union $\Theta_v$ of six tiles containing $v$. Indeed, all of its leaves pass by $v$  and their directions alternate from one tile to its neighbor, see Figure \ref{fig:alternation}.
\end{proof}

\begin{remarkimp}
The list given in Proposition \ref{prop:list of possible local behaviours PTT} is realizable: one can find examples of triangle tiling billiard foliations $\mathcal{P}_{\tau}$ (by choosing the forms of tiles and the directions $\tau$) and flowers in them with all of the listed local behaviors. In this work we also classify possible global topological behaviors, e.g. we prove that the first case for $s=2$ on Figure \ref{fig:possible_local_behavior of separatrices in PTT} is not realizable by separatrix loops but only by unbounded separatrices, see Proposition \ref{prop:UNBOUNDED}.
\end{remarkimp}

The statement analogous to that of Proposition \ref{prop:list of possible local behaviours PTT} can also be proven for quadrilateral tilings: in this case $s \in \{0,2,4\}$ and for each value of $s$ the only one combinatorial distribution of tiles intersecting the flower $\gamma$ is possible (for $s=2$, these tiles are neighbouring). Although, the combinatorics of sides crossed by the separatrix segments is richer than in the case of triangle billiards. We do not discuss this issue in more detail, the case of cyclic quadrilaterals is still quiet mysterious for us.

\begin{figure}
\includegraphics[scale=0.1]{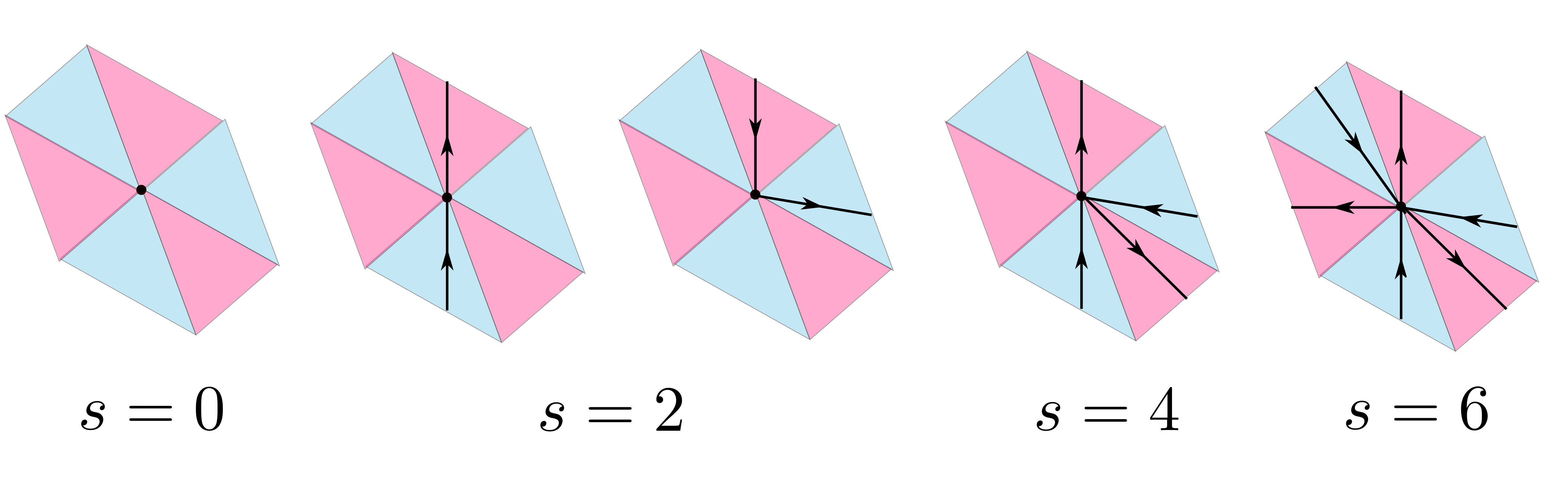}
\caption{\emph{All possible local combinatorial behaviors of restrictions} $\gamma \cap \Theta_v$\emph{ of a flower} $\gamma \in \mathcal{P}_{\tau}$ \emph{on the union} $\Theta_v$ \emph{of six tiles containing }$v$. For $s=0$ the only possible behavior is trivial. For $s=2$ two behaviors are possible. For $s=4$ and $6$ only one combinatorial behavior is possible. This Figure contains the information on the number of separatrix segments and their \emph{relative} positions.
}
\label{fig:possible_local_behavior of separatrices in PTT}
\end{figure}

\begin{figure}
\centering
\includegraphics[scale=0.1]{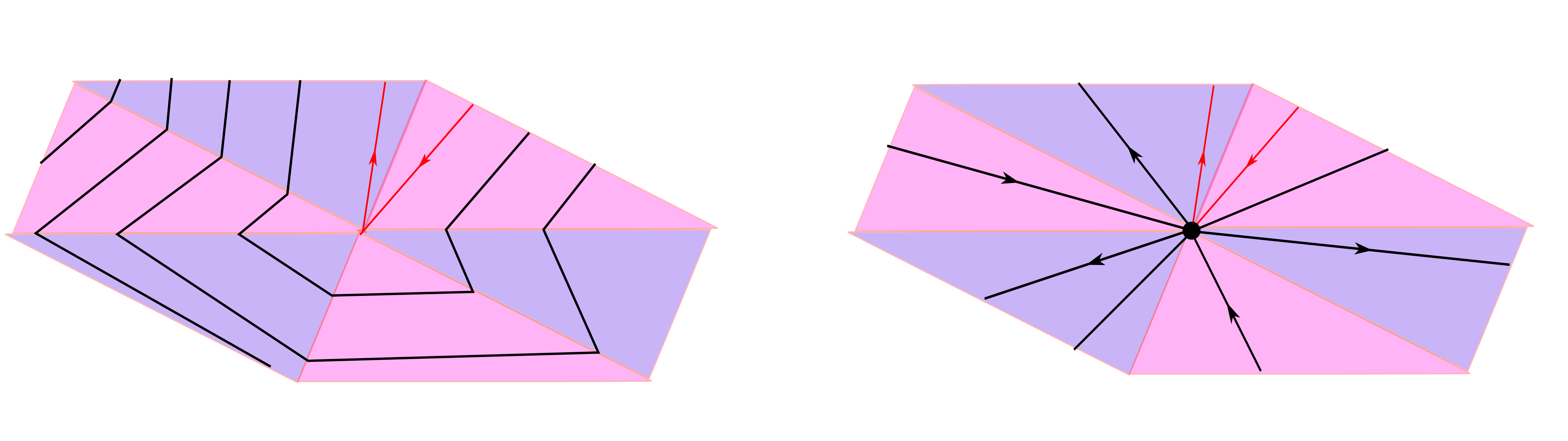}
\caption{An example of possible restriction of the periodic foliation $\mathcal{P}_{\tau}$ and a ray folation $\mathcal{R}_{p}$ to the set $\Theta_v$ with $v \in \mathcal{F}^{-1}(p)$. In red are given their common separatrix segments.}
\label{fig:alternation}
\end{figure}

\section{Tree conjecture for triangle tiling billiards.}\label{sec:proof of the tree conjecture}
In this Section, we prove the Tree Conjecture for triangle tiling billiards. 

\subsection{Reducing the Tree Conjecture to the Bounded Flower Conjecture.}\label{subs:FLOW}
The \emph{Tree Conjecture} is a statement about the global symbolic behavior of periodic trajectories. We reduce it to the \emph{Bounded Flower Conjecture} which is a local statement about the topology of separatrix loops in {one vertex}.

\smallskip

We introduce some notations. We say that two tiles are \textbf{neighbouring in} $e$ if they share an edge $e$.
Additionaly, and only for triangle tilings, we say that two tiles are \textbf{opposite in a vertex} $v$ if they both pass by $v$ and are centrally symmetric to each other with respect to $v$. For any tile $\theta_0$ such that $e \subset \theta_0, v \in \theta_0$ with $e \in E, v\in V$ we denote by $\theta_0^e$ its neightbouring tile in $e$, and by $\theta_0^v$ its opposite tile in $v$, see Figure \ref{fig:flower_notation}.

\smallskip

We say that the \textbf{Flower Conjecture holds for a separatrix loop} $\gamma$ such that for any $v \in \gamma \cap V$, there exists $e \in E$ such that $e \ni v$, $\gamma$ passes by $\theta$ and $\theta^e$ and $e \in \Omega^{\gamma}$. In other words, a separatrix loop in $v$ has to pass by two neighbouring tiles and to contain their common edge in its interior, see Figure \ref{fig:flower_notation}.

We say that the \textbf{Flower conjecture holds for a tiling} if it holds for all the separatrix loops of a tiling billiard on this tiling. We say that the \textbf{Bounded Flower Conjecture holds for a tiling} if the previous property is verified by all the petals of all the \emph{bounded} flowers in parallel foliations.

\begin{itemize}
\item[•] Obviously, the Flower Conjecture implies the Bounded Flower Conjecture.
\item[•] The Flower Conjecture implies that two separatrix loops $\gamma_1$ and $\gamma_2$ in a vertex belonging to the same parallel foliation $\mathcal{P}_{\tau}$ have the \emph{same} index with respect to infinity. In other words, the corresponding open domains  $\mathring{\Omega}^{\gamma_1}$ and $\mathring{\Omega}^{\gamma_2}$ are disjoint.
\item[•] The Flower Conjecture for periodic triangle tiling excludes a separatrix loop passing by two opposite triangles, as well as a separatrix loop passing by neighbouring triangles but not contouring an edge between them. These two topological configurations (that the Flower Conjecture excludes) are represented on Figure \ref{fig:OTS2}.
\end{itemize}

\begin{theorem}\label{thm:bounded_flower_conjecture}
The Bounded Flower Conjecture holds periodic triangle tilings.
\end{theorem}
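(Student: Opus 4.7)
The plan is to reduce the statement to a purely local analysis at each singular vertex traversed by a petal, using the classification of Proposition \ref{prop:list of possible local behaviours PTT} combined with the $4n+2$ structure of Theorem \ref{thm:triangle_tiling_billiards_info}. Fix a bounded flower $\Phi$ in a parallel foliation $\mathcal{P}_\tau$, a petal $\gamma$ of $\Phi$, and a vertex $v \in \gamma \cap V$; the goal is to exhibit an edge $e \ni v$, shared by two tiles both visited by $\gamma$, with $e \subset \Omega^\gamma$. The petal contributes exactly two separatrix segments to the flower at $v$, sitting in two tiles of $\Theta_v$. By Proposition \ref{prop:list of possible local behaviours PTT} together with the alternating-direction property of the radial foliation $\mathcal{R}_{\mathcal{F}(v)}$ from Figure \ref{fig:alternation}, these two tiles are either \emph{neighboring} (sharing an edge at $v$) or \emph{opposite} (sharing only $v$); the cases $s=4$ and $s=6$ admit a unique combinatorial model, which I would split into petal-pairs each of neighboring type.

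In the neighboring case I obtain a candidate edge $e$ for free. To check $e \subset \Omega^\gamma$, I would use the folded chord $l \subset \mathcal{D}$ into which $\gamma$ folds: the interior of the petal corresponds to a specific side of $l$ in $\mathcal{D}$ (determined by the orientation of $\mathcal{P}_\tau$ inherited through the folding), and the folded image of $e$ at $p = \mathcal{F}(v)$ lies on that same side precisely because the two tiles $\theta,\theta^e$ abut $l$ from opposite sides at $p$. This is a short case-by-case verification inside the bellow.

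The main obstacle, and essentially the whole content of the theorem, is to rule out the opposite configuration at any vertex of a petal of a bounded flower. My plan is a perturbation argument. Pick a leaf $\gamma_\epsilon$ of $\mathcal{P}^\gamma$ inside $\Omega^\gamma$ and very close to $\gamma$; since $\Omega^\gamma$ is bounded and by Lemma \ref{lemma:use_me_on parallel and radial}(5) it is foliated by leaves of $\mathcal{P}^\gamma$, the leaf $\gamma_\epsilon$ is periodic. At each singular vertex of $\gamma$, the leaf $\gamma_\epsilon$ replaces the passage through $v$ by a detour on the interior side: near a neighboring-type vertex it crosses one extra edge (the shared one), whereas near an opposite-type vertex it must cross three extra edges (traversing two intermediate tiles that separate $\theta$ from $\theta^v$). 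By Theorem \ref{thm:triangle_tiling_billiards_info}(4), both $|w(\gamma)|$ and $|w(\gamma_\epsilon)|$ are $\equiv 2 \pmod 4$, so the total extra count must vanish modulo $4$.

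The hard step is closing the parity bookkeeping to deduce that the number of opposite-type vertices of $\gamma$ is zero. A raw mod-$4$ count gives $k_n + 3k_o \equiv 0 \pmod 4$, which alone does not force $k_o = 0$. I expect to strengthen this by applying the same perturbation on the \emph{outer} side of $\gamma$ (using a second bounded region adjacent to $\gamma$ coming from the bounded-flower assumption, if available) and by exploiting the palindromic $\overline{s_0^2}$ structure of Theorem \ref{thm:triangle_tiling_billiards_info}(5), which pairs up the singular vertices visited in the two halves of the period and forces $k_n$ and $k_o$ to be even. Combining the two mod-$4$ relations coming from interior and exterior perturbations, and the evenness of $k_n,k_o$, I expect to obtain $k_o = 0$, thereby eliminating the opposite configuration and completing the proof.
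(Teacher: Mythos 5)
There is a genuine gap, and it sits exactly where you flag it: the parity bookkeeping cannot close. Worse than you estimate, in the \emph{opposite}-tile configuration the inner and the outer detours around the pistil each cross three edges of $\Theta_v$ (the six tiles around $v$ split symmetrically), so the two perturbed periodic trajectories have the \emph{same} length and the mod-$4$ count is vacuous; no combination of interior/exterior counts and evenness of $k_n,k_o$ rules out $k_o=2$, let alone $k_o=1$. What the paper actually uses is not the period mod $4$ but the full strength of point 5 of Theorem \ref{thm:triangle_tiling_billiards_info}: the cyclic accelerated code is literally a square $\overline{s^2}$. In Proposition \ref{prop:first} the inner and outer trajectories have sign codes $+--+S$ and $-++-S$; writing $S=s\bar s$ and imposing that \emph{both} cyclic words are squares forces the word $s$ to end simultaneously in $+$ and in $-$. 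This is a letter-level contradiction invisible to any length count, and it is the only mechanism in the paper that kills the opposite-tile petal.

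Two further pieces are missing. First, you dismiss the case of a petal through two neighbouring tiles with the shared edge \emph{outside} $\Omega^\gamma$ (obstruction \textbf{2.2}) as a "short verification inside the bellow," but whether $e\subset\Omega^\gamma$ is a global statement about the winding of the closed curve in the plane, not readable from the local picture at $p=\mathcal{F}(v)$; the paper excludes it by a genuinely global argument, the central symmetry of the ray foliation $\mathcal{R}_p$ about $v$ (Proposition \ref{prop:nohungry}: a "hungry" petal would have to intersect its symmetric partner in the same foliation). Second, the Bounded Flower Conjecture as used in Proposition \ref{prop:treetoflower} also requires excluding the nested multi-petal configurations (\textbf{4.1}, \textbf{4.2}, \textbf{6.1}--\textbf{6.3}), where the offending petal may individually look "neighbouring" at $v$ but is wrapped around another petal; your vertex-by-vertex analysis of a single petal never sees these. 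So the proposal would need to be replaced by (or supplemented with) the square-word combinatorics and the ray-foliation symmetry to become a proof.
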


\begin{theorem}\label{thm:flower_conjecture_holds}
The Flower Conjecture holds for periodic triangle tilings.
\end{theorem}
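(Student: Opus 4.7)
My plan is to deduce the Flower Conjecture from the Bounded Flower Conjecture (Theorem \ref{thm:bounded_flower_conjecture}) combined with the Tree Conjecture (Theorem \ref{thm:main}), both established earlier in this section. Let $\gamma$ be a separatrix loop in a parallel foliation $\mathcal{P}_\tau$ passing through a vertex $v \in V$, and let $s$ be the number of separatrix segments of the flower at $v$; by Proposition \ref{prop:list of possible local behaviours PTT}, $s \in \{2,4,6\}$.

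The case $s = 2$ is immediate: the flower at $v$ then consists of a single separatrix, which must be $\gamma$ itself; hence the flower is bounded and Theorem \ref{thm:bounded_flower_conjecture} directly yields the Flower property for $\gamma$ at $v$. For $s \in \{4,6\}$ the flower at $v$ may be unbounded (some of its separatrices can escape to infinity), so BFC does not apply as-is. Nevertheless, by Proposition \ref{prop:list of possible local behaviours PTT} the local combinatorial behavior in $\Theta_v$ is uniquely determined, so the tiles around $v$ containing the separatrix segments are prescribed. My plan is then to pair the $s$ segments at $v$ into the $s/2$ individual separatrices of the flower by tracking the orientation of $\mathcal{P}_\tau$, and to show that when a separatrix is a petal (as $\gamma$ is), its two segments necessarily lie in two neighbouring tiles $\theta, \theta' = \theta^e$ sharing an edge $e \ni v$.

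To establish that $e \subset \Omega^\gamma$, I would approach $\gamma$ from inside by a sequence of regular periodic trajectories $\delta_n \subset \Omega^\gamma$ with $\delta_n \to \gamma$; these exist since by Lemma \ref{lemma:use_me_on parallel and radial} the interior $\Omega^\gamma$ is foliated by leaves of $\mathcal{P}^\gamma$ forming a cylinder bounded by $\gamma$. By the Tree Conjecture each $G_\Delta^{\delta_n}$ is a tree; since $\Omega^{\delta_n} \subset \Omega^{\delta_{n+1}} \subset \Omega^\gamma$ and $\bigcup_n \Omega^{\delta_n} = \Omega^\gamma$, the increasing union $\bigcup_n G_\Delta^{\delta_n} = \Omega^\gamma \cap \Lambda_\Delta$ is also a tree. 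Attaching the boundary vertex $v$ yields $\overline{\Omega^\gamma} \cap \Lambda_\Delta$, which must stay connected; the pairing of $\gamma$'s segments at $v$ into neighbouring tiles prevents $v$ from being isolated in this graph, so at least one edge of $\Lambda_\Delta$ incident to $v$ lies in $\overline{\Omega^\gamma}$, and by the prescribed combinatorics this edge is precisely $e$. Since $\gamma$ meets $e$ only at its endpoint $v$, the rest of $e$ lies in the open region $\Omega^\gamma$.

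The main obstacle is the combinatorial pairing step for $s \in \{4,6\}$: a priori the $s$ separatrix segments at $v$ can be grouped into $s/2$ separatrices in several ways, and one must exclude the pairing in which a petal's two segments sit in two opposite (non-neighbouring) tiles at $v$. The plan is to exploit the alternation of orientations of tiles around $v$ together with the structure of the radial foliation $\mathcal{R}_p$ at $p = \mathcal{F}(v)$, which already underpinned the proof of Proposition \ref{prop:list of possible local behaviours PTT}, to force the pairing onto neighbouring tiles. A secondary subtlety is to guarantee that the thin region sandwiched between $\delta_n$ and $\gamma$ near $v$ does not swallow the edge $e$ in the limit; this follows from the stability of periodic trajectories under small perturbations (point 3 of Theorem \ref{thm:triangle_tiling_billiards_info}), which implies that the approximating $\delta_n$ eventually encloses $e$ in $\Omega^{\delta_n}$ whenever $e$ is incident to a neighbouring-tile pair crossed by $\gamma$.
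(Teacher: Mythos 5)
Your proposal correctly identifies that the Flower Conjecture goes beyond the Bounded Flower Conjecture exactly when the flower at $v$ is unbounded, and the $s=2$ reduction (a lone separatrix that is a loop forms a bounded flower by itself) is fine. But for $s\in\{4,6\}$ the argument has a genuine gap precisely at the point you flag as the ``main obstacle'': excluding the pairing in which a petal's two segments sit in opposite tiles at $v$, and excluding a petal through neighbouring tiles that fails to enclose the shared edge. Proposition \ref{prop:list of possible local behaviours PTT} cannot do this for you: it only fixes the \emph{relative positions} of the $s$ segments inside $\Theta_v$, not how they pair up into separatrices globally, and indeed its $s=2$ list explicitly contains the opposite-tile configuration (realized by unbounded separatrices, cf.\ Proposition \ref{prop:UNBOUNDED}), so ``alternation of orientations around $v$'' alone cannot force the neighbouring-tile pairing. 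Your limiting argument via the Tree Conjecture also does not close the gap: if a petal did pass through two opposite tiles, its interior near $v$ would still contain full tiles and edges incident to $v$, so connectivity of $\bigcup_n G_\Delta^{\delta_n}$ would not be violated and no contradiction is produced.

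The missing ingredient is a \emph{global} symmetry argument, which is how the paper actually completes the proof (the statement is postponed to Part III). The ray foliation $\mathcal{R}_p$ with $p=\mathcal{F}(v)$ is centrally symmetric about $v$ up to reversal of orientation in opposite tiles (Proposition \ref{prop:nohungry}, point 1). This excludes all ``hungry'' configurations --- a petal through a tile $\theta_0$ whose opposite tile $\theta_0^v$ lies inside the petal --- for flowers bounded or not, which kills the neighbouring-tiles-without-the-edge case; and in Proposition \ref{prop:UNBOUNDED} the same symmetry excludes a bounded petal of an unbounded flower passing through opposite tiles, because such a petal would swallow the opposite of a tile carrying an escaping ray, forcing the symmetric flower $\gamma^v$ to intersect $\gamma$ away from $v$, which is impossible for leaves of one foliation. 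Together with Theorem \ref{thm:bounded_flower_conjecture} (which handles all petals of bounded flowers via the square property and the same symmetry), this yields the Flower Conjecture. Your outline would become a proof only if you import these two symmetry exclusions; as written, the decisive step is asserted rather than proved.
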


Theorem \ref{thm:bounded_flower_conjecture} together with Proposition \ref{prop:list of possible local behaviours PTT} give four possible topological forms of bounded flowers with the number of petals in the range from $0$ to $3$, see Figure \ref{fig:petals}. A singular point with no petals is also considered a flower, even though in real life such flowers are a little sad.

\bigskip

We postpone the proof of the Theorem \ref{thm:flower_conjecture_holds} to Section \ref{subs:Q}. We prove the Theorem \ref{thm:bounded_flower_conjecture} in this section. Finally, for the proof of the Tree Conjecture, it suffices to reduce it to Theorem \ref{thm:bounded_flower_conjecture}.

\begin{figure}
\centering
\includegraphics[scale=0.34]{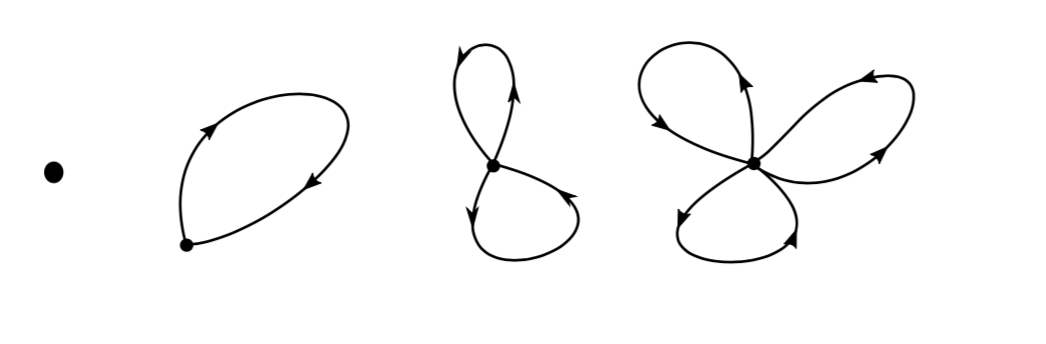}
\caption{Possible topological behaviors of bounded flowers for triangle tiling billiards, up to orientation, supposing the Bounded Flower Conjecture.}
\label{fig:petals}
\end{figure}

\begin{proposition}\label{prop:treetoflower}
For a triangle (cyclic quadrilateral) tiling, the Bounded Flower Conjecture is equivalent to the Tree Conjecture.
\end{proposition}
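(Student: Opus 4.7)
The plan is to prove both implications by exploiting the foliation picture from Lemma \ref{lemma:use_me_on parallel and radial}: the interior $\Omega^\delta$ of any periodic trajectory $\delta$ is saturated by leaves of the parallel foliation $\mathcal{P}^\delta$, and the singular leaves inside this disk are precisely bounded flowers rooted at the tiling vertices $v \in V \cap \Omega^\delta$. Since $\delta$ is a regular leaf, it avoids all of $V$, hence every vertex of the embedded graph $G_\Delta^\delta$ lies strictly in the open interior, and any edge $e \in E$ joining two such interior vertices is entirely contained in $\Omega^\delta$ and therefore belongs to $G_\Delta^\delta$.

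For the direction \emph{Bounded Flower Conjecture implies Tree Conjecture}, I would argue by induction on the number of interior tiling vertices $N = |V \cap \Omega^\delta|$. A disk cannot be regularly foliated by closed leaves, so $N \geq 1$; in the base case $N = 1$ the unique flower must have no petals (a petal would cobound a subdisk foliated only by regular leaves, which is impossible), and $G_\Delta^\delta$ reduces to one isolated vertex. For $N \geq 2$ I would pick an outermost flower in $\Omega^\delta$, namely one whose petals cobound with $\delta$ an annular collar foliated only by regular periodic leaves. The Bounded Flower Conjecture applied to each petal $\gamma_i$ supplies, at every vertex of $\gamma_i$, a pair of neighbouring tiles and a shared edge lying inside $\Omega^{\gamma_i} \subset \Omega^\delta$; this is exactly the local data needed to glue the smaller graphs $G_\Delta^{\gamma_i}$, which are trees by the induction hypothesis applied to the strictly smaller disks $\Omega^{\gamma_i}$, into a connected acyclic graph at the shared pistil. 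The annular collar between the outermost flower and $\delta$ contains no further vertices or edges of $\Lambda_\Delta$, so the glued object is exactly $G_\Delta^\delta$, which is therefore a tree.

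For the converse direction \emph{Tree Conjecture implies Bounded Flower Conjecture}, I would proceed by approximation. Let $\gamma$ be a petal of a bounded flower with pistil $v$ included in some parallel foliation $\mathcal{P}_\tau$. By the stability of periodic trajectories (point 3 of Theorem \ref{thm:triangle_tiling_billiards_info}) combined with point 5 of Lemma \ref{lemma:use_me_on parallel and radial}, $\gamma$ is a Hausdorff limit of regular periodic leaves $\delta_n \subset \mathring\Omega^\gamma$ of the \emph{same} parallel foliation, chosen so that $v$ is interior to each $\Omega^{\delta_n}$ and so that $\delta_n$ shadows $\gamma$ arbitrarily closely on its non-singular parts. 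Each $G_\Delta^{\delta_n}$ is a tree by the Tree Conjecture, and the combinatorics of $G_\Delta^{\delta_n}$ near $v$ is eventually dictated by the local structure of $\gamma$ at $v$ as classified in Proposition \ref{prop:list of possible local behaviours PTT}. If the Bounded Flower Conjecture failed at $v$ along $\gamma$, then one of the excluded local configurations (see Figure \ref{fig:OTS2}) would appear in $G_\Delta^{\delta_n}$ for large $n$, producing either a triangular cycle enclosing a tile that $\delta_n$ does not cross or a disconnection of the tiles around $v$ inside $\Omega^{\delta_n}$; either outcome contradicts the tree property.

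The main obstacle is the inductive gluing step in the first direction, together with the parallel combinatorial check in the second: in both cases one must use the classification of local flower shapes in Proposition \ref{prop:list of possible local behaviours PTT} together with the admissible petal topologies of Figure \ref{fig:petals} to verify that the edges furnished by the Bounded Flower Conjecture assemble the subtrees at $v$ without ever creating a cycle and without ever leaving a tile incident to $v$ unvisited. The heart of the equivalence is the matching of these two lists: the configurations that the Bounded Flower Conjecture excludes must be exactly those that would destroy the tree property, so that the local statement captures the global one without loss.
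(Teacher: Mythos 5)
Your forward direction (Bounded Flower $\Rightarrow$ Tree) is essentially the paper's argument: contract $\delta$ inside $\Omega^{\delta}$ onto a flower, use the edges through the pistil supplied by the Bounded Flower Conjecture to glue the pieces coming from the petals, and recurse. Two details you should add to make it airtight. First, the inductive hypothesis applies to periodic trajectories, not to the (singular) petals, so each petal $\gamma_j$ must first be approached from inside by a periodic leaf $\delta_j \subset \Omega^{\gamma_j}$ of $\mathcal{P}^{\delta}$ before recursing; this is how the paper obtains the decomposition $G_{\Delta}^{\delta} = \bigcup_j G_{\Delta}^{\delta_j} \cup \{e_j\}$. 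Second, the contracted flower may a priori pass through two distinct vertices of $V$ (its chord may meet $\mathcal{F}(V)$ twice); the paper removes this by perturbing the direction $\tau$, using that $\mathcal{F}(V)$ is countable. Both are repairable.

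The converse direction as written has a genuine gap. You approximate a single petal $\gamma$ by regular leaves $\delta_n \subset \mathring{\Omega}^{\gamma}$ \emph{chosen so that $v$ is interior to each $\Omega^{\delta_n}$} --- but this is impossible: $v \in \gamma = \partial\Omega^{\gamma}$, so any leaf strictly inside the petal satisfies $\Omega^{\delta_n} \subset \mathring{\Omega}^{\gamma}$ and hence excludes $v$. More importantly, approximation from \emph{inside one petal} cannot detect a failure of the Bounded Flower Conjecture: the forbidden configurations (a petal through two opposite tiles, a petal through neighbouring tiles not enclosing their common edge, nested petals of different index, \dots) concern the position of the petal relative to the tiles of $\Theta_v$ lying \emph{outside} it, and these tiles are invisible from $\mathring{\Omega}^{\gamma}$. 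The paper instead argues by contrapositive with an \emph{outside} approximation of the whole flower: if the conjecture fails, take the outermost petals (those not contained in the interior of another petal) and a periodic leaf $\delta$ of $\mathcal{P}^{\gamma}$ with $\bigcup_j \Omega^{\gamma_j} \subset \Omega^{\delta}$ passing by the same tiles as these petals; each forbidden configuration then forces $\delta$ to enclose a tile of $\Theta_v$ that it does not cross, whose three edges form a cycle in $G_{\Delta}^{\delta}$, contradicting the Tree Conjecture. This is the failure mode that matters (the paper's restatement of the conjecture is precisely ``$\delta$ passes by every tile meeting $\Omega^{\delta}$''), whereas the ``disconnection'' alternative you invoke is not the relevant obstruction. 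You should replace your inside-the-petal limit by this outside approximation of the outermost petals.
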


\begin{proof}
Suppose that the Bounded Flower Conjecture fails for the petals $\gamma_j, j \in J$ of some flower $\gamma$ with a pistil $v \in V$. Take all of the petals $\gamma_i$ in this family that are not contained in $\Omega^{\gamma_j}$ for some $j \in J, j \neq i$. Suppose that their indices belong to a subset $J_0 \subset J$. Then  there exists a periodic trajectory $\delta$ passing by the same tiles as $\cup_{j \in J_0} {\gamma_j}$, with $\cup_{j \in J_0} \Omega^{\gamma_j} \subset \Omega^{\delta} $. This trajectory then contours a tile.\footnote{Our reader can easily find a trajectory $\delta$ for all obstructions for the Flower Conjecture for periodic triangle tilings on Figure \ref{fig:one big picture with what can go wrong}.}

Now we prove that the Bounded Flower Conjecture implies the Tree Conjecture. Take some periodic trajectory $\delta$. Then the domain $\Omega^{\delta}$ contoured by $\delta$ is foliated by a family of trajectories in $\mathcal{P}^{\delta}$, among which only a finite number of singular ones. Now we contract $\delta$ inside $\Omega^{\delta}$ in a direction of the inner normal to $\partial \Omega^{\delta}$, in order to obtain a flower $\gamma$ with a singularity in some vertex $v \in \Omega^{\delta} \cap V$. 

If the trajectory $\delta$ contracts to a vertex, hence the corresponding graph $G_{\Delta}^{\delta}$ is a point and the proof is finished. 

Suppose now that $\delta$ contracts to a non-trivial flower $\gamma$. \emph{We can assume that such flower has its only singularity in} $v \in V$. Indeed, if it hasn't, then under folding $\gamma$ maps to a chord $l$ which connects $\mathcal{F}(v)$ with $\mathcal{F}(v')$ for some $v,v'\in V, v \neq v'$. But then one may perturb the initial direction of $\delta$ to obtain a perturbed trajectory $\delta'$, in such a way that a perturbed chord $l'$ (defined by $\mathcal{F}(\delta') \subset l'$) passes by $v$ but doesn't pass by $v'$ anymore, and the symbolic dynamics of the trajectory $\delta'$ is the same as that of $\delta$. This can be achieved since the set $\mathcal{F}(V)$ is a countable subset of $\Sph^1$.

Hence we obtain a flower $\gamma$ with a pistil in some vertex $v \in V$ with $m$ petals, where $m \in \{0,1,2,3\}$ for triangle tilings and $m \in \{0,1,2\}$ for quadrilateral tilings. Now approach each of the petals $\gamma_j$ (from the inside) by periodic trajectories $\delta_j \subset \Omega^{\gamma_j}$ as leaves of $\mathcal{P}^{\gamma}$. Then we have a decomposition: $G_{\Delta}^{\delta}=\cup_j G_{\Delta}^{\delta_j} \cup e_j$, where $e_j$ are the edges passing through $v$ inside each of the petals $\gamma_j$.\footnote{Moreover, the symbolic dynamics of the initial periodic trajectory $\delta$ is defined by the dynamics of the periodic trajectories $\delta_j$.} We define in this way a recurrence process (by the length of $\delta$) that will eventually stop at a trajectory of period $6$. This proves that the Tree Conjecture follows from the Bounded Flower Conjecture.
\end{proof}

\begin{figure}
\centering
\includegraphics[scale=0.5]{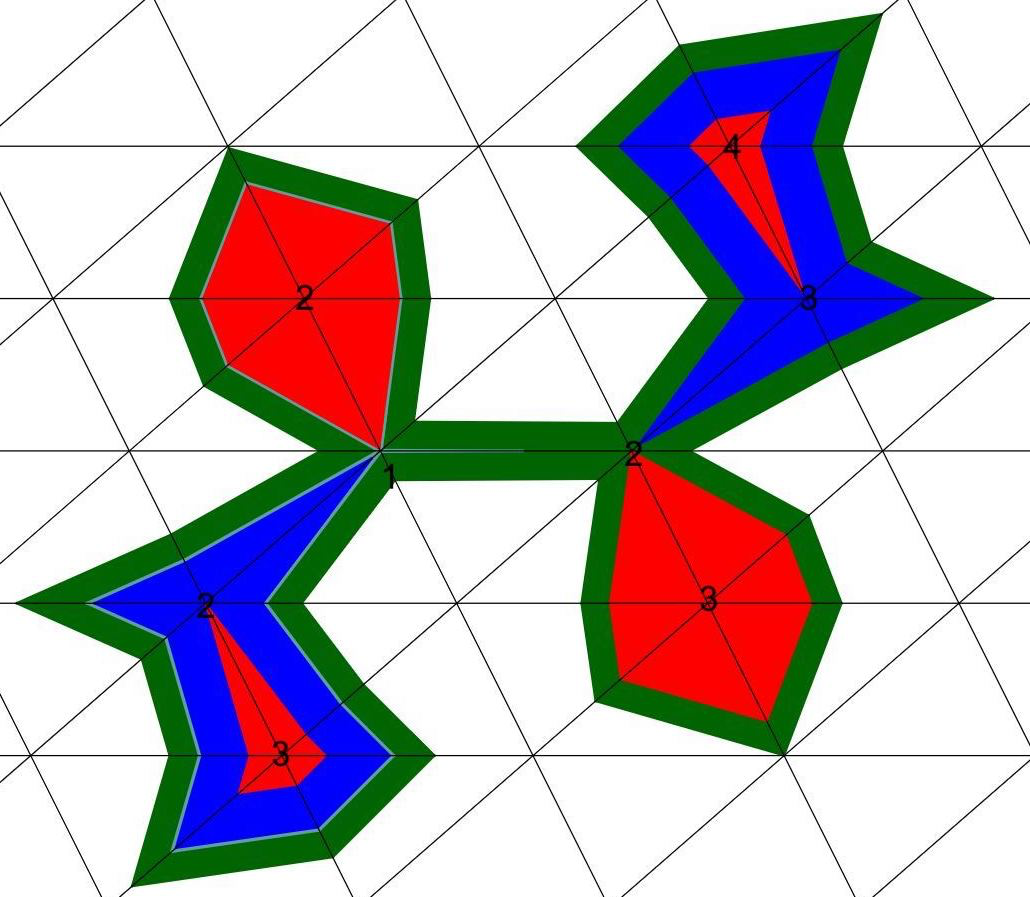}
\caption{\emph{Contraction of a periodic trajectory onto a sequence of flowers in the parallel foliation.} Figure by Ofir David.}
\label{fig:full}
\end{figure}

\begin{example}
The proof of the Tree Conjecture from the Bounded Flower Conjecture is constructive. For any periodic trajectory $\delta$, the tree $G_{\Delta}^{\delta}$ can be constructed  as a growing union of finite graphs, $G_{\Delta}^{\delta}= \cup_{k=1}^K G_k$. On each step one adds to the graph $G_k$ pistils of new flowers at each step with the edges inside their petals connected to these pistils. Any vertex $v \in \Omega^{\delta} \cap V$ is a pistil of a flower on some step of this process, by the point 2. of Lemma \ref{lemma:use_me_on parallel and radial}.

For a trajectory $\delta$ on Figure \ref{fig:full}, on the first step, one obtains a flower with one pistil and $3$ petals, and $G_1$ is a graph with one vertex marked by $1$ on Figure \ref{fig:full}. One then defines $G_2$ as a $1$-level tree with a parent marked by $1$ and three children marked by $2$ connected to it. Then, $G_3$ is the union of $G_2$ with edges going to additional vertices marked by $3$, and $G_4$ is $G_3$ with one additional vertex marked by $4$ and a corresponding edge. The final graph $G_{\Delta}^{\delta}$ is a tree.
\end{example}

\subsection{Obstructions to the Flower Conjecture.}\label{subs:proof of the tree conjecture}

Starting from here and till the end of this Section, the only tiling we consider is a periodic triangle tiling. 

The only cases of global behavior of bounded flowers contradicting the Bounded Flower Conjecture and respecting Proposition \ref{prop:list of possible local behaviours PTT} can be simply enumerated, see Figure \ref{fig:one big picture with what can go wrong} and the following list. In this list, in each of the cases we stress the set of petals $\mathcal{O}$ for which the Bounded Flower Conjecture doesn't hold.

Till the end of this Section, all of the flowers are considered bounded. We denote flowers by $\gamma$, and their petals by the same letter with indices.

\bigskip

\begin{center}
\textbf{Topological obstructions to the Bounded Flower conjecture.}
\end{center}

\begin{itemize}
\item[\textbf{2.1}] A flower has one petal $\gamma_1$ that passes by a pair of opposite tiles, $\mathcal{O}=\{\gamma_1\}$. 
\item[\textbf{2.2}] The petal $\gamma_1$ passes by a pair of neighbouring tiles in $e$ but $e \notin \Omega^{\gamma_1}$, $\mathcal{O}=\{\gamma_1\}$.
\item[\textbf{4.1}] A flower has two petals $\gamma_1, \gamma_2$ of different indices as curves. For \textbf{4.1a} and \textbf{4.1b}, a petal $\gamma_1$ passes by opposite tiles and a petal $\gamma_2$ passes by two neighbouring tiles. The two cases occur when \textbf{4.1a} $\Omega^{\gamma_2} \subset \Omega^{\gamma_1}$ (and $\mathcal{O}=\{\gamma_1\}$) or \textbf{4.1b} 
$\Omega^{\gamma_1} \subset \Omega^{\gamma_2}$ (and $\mathcal{O}=\{\gamma_2\}$). In the case \textbf{4.1c} both petals $\gamma_1$ and  $\gamma_2$ pass by neighbouring tiles but $\Omega^{\gamma_1} \subset \Omega^{\gamma_2}$ and $\mathcal{O}=\{\gamma_2\}$.
\item[\textbf{4.2}] The petals $\gamma_1$ and $\gamma_2$ are of the same index but $\gamma_1$ passes by opposite triangles, $\mathcal{O}=\{\gamma_1\}$.
\item[\textbf{6.1}] A flower has three petals $\gamma_j, j=1,2,3$. One of the loops $\gamma_3$ passes by opposite triangles and $\Omega^{\gamma_2} \subset \Omega^{\gamma_3}$, $\mathcal{O}=\{\gamma_3\}$.
\item[\textbf{6.2}] The petals $\gamma_j, j=1,2,3$ are such that $\Omega^{\gamma_3} \subset  \Omega^{\gamma_2} \subset  \Omega^{\gamma_1}$, and $\mathcal{O}=\{\gamma_1, \gamma_2\}$.
\item[\textbf{6.3}] A flower has three petals $\gamma_j$, and all of them pass by neighbouring tiles. Although $\Omega^{\gamma_1} \cup \Omega^{\gamma_2} \subset \Omega^{\gamma_3}$ and $\mathcal{O}=\{\gamma_3\}.$
\end{itemize}

\begin{remark}
This list is given modulo a possible change of orientations of all the petals. Without loss of generality, we fix the orientations as shown on Figure \ref{fig:one big picture with what can go wrong}.
\end{remark}

Our goal is now to prove that all of the cases listed above are not realized by triangle tiling billiard trajectories. We first present our main tools.

\begin{figure}
\includegraphics[scale=0.31]{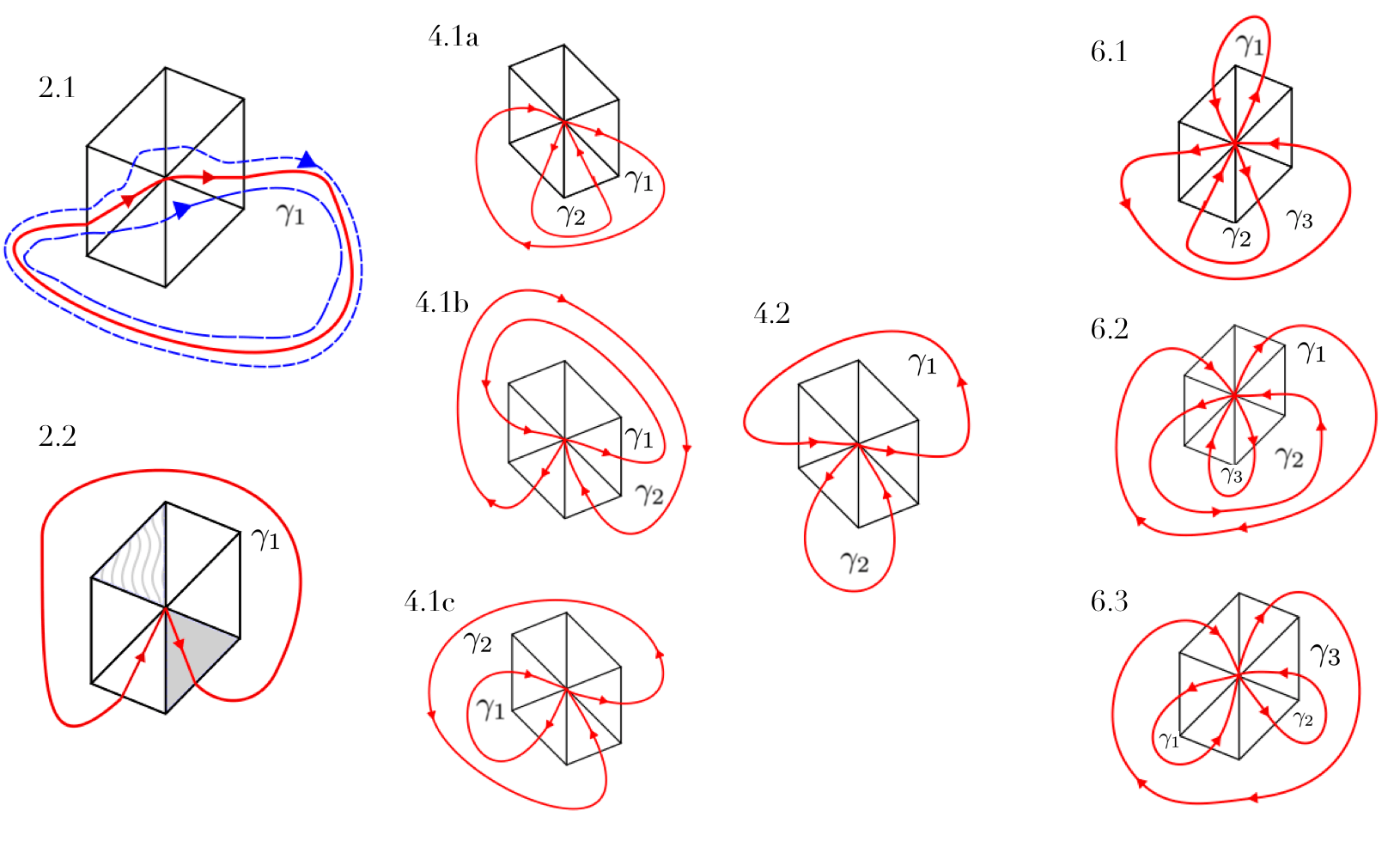}
\caption{\emph{A list of topological obstructions for the Bounded Flower Conjecture for triangle tiling billiards. } 
The first number in the name of the obstruction is the number $s$ of separatrix segments (twice the number of petals) in a bounded flower.
This Figure carries topological information, i.e. the way the trajectories are placed with respect to each other globally as well as the local combinatorics in the union $\Theta_v$ of six trianglular tiles containing  $v\in V$.}
\label{fig:one big picture with what can go wrong}
\end{figure}

\bigskip

As said before, the parallel and ray foliations can be defined for a very large class of tilings, see Remark \ref{rem:locally foldable}. But more specifically, we use two tools which are proper to a periodic triangle tiling.
First, the periodic symbolic words are the \emph{squares} of some symbolic words, see 5. in Theorem \ref{thm:triangle_tiling_billiards_info}. This \emph{square property} is a very strong property. We prove it in Theorem \ref{thm:one-more-time} but for now we use it as acquiered to obtain the proof of the Bounded Flower Conjecture.\footnote{If our reader wants to be sure that there is no logical loop in the argument (and they are right!), we send them to study Section \ref{sec:trop_cool} of the second part of the work. The Section \ref{sec:trop_cool} is completely independent from the first part of this work, and gives a proof of the square property as a corollary of the renormalization process introduced in it.} 

Second, we use the \emph{symmetry} of the ray foliation $\mathcal{R}$ for triangle tiling billiards centered at a singularity. Both of these tools are very strongly related to the special features of the periodic triangle tiling. For example, both of these two properties break for cyclic quadrilateral tilings.

\smallskip

We first show in detail how to exclude the cases \textbf{2.1} and \textbf{2.2}, and then treat all the other cases.

\subsection{Exclusion of topological obstructions for one petal flowers.}\label{subs:MAIN}
Define a sign alphabet $\mathcal{S}:=\{+,-\}$ and a sign map $\sigma: \mathcal{A}_{\Delta}^2 \rightarrow \mathcal{S}$ explicitely by $\sigma(ab)=\sigma(bc)=\sigma(ca)=+$ and $\sigma(ba)=\sigma(cb)=\sigma(ac)=-$. This sign map extends to the map $\sigma: \left(\mathcal{A}_{\Delta}^2\right)^{\N} \rightarrow \mathcal{S}^{\N}$ that we denote by the same letter. This map simplifies any accelerated symbolic code of a curve into its \textbf{sign code}.

Very importantly, we consider the (accelerated) symbolic codes of periodic trajectories as \emph{cyclic words}, i.e. for us the two periodic words $w_0 \ldots w_n$ and $w_k w_{k+1} \ldots w_n w_0 \ldots w_{k-1}$ are equal for any $j,k \in \{0,1, \ldots, n\}, k \neq 0$ and any $w_j \in \mathcal{A}_{\Delta}^2$. Any (accelerated) symbolic code is a \emph{square} of some word in the alphabet $\mathcal{A}_{\Delta}^2$, and hence in the alphabet $\mathcal{S}$. 

\begin{example}
The accelerated (cyclic) symbolic code of a $6$-periodic orbit in a triangle tiling billiard can be written as $(ab \; bc \; ca)^2$ but also as $(bc \; ca \; ab \;)^2$. Its corresponding sign code for both cases is $(+++)^2$.
\end{example}

\textbf{A word on the notation.} In the following, we denote by $\gamma_j$ the petals and by $\delta_j$ the periodic trajectories approaching these petals or their unions. Second, we identify the trajectories with their symbolic orbits. We denote by the same letter an oriented closed curve on the plane as well as a corresponding cyclic periodic word in the alphabet $\mathcal{A}_{\Delta}^2$ or in the alphabet $\mathcal{S}$, via the sign map.

\smallskip

In order to exclude the case \textbf{2.1}, one uses the square property.

\begin{proposition}\label{prop:first}
A configuration \textbf{2.1} is never realized by a bounded flower.
\end{proposition}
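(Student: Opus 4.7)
The plan is to combine the $180^\circ$ rotational symmetry at $v$ with the square property of periodic trajectories (statement 5 of Theorem \ref{thm:triangle_tiling_billiards_info}, proved independently in Section \ref{sec:trop_cool}).

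I first claim that $\gamma_1$ is invariant under the $180^\circ$ rotation $R_v$ about $v$. The vertex set of the tiling is a lattice, hence closed under $R_v$; in particular the tiling itself is $R_v$-invariant. Moreover $R_v$ preserves the side labels $a,b,c$: although $R_v$ exchanges the two colors of tiles, the side of any tile opposite a given vertex is mapped to the side of the image tile opposite the image vertex, so the alphabetical labeling transports correctly. Since $\gamma_1$ crosses $v$ through the opposite tiles $\theta$ and $\theta^v$, its tangent line at $v$ is $R_v$-invariant; tiling billiard trajectories being determined by a point and a line direction, $R_v(\gamma_1)=\gamma_1$ as an unoriented curve. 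Parametrizing $\gamma_1$ from $v$, the rotation acts by reversal, whence the symbolic code $w = e_1 e_2 \cdots e_n$ is a palindrome, $e_i = e_{n+1-i}$.

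I next approach $\gamma_1$ from inside $\Omega^{\gamma_1}$ by leaves of $\mathcal{P}^{\gamma_1}$: these are bounded and hence periodic trajectories $\delta$ by point 2 of Theorem \ref{thm:triangle_tiling_billiards_info}. The code of $\delta$ is obtained from $w$ by inserting, at the position of $v$, the length-$3$ word $xyz$ recording the edges $\delta$ crosses on its detour through the two intermediate tiles between $\theta$ and $\theta^v$. Because the angles at $v$ follow a period-$3$ pattern $\phi_1\phi_2\phi_3\phi_1\phi_2\phi_3$ (forced by the centrally symmetric structure of the tiling at every vertex), the labels of the six edges at $v$ also follow a period-$3$ pattern containing each of $a,b,c$, so any three consecutive such edges form a permutation of $\{a,b,c\}$; in particular $x \neq z$.

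Finally, by the square property, the code of $\delta$, of length $n+3$, is invariant under the cyclic shift by $m := (n+3)/2$, which also forces $n$ to be odd. Tracing this shift through the inserted segment $xyz$ identifies $x$ with $e_{(n-1)/2}$ and $z$ with $e_{(n+3)/2}$; the palindrome identity then gives $e_{(n-1)/2} = e_{n+1-(n-1)/2} = e_{(n+3)/2}$, so $x=z$, contradicting the distinctness of the inserted letters. The delicate point I foresee is verifying label-preservation under the color-swapping rotation $R_v$, but this reduces to an explicit rigid-motion computation in a single tile.
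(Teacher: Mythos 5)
Your argument hinges on the claim that $\gamma_1$ is invariant under the point reflection $R_v$, hence that its code is a palindrome, and this is where the proof breaks. A separatrix loop is not "determined by a point and a line direction" at $v$: its continuation through the vertex is defined by the folding, not by straight-line prolongation. Concretely, the two separatrix segments of $\gamma_1$ at $v$ fold onto the same chord $l$ issued from $p=\mathcal{F}(v)$, one inside $\mathcal{F}(\theta)$ and one inside $\mathcal{F}(\theta^v)$; these two folded triangles are mirror images across the diameter $d$ of $\mathcal{C}$ through $p$, so the segment of $\theta^v$ that is \emph{collinear} with the segment in $\theta$ unfolds from the reflected chord $\mathrm{refl}_d(l)$, not from $l$. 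Thus $R_v(\gamma_1)$ is the centrally symmetric trajectory $\gamma_1^v$ of Proposition \ref{prop:nohungry} (a leaf of the same ray foliation $\mathcal{R}_p$ but of a \emph{different} parallel foliation), and it coincides with $\gamma_1$ only when $l=d$, i.e.\ in the exceptional circumcenter case. For a generic bounded petal the two segments at $v$ make a genuine angle, the code of $\gamma_1$ is not a palindrome, and your identity $e_{(n-1)/2}=e_{(n+3)/2}$ has no justification; the square property applied to the single inner approximant alone yields no contradiction.

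The good news is that the rest of your bookkeeping is sound and the gap is repaired by exactly the ingredient the paper uses: a \emph{second} approximating trajectory. The six edges at $v$ do read $abcabc$ cyclically (three full lines through $v$, one per label), so the inner detour $xyz$ is a permutation of $\{a,b,c\}$ with $x\neq z$ — that part is correct. Now take also the outer periodic approximant $\deltaout\nsubseteq\Omega^{\gamma_1}$: it traverses the three tiles on the other side of $v$ and its detour reads the opposite rays, i.e.\ $zyx$, while its tail $f_1\cdots f_n$ outside $\Theta_v$ is the same as that of $\deltain$. Applying your shift-by-$m$ computation ($m=(n+3)/2$) to \emph{both} cyclic squares $xyzf_1\cdots f_n$ and $zyxf_1\cdots f_n$ gives $x=f_{m-2}$ and $z=f_{m-2}$ simultaneously, hence $x=z$, the desired contradiction — with no symmetry of $\gamma_1$ needed. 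This is precisely the paper's proof, which runs the same comparison of $\deltain$ and $\deltaout$ in the coarser sign alphabet ($+--+S$ versus $-++-S$).
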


\begin{proof}
Suppose that a configuration \textbf{2.1} is realized by some petal $\gamma_1$ in the vertex $v$ of a triangle tiling billiard, the only petal of its flower $\gamma, \gamma=\gamma_1$.

We now perturb $\gamma_1$ in the foliation $\mathcal{P}^{\gamma}$ in order to obtain two periodic trajectories $\deltain$ and $\deltaout$ in a small neighbourhood of $\gamma_1$ with $\deltain \subset {\Omega}^{\gamma_1}$ and $\deltaout \nsubseteq {\Omega}^{\gamma_1}$, see Figure \ref{fig:OTS2}.

We suppose that outside the set $\Theta_v$ the trajectories $\deltain, \deltaout$  and $\gamma_1$ pass by the same tiles. Then there exists a word $S \in \mathcal{S}^{\N}$ of even length such that the accelerated cyclic symbolic words of $\deltain$ and $\deltaout$ in the sign alphabet are: $\deltain=+--+S$ and $ \deltaout=-++-S$. We split $S=s \bar{s}$ into a concatenation of two words of equal length, $s,\bar{s} \neq \emptyset$. Then $\deltain=-+s \bar{s}+-$ and $\deltaout=+-s \bar{s}-+$.

But since the words $\deltain$ and $\deltaout$ are squares of some words in the alphabet $\mathcal{S}$, length considerations give that  simultaneously $-+s=\bar{s}+-$ and $+-s=\bar{s}-+.$ But these two equations imply that the word $s$ finishes  by $+$ and $-$ at the same time, which is a contradiction.
\end{proof}

In order to exclude \textbf{2.2}, one uses the symmetry of the ray foliation $\mathcal{R}_{p}$ with $p=\mathcal{F}(v)$.

\begin{proposition}\label{prop:pacman}
A configuration \textbf{2.2} is never realized by a bounded flower.
\end{proposition}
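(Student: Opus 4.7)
The plan is to follow the template of Proposition \ref{prop:first}: assuming configuration \textbf{2.2} is realized by a bounded petal $\gamma_1$ at vertex $v$ with separatrix segments in the adjacent tiles $\theta$ and $\theta^e$, I would perturb $\gamma_1$ in its parallel foliation $\mathcal{P}^{\gamma}$ to obtain periodic trajectories $\deltain \subset \Omega^{\gamma_1}$ and $\deltaout \nsubseteq \Omega^{\gamma_1}$, and derive a contradiction by combining their local combinatorics near $v$ with the square property (point 5 of Theorem \ref{thm:triangle_tiling_billiards_info}). Because $e \notin \Omega^{\gamma_1}$, the two perturbations bypass $v$ on opposite sides: $\deltaout$ crosses the edge $e$ (staying inside the pair $\theta, \theta^e$) while $\deltain$ is forced to detour through the other tiles of $\Theta_v$ adjacent to $\theta$ and $\theta^e$ along their second edges $e', e''$ at $v$.

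The new ingredient, the symmetry of the ray foliation $\mathcal{R}_p$ with $p=\mathcal{F}(v)$, enters through the local structure of separatrix directions around $v$: the alternation property of $\mathcal{R}_p$ on $\Theta_v$ established in Proposition \ref{prop:list of possible local behaviours PTT} pins down exactly which pairs of sides $\deltain$ and $\deltaout$ cross in the vicinity of $v$, and in what cyclic order, because the two segments of $\gamma_1$ in $\theta$ and $\theta^e$ are mirror-related in the folded picture across the line through $\mathcal{F}(e)$. Pushing this symmetry back to the tiled plane, one obtains prescribed local sign patterns for the accelerated codes of $\deltain$ and $\deltaout$, both sharing a common bulk word $S \in \mathcal{S}^{\N}$ coming from the tiles outside $\Theta_v$.

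Writing $S = s \bar s$ as in the proof of Proposition \ref{prop:first}, one ends up with expressions of the form $\deltaout = \varepsilon_1 \varepsilon_2 \, s \bar s \, \varepsilon_3 \varepsilon_4$ and $\deltain = \varepsilon_1' \varepsilon_2' \, u \, s \bar s \, u' \, \varepsilon_3' \varepsilon_4'$, where $u, u'$ are short words encoding the detour through $\theta^{e'}$ and $\theta^{e''}$, and the signs $\varepsilon_i, \varepsilon_i'$ are determined by the alternation and the Pacman orientation. Imposing the square condition on both cyclic words simultaneously should then force, on the terminal and initial signs of $s$, two incompatible constraints of the same flavor as the simultaneous $-+s = \bar s +-$ and $+-s = \bar s -+$ clash in Proposition \ref{prop:first}.

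The main obstacle I anticipate is that $\deltain$ visits strictly more tiles in $\Theta_v$ than $\deltaout$, so its local word is longer and in principle has more freedom. The delicate point is therefore to verify, using the ray-foliation symmetry, that the extra signs coming from the detour are themselves rigidly determined (up to the shared bulk) and cannot be tuned to absorb the sign incompatibility; this is where the specific structure of $\mathcal{R}_p$ near $v$, rather than just abstract perturbation, really does the work.
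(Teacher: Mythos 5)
Your plan does not go through, and the failure is concrete: for configuration \textbf{2.2} the square property does not produce the sign clash you are counting on. Carry out the computation you defer. A non-degenerate separatrix segment ending at $v$ must enter its tile through the edge opposite to $v$, so with the six edges at $v$ labelled cyclically $a,b,c,a,b,c$ the local accelerated sign words are forced: $\deltaout=--\,S$ (the outer perturbation cuts across $e$ and visits only two tiles of $\Theta_v$, since $e\notin\Omega^{\gamma_1}$) and $\deltain=+----+\,S$ (the inner one detours through all six tiles), with the same bulk $S=s\bar s$. Imposing the square condition on both cyclic words gives $s_j=\bar s_{j+1}$ and $s_j=\bar s_{j+3}$, hence $s_{j+2}=s_j$, together with the boundary conditions $s_{|s|}=s_{|s|-1}=\bar s_1=\bar s_2=-$ and $s_{|s|-2}=\bar s_3=+$. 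These are simultaneously satisfiable: for instance $s=-+--$, $\bar s=--+-$ makes both words genuine squares (of lengths $10$ and $14$). So, unlike case \textbf{2.1}, there is no contradiction to be extracted from the symbolic codes, and no further ``pinning down'' by the alternation property can help, because the local signs above are already completely determined by the combinatorics.

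This is exactly why the paper switches tools for this case. The relevant feature of \textbf{2.2} is that it makes the flower \emph{hungry}: since $e\notin\Omega^{\gamma_1}$, the tile $\theta_0^v$ opposite (at $v$) to a tile $\theta_0$ crossed by $\gamma_1$ is entirely contained in $\Omega^{\gamma_1}$. Proposition \ref{prop:nohungry} then shows that the ray foliation $\mathcal{R}_p$ with $p=\mathcal{F}(v)$ is \emph{globally} centrally symmetric about $v$, hence contains the centrally symmetric flower $\gamma^v$; the hungry configuration forces $\gamma^v$ to meet $\gamma_1$ away from $v$, which is impossible for two leaves of one foliation. This is a statement about entire leaves, not the local direction-alternation you invoke, and it disposes of \textbf{4.1c}, \textbf{6.2} and \textbf{6.3} in the same stroke.
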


We observe that for the case \textbf{2.2} a following property holds. There exists a petal $\gamma_1$ and a tile $\theta_0 \ni v$ such that $\gamma_1 \cap \theta_0 \neq \emptyset$ and $\theta_0^v \subset \Omega^{\gamma_1}.$ In this case, we say that the tile $\theta_0$ is a \textbf{hungry tile} and that it eats up $\theta_0^v$. We call a flower $\gamma$ (not necessarily bounded) a \textbf{ hungry flower} if there exists a petal in this flower passing by a hungry triangle, see Figure \ref{fig:flower_notation}. This property is shared by configurations \textbf{2.2, 4.1c, 6.2} and \textbf{6.3}. In order to prove Proposition \ref{prop:pacman}, we prove a more general statement that excludes all the cases that we have just mentionned. 

\begin{figure}
\centering
\includegraphics[scale=0.05]{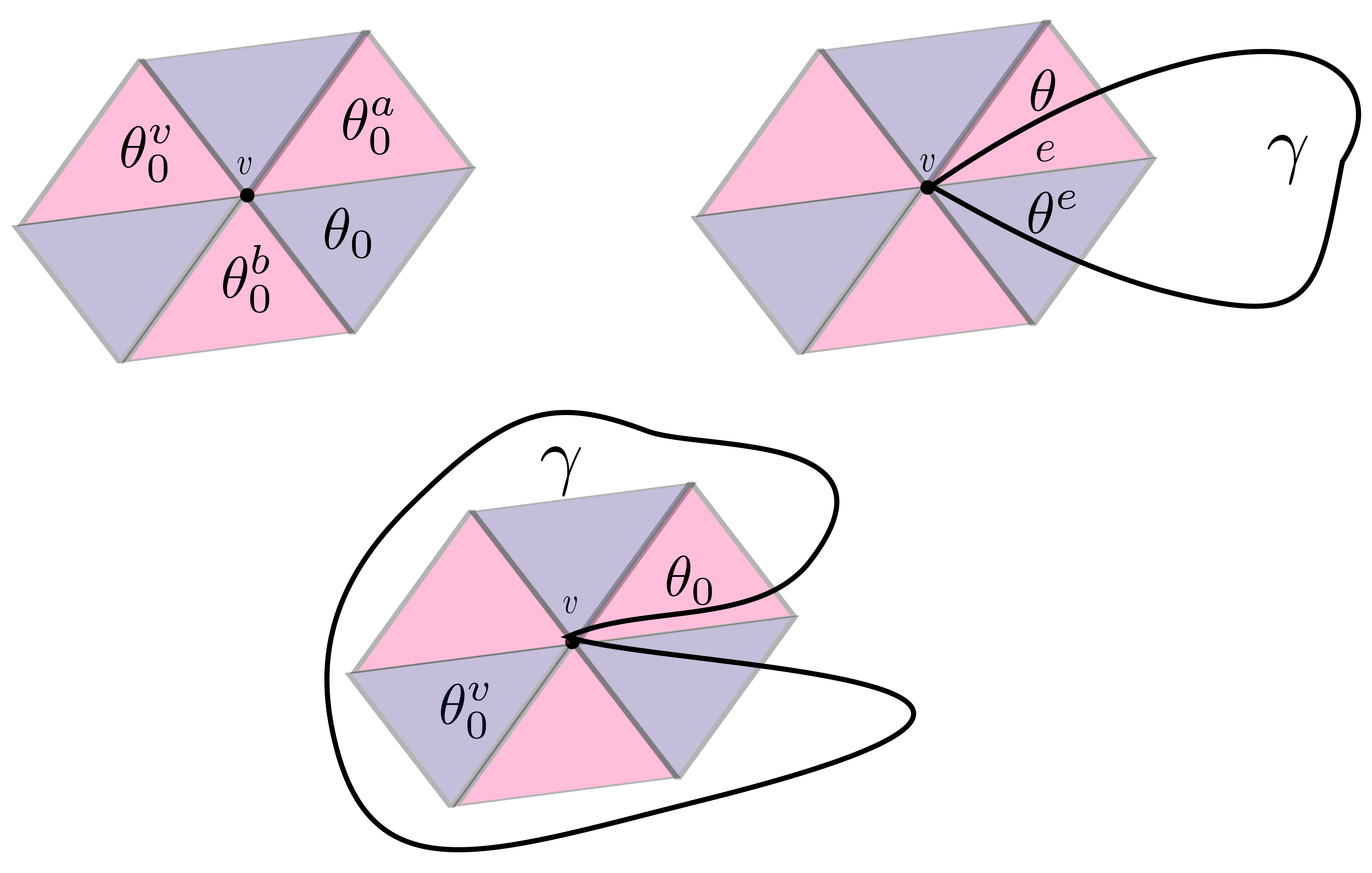}
\caption{\emph{Different notations related to triangle tilings and flowers in them.} From left to right, from top to bottom: first, for a tile $\theta_0$ we mark here two out of three of its neighbouring tiles $theta_0^a$ (sharing an edge $a$) and $\theta_0^b$ (sharing an edge $b$) as well as its opposite tile $\theta_0^v$ in the vertex $v\in V$; second, an illustration for the Flower Conjecture on the triangle tiling, a loop $\gamma$ satisfying the Flower Conjecture passes by $\theta$ and $\theta^e$ and the set $\Omega^{\gamma}$ contains the edge $e$; third, a petal of a hungry flower $\gamma$ is represented on the last picture, passing by a tile $\theta_0$ and such that the opposite tile $\theta_0^v$ is contained inside $\Omega^{\gamma}$.}
\label{fig:flower_notation}
\end{figure}

\begin{proposition}\label{prop:nohungry}
1. The ray foliation $\mathcal{R}_p$ with $p=\mathcal{F}(v), v \in V$ is centrally symmetric with respect to $v$, modulo a change of orientation of leaves in opposite tiles.
2. A configuration of separatices forming a hungry flower is never realized by triangle tiling billiard foliations.
\end{proposition}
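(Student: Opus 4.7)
The argument splits along the two parts of the statement, with Part 1 supplying the structural input that Part 2 exploits.

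For Part 1, I will describe $\mathcal{R}_p|_{\Theta_v}$ intrinsically. Fix $\theta \in \Theta_v$; since $\mathcal{F}|_\theta$ is an isometry sending $v$ to $p$, the pullback of a chord through $p$ that meets $\mathcal{F}(\theta)$ is a segment in $\theta$ with $v$ as an endpoint. Letting the chord direction vary over all chords through $p$, the pullbacks sweep out the full pencil of rays from $v$ filling the wedge $W_\theta$ of $\theta$ at $v$. Thus $\mathcal{R}_p|_{\Theta_v}$ is, tile by tile, the pencil of rays based at $v$ in the angular sector $W_\theta$. In a periodic triangle tiling the opposite tile $\theta^v$ is obtained from $\theta$ by the central symmetry $\sigma_v$ around $v$, so $W_{\theta^v} = W_\theta + \pi$; hence $\sigma_v$ carries the pencil in $\theta$ bijectively onto the pencil in $\theta^v$ and therefore permutes leaves of $\mathcal{R}_p|_{\Theta_v}$. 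Since $\sigma_v$ reverses every ray from $v$, this bijection preserves leaves only after reversing orientation on opposite tiles, which is the announced ``modulo'' clause.

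For Part 2, I argue by contradiction. Suppose a hungry flower exists: there is a petal $\gamma_1$ of a flower in some $\mathcal{P}_\tau$ with pistil $v$ and a tile $\theta_0 \ni v$ such that $\gamma_1 \cap \theta_0 \neq \emptyset$ and $\theta_0^v \subset \Omega^{\gamma_1}$. Since $\gamma_1$ passes through $v$, by point 4 of Lemma~\ref{lemma:use_me_on parallel and radial} it is also a leaf of $\mathcal{R}_p$ with $p := \mathcal{F}(v)$. Applying Part 1 to the separatrix segment $\gamma_1 \cap \theta_0$ produces a ray in $\theta_0^v$ starting at $v$ which is a piece of a well-defined leaf $M$ of $\mathcal{R}_p$. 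I then distinguish two cases.

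If $M = \gamma_1$, then $\gamma_1$ has a separatrix segment at $v$ in $\theta_0^v$ in addition to its segments in $\theta_0$ (and possibly $\theta_0^e$). I check case by case that in each hungry configuration \textbf{2.2}, \textbf{4.1c}, \textbf{6.2}, \textbf{6.3} of Figure~\ref{fig:one big picture with what can go wrong} this forces either a segment count $s$ incompatible with the configuration or an exit of $\gamma_1$ through an edge of $\theta_0^v$, which breaks the hypothesis $\theta_0^v \subset \Omega^{\gamma_1}$. If instead $M \neq \gamma_1$, then $M$ and $\gamma_1$ are distinct leaves of the same foliation $\mathcal{R}_p$, so they are disjoint at regular points; thus $M$ cannot cross $\gamma_1 = \partial\Omega^{\gamma_1}$. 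As $M$ starts at $v$ and immediately enters $\theta_0^v \subset \Omega^{\gamma_1}$, it is trapped in the bounded domain $\Omega^{\gamma_1}$. By point 2 of Theorem~\ref{thm:triangle_tiling_billiards_info}, $M$ is a closed periodic trajectory, hence a petal of a flower at $v$ in some parallel foliation $\mathcal{P}_{\tau^\ast}$ with $\tau^\ast \neq \tau$, strictly inside $\Omega^{\gamma_1}$.

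The hard part is closing this second case. My plan is to extract a strictly decreasing monotone quantity — the number of tiles enclosed in $\Omega^{\gamma_1}$, or equivalently the area — and iterate the construction with $M$ in place of $\gamma_1$. Two subtleties must be handled with care: first, $M$ lives in a different parallel foliation $\mathcal{P}_{\tau^\ast}$, so the ``hungry'' property must be re-verified rather than inherited (the relevant hungry tile for $M$ is $\theta_0^v$ and its opposite is $\theta_0$, which now sits on $\partial\Omega^{\gamma_1}$ and must still contain a companion tile fully inside $\Omega^M$); second, the descent must terminate in finitely many steps since only finitely many tiles fit inside any bounded region. Any such infinite nested chain of periodic trajectories through $v$ contradicts this finiteness, ruling out the hungry flower altogether.
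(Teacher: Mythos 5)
Your Part 1 only establishes the \emph{local} symmetry of $\mathcal{R}_p$ restricted to $\Theta_v$ (the pencils of ray-germs at $v$ in opposite wedges correspond under $\sigma_v$), whereas the proposition — and your own Part 2 — needs the \emph{global} statement: the central symmetry $\sigma_v$ about a vertex is a symmetry of the entire periodic triangle tiling, hence of the billiard dynamics, so the full leaf of $\mathcal{R}_p$ through $\sigma_v(\gamma_1\cap\theta_0)$ is exactly $\sigma_v(\gamma_1)$ with reversed orientation, not merely some leaf $M$ that happens to start along the symmetric direction. This is what the paper proves in point 1, and it is precisely the ingredient your Part 2 is missing.

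The genuine gap is the case $M\neq\gamma_1$. Your descent does not close, and as set up it cannot: for the induction hypothesis to reproduce, $M$ would have to be a petal of a hungry flower with hungry tile $\theta_0^v$ eating $\theta_0$, i.e.\ $\theta_0\subset\Omega^M$; but $\Omega^M\subset\Omega^{\gamma_1}$ while $\gamma_1$ crosses the interior of $\theta_0$, so part of $\theta_0$ lies outside $\Omega^{\gamma_1}$ and hence outside $\Omega^M$. You flag this yourself ("the hungry property must be re-verified rather than inherited") but never supply the verification, so the proof is incomplete. The repair is to use the global symmetry instead of a single ray-germ: $M=\sigma_v(\gamma_1)$ is a \emph{closed} curve with $\Omega^M=\sigma_v(\Omega^{\gamma_1})$, of the same area as $\Omega^{\gamma_1}$ and containing $\theta_0$ in its interior. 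Since $\sigma_v(\gamma_1)$ enters $\mathring\Omega^{\gamma_1}$ through $\theta_0^v$ and $\gamma_1$ enters $\mathring\Omega^{\sigma_v(\gamma_1)}$ through $\theta_0$, the two closed curves must cross at a point other than $v$ (near $v$ they occupy disjoint sectors of $\Theta_v$, so they cannot cross there; and equality $M=\gamma_1$ is excluded because $\gamma_1$ would then cross $\theta_0^v$). Two unions of leaves of the same ray foliation cannot cross outside the singularity — contradiction in one step, with no descent needed. This is the paper's argument, and it also absorbs your case $M=\gamma_1$.
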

\begin{proof}
For any separatrix segment of the trajectory $\gamma_0$ starting in a vertex $v$ and in the tile $\theta_0 \ni v$, consider a separatrix segment starting in $v$ and crossing the tile $\theta_0^v$ such that it lies on the same line as the initial segment. Simply by symmetry, the corresponding trajectory $\gamma_0^v$ is \emph{globally} centrally symmetric to $\gamma_0$, although its orientation is different from that of $\gamma_0$. This proves 1.

Consider now a hungry flower $\gamma$ in the vertex $v$ and include it in its ray foliation $\mathcal{R}^{\gamma}$. This foliation contains a symmetric flower $\gamma^v$ defined as in the proof of point 1 by symmetry. But the hungry flower configuration implies that these two flowers $\gamma$ and $\gamma^v$ intersect outside $v$. This is not possible since $\gamma$ and $\gamma^v$ are leaves of the same foliation, see Figure \ref{fig:symmetry of radial folation}.
\end{proof}

\begin{remark}
The two tiles $\theta_0$ and $\theta_0^v$ fold into two triangles in the bellow, symmetric with respect to the diameter $d$ of the circle $\mathcal{C}$ such that $d \ni \mathcal{F}(p)$. The corresponding symmetric trajectories $\gamma_0$ and $\gamma_0^v$ constructed in the proof of the above Proposition \ref{prop:nohungry} fold into the chords symmetric with respect to the same diameter $d$, see Figure \ref{fig:symmetry of radial folation}. In the ray foliation $\mathcal{R}_p$ the trajectories crossing $\theta_0$ ($\theta_0^v$) go out of (into) $v$.
\end{remark}

\begin{corollary}
Configurations \textbf{2.2, 4.1c, 6.2} and \textbf{6.3} are never realized by bounded flowers.
\end{corollary}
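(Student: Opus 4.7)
The strategy is to recognize each of the four configurations as an instance of a \textbf{hungry flower} and then invoke part 2 of Proposition \ref{prop:nohungry}. Recall that a flower is hungry as soon as one of its petals $\gamma_j$ passes through a tile $\theta_0 \ni v$ with $\theta_0^v \subset \Omega^{\gamma_j}$. The plan is therefore to exhibit such a pair $(\theta_0, \gamma_j)$ in each of the configurations \textbf{2.2}, \textbf{4.1c}, \textbf{6.2}, \textbf{6.3}, reading off the information from Figure \ref{fig:one big picture with what can go wrong} and the local classification of Proposition \ref{prop:list of possible local behaviours PTT}.

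Start with \textbf{2.2}: the single petal $\gamma_1$ enters $\Theta_v$ through a tile $\theta$, exits through the neighbouring tile $\theta^e$ (sharing the edge $e \ni v$), and by hypothesis the edge $e$ lies outside $\Omega^{\gamma_1}$. Among the two admissible local pictures with $s=2$ from Figure \ref{fig:possible_local_behavior of separatrices in PTT}, this identifies $\gamma_1 \cap \Theta_v$ uniquely: the petal cuts $\Theta_v$ so that $\theta$ and $\theta^e$ lie on the outer side of $\gamma_1$, while the four remaining tiles of $\Theta_v$, which include $\theta^v$, lie on the inner side. In particular $\theta^v \subset \Omega^{\gamma_1}$, so $\theta$ is a hungry tile and the flower is hungry.

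An analogous reading of Figure \ref{fig:one big picture with what can go wrong} handles the other three configurations. In each case, the offending petal $\gamma_j \in \mathcal{O}$ is the outermost petal of a nested family, so the domains of the inner petals, which together already occupy several of the six tiles of $\Theta_v$ around $v$, are contained in $\Omega^{\gamma_j}$. This forces at least one of the tiles opposite (at $v$) to those crossed by $\gamma_j$ to lie in $\Omega^{\gamma_j}$, giving a hungry pair $(\theta, \gamma_j)$. Concretely one takes $\gamma_j = \gamma_2$ in \textbf{4.1c}, then $\gamma_j \in \{\gamma_1, \gamma_2\}$ in \textbf{6.2} using the chain $\Omega^{\gamma_3} \subset \Omega^{\gamma_2} \subset \Omega^{\gamma_1}$ successively, and finally $\gamma_j = \gamma_3$ in \textbf{6.3} where $\Omega^{\gamma_1} \cup \Omega^{\gamma_2} \subset \Omega^{\gamma_3}$ swallows the opposites at $v$ of both tiles crossed by $\gamma_3$.

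The only non-trivial step is this local bookkeeping around the pistil $v$: for each topological picture one must correctly identify which of the six tiles of $\Theta_v$ is crossed by which petal and on which side of it sits. Once that is done, the contradiction with Proposition \ref{prop:nohungry} is immediate, and no further dynamical input is required. The content of the corollary is thus the observation that Figure \ref{fig:one big picture with what can go wrong} exhausts the remaining topological violations of the Bounded Flower Conjecture and that each of them is visibly hungry.
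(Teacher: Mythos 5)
Your proof is correct and takes essentially the same route as the paper: the text immediately preceding Proposition~\ref{prop:nohungry} already records that configurations \textbf{2.2}, \textbf{4.1c}, \textbf{6.2} and \textbf{6.3} each exhibit a petal passing through a tile $\theta_0\ni v$ whose opposite $\theta_0^v$ is contained in that petal's interior (a hungry flower), and the corollary is then an immediate application of point 2 of Proposition~\ref{prop:nohungry}. The local bookkeeping you add around the pistil — using that each tile of $\Theta_v$ carries at most one separatrix segment, so any tile not crossed lies entirely on one side of the petal — correctly justifies the paper's one-line assertion.
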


The possible obstructions that are left to exclude are \textbf{4.1a, 4.1b, 4.2}, and \textbf{6.1}. They are treated analogously to \textbf{2.1} in the next paragraph, by using the square property of accelerated symbolic codes in the sign alphabet.

\begin{figure}
\centering
\includegraphics[scale=0.08]{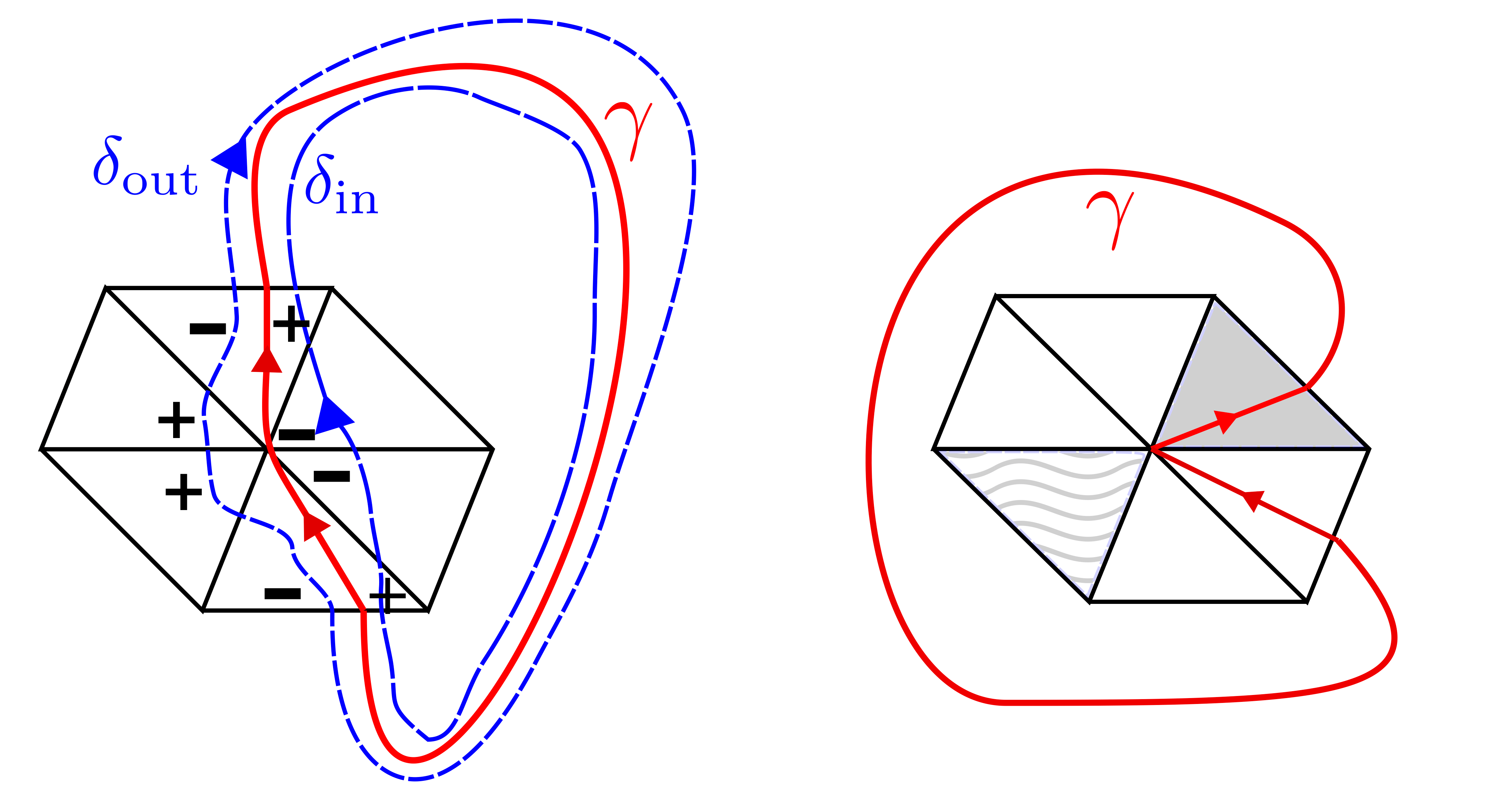}
\caption{This Figure serves several illustration purposes:
1. For any petal $\gamma$, the Flower Conjecture obstructions for it are represented by two possible behaviours represented on this Figure;
2. if $\gamma$ is an only petal in its bounded flower, then this Figure represents the obstructions \textbf{2.1} and \textbf{2.2};
3. this is an illustration for the proofs of Propositions \ref{prop:first} and \ref{prop:pacman}. For the case \textbf{2.1}: a petal $\gamma_1$ and two periodic trajectories $\deltain, \deltaout$ approaching it in the parallel foliation $\mathcal{P}^{\gamma_1}$. The sign codes of $\deltain$ and $\deltaout$ while passing by $\Theta_v$ are correspondingly $+--+$ and $-++-$. For the case \textbf{2.2}, a hungry tile $\theta_0$ (with a petal $\gamma_1$ passing through it) eats up the tile $\theta_0^v$.}
\label{fig:OTS2}
\end{figure}
\smallskip

\begin{figure}
\centering
\includegraphics[scale=0.4]{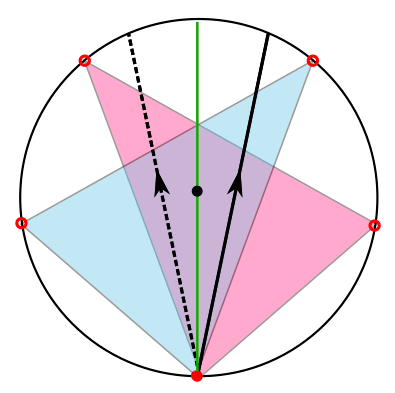}
\includegraphics[scale=0.04]{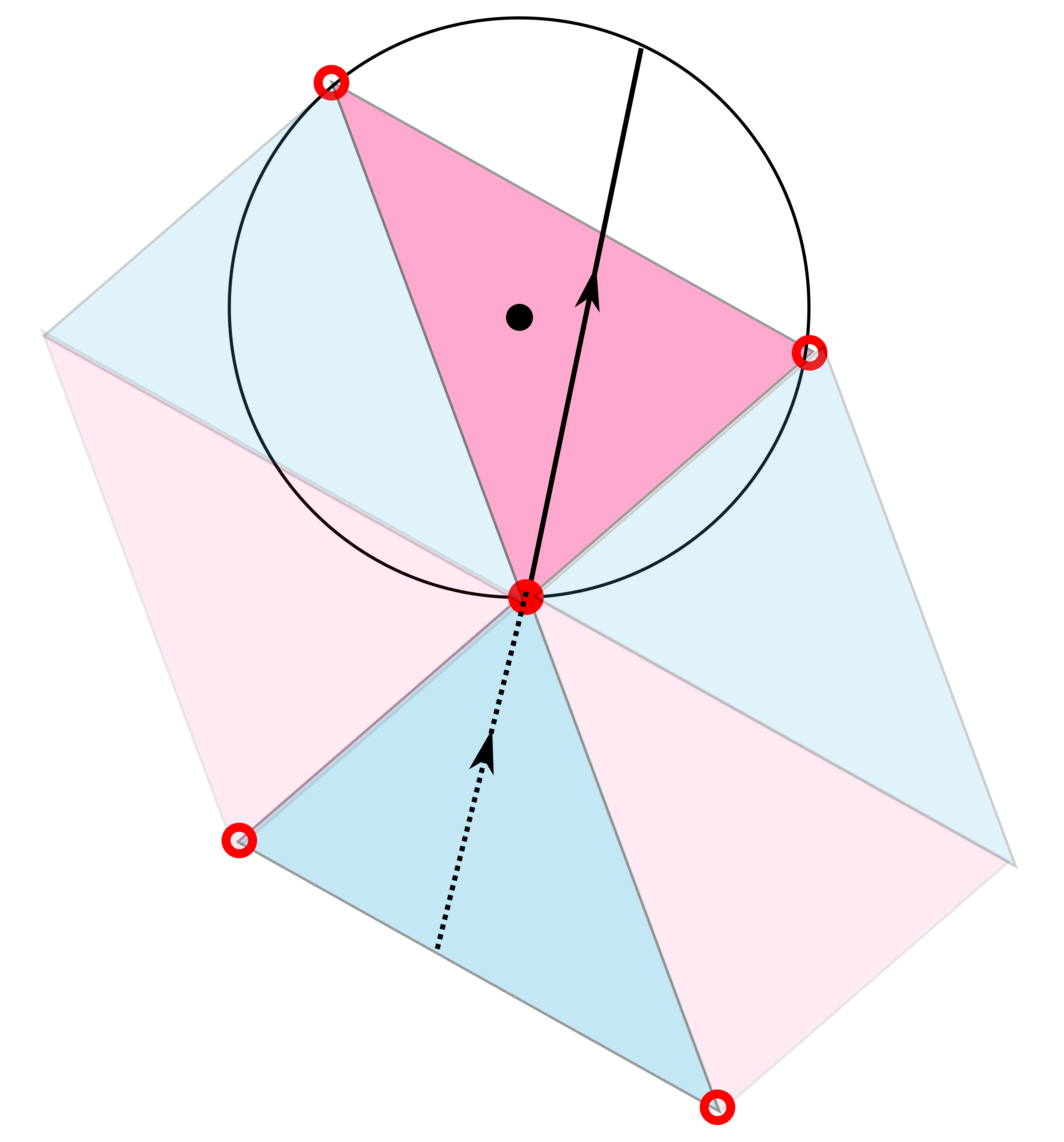}
\includegraphics[scale=0.04]{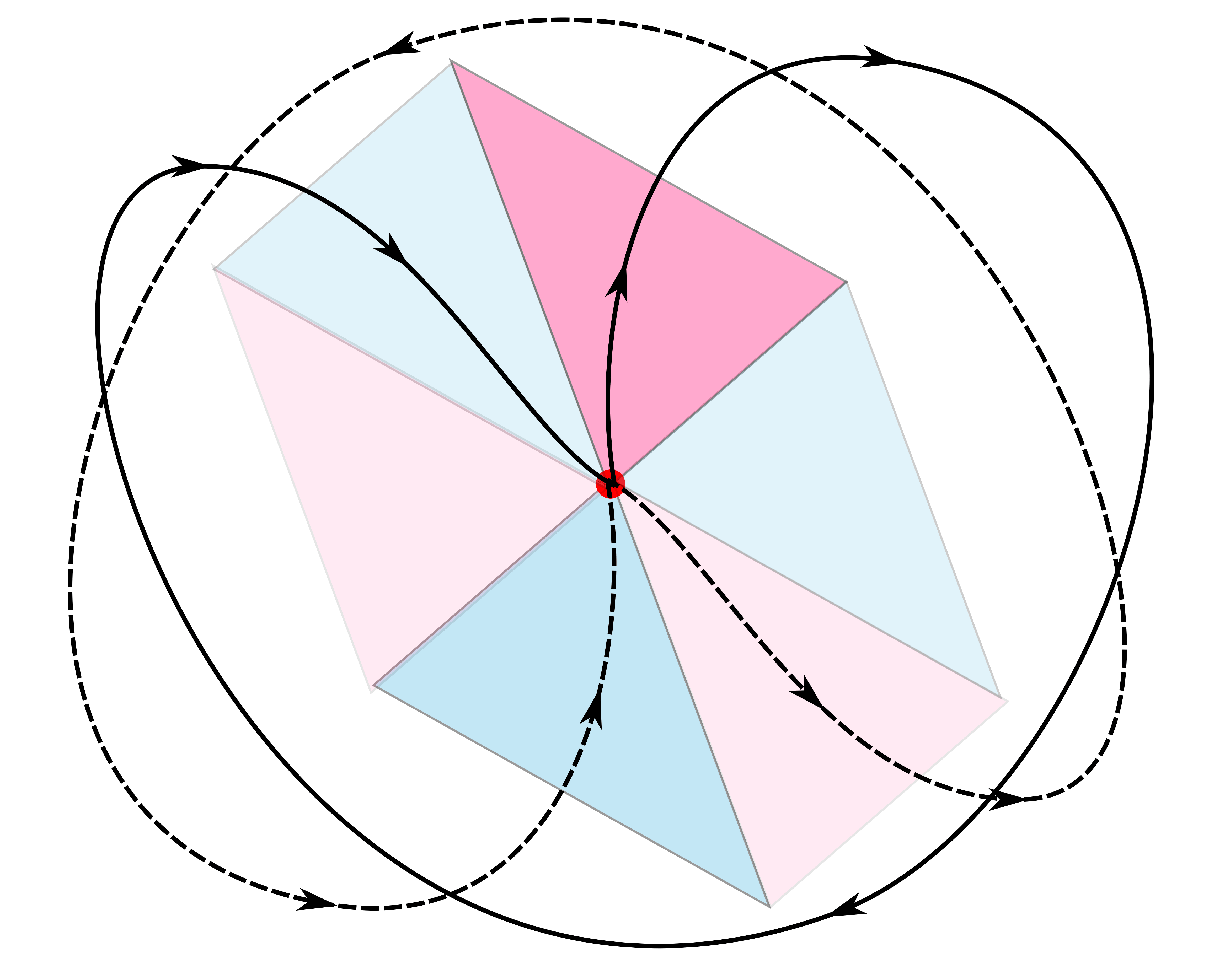}
\caption{\emph{Symmetry of the ray foliation} $\mathcal{R}_p$\emph{ with} $p=\mathcal{F}(v), v \in V$. From left to right:
1. Folded triangles  $\mathcal{F}(\theta_0)$ and $\mathcal{F}(\theta_0^v)$ are symmetric to each other with respect to the diameter $d \ni p$. 2. Associated unfolded segments. 3. A hungry flower $\gamma$ and a flower $\gamma^v$: their petals have to intersect but they can't! }
\label{fig:symmetry of radial folation}
\end{figure}

\subsection{Exclusion of remaining cases and finalisation of the proof.}\label{subs:REMAIN}

\begin{proposition}
Configurations \textbf{4.1a} and \textbf{4.1b} are never realized by bounded flowers.
\end{proposition}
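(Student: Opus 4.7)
The approach mirrors the proof of Proposition \ref{prop:first}. In both configurations \textbf{4.1a} and \textbf{4.1b}, the petal $\gamma_1$ passes through a pair of opposite tiles at $v$, which is the same local feature that drove the contradiction in case $s=2$. The plan is to apply the argument of Proposition \ref{prop:first} to $\gamma_1$, using the presence of $\gamma_2$ only to locate periodic perturbations on the two sides of $\gamma_1$.

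First, I would include the whole flower $\gamma = \gamma_1 \cup \gamma_2 \cup \{v\}$ in its parallel foliation $\mathcal{P}^{\gamma}$ and choose periodic trajectories $\deltain, \deltaout$ hugging $\gamma_1$ from its two sides. In case \textbf{4.1a}, the annular region $\Omega^{\gamma_1} \setminus \overline{\Omega^{\gamma_2}}$ is bounded, so point 2 of Theorem \ref{thm:triangle_tiling_billiards_info} ensures that any leaf of $\mathcal{P}^{\gamma}$ lying in it is periodic, providing $\deltain$; the periodic leaf $\deltaout$ just outside $\gamma_1$ is produced exactly as in the proof of Proposition \ref{prop:first}. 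In case \textbf{4.1b} the roles of inside and outside with respect to $\gamma_1$ are swapped: $\deltain$ lies inside $\gamma_1$, and $\deltaout$ sits in the bounded annular region $\Omega^{\gamma_2} \setminus \overline{\Omega^{\gamma_1}}$, again periodic by point 2 of Theorem \ref{thm:triangle_tiling_billiards_info}.

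Next I would compute the accelerated sign codes of $\deltain$ and $\deltaout$ restricted to the passage through $\Theta_v$. By Proposition \ref{prop:list of possible local behaviours PTT} the local arrangement of the four separatrix segments at $v$ is uniquely determined up to $\tau \mapsto -\tau$; combined with the fact that the two $\gamma_1$-separatrices occupy an opposite pair of tiles in $\Theta_v$ and the two $\gamma_2$-separatrices a neighbouring pair, one reads off that both $\deltain$ and $\deltaout$ cross precisely the same pair of opposite tiles traversed by $\gamma_1$, but on the two sides of $v$. The local contributions to their sign codes are therefore $+--+$ for $\deltain$ and $-++-$ for $\deltaout$, exactly as in the proof of Proposition \ref{prop:first}. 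Outside $\Theta_v$ the two perturbations follow $\gamma_1$ through the same sequence of tiles, so the full cyclic sign codes read $\deltain = -+\,s\,\bar{s}\,+-$ and $\deltaout = +-\,s\,\bar{s}\,-+$ with $s, \bar{s}$ of equal length. The square property (point 5 of Theorem \ref{thm:triangle_tiling_billiards_info}) applied to both yields the simultaneous equalities $-+s = \bar{s}+-$ and $+-s = \bar{s}-+$, forcing $+ = -$ upon comparison of first letters, a contradiction.

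The main obstacle is the bookkeeping for the local sign-code window: one has to verify that the presence of $\gamma_2$ does not perturb $\deltain$ or $\deltaout$ into any extra tile of $\Theta_v$ beyond the opposite pair crossed by $\gamma_1$, and that outside $\Theta_v$ the two perturbations follow $\gamma_1$ through exactly the same sequence of tiles. The first point is a direct consequence of the unique $s=4$ local configuration given by Proposition \ref{prop:list of possible local behaviours PTT} together with the positions of the $\gamma_1$-separatrices; the second follows from the fact that $\deltain$ and $\deltaout$ are leaves of the foliation $\mathcal{P}^{\gamma}$ infinitesimally close to the bounded petal $\gamma_1$. Once these verifications are in place, the square-property contradiction of Proposition \ref{prop:first} transfers verbatim.
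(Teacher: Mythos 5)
There is a genuine gap. Your whole argument rests on the claim that the two perturbations $\deltain$ and $\deltaout$ of $\gamma_1$ traverse \emph{the same} sequence of tiles outside $\Theta_v$, so that their cyclic sign codes are $-+\,s\bar s\,+-$ and $+-\,s\bar s\,-+$ with a common word $s\bar s$. This is exactly what fails in configurations \textbf{4.1a} and \textbf{4.1b}: the two petals are \emph{nested} ($\Omega^{\gamma_2}\subset\Omega^{\gamma_1}$ in \textbf{4.1a}), so the region between them is a pinched annulus whose boundary is the single closed curve $\gamma_1\cup\gamma_2$ through $v$. A leaf of $\mathcal{P}^{\gamma}$ lying in that region and hugging $\gamma_1$ cannot cross the separatrix $\gamma_2$, hence it is forced to go around $\gamma_1$ \emph{and} around $\gamma_2$ in turn; its symbolic code is a concatenation involving the codes of both petals and is strictly longer than that of the leaf on the other side of $\gamma_1$, which follows $\gamma_1$ alone. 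So in \textbf{4.1a} your $\deltain$, and in \textbf{4.1b} your $\deltaout$, does not have the code you assign to it, and the two word equations you extract (which force $+=-$) are not both available. The same difficulty cannot be dodged by moving the perturbation deeper into the annulus: then it no longer passes by the tiles of $\gamma_1$ and carries no usable code.

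This is precisely why the paper's proof does not reuse the two-trajectory scheme of the case \textbf{2.1} but introduces \emph{three} periodic leaves: $\delta_1$ inside the inner petal, $\delta_2$ outside the outer petal, and a third leaf $\delta$ in the pinched annulus whose code $++w--w++u-++-u$ is built from the codes $(w--)^2$ and $(u-++-)^2$ of the first two. The contradiction then comes from imposing the square property on $\delta$, yielding the single incompatible word equation $-w++u-+=+-u++w-$. Your local analysis of the $s=4$ picture near $v$ and the use of Proposition \ref{prop:list of possible local behaviours PTT} are fine as far as they go, but the global step must be replaced by this three-trajectory bookkeeping; as written, the argument of Proposition \ref{prop:first} does not transfer.
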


\begin{proof}
Consider the case \textbf{4.1a}. We denote $\gammain:=\gamma_2$ and $\gammaout:=\gamma_1$. We approach $\gammain$ by a  trajectory $\delta_1$ from the inside ($\delta_1 \subset \Omega^{\gammain}$), and $\gammaout$ by a trajectory $\delta_2$ from outside ($ \delta_2 \subset \R^2 \setminus \Omega^{\gammaout}$). One can choose a trajectory $\delta$ inside the set $\Omega^{\gammaout} \setminus  \Omega^{\gammain}$ close enough to its boundary (in such a way that it passes by the same tiles as $\gammain \cup \gammaout$). All of the trajectories $\delta_1, \delta_2, \delta$ are chosen to be periodic, non-singular and belong to the same foliation $\mathcal{P}^{\gamma}$. 

Then, by the square property and from the combinatorics of such a configuration, we conclude that there exist the words $w,u \in \mathcal{S}^{\N}$ such that
\begin{align*}
\delta_1=(w--)^2,\\
\delta_2=(u-++-)^2,\\
\delta=++w--w++u-++-u.
\end{align*}

But since $\delta$ is also a symbolic square, from length considerations, we obtain the word equality $
-w++u-+=+-u++w-
$
which is impossible since $-\neq+$. The argument for the case \textbf{4.1b} is the same, with $\gammain:=\gamma_1$ and $\gammaout:=\gamma_2$.
\end{proof}

\begin{proposition}
Configuration \textbf{4.2} is never realized by a bounded flower.
\end{proposition}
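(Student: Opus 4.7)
The plan is to imitate the sign-code arguments used for configurations \textbf{2.1} and \textbf{4.1} and reach a contradiction via the square property. Let the bounded flower be $\gamma = \gamma_1 \cup \gamma_2$ with pistil $v \in V$, and assume $\gamma_1$ passes by two opposite triangles while $\gamma_2$ passes by two neighbouring triangles, the two petals having the same index (so $\Omega^{\gamma_1} \cap \Omega^{\gamma_2} = \emptyset$). I would perturb $\gamma$ within its parallel foliation $\mathcal{P}^{\gamma}$ to produce three periodic trajectories: an inner trajectory $\delta_1 \subset \Omega^{\gamma_1}$, an inner trajectory $\delta_2 \subset \Omega^{\gamma_2}$, and an outer trajectory $\delta$ with $\Omega^{\gamma_1} \cup \Omega^{\gamma_2} \subset \Omega^{\delta}$, the trajectory $\delta$ passing through exactly the tiles crossed by $\gamma_1 \cup \gamma_2$. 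These perturbations exist and are periodic by item~3 of Theorem~\ref{thm:triangle_tiling_billiards_info}, together with the observation that, under the same-index assumption, the region exterior to the figure-eight $\gamma$ is locally a single annular piece of the foliation.

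Next, I would write the cyclic accelerated sign codes. By the analysis of the opposite-triangle case in Proposition~\ref{prop:first}, the inner perturbation of $\gamma_1$ contributes $+--+$ near $v$ and the outer perturbation contributes $-++-$. The neighbouring-triangle petal $\gamma_2$ contributes a short sign pattern $p_{\mathrm{in}}$ from the inside and $p_{\mathrm{out}}$ from the outside, read off directly from the four-separatrix configuration in Figure~\ref{fig:possible_local_behavior of separatrices in PTT}. Outside the star $\Theta_v$ all three trajectories follow the same tile sequences along the two petal arcs, so there exist words $s_1, s_2 \in \mathcal{S}^{\N}$ with
\begin{align*}
\delta_1 &= +--+\, s_1,\\
\delta_2 &= p_{\mathrm{in}}\, s_2,\\
\delta   &= -++-\, s_1\, p_{\mathrm{out}}\, s_2.
\end{align*}
By item~5 of Theorem~\ref{thm:triangle_tiling_billiards_info} each of these cyclic words is a square. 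Splitting $\delta_1$ as in the proof of Proposition~\ref{prop:first} pins down the first and last letter of $s_1$; splitting $\delta_2$ similarly pins down the first and last letter of $s_2$; and splitting $\delta$ into two equal halves produces a word equation between the prefix and suffix of $s_1\, p_{\mathrm{out}}\, s_2$ which, combined with the constraints from $\delta_1$ and $\delta_2$, forces some $+$ to equal some $-$, contradiction.

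The main obstacle I expect is the careful bookkeeping of the local sign patterns of $\gamma_2$ under the same-index hypothesis: this hypothesis determines the relative orientation of the two petals at $v$ and therefore the precise signs in $p_{\mathrm{in}}$ and $p_{\mathrm{out}}$, and only one of the orientation cases in Figure~\ref{fig:possible_local_behavior of separatrices in PTT} is compatible with the required index condition. Once the signs are pinned down, the argument is purely symbolic and structurally parallel to the cases already treated; the only remaining verification is that $\delta$ really is a single periodic loop passing through the announced tile sequence, which follows from orientability of $\mathcal{P}^{\gamma}$ and the stability of periodicity under perturbation.
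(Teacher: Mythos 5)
Your plan is the paper's proof in outline: perturb the flower inside $\mathcal{P}^{\gamma}$ to one periodic trajectory in each petal and one outer periodic trajectory enclosing both, write the cyclic accelerated codes in the sign alphabet $\mathcal{S}$, and derive incompatible word equations from the square property. However, the step you set aside as ``bookkeeping'' is the entire content of the argument, and until it is done no contradiction has been exhibited. Concretely, with the orientations fixed on Figure~\ref{fig:one big picture with what can go wrong} the paper obtains
$\delta_1=-++-\,s\bar{s}$ with $|s|=|\bar{s}|$ (note the sign: it is $-++-$, not the $+--+$ you import from the inner case of Proposition~\ref{prop:first}; the local signs depend on the orientation convention for configuration \textbf{4.2} and must be re-derived, not copied), $\delta_2=(--w)^2$, and $\delta=++w--w++\,s\bar{s}$; splitting each of these cyclic squares in half yields the two equations $\bar{s}-+=+-s$ and $-w++s=\bar{s}++w-$, which force the word $s$ to end in $-$ and in $+$ simultaneously. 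Your proposal asserts that such a contradiction will appear once $p_{\mathrm{in}}$ and $p_{\mathrm{out}}$ are determined, but does not determine them or display the equations, so as written it is a correct strategy rather than a complete proof; supplying the explicit local sign patterns and the two resulting word equations closes the gap and reproduces the paper's argument exactly.
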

\begin{proof}
Define three non-singular periodic trajectories $\delta_1, \delta_2$ and $\delta$ in the parallel foliation $\mathcal{P}^{\gamma}$. First, $\delta_j \in {\Omega}^{\gamma_j}$ and $\delta_j$ passes by the same tiles as $\gamma_j$ for $j=1,2$. Then, we take a trajectory $\delta$ that passes by the same tiles as the flower $\gamma$ and such that $\gamma \subset {\Omega}^{\delta}$. Then, there exist the words 
 $s, \bar{s}, w \in  \mathcal{S}^{\N}$ such that the words $s$ and $\bar{s}$ have equal length and 

\begin{align*}
\delta_1=-++-s\bar{s},\\
\delta_2=(--w)^2,\\
\delta=++w--w++s\bar{s}.
\end{align*}

Length considerations imply the following two equations:
$\bar{s}-+=+-s$ and $-w++s=\bar{s}++w-$. These two are incompatible, since the word $s$ has to finish by $-$ and $+$ simultaneously.
\end{proof}

\begin{proposition}\label{prop:last}
Configuration \textbf{6.1} is never realized by a bounded flower.
\end{proposition}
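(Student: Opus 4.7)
The plan is to mimic the argument used for obstructions \textbf{2.1}, \textbf{4.1a,b} and \textbf{4.2}, i.e.\ to approximate each petal by a non-singular periodic trajectory in the common parallel foliation $\mathcal{P}^\gamma$ and then exploit the square property of cyclic accelerated sign codes (Theorem~\ref{thm:triangle_tiling_billiards_info}, item~5) to get a sign contradiction. The configuration \textbf{6.1} differs from the earlier ones only in that it has three petals instead of one or two; the local sign blocks at the pistil are known explicitly from Proposition~\ref{prop:list of possible local behaviours PTT} for $s=6$, so the only extra work is combinatorial bookkeeping.

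More precisely, write the flower as $\gamma=\gamma_1\cup\gamma_2\cup\gamma_3$ with $\Omega^{\gamma_2}\subset\Omega^{\gamma_3}$ and $\gamma_3$ passing by a pair of opposite tiles at $v$. First, I would choose four periodic non-singular trajectories in $\mathcal{P}^\gamma$: a trajectory $\delta_1\subset\Omega^{\gamma_1}$ close to $\gamma_1$, a trajectory $\delta_2\subset\Omega^{\gamma_2}$ close to $\gamma_2$, a trajectory $\delta_3\subset\Omega^{\gamma_3}\setminus\overline{\Omega^{\gamma_2}}$ close to $\gamma_3$, and a trajectory $\delta$ in $\R^2\setminus\Omega^{\gamma_3}$ close enough to $\gamma$ so that $\delta$ passes exactly by the tiles touched by $\gamma$. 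Near the pistil $v$, Proposition~\ref{prop:list of possible local behaviours PTT} forces, up to the global orientation, the local contributions $+-$ and $-+$ for the two petals $\gamma_1,\gamma_2$ going through pairs of neighbouring tiles with the common edge inside, and $+--+$ for $\gamma_3$ going through opposite tiles, exactly as in the analysis of \textbf{2.1} and \textbf{4.2}.

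The next step is to write the four cyclic accelerated sign codes as concatenations in which the part outside $\Theta_v$ is shared. Splitting this common part into sub-words of matching length where necessary (as $s,\bar s$ in the proofs of Propositions~\ref{prop:first} and for \textbf{4.2}), one gets expressions of the schematic form
\begin{equation*}
\delta_1=(+-\,w_1)^{?},\qquad \delta_2=(-+\,w_2)^{?},\qquad \delta_3=+--+\,s\bar s,\qquad \delta=(\text{glue of the three outer blocks})
\end{equation*}
where the $w_i$, $s$, $\bar s$ are pieces of the common symbolic word, and the question mark indicates that each of $\delta_j$ is a symbolic square. Imposing the square property on $\delta$ and on $\delta_3$ simultaneously yields two length-matching equations of the type already met in the proof of \textbf{4.2}, for example one of the form $\bar s-+=+-s$, which forces the last letter of $s$ to be both $+$ and $-$.

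The hard part is purely bookkeeping: there is one global cyclic word $\delta$ which is glued from four pieces (one per visit of $\partial\Theta_v$ by $\gamma$), and one has to make sure the cyclic ordering of these pieces along $\delta$ is read off correctly from the topology of \textbf{6.1} and from the fact that $\gamma_2$ is nested inside $\gamma_3$ (so the segment of $\delta$ corresponding to $\gamma_2$ is enclosed between the two $+--+$-halves coming from $\gamma_3$). Once the cyclic order is fixed, the contradiction is extracted exactly as in the previous propositions, so no new idea beyond the square property and the local block catalogue is needed.
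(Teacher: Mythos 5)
Your overall strategy (perturb each petal to nearby non-singular periodic leaves of $\mathcal{P}^{\gamma}$ and extract a contradiction from the square property of the cyclic sign codes) is indeed the paper's strategy, but the combinatorial bookkeeping you defer as routine is precisely where the proof lives, and the pieces of it you do write down are wrong. In configuration \textbf{6.1} the petal $\gamma_2$ is nested inside $\gamma_3$ ($\Omega^{\gamma_2}\subset\Omega^{\gamma_3}$, the two loops meeting only at the pistil $v$). Consequently: (a) a trajectory $\delta\subset\R^2\setminus\Omega^{\gamma_3}$ can never reach the tiles crossed by $\gamma_2$ away from $\Theta_v$, so the outer trajectory you postulate, ``passing exactly by the tiles touched by $\gamma$'' and glued from \emph{three} outer blocks, does not exist; the outer approximant only sees $\gamma_1$ and $\gamma_3$ and its code is of the form $++w--w++U$ where $w--$ is the single-tour word of $\delta_1=(w--)^2$ and $U$ is the part of $\gamma_3$'s code away from $\Theta_v$. (b) The closed leaf in the pinched annulus $\Omega^{\gamma_3}\setminus\overline{\Omega^{\gamma_2}}$ necessarily traces the \emph{whole} boundary of that annulus --- $\gamma_3$ from inside \emph{and} $\gamma_2$ from outside --- so its code is not $+--+\,s\bar s$ but $--v++v--U$ (with $\delta_2=(v++)^2$). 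Your proposed code for $\delta_3$ omits the entire $\gamma_2$ tour, and your remark that ``the segment of $\delta$ corresponding to $\gamma_2$ is enclosed between the two $+--+$ halves coming from $\gamma_3$'' has the nesting backwards.

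The contradiction in the paper is then obtained by applying the square property to exactly these two \emph{composite} trajectories, which share the word $U=u\bar u$: the halving of $++w--w++u\bar u$ gives $-w++u=\bar u++w-$, and the halving of $--v++v--u\bar u$ gives $\bar u--v-=+v--u$, forcing the first letter of $\bar u$ to be simultaneously $-$ and $+$. Your sketch never identifies this shared subword, and the single equation you exhibit ($\bar s-+=+-s$) is not derivable from the (incorrect) codes you assign. So as written the argument would not go through; to repair it you must replace your $\delta_3$ and $\delta$ by the annulus leaf and the leaf enclosing $\Omega^{\gamma_1}\cup\Omega^{\gamma_3}$, write their full cyclic codes including the $\gamma_2$ tour in the former, and run the two halving equations on the common word $U$.
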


\begin{proof}
We choose periodic non-singular trajectories $\delta_j, j=1,2,3,4$ as follows:
\begin{itemize}
\item[•] the trajectories $\delta_j$  pass by the same tiles as $\gamma_j$  and $\delta_j \subset {\Omega}^{\gamma_j}$ for $j=1,2$,
\item[•]  a trajectory $\delta_3$ is close to the boundary $\partial 
\left( {\Omega}^{\gamma_1} \cup {\Omega}^{\gamma_3} \right)$ and ${\Omega}^{\delta_3}$ contains this boundary,
\item[•] a trajectory $\delta_4$ is close to the boundary of the set ${\Omega}^{\gamma_3} \setminus {\Omega}^{\gamma_2}
$ and is contained inside this set. 
\end{itemize}
Then there exist the words $w,v, U \in  \mathcal{S}^{\N}$ such that

\begin{align*}
\delta_1=(w--)^2,\\
\delta_2=(v++)^2,\\
\delta_3=++w--w++U,\\
\delta_4=--v++v--U.
\end{align*}
Since both $\delta_3$ and $\delta_4$ are symbolic squares, one can split the word $U$ in two words $u, \bar{u} \in \mathcal{S}^{\N}$ of equal length, $U=u \bar{u}$. The length considerations for $\delta_3$ and $\delta_4$ imply:
\begin{align*}
-w++u=\bar{u}++w-,\\
\bar{u}--v-=+v-- u.
\end{align*} 
Since the word $\bar{u}$ can't start from $+$ and $-$ at the same time, we have a contradiction.
\end{proof}

The Bounded Flower Conjecture for triangle tilings (Theorem \ref{thm:bounded_flower_conjecture}) now follows.

\begin{proof}
Take any vertex  $v \in V$ in a triangle tiling and a bounded flower in it.
One can suppose that $v$ is the only singularity that this flower meets.\footnote{The proof of this fact is word by word coming from the argument in the proof of Proposition \ref{prop:treetoflower}. A statement in the proof we are interested in is marked with italics.} Then such a flower has to satisfy the Bounded Flower Conjecture since "it has no choice": all the obstructions have been excluded in Propositions \ref{prop:first}--\ref{prop:last}.
\end{proof}

By Proposition \ref{prop:treetoflower},  this finishes the proof of the Tree Conjecture for triangle tiling billiards. Our strategy gives a new proof of the Tree Conjecture for obtuse triangle tiling billiards, previously proven in \cite{BDFI18}.

\begin{corollary}[Theorem 5.7., \cite{BDFI18}]
Any periodic trajectory in an \emph{obtuse} triangle tiling billiard encloses a tree which is a \emph{path.}
\end{corollary}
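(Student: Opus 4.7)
The plan is to combine the Tree Conjecture (Theorem \ref{thm:main}, just established) with an elementary geometric lemma specific to obtuse triangles. Since $G_\Delta^\delta$ is already known to be a tree, it suffices to prove that every vertex has degree at most two.

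First, I would relate the degree at a vertex $v \in G_\Delta^\delta$ to a local petal count. By running the recursive decomposition of Proposition \ref{prop:treetoflower} starting at $v$, the trajectory $\delta$ contracts onto a bounded flower with pistil $v$ and some number $m(v)$ of petals; since the decomposition $G_\Delta^\delta = \bigcup_j G_\Delta^{\delta_j} \cup e_j$ places exactly one edge $e_j$ at $v$ per petal (and no further edges at $v$ belong to the interior pieces $G_\Delta^{\delta_j}$), the degree of $v$ in the tree equals $m(v)$. It therefore suffices to prove that $m(v) \leq 2$ for every vertex of any obtuse triangle tiling.

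The core step is the geometric lemma: \emph{in an obtuse triangle tiling, no bounded flower has three petals.} Fix a vertex $v$ and let $p = \mathcal{F}(v) \in \mathcal{C}$. Since around $v$ the six tiles have angles $\gamma,\beta,\alpha,\gamma,\beta,\alpha$ in cyclic order, explicit composition of the reflections from Lemma \ref{lemma:folding} around $p$ gives the angular sectors at $p$ of the six folded triangles:
\begin{align*}
\mathcal{F}(\theta_0) &= [0,\gamma], & \mathcal{F}(\theta_1) &= [\gamma-\beta,\gamma], & \mathcal{F}(\theta_2) &= [\gamma-\beta,\,\alpha+\gamma-\beta], \\
\mathcal{F}(\theta_3) &= [\alpha-\beta,\,\alpha-\beta+\gamma], & \mathcal{F}(\theta_4) &= [\alpha-\beta,\alpha], & \mathcal{F}(\theta_5) &= [0,\alpha].
\end{align*}
A three-petal flower ($s=6$) corresponds to a chord direction through $p$ lying in all six of these sectors; in particular, it must lie in both $\alpha$-sectors $\mathcal{F}(\theta_2)$ and $\mathcal{F}(\theta_5)$, whose intersection equals $[\gamma-\beta,\,\alpha]$. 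This is nonempty if and only if $\gamma-\beta \leq \alpha$, equivalently $\gamma \leq \alpha+\beta = \pi-\gamma$, i.e.\ $\gamma \leq \pi/2$. In the obtuse case $\gamma > \pi/2$ the intersection is empty, so no common direction exists and no three-petal bounded flower can arise.

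Combining the two ingredients yields $m(v) \leq 2$ at every vertex, so $G_\Delta^\delta$ has maximum degree at most $2$ and is therefore a path. The main obstacle is the explicit bookkeeping of the six folded sector formulas, which requires careful tracking of successive reflections around $p$ together with the identity $\alpha + \beta + \gamma = \pi$; once those formulas are established, the obtuse conclusion reduces to the single sharp inequality $\gamma - \beta > \alpha \Longleftrightarrow \gamma > \pi/2$.
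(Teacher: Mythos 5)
Your geometric core is sound: the six folded sectors at $p$ are exactly as you list them, and the intersection $\mathcal{F}(\theta_2)\cap\mathcal{F}(\theta_5)=[\gamma-\beta,\alpha]$ being empty precisely when $\gamma>\pi/2$ is the same observation the paper makes (phrased there as $\mathcal{F}(\theta_\alpha)\cap\mathcal{F}(\theta_\alpha^v)=\{p\}$ for the opposite acute-angled pair). So your exclusion of three-petal flowers in the obtuse case is correct and matches the paper's method.

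The gap is in the reduction $\deg(v)=m(v)$. First, you cannot "run the recursive decomposition starting at $v$": the contraction of $\delta$ lands on whichever vertex the moving chord meets first, not on a vertex of your choosing. More seriously, the identity itself fails for every vertex except the root of the decomposition. Each tree edge $e_j$ is enclosed by a petal of the flower at exactly \emph{one} of its two endpoints (two singular leaves, one through each endpoint and each enclosing the other endpoint, would have to cross); in the decomposition $G_\Delta^\delta=\bigcup_j G_\Delta^{\delta_j}\cup e_j$ that endpoint is the pistil, and the \emph{other} endpoint $u_j$ inherits $e_j$ as an edge not counted by any petal of $u_j$'s own flower. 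Hence $\deg(v)=m(v)+1$ at every non-root vertex (check it on a $10$-periodic trajectory: the leaf opposite the pistil has $m=0$ but degree $1$). Excluding three-petal flowers therefore only gives $\deg(v)\leq 3$, which does not yield a path; you would additionally need to exclude a two-petal flower at a non-root vertex, or bound the degree directly. A direct route that does close: $\deg(v)$ equals the number of the six folded neighbours $\mathcal{F}(u)$, $u\sim v$ (which sit at arc-distances $\pm 2\alpha,\pm 2\beta,\pm 2\gamma$ from $p$), lying on the same side of the chord $l$ as $p$; degree $3$ forces the three far points to be an alternating triple of edges around $v$ and simultaneously a contiguous arc avoiding $p$, and one checks that the cyclic order of the six points on $\mathcal{C}$ forbids this exactly when $\gamma>\pi/2$. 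The paper's own write-up is admittedly just as terse at this final step, but your version commits to a specific false identity, so the argument as written does not go through.
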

\begin{proof}
Consider flower in a vertex $v \in V$ (bounded or not) in an obtuse triangle tiling. Let $\gamma$ be an obtuse angle, and denote the six tiles in $\Theta_v$ as $\theta_{\bullet}$ and $\theta_{\bullet}^v$ correspondingly for the opposite to $\theta_{\bullet}$ tile. Here $\bullet \in \{\alpha, \beta, \gamma\}$ is an angle a tile $\theta_{\bullet}$ (and $\theta_{\bullet}^v$) has in the vertex $v$. 

Any flower in an obtuse tiling has \emph{at most two} petals. Indeed, fold $\Theta_v$ into a bellow. Then one simply verifies that $\mathcal{F}\left(\theta_{
\alpha}^v\right) \cap \mathcal{F}\left(\theta_{
\alpha}\right)=\{p\}$ and $\mathcal{F}\left(\theta_{
\beta}^v\right) \cap \mathcal{F}\left(\theta_{
\beta}\right)=\{p\}$, see Figure \ref{figure:proof_obtuse}. Hence a flower in $v$ can't simultaneously pass by the interior of the tiles $\theta_{\alpha}$ and $\theta_{\alpha}^v$ (the same for $\theta_{\beta}$ and $\theta_{\beta}^v$). This gives that each flower has at most $4$ separatrix segments in $v$ (two passing by $\theta_{\gamma}$ and $\theta_{\gamma}^v$ and two passing by one representative of each of the couples with angles $\alpha$ and $\beta$ in $v$). Hence, the graphs inside periodic trajectories in obtuse triangle tiling billiards are paths.
\end{proof}

\begin{figure}
\centering
\includegraphics[scale=0.1]{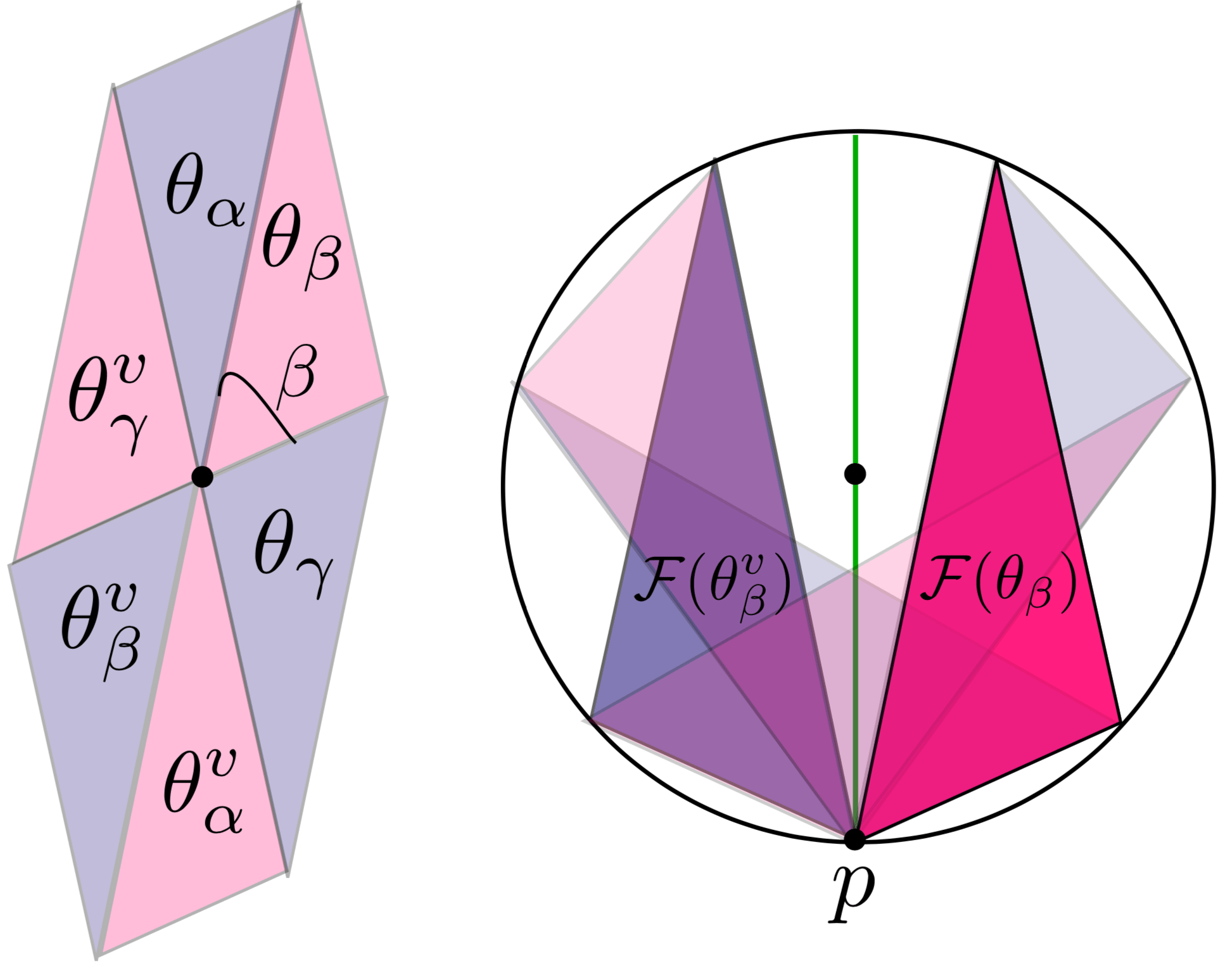}
\caption{On the left the neighbourhood of a vertex $v \in V$ in a triangle tiling is represented as a union of three pairs of opposite tiles. The tile $\theta_{\beta}$ has its angle equal to $\beta$ in the vertex $v$, as well as its opposite tile $\theta_{\beta}^v$. On the right one can see the image $\mathcal{F}(\Theta_v)$ under the folding map. The images of tiles $\theta_{\bullet}$ and  $\theta_{\bullet}^v$ with acute angle in $v$ intersect only in the point $p=\mathcal{F}(v)$.}
\label{figure:proof_obtuse}
\end{figure}

\newpage
\begin{center}
\textbf{Part II.-- Renormalization for fully flipped $3$-interval exchange transformations}
\end{center}

In this part, we introduce the renormalization process on the family $\CETthree$ of fully flipped $3$-IET on the circle (see Section \ref{sec:fully_flipped_intro} for definitions). This renormalization process is a combinatorial counterpart of the process of contraction of periodic trajectories of tiling billiards onto flowers that has been described in paragraph \ref{subs:FLOW} and used in the proof of Tree Conjecture (Conjecture \ref{conj:tree}) in the Part I.

\section{Arithmetic orbits of real-rel leaves and billiard trajectories.}\label{sec:arithmetic_orbits}

Any map $F \in \CETthree$ is defined by a triple $(l_1, l_2, l_3) \in \Delta_2$ and a parameter $\tau \in \Sph^1$, see Section \ref{sec:fully_flipped_intro}. The family $\CETthree$ has a $3$-dimensional space of parameters $\Delta_2 \times \Sph^1$ with a symmetry around the plane $\tau=\frac{1}{2}$. Indeed, a map $F_{\tau}^+:=F^{l_1, l_2, l_3}_{\tau}$ is conjugated to a map $F_{1-\tau}^-:=F^{l_3, l_2, l_1}_{1-\tau}$ via a change of orientation, $F_{\tau}^+=i \circ F_{1-\tau}^- \circ i$. Here $i$ is a global involution on $\Sph^1, i: p \mapsto 1-p$. In particular, this means that the maps in $\CETthreehalf$ have extra symmetries and commute with a global involution. This was already noticed in [paragraph 4.1,  \cite{Olga}]. For the following we suppose $\tau \in [0, 1/2]$.

In this Section and till the end of the article we associate to a quadruple of parameters $(l_1, l_2, l_3, \tau) \in \Delta_2 \times [0, \frac{1}{2}]$ a quadruple $(x_1, x_2, x_3, r) \in \Delta_2 \times [0, \frac{1}{2}]$ connected to it by linear relations \eqref{eq:l and x} and the relation
\begin{equation}\label{eq:r_and_x}
r:=\frac{1}{2}-\tau, r \in [0, \frac{1}{2}].
\end{equation}
The connection between these two sets of parameters is one-to-one, and in this work we navigate from one to another.

\bigskip

In the following Lemma, we formalize the connection between triangle tiling billiards and real-rel deformations of Arnoux-Rauzy maps discussed in the paragraph \ref{subs:rrdeformAR}.

\begin{lemma}\label{lem:ARconnection}
Take any triple $(x_1, x_2, x_3) \in \Delta_2 \setminus \partial \Delta_2$. Define $T:=T^{x_1,x_2,x_3} \in \AR$ a corresponding Arnoux-Rauzy map. Then for any $r \in \left[0, \frac{1}{2}\right]$ the following holds:
\begin{itemize}
\item[1.] let $T_r$ be a first-return map of a vertical flow on the translation surface $X^{x_1,x_2,x_3}_r$ in a real-rel leaf of $X_T$ on a horizontal transversal. Then $T_r=F^2$ for $F=F^{l_1,l_2,l_3}_{\tau} \in \CETthree$, where its parameters $l_j$ are defined by \eqref{eq:l and x} and \eqref{eq:r_and_x};
\item[2.] for any point $p \in \Sph^1$ the \textbf{displacement} $T_r(p)-p$ belongs to a finite set $\{0, \pm l_j \mid j\in \mathcal{N}_{\Delta}\}$;
\item[3.] moreover, if $r \leq \min\{x_j\}_{j=1}^3$, then for any $p \in \Sph^1$, $T_r(p)-p \neq 0$ and the map $T_{r}: \Sph^1 \rightarrow \Sph^1 $ is a $6$-IET with the intervals of continuity $I_j^{\pm}$ of lenghts $|I_j^{\pm}|=\frac{x_j}{2} \pm r, j \in \mathcal{N}_{\Delta}$.
\end{itemize}
\end{lemma}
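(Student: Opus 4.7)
The plan is to handle the three points sequentially. Point~1 will follow from the base case $r = 0$ (covered by Proposition~\ref{prop:the_squares_are_Rauzy}) by tracking how a real-rel deformation of the suspension surface reflects on the first-return map. Points~2 and~3 will then follow by explicit computation with $F = R_\tau \circ F_0$, using that the involution $F_0$ acts on each continuity interval as its central symmetry.

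\textbf{Point 1 via real-rel deformation.} Proposition~\ref{prop:the_squares_are_Rauzy} provides the base case: $T = T^{x_1,x_2,x_3} = F_0^2$ with $F_0 = F^{l_1,l_2,l_3}_{1/2} \in \CETthreehalf$, and $X^{x_1,x_2,x_3}$ is the orientation double cover of a non-orientable translation surface whose first-return map on a horizontal transversal is $F_0$. A real-rel deformation by $r$ varies only horizontal absolute periods, keeping the combinatorial type and all $l_j$ fixed; descending through the $\Z/2$-quotient it becomes a purely horizontal shift of the singularity structure of the non-orientable surface, and in the first-return description this shift is precisely the rotation $R_\tau$ factor of a map in $\CETthree$. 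Matching the normalizations \eqref{eq:l and x}--\eqref{eq:r_and_x} fixes the rate as $\tau = \tfrac{1}{2} - r$, whence $T_r = F^2$ with $F = F^{l_1,l_2,l_3}_\tau$.

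\textbf{Point 2 by direct computation.} The restriction of $F_0$ to each interval $I_j$ is its central symmetry around the midpoint $m_j$, so $F_0(p) = 2m_j - p$ and therefore $F(p) = 2m_j - p + \tau \pmod 1$ for $p \in I_j$. If moreover $F(p) \in I_k$, a second application gives
\begin{equation*}
T_r(p) - p \;=\; F^2(p) - p \;\equiv\; 2(m_k - m_j) \pmod 1.
\end{equation*}
Plugging in $m_1 = l_1/2$, $m_2 = l_1 + l_2/2$, $m_3 = l_1 + l_2 + l_3/2$ and using $l_1+l_2+l_3=1$, the nine values $2(m_k - m_j) \pmod 1$ collapse to the set $\{0,\pm l_1, \pm l_2, \pm l_3\}$: for instance $2(m_2 - m_1) = l_1 + l_2 \equiv -l_3$, and the remaining non-trivial cases are obtained analogously by cyclic symmetry.

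\textbf{Point 3 and the main obstacle.} Under the hypothesis, I would first check that every image $F(I_j) = I_j + \tau$ is disjoint from $I_j$ on $\Sph^1$; this rules out the case $k = j$ in the formula above and forces $T_r(p) - p \neq 0$. Each $F(I_j)$ then straddles exactly the two cyclically adjacent intervals $I_{j\pm 1}$, and preimaging the boundary point between them subdivides $I_j$ into two subintervals. Performing the computation for $j = 1, 2, 3$ produces, as a direct check shows, six sub-intervals whose lengths are exactly $\tfrac{x_j}{2} \pm r$, with one subinterval of each sign for every $j$; relabelling these as $I_j^{\pm}$ gives the claimed $6$-IET structure for $T_r$. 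The \emph{main obstacle} is not the calculations but the real-rel argument in Point~1: one has to verify carefully that the horizontal deformation of $X^{x_1,x_2,x_3}$ really descends to the non-orientable quotient as a shift with the claimed rate and sign, and that no extra rescaling is introduced by the orientation double cover. Once that identification is pinned down cleanly, Points~2 and~3 reduce to the elementary calculations sketched above.
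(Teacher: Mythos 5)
Your proposal is correct and follows essentially the same route as the paper: point 1 is reduced to the $r=0$ case of Proposition \ref{prop:the_squares_are_Rauzy} plus the observation that the real-rel deformation descends to the rotation factor $\tau=\tfrac12-r$, and points 2 and 3 are obtained by the same explicit computation with $F=R_\tau\circ F_0$, yielding the same six subintervals of lengths $\tfrac{x_j}{2}\pm r$. Your uniform displacement formula $T_r(p)-p=2(m_k-m_j)$ via the midpoints is a slightly cleaner packaging of the case-by-case displacement computation the paper performs on the six intervals $I_j^{\pm}$, but it is not a different argument.
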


\begin{proof}
For $r=0$, the statement of this Lemma is equivalent of that of Proposition \ref{prop:the_squares_are_Rauzy} and has already been proven in \cite{Olga}. Moreover, the point 1. follows from the fact that the surface $X^{x_1,x_2,x_3}_0$ is a double-cover of a non-orientable surface with a first-return map equal to $F_{1/2}^{l_1,l_2,l_3}$, by Proposition \ref{prop:the_squares_are_Rauzy}. The horizontal moves of singularities for the fully flipped interval exchange transformation $F$ are giving birth to horizontal moves of singularities on the surface $X$. The change of parameter $r$ is exactly that of the relative positions of singularities on the surface $X_r$ in the real-rel foliation. 

Suppose now that $\tau:=\frac{1}{2}-r$ and $r \neq 0$. We suppose that $r \in \left[ 0, \min_{j} \left\{ \frac{x_j}{2}\right\}\right)$ or, equivalently, $\tau > \max(l_j)$. Then, by a direct calculation, one shows that the map $F^2$ has $6$ intervals of continuity defined as follows:
\begin{align*}
I_2^+:=\left(l_2+\tau, 1 \right), I_2^-:=\left(0,\tau-l_2\right),\\
I_3^+:=\left(\tau-l_2, l_1\right), I_3^-:=\left(l_1, l_1+\tau-l_3\right),\\
I_1^+:=\left(l_1+\tau-l_3, l_1+l_2\right), I_1^-:=\left(l_1+l_2, l_2+\tau\right).
\end{align*}
The lengths of these intervals verify $|I_j^{\pm}|=\frac{x_j}{2} \pm r$.\footnote{For $r=0$ the intervals $I_j^+$ and $I_j^-$ have equal length, and are exactly the interavls of continuity of the maps in the Arnoux-Rauzy family $\AR$.}  Here the intervals of continuity of $F$ can be represented as unions:
\begin{equation}\label{eq:repres_abc}
I_1 = I_2^- \cup I_3^+, \; \; \; I_2=I_3^- \cup I_1^+, \; \; \; I_3=I_1^- \cup I_2^+.
\end{equation}
The map $F$ is an orientation reversing isometry on each of the intervals $I_j^{\pm}, j \in \mathcal{N}_{\Delta}$ and for any couple $(j,k)$, with $j \neq k$:
\begin{equation*}
|I_j^+|+|I_k^-|=\frac{x_j+x_k}{2}=|I_j^-|+|I_k^+|.
\end{equation*}
This implies that the previous decomposition \eqref{eq:repres_abc} can be rewritten as
\begin{equation*}
I_1 = F(I_3^-) \cup F(I_2^+), \; \; \; I_2=F(I_1^-) \cup F(I_3^+), \; \; \; I_3=F(I_2^-) \cup F(I_1^+),
\end{equation*}
Moreover, the images $F(I_j^{\pm})$ cover the interval $[0,1]$ in the following order: $[0,1]=F(I_3^-) \cup F(I_2^+) \cup F(I_1^+) \cup F(I_3^-) \cup F(I_2^-) \cup F(I_1^+)$. Then, one more application of $F$ maps the intervals $F(I_j^{\pm})$ onto the circle in the following order $\Sph^1=T(I_3^-) \cup T(I_3^+) \cup T(I_1^-) \cup T(I_1^+) \cup T(I_2^-) \cup T(I_2^+)$.

The intervals $I_j^{\pm}$ can be distinguished one from another by their symbolic dynamics, e.g. $I_1^+=\{p \in \Sph^1 : p \in I_2, F(p) \in I_3\}$. Analogically, the first steps of accelerated symbolic codes of $I_1^-, I_2^+, I_2^-, I_3^+, I_3^-$ are $cb, ca, ac, ab$ and $ba$ correspondingly. The displacement for every $p \in  I_j^{\pm}, j \in \mathcal{N}_{\Delta}$ can be calculated explicitely by the use of these codes. The displacement is equal to zero if and only if $F$ has a $2$-periodic interval (this happens if and only if $\tau \leq \max(l_j)$).
\end{proof}

\begin{remarkimp}\label{rem:connectionbilliards}
From the point of view of triangle tiling billiards, the inclusion  $T_r(p)-p \in \{\pm l_j\}$ is represented by the fact that a trajectory changes its direction after two refractions exactly by this amount, see Figure \ref{fig:triangle_angle} and Theorem 3.6 in \cite{BDFI18} for details. If the displacement for a map $F \in \CETthree$ is equal to $0$, there is no corresponding billiard trajectory. The six-element set $\{\pm l_j\}$ has also been considered in relation to the artihmetic orbits of Arnoux-Rauzy maps (in different terminology) by P. Hopper and B. Weiss in \cite{HW18}, see their Proposition 4.6 and following discussion. Most importantly, the set of displacement values of $T_r$ doesn't depend on $r$.
\end{remarkimp}

\begin{figure}
\centering
\includegraphics[scale=0.05]{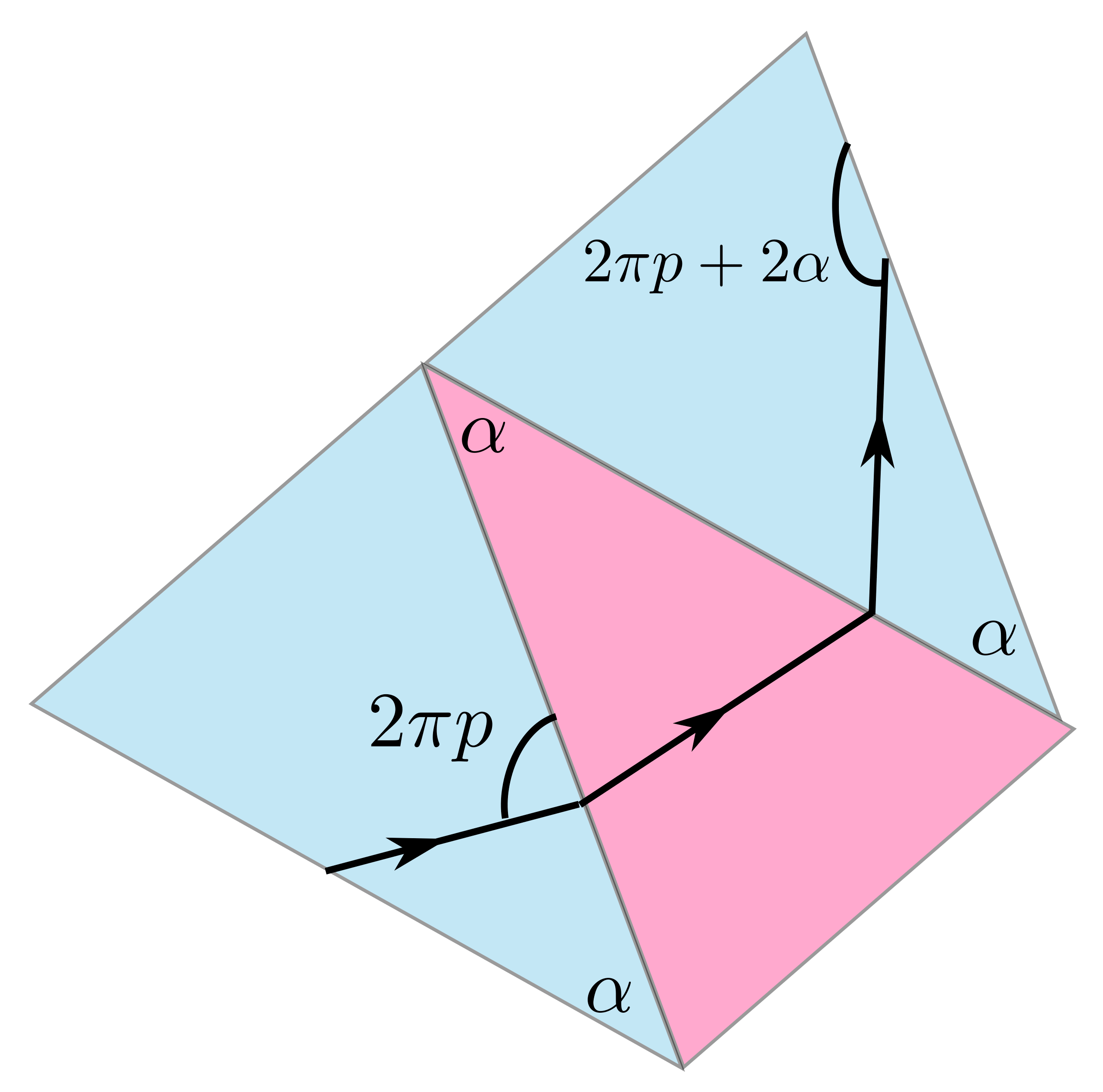}
\caption{For a trajectory that has an angle equal to $2 \pi p$ with some fixed line, $p \in \Sph^1$, after two reflections with respect to the side $c$ and then side $b$, the trajectory has an angle equal to  $2\pi p+2 \alpha=2\pi (p + l_1)$ with respect to that same line. Analogously, the displacements in all the other directions are measured by $\pm l_j, j \in \mathcal{N}_{\Delta}$.}
\label{fig:triangle_angle}
\end{figure}

We now define the arithmetic orbits of the family of dynamical squares of the maps in the family $\CETthree$. Here we follow almost word by word the definition in [Section 4, \cite{HW18}] modulo one important difference. 

For any map $F \in \CETthree$, define an interval exchange transformation $T: \Sph^1 \rightarrow \Sph^1$ by $T:=F^2$. Let $H$ be a group of rotations of $\Sph^1=\R / \mathbb{Z}$ generated by six numbers $\pm l_j, j \in \mathcal{N}_{\Delta}$. Denote $\Gamma$ the Cayley graph of $H$ with respect to these six generators. Consider a periodic triangle tiling with the angles of tiles defined by the relation \eqref{eq:correspondance}. We isomorphically embed $\Gamma$ to the plane (as a graph) to be the set of edges connecting the barycenters of all \emph{positively oriented} triangles in this tiling. Of course, one could see this graph in a slightly simpler way (as it is done  in \cite{HW18}). Although the way we propose to do it here is a great use for us since $\Gamma$ is a conformal copy of the graph $\Lambda_{\Delta}$.

A choice of a point $p \in \Sph^1$ defines an embedded curve in the graph $\Gamma$, i.e. a sequence of elements $h_n \in H$ such that 
$T^n(p)-p = h_n \mod \mathbb{Z}$. We call $\{h_n\}$ the \textbf{arithmetic orbit} of $p$.

\bigskip

For any triangle tiling billiard trajectory $\delta$ in a tiling corresponding to a map $T \in \AR$ via Proposition \ref{prop:the_squares_are_Rauzy}, we define a piece-wise linear curve $\gamma(p)$ that follows the orbit $(T^a)^{\circ k}(p)=F^{\circ 2k}(p), p \in \Sph^1$ in a following way. It starts in a \emph{barycenter} of a starting tile $\theta_0$ that the trajectory $\delta$ crosses, and connects it to the barycenter of a tile in which $\delta$ arrives after two reflections. Without loss of generality, we suppose that $\theta_0$ is positively oriented. 

Then the (oriented) segments that form $\gamma(p)$ belong to a six-element set $\left\{\pm \overrightarrow{AB}, \pm \overrightarrow{BC}, \pm \overrightarrow{CA}\right\}$. Here $A,B,C$ are the vertices of a tile $\theta_0$ marked on the plane. From all of the above follows that the \emph{curve} $\gamma(p)$ \emph{coincides with the arithmetic orbit of} $p$ \emph{under the map} $T$.

To sum up, the study of arithmetic orbits of the Arnoux-Rauzy maps and their real-rel deformations is equivalent to the study of triangle tiling billiard trajectories. In some sense, the latter are finer objects since their dynamics is a "square root" of the dynamics of the real-rel deformations of the family $\AR$. 

 \begin{remark}
For all of the maps $T=F^2$ with $F \in \CETthree$ their SAF invariant is zero.\footnote{For the definition of the SAF-invariant, see \cite{A81}. The Arnoux-Yoccoz map $T^a$ is a map for which $SAF(T^a)=0$  coexists with minimality, see \cite{AY}.}More generally, a square of any fully flipped interval exchange transformation has a zero SAF invariant. This statement has already been proven in [Proposition 18 in \cite{Olga}]. We give now a simpler proof which is a remark by Victor Kleptsyn. A fully flipped map $F:\Sph^1 \rightarrow \Sph^1$ can be represented as a composition $F={i} \circ H$ with $H \in \mathrm{IET}(\Sph^1)$ and $i$ the global involution on $\Sph^1$. Obviously, $SAF (i \circ H \circ i) = -SAF (H)$. Since $SAF: \mathrm{IET} \rightarrow \R \wedge_{\Q} \R$ is a group homomorphism, we have:
$SAF(F^2)=SAF(i \circ H \circ i \circ H)= SAF (i \circ H \circ i) + SAF(H)=- SAF (H) + SAF (H)=0$. 
\end{remark}

\section{Renormalization.}\label{sec:trop_cool}
A goal of this Section is to describe a renormalization process on the family $\CETthree$.

\subsection{Complete periodicity and integrability.}\label{subs:complete_periodicity_and_rotations}
First,we deal with several simple cases.

\smallskip

For any map $F \in \CETthree$ we say that an interval $I \subset \Sph^1$ is $k$\textbf{-periodic} if $F^k \mid_I=\mathrm{id}$ for some $k \in \N^*$ (and such $k$ is minimal). We call the set $\mathcal{P}_F$ of all $k \in \N^*$ such that there exists a $k$-periodic interval, the \textbf{set of interval periods} of the map $F$.

\begin{lemma}\label{lem:integrability}
Fix $(l_1, l_2, l_3) \in \Delta_2$ and $\tau \in [0, \frac{1}{2}]$. Then, the following holds for $F=F_{\tau}^{l_1, l_2, l_3} \in \CETthree$:
\begin{itemize}
\item[1.] if $\tau \leq \max(l_j)$ then $F$ is completely periodic. Moreover, if $\tau \in (0, \min(l_j)]$ then $\mathcal{P}_F=\{2,6\}$. If $\tau \in (\min(l_j), \midd(l_j)]$ then $\mathcal{P}_F=\{2,4n+2, 4n+6\}$, where $n=\lfloor \frac{\tau}{\min (l_j)}\rfloor \in \N^*$. In particular, if $l_j>\frac{1}{2}$ for some $j$, and $\tau \leq 1-l_j$ then $F$ is completely periodic;
\item[2.]  if $l_j>\frac{1}{2}$ for some $j$, and $\tau > 1-l_j$, then for any point $p \in \Sph^1$ either $F^2(p)=p$ or $F^2(p)=R_{\kappa}$ where $R_{\kappa}$ is a rotation by $\kappa=\frac{l_3}{l_2+l_3}$,  defined on an entire interval $I$ (with its endpoints identigied). This interval is defined as a connected component of points $q$ such that $F^2(q) \neq q$, containing $p$;
\item[3.] the set $\mathcal{P}_F$ is finite in any of these cases, and $\mathcal{P}_F \subset \{4n+2 \mid n\in \N^*\}$ for point 1. and in point 2. it is as well if $\varkappa \notin \Q$.
\end{itemize}
\end{lemma}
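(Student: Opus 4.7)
The proof proceeds by direct analysis using the explicit formula $F(p) = a_j + b_j - p + \tau$ on each interval of continuity $I_j = (a_j, b_j)$ of $F$, where $a_j, b_j$ denote the endpoints of $I_j$; combined with the displacement identity $F^2(p) - p \in \{0, \pm l_k \mid k\in\mathcal{N}_{\Delta}\}$ from Lemma~\ref{lem:ARconnection}, this reduces the lemma to orbit bookkeeping on a finite number of subintervals. The first ingredient will extract the $2$-periodic pieces; the second will force every nontrivial $F^2$-orbit to be a discrete arithmetic progression, so that its period can be read off by counting steps until return.

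For Part~1, substituting the formula shows that whenever $\tau \leq l_j$, the subinterval $J_j := [a_j + \tau, b_j] \subset I_j$ of length $l_j - \tau$ is mapped into itself by the involution $p \mapsto a_j + b_j + \tau - p$, and is therefore $2$-periodic. This immediately gives $2 \in \mathcal{P}_F$ whenever $\tau \leq \max(l_j)$; and when $l_j > 1/2$ with $\tau \leq 1-l_j$, a length count confirms that $J_j$ together with the $2$- and $6$-periodic subintervals constructed next exhaust $\Sph^1$, yielding complete periodicity. For $\tau \leq \min(l_j)$, I will trace the orbit of the complementary slice $[a_j, a_j+\tau]$ under $F^2$: the explicit formula makes it visit the three intervals $I_k$ cyclically, and after six applications of $F$ the displacements $\pm l_1, \pm l_2, \pm l_3$ accumulate to an integer, closing the orbit in period $6$. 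For $\tau \in (\min(l_j), \midd(l_j)]$, that same orbit now overshoots the shortest interval and must ``wrap'' $n = \lfloor \tau/\min(l_j) \rfloor$ times before returning; each wrap contributes exactly four more applications of $F$, producing the two new period classes $4n+2$ and $4n+6$ depending on whether the starting subinterval lies before or past the overshoot threshold.

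For Part~2, when $l_j > 1/2$ and $\tau > 1-l_j$, a short calculation shows $F(I_{k_1} \cup I_{k_2}) \subset I_j$ (with $k_1, k_2$ the other two indices) and that $F(I_j)$ wraps once around, so $F^2$ has a simple piecewise-translational form; checking that the translation jumps glue into a single orientation-preserving isometry on each connected component of $\{F^2 \neq \id\}$ reduces $F^2$ there to a rotation, and evaluating one displacement reveals the angle $\kappa = l_3/(l_2+l_3)$. For Part~3, finiteness of $\mathcal{P}_F$ is immediate from the finite decomposition of $\Sph^1$ produced in Parts~1 and~2; the inclusion $\mathcal{P}_F \subset \{4n+2\}$ for Part~1 is read off the list $\{2, 4n+2, 4n+6\}$ (all $\equiv 2 \pmod 4$), and for Part~2 it holds because an irrational rotation $R_\kappa$ has no periodic interval at all. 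The step I expect to be the hardest is the case analysis for $\tau \in (\min(l_j), \midd(l_j)]$: one must carefully track how each subinterval splits when it crosses boundaries between the $I_j$, and verify that no unexpected periods appear beyond the two claimed classes. The underlying dynamics is elementary, but the combinatorics of the splits is where care is required.
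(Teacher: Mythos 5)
Your outline handles the regimes $\tau\le\min(l_j)$, $\tau\in(\min(l_j),\midd(l_j)]$, and the obtuse special case $l_j>\frac12$, $\tau\le 1-l_j$, but it is silent on the remaining regime $\tau\in(\midd(l_j),\max(l_j)]$ with all $l_j\le\frac12$. That regime is part of the very first claim of point~1 (``if $\tau\le\max(l_j)$ then $F$ is completely periodic''), and it is precisely where the real work lies. In that case only one $2$-periodic interval $K_1=I_1\cap F(I_1)$ survives, and the first-return map on $(0,\tau)$ is no longer a $2$-interval exchange but a $3$-interval exchange with a flipped middle interval, with combinatorics
$\begin{pmatrix} I_1^-&\overline{I_1^0}&I_1^+\\ I_1^+&\overline{I_1^0}&I_1^- \end{pmatrix}$.
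Your ``each wrap contributes exactly four applications of $F$'' bookkeeping is tailored to the two-interval return map of the $(\min,\midd]$ case and does not extend here: the orbit of a subinterval can now be cut by two independent singularities, and there is no single wrapping count that closes it up. The paper resolves this by running the Rauzy--Nogueira induction on that $3$-IET with flips and exhibiting a finite Rauzy diagram that terminates in a completely periodic permutation; some argument of comparable strength (a terminating induction, or an explicit case analysis of how the three return subintervals split) is needed, and nothing in your proposal supplies it.

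For the parts you do treat, your route is essentially the paper's in different clothing: the $2$-periodic pieces you construct are exactly $K_j=I_j\cap F(I_j)$, your period-$6$ orbit tracing matches the paper's decomposition of $\Sph^1\setminus\bigcup K_j$, your wrap count for $\tau\in(\min,\midd]$ reproduces the paper's reduction to a flipped $2$-IET on $(0,\tau)$ whose interval periods are $\{2n,2n+2\}$ (Keane), and your Part~2 and Part~3 arguments coincide with the paper's (first return on $I_2\cup I_3$ is $F^2=R_\kappa$ with $\kappa=l_3/(l_2+l_3)$). So the proposal is recoverable, but as written the case $\midd(l_j)<\tau\le\max(l_j)$ is a genuine hole rather than a detail to be checked.
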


\begin{proof}
We suppose that $l_1 \geq l_2 \geq l_3$. Let for any $j \in \mathcal{N}_{\Delta}$ 
\begin{equation}\label{eq:Jj}
K_j:=I_j \cap F(I_j).
\end{equation}

First, if $\tau \leq l_3$, $F$ has three $2$-periodic intervals $K_j$. The set $\Sph^1 \setminus \cup_{j=1}^{3} K_{j}$ splits into three intervals, all belonging to the same $6$-periodic interval orbit.

Second, if $\tau \in (l_3, l_2]$ then $F$ has two $2$-periodic intervals $K_1$ and $K_2$. Denote $I_1^-:=(0,l_3), I_1^+:=(l_3, \tau), I_2^-:=(l_1, l_1+\tau-l_3), I_2^+:=( l_1+\tau-l_3, l_1+\tau)$. Then $[0,1]=I_1^- \sqcup I_1^+ \sqcup K_1 \sqcup I_2^- \sqcup I_2^+ \sqcup K_2 \sqcup I_3$ and
we have a following chain of images:

\begin{align*}
(0,l_3)=I_1^- \xmapsto{F} I_2^+ \mapsto I_3 \xmapsto{F} (\tau-l_3, \tau) \subset (0,\tau);\\
(l_3, \tau)=I_1^+ \xmapsto{F} I_2^+ \xmapsto{F} (0, \tau-l_3) \subset(0,\tau).
\end{align*}
Then in restriction to $(0, \tau)$ the first return map $F'$ of $F$ is a $2$-interval exchange transformation with combinatorics $\begin{pmatrix}
\overline{I_1^- }&I_1^+\\
{I_1^+}&\overline{I_1^-}
\end{pmatrix}$.\footnote{The notation here is analogous to the standard notation for the combinatorics of the dynamics of $\mathrm{IET}$. The difference is that some of the intervals (e.g. here the interval $I_1^- $) may be flipped. In this case we write a bar over such intervals. For more on these notations and in general, dynamical behavior of IETs with flips, see \cite{N89, Olga}.} Such a first return map is completely periodic (as first proven in \cite{K75}) with
$\mathcal{P}_{F'}=\{2n, 2n+2\}$, where $\frac{|I_1^-|}{|I_1^-|+|I_1^+|}=\frac{l_3}{\tau} \in [\frac{1}{n+1}, \frac{1}{n})$. This gives that $\mathcal{P}_F=\{4n+2, 4n+6\}$.

Finally, suppose $\tau \in (l_2, l_1]$, then $K_1$ is the only $2$-periodic interval for $F$. Consider now a following subdivision of the initial intervals of continuity: $I_1=I_1^- \cup I_1^0 \cup I_1^+ \cup K_1, I_2=I_2^- \cup I_2^+, I_3=I_3^- \cup I_3^+$, with
\begin{align*}
I_1^-:=(0, \tau-l_2), I_1^0:=(\tau-l_2, l_3), I_1^+:=(l_3, \tau)\\
I_2^-:=(l_1, \tau+l_1-l_3), I_2^+:=(\tau+l_1-l_3, l_1+l_2)\\
I_3^-:=( l_1+l_2, l_1+\tau), I_3^-:=(l_1+\tau, 1).
\end{align*}
Then we have a following chain of images:
\begin{align*}
I_1^- \xmapsto{F} I_3^+ \xmapsto{F} (l_2, \tau) \subset (0, \tau)\\
I_1^0 \xmapsto{F} I_2^+ \xmapsto{F} I_3^+ \xmapsto{F} (\tau-l_3, l_2) \subset (0, \tau)\\
I_1^+ \xmapsto{F} I_2^- \xmapsto{F} (0, \tau-l_3) \subset (0, \tau).
\end{align*}
This gives that the first-return map on $(0, \tau)$ has the combinatorics $
\begin{pmatrix}
I_1^-&\overline{I_1^0}&I_1^+\\
I_1^+&\overline{I_1^0}&I_1^-
\end{pmatrix},
$ with the lengths of its intervals of continuity $|I_1^-|=\tau-l_2, |I_1^0|=l_2+l_3-\tau, |I_1^+|=\tau-l_3$.

This first return map is completely periodic since the Nogueira-Rauzy induction for this map stops, and its Rauzy diagram is finite.\footnote{ This induction is the Rauzy-Nogueira induction for IETs with flips and was first introduced in \cite{N89}.This induction is defined in an analogous way to the standard Rauzu induction, by inducing on each step on the difference between the initial and losing intervals. For \emph{almost any} IET with flips the Rauzy-Nogueira induction stops, as proven by A. Nogueira. For more details, see for example \cite{Olga}.} Indeed, one has a following Rauzy diagram:

\begin{equation}\label{eq:comp_per_map}
\begin{pmatrix}
I_1^-&\overline{I_1^0}&I_1^+\\
I_1^+&\overline{I_1^0}&I_1^-
\end{pmatrix} \overset{|I_1^+|>|I_1^-|}{\underset{|I_1^-|>|I_1^+|}\rightleftarrows} 
\begin{pmatrix}
I_1^-&\overline{I_1^0}&I_1^+\\
I_1^+&I_1^-&\overline{I_1^0}
\end{pmatrix}
\overset{|I_1^0|>|I_1^+|}{\underset{}\rightarrow}
\begin{pmatrix}
I_1^-&\overline{I_1^+}&\overline{I_1^0}\\
\overline{I_1^+}&I_1^-&\overline{I_1^0}
\end{pmatrix}
\overset{|I_1^+|>|I_1^-|}{\underset{}\rightarrow}
\begin{pmatrix}
\overline{I_1^-}&\overline{I_1^+}&\overline{I_1^0}\\
\overline{I_1^-}&\overline{I_1^+}&\overline{I_1^0}
\end{pmatrix}.
\end{equation}

We do not give a full Rauzy diagram but only one of its parts,
since the diagram is symmetric with respect to the exchange of $I_1^-$ and $I_1^+$. After a finite number of steps of the Rauzy-Nogueira induction, one obtains a completely periodic map (indeed, a permutation on the right in \eqref{eq:comp_per_map} is completely periodic).\footnote{One can also explicitely calculate the set $\mathcal{P}_F$ in this case but we do not need it in the following.} This proves the point 1.

For the point 2., if $l_1>\frac{1}{2}$ and $\tau>1-l_1$ then we have $0 < l_1+\tau-1<\tau<l_1$. This means that the map $F$ has two $2$-periodic intervals $I_1^-:=(0, l_1+\tau-1)$ and $I_1^+:=(\tau, l_1)$, $I_1^- \cup I_1^+=K_1$. Then, the first return map on the interval $I_2 \cup I_3=(l_1, 1)$ is equal to $F^2$ and coincides with a rotation $R_{\kappa}$ with with $\kappa=\frac{l_3}{l_2+l_3}$.

Finally, for all the maps studied above the elements of $\mathcal{P}_F$ have the form $\{4n+2\ \mid n \in \N\}$ (except for the point 2. and $\kappa \in \Q$ that may induce periods of the form $4n, n \in \N$). The set $\mathcal{P}_F$ is always finite.
\end{proof}

\begin{remarkimp}\label{rem:INTEGRABILITY}
In terms of triangle tiling billiards, the maps with parameters described in Lemma \ref{lem:integrability} are \textbf{integrable}, i.e. the corresponding trajectories are either \emph{periodic} (correspond to periodic intervals) or \emph{linearly escaping} (correspond to the point 2. of the Lemma \ref{lem:integrability}). The point 2. corresponds to the case of obtuse triangle tilings: on any of such tilings one finds linearly escaping trajectories. The point 1. corresponds to the case when trajectories start far enough from the circumcenter and are periodic. For more on the notion of integrability for tiling billiards, see  [Section 5, \cite{Olga}].
\end{remarkimp}

\subsection{Renormalization process.}\label{subs:renormalization}
Now we are ready to define the renormalization process on the family $\CETthree$: we will do it for all the cases that were not covered by the previous pargraph.

\bigskip

\begin{theorem}\label{thm:renormalization_process}
Take a map $F=F_{\tau}^{l_1, l_2, l_3} \in \CETthree$ with $\tau \in [0, \frac{1}{2}]$. Let $\max\{l_j\}_{j=1}^3 \leq \frac{1}{2}$ and $\tau> \max\{l_j\}_{j=1}^3$. Define $x_j$ and $r$ via the relations \eqref{eq:l and x} and \eqref{eq:r_and_x}. Then the following holds.
\begin{itemize}
\item[1.] A map $T=F^2: \Sph^1 \rightarrow \Sph^1$ is a $6$-IET with intervals of continuity $I_j^{\pm}$ of lengths $|I_j^{\pm}|=\frac{x_j}{2} \pm r, j \in \mathcal{N}_{\Delta}$. Moreover, $I_j^+$ and $I_j^-$ are neigbouring in the preimage, and their images $T(I_j^+)$ and $T(I_j^-)$ are neighbouring in the image.
\item[2.] Suppose that $l_j= \min\{l_j\}_{j=1}^3$ for some $j \in \mathcal{N}_{\Delta}$.  
Consider the interval $S_j:=I_j^+ \cup I_j^-=:(s_j^-, s_j^+)$ and reglue its endpoints to obtain a circle $S_j / {s_j^-\sim s_j^+}$. Then a first return map on this circle is well-defined. Let ${R}_j F: \Sph^1 \rightarrow \Sph^1$ be its rescaling back to the unit circle. Then ${R}_j F \in \CETthree$ and its parameters  $(l_1', l_2', l_3', \tau') \in \Delta_2 \times [0, 1/2]$ are defined as follows: $(l_1', l_2', l_3')$ is th image of $(l_1, l_2, l_3)$ under the fully subtractive algorithm, and 
\begin{align*}
\tau'=\frac{1}{2}-r', \; \; r'=\frac{r}{|S_3|} \geq r.
\end{align*}
\item[3.] A map ${R}_j F$ has a $2$-periodic interval if and only if $l_j \geq \frac{1}{4}-\frac{r}{2}$. 
\end{itemize}
We call the interval $I_j$ \textbf{the interval in play}.
\end{theorem}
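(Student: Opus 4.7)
For Part 1, I translate the hypothesis $\tau > \max_j l_j$ via $r = \frac{1}{2}-\tau$ and $x_j = 1 - 2l_j$ into the equivalent bound $r < \frac{1}{2}\min_j x_j$. This is exactly the hypothesis of point~3 of Lemma~\ref{lem:ARconnection}, which immediately gives that $T = F^2$ is a $6$-IET with continuity intervals $I_j^{\pm}$ of the claimed lengths $\frac{x_j}{2}\pm r$. The adjacency of $I_j^+$ with $I_j^-$ in the domain, and of $T(I_j^+)$ with $T(I_j^-)$ in the range, is immediate from the explicit expressions for $I_j^{\pm}$ and their $T$-images displayed in the proof of that lemma.

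For Part 2, I may assume without loss of generality $j = 3$, so $l_3 = \min_i l_i$. From the explicit expressions $I_3^+ = (\tau - l_2, l_1)$ and $I_3^- = (l_1, l_1 + \tau - l_3)$ I compute $|S_3| = l_1 + l_2 - l_3 = 1 - 2l_3$, which already matches the sum of the components produced by one step of the fully subtractive algorithm on $(l_1,l_2,l_3)$ with $l_3$ in the distinguished role. The core task is to describe the first-return map of $F$ to the glued circle $S_3/(s_3^- \sim s_3^+)$ explicitly. Each of $I_3^+ \subset I_1$ and $I_3^- \subset I_2$ splits into two subintervals: points whose first $F$-image already lies in $S_3$, and points whose first image falls into the complementary arcs $I_1^+ \cup I_2^-$ and which return only after two further iterations through $I_3$. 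A direct computation of the successive images shows that the return map, decomposed over the resulting three subintervals of $S_3$, is the composition of a local orientation-reversing involution on each piece with a global rotation; after rescaling by $1/|S_3|$ it is therefore in $\CETthree$, with interval lengths $(l_1',l_2',l_3') = ((l_1-l_3)/|S_3|,\, (l_2-l_3)/|S_3|,\, l_3/|S_3|)$ matching the fully subtractive algorithm exactly, and a rotation parameter $\tau' = \frac{1}{2} - r/|S_3|$ (equivalently $r' = r/|S_3| \geq r$, since $|S_3| \leq 1$) obtained by tracking the displacement of one fixed endpoint. The main obstacle is the combinatorial bookkeeping: one must verify that the three resulting subintervals are permuted by the trivial permutation, so that the return map lies in $\CETthree$ rather than in a more general class of flipped IETs. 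This is where the minimality assumption $l_3 \leq l_1, l_2$ together with the adjacency statements from Part~1 enters decisively.

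For Part 3, I apply Lemma~\ref{lem:integrability} to $R_3 F$ with parameters $(l_1',l_2',l_3',\tau')$ from Part~2. The argument splits according to whether $l_3' = l_3/(1-2l_3)$ exceeds $\frac{1}{2}$, which is equivalent to $l_3 > \frac{1}{4}$. If $l_3 > \frac{1}{4}$, then either point~1 or point~2 of Lemma~\ref{lem:integrability} applies to $R_3 F$ (depending on whether $\tau' \leq 1-l_3'$) and a $2$-periodic interval exists, while the inequality $l_3 \geq \frac{1}{4} - r/2$ holds automatically for any $r \geq 0$. If $l_3 \leq \frac{1}{4}$, all $l_j'$ are at most $\frac{1}{2}$, so point~1 of Lemma~\ref{lem:integrability} gives that a $2$-periodic interval exists iff $\tau' \leq \max_i l_i'$; a short computation shows that the hypothesis $\tau > l_1$ forces $\tau' > l_1' \geq l_2'$, so the condition reduces to $\tau' \leq l_3'$, which simplifies via a one-line algebraic manipulation to exactly $l_3 \geq \frac{1}{4} - \frac{r}{2}$.
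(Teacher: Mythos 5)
Your proposal is correct, and for points 1 and 2 it follows essentially the paper's route: point 1 is read off from Lemma \ref{lem:ARconnection} after translating $\tau>\max_j l_j$ into $r<\tfrac{1}{2}\min_j x_j$, and point 2 rests on the same first-return computation on $S_3=(\tau-l_2,\,l_1+\tau-l_3)$, split into the two pieces that return after one iterate of $F$ and the two pieces (whose first images land in $I_1^+\cup I_2^-$) that return after three. The one thing you assert rather than carry out is the ``direct computation of successive images'': the paper performs it explicitly, exhibiting the decomposition $S_3=J_3^2\sqcup J_1\sqcup J_2\sqcup J_3^1$ and checking that the return images tile $S_3$ as $F(J_2)\sqcup F^3(J_3^2)\sqcup F^3(J_3^1)\sqcup F(J_1)$, which is precisely the combinatorial bookkeeping you flag as the main obstacle; that check (and the resulting value $\tau'=(\tau-l_3)/|S_3|$) is the only substantive content your write-up defers. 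For point 3 you take a genuinely different route. The paper works on the original circle and tests which of $J_1,J_2,J_3^1,J_3^2$ has return image meeting itself, obtaining the two conditions $l_3\geq\tfrac14+\tfrac r2$ and $l_3\geq\tfrac14-\tfrac r2$ and taking their union. You instead apply the criterion ``a map in $\CETthree$ has a $2$-periodic interval iff $\tau\leq\max_j l_j$'' to the renormalized parameters: since $\tau>l_1,l_2$ forces $\tau'>l_1',l_2'$, the condition reduces to $\tau'\leq l_3'$, i.e.\ $\tau\leq 2l_3$, i.e.\ $l_3\geq\tfrac14-\tfrac r2$ (and your separate case $l_3>\tfrac14$, where $l_3'>\tfrac12\geq\tau'$, is handled by Lemma \ref{lem:integrability}). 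This is a cleaner one-line derivation that also makes transparent why the paper's two petal conditions unite to the weaker one; its only cost is that it presupposes point 2 in full and the iff form of the $2$-periodicity criterion, both of which are available.
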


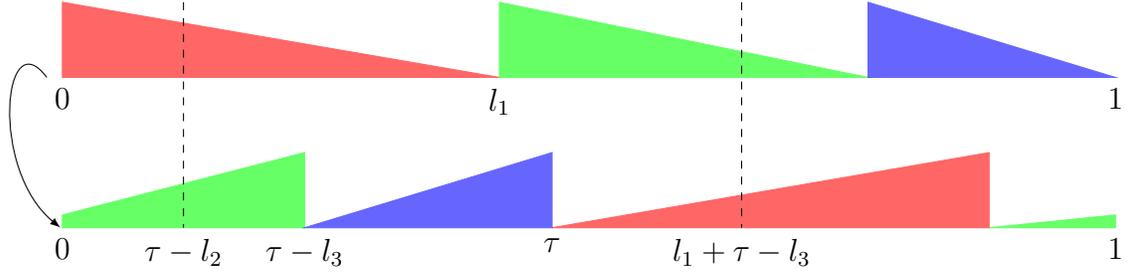
\begin{figure}
\centering
\begin{tikzpicture}[xscale=14]
\path[draw, fill=red,red, opacity=0.6] (0,0)--(0,1)--(0.415,0)--cycle;
\path[draw, fill=green, green, opacity=0.6] (.415,0)--(.415,1)--(0.765,0)--cycle;
\path[draw, fill=blue, blue, opacity=0.6] (.765,0)--(1,0)--(.765,1)--cycle;
\path[draw] (0,0) node[below] (1){$0$} --cycle;
\path[draw] (1,0) node[below]{$1$} --cycle;
\path[draw] (.415,0) node[below]{$l_1$} --cycle;

 \tikzset{
        arrow/.style={
            color=black,
            draw=black,
            -latex,
                font=\fontsize{12}{12}\selectfont},
        }

\path[draw, fill=red, red, opacity=0.6] (.465,0-2)--(.880,1-2)--(0.880,0-2)--cycle;
\path[draw, fill=green,green, opacity=0.6] (0.880,0-2)--(1,0-2)--(1,0.170-2)--cycle;
\path[draw, fill=green,green, opacity=0.6] (0,0-2)--(0,0.170-2)--(0.230,1-2)--(0.230,0-2)--cycle;
\path[draw, fill=blue,blue, opacity=0.6] (0.230,0-2)--(.465,0-2)--(.465,1-2)--cycle;
\path[draw] (0,-2) node[below] (2){$0$} --cycle;
\path[draw] (1,-2) node[below]{$1$} --cycle;
\path[draw] (.465,0-2) node[below]{$\tau$} --cycle;
\path[draw] (.230,0-2) node[below]{$\tau-l_3$} --cycle;
\draw[arrow](1) to [out=94,in=95]  (2);
\draw [dashed](0.115,1)--(0.115,-2);
\draw [dashed](0.645,1)--(0.645,-2);
\path[draw] (0.115,-2) node[below] (2){$\tau-l_2$} --cycle;
\path[draw] (0.645,-2) node[below] (2){$l_1+\tau-l_3$} --cycle;
\end{tikzpicture}
\caption[]{\emph{Interval} $S_3$ \emph{of the induction.}
Here $F=F_{\tau}^{l_1, l_2, l_3} \in \CETthree$ with the parameters satisfying the relations $l_3 < l_2 \leq l_1$ and $\tau \in \left(l_1, \frac{1}{2}\right]$. One step of renormalization gives a map ${R}_3 F$ which is a rescaled first return map on the interval $S_3$. The middlepoint of $S_3$ is equal to $\tau+l_1-\frac{1}{2}$ and coincides with a singularity $l_1$ and only if $\tau=\frac{1}{2}$.}\label{fig:illust_renormalization}
\end{figure}

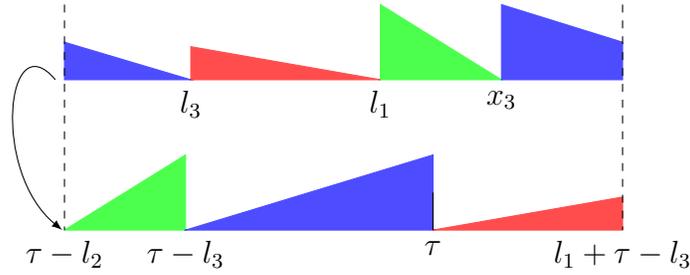
\begin{figure}
\centering
\begin{tikzpicture}[xscale=14]
\path[draw] (0.115,0) node[below] (1){} --cycle;
\path[draw] (0.235,0) node[below] {$l_3$} --cycle;
\path[draw] (0.53,0) node[below] {$x_3$} --cycle;
\path[draw] (0.115,-2) node[below] (2){} --cycle;
\path[draw, fill=blue,blue, opacity=0.7] (0.115,0)--(0.235,0)--(0.115,0.5)--cycle;
\path[draw, fill=red,red, opacity=0.7] (0.235,0)--(0.415,0)--(0.235,0.44)--cycle;
\path[draw, fill=green, green, opacity=0.7] (0.415,0)--(0.415,1)--(0.53,0)--cycle;
\path[draw, fill=blue, blue, opacity=0.7] (0.53,0)--(0.53,1)--(0.645,0.5)--(0.645,0)--cycle;
\path[draw] (0.415,0) node[below]{$l_1$} --cycle;

 \tikzset{
        arrow/.style={
            color=black,
            draw=black,
            -latex,
                font=\fontsize{12}{12}\selectfont},
        }

\path[draw] (0.23,0-2) node[below] {$\tau-l_3$} --cycle;
\path[draw] (0.465,0-2) node[below] {$\tau$} --cycle;
\path[draw] (0.115,-2) node[below] (2){$\tau-l_2$} --cycle;
\path[draw] (0.645,-2) node[below] {$l_1+\tau-l_3$} --cycle;
\path[draw, fill=green, green, opacity=0.7] (0.115,0-2)--(0.23,1-2)--(0.23,0-2)--cycle;
\path[draw, fill=blue,blue, opacity=0.7] (0.23,0-2)--(0.465,1-2)--(0.465,0-2)--cycle;
\path[draw, fill=red,red, opacity=0.7] (0.465,0-2)--(0.645,0-2)--(0.645,0.44-2)--cycle;
\draw[arrow](1) to [out=94,in=95]  (2);
\draw [dashed](0.115,1)--(0.115,-2);
\draw (0.465,0-2)--(0.465,0.5-2);
\draw [dashed](0.645,1)--(0.645,-2);
\end{tikzpicture}
\caption{\emph{First return map on} $S_3$ \emph{is  a fully flipped interval exchange transformation}.The intervals $J_a, J_b, J_c^1, J_c^2$ are intervals of continuity of such a map, and the dynamics is defined by equations \eqref{eq:cut_up} and \eqref{eq:cut-down}. By regluing the extremities of $S_3$, a singularity between $J_c^1$ and $J_c^2$ dissapears and a rescaled map ${R}_3 F \in \CETthree$.}
\label{fig:induced_map}
\end{figure}

\begin{proof}
The point 1. follows from the proof of Lemma \ref{lem:ARconnection}. As already mentionned before, the inequality $\tau>l_j$ is equivalent to the absence of $2$-periodic intervals for $F$.
 
In the following we suppose that $l_3=\min \{l_j\}_{j=1}^3$ or, equivalently,  $x_3= \max \{x_j\}_{j=1}^3$. Then $S_3=\left(\tau-l_2, l_1+\tau-l_3 \right)$ and we study the first return map on $S_3$, see Figure \ref{fig:illust_renormalization}.

Cut each of the intervals $I_3^+$ and $I_3^-$ into two subintervals by points $l_3$ and $x_3$ correspondingly. Then $I_3^+=J_3^2 \cup J_1$ and $I_3^-=J_2 \cup J_3^1$, where the intervals $J_1, J_2, J_3^1$ and $J_3^2$ are defined by
\begin{align*}
J_3^1:=\left(l_1+l_2-l_3, l_1-l_3+\tau\right), J_3^2:=\left(\tau-l_2, l_3\right),\\
J_1:=\left(l_3, l_1 \right),\\
J_2:=\left(l_1, l_1+l_2-l_3\right).
\end{align*}
We see that $|J_1|=\frac{x_3-x_1}{2}=l_1-l_3, |J_2|=\frac{x_3-x_2}{2}=l_2-l_3$ and $|J_3^1|+|J_3^2|=\left(\frac{x_2}{2}-r\right)+\left(\frac{x_1}{2}+r\right)=l_3$. Moreover, the interval $S_3$ is cut into four disjoint intervals in the following order:
\begin{equation}\label{eq:cut_up}
S_3=J_3^2 \sqcup J_1 \sqcup J_2 \sqcup J_3^1.
\end{equation}

One can easily see that $F(J_1) \cup F(J_2) \subset S_3$, and that $F(J_1)$ is put to the right end of $S_3$, and $F(J_2)$ is put to the left end of $S_3$ by the dynamics. 

For the intervals $J_3^1$ and $J_3^2$, one has the following chains of iterations:

\begin{align*}
J_3^1 \xmapsto{F} I_2^-  \xmapsto{F} \left(l_1+l_2, l_1+\tau\right) \xmapsto{F} \left(l_2, \tau\right) \subset S_3, \\
J_3^2 \xmapsto{F} I_1^+ \xmapsto{F} \left(l_1+\tau, 1\right) \xmapsto{F} \left(\tau-l_3, l_2\right) \subset S_3.
\end{align*}

This proves that the first return map on $S_3$ coincides with $F^3$ in restriction to $J_3^1 \cup J_3^2$, see Figure \ref{fig:induced_map}.

Finally, we conclude that the images of the four intervals $J_1, J_2, J_3^1, J_3^2$ under the first return map cover $S_3$ without intersection. Indeed, we have

\begin{equation}\label{eq:cut-down}
S_3=F(J_2) \sqcup F^3 (J_3^2) \sqcup F^3 (J_3^1) \sqcup F (J_1).
\end{equation}

Hence after regluing the ends of $S_3$ together, the first return map becomes a map in ${R}_3 F \in \CETthree$ with three intervals of continuity: the (rescaled) intervals $J_1, J_2$ and $J_3=J_3^1 \cup J_3^2$. The direct calculation shows that $\tau=\frac{\tau-l_3}{|S_3|}$. By writing out $\tau=\frac{l_1+l_2+l_3}{2}-r$ we conclude $\tau'=\frac{1}{2}-\frac{r}{|S_3|}$. Thus the point 2. is proven.

For the point 3. we see that $F (J_2) \cap J_2 =\emptyset$ and $ F (J_1) \cap J_1 =\emptyset$ since $\tau > l_j$. Finally, $F^3 (J_3^1) \cap J_3^1 \neq \emptyset$ is equivalent to the inequality 
$l_1+l_2-l_3 \leq \tau  \Leftrightarrow l_3 \geq \frac{1}{4}+\frac{r}{2}$.  Analogously, $F^3 (J_3^2) \cap J_3^2 \neq \emptyset$ is equivalent to the analogous inequality $\tau-l_3 \leq l_3 \Leftrightarrow  l_3 \geq \frac{1}{4}-\frac{r}{2}.$ By uniting these two inequalities, we finish the proof. 
\end{proof}

\smallskip
We now define the\textbf{ renormalization process on the family} $\CETthree$ as follows. Take any map $F \in \CETthree$ and let $k=0$, $F_0=F$.
If the conditions of Theorem \ref{thm:renormalization_process} do not hold (equivalently, conditions of Lemma \ref{lem:integrability} do hold) for $F$ , we say that the \textbf{renormalization process stops for the map} $F$. If these conditions do hold, that defines the index $t_1 \in \mathcal{N}_{\Delta}$ of the interval in play and one defines $R_{t_1} F \in \CETthree$. 

Then, one continues by recurrence. On the $k$-th step of the renormalization process (it if is defined), one obtains an interval exchange map $F_k \in \CETthree$ defined by 
\begin{equation}\label{eq:RENORMALIZATION_DEF}
F_k={R}_{t_k} \circ \ldots \circ {R}_{t_1} F.
\end{equation}

Here $\{t_k\} \in \mathcal{N}_{\Delta}^{\N}$ is a sequence of indices corresponding to the intervals in play. 

Define $\boldsymbol{\lambda}:=(l_1, l_2, l_3, \tau) \in \Delta_2 \times [0, \frac{1}{2}]$ as a vector of parameters for any map $F \in \CETthree$. Then we denote by  $\{\boldsymbol{\lambda}^{(k)}\}_{k \in \N}$ a sequence of such vectors corresponding to the maps $F_k$. Here
$\boldsymbol{\lambda}^{(k)}=(l_1^{(k)}, l_2^{(k)}, l_3^{(k)}, \tau^{(k)}) \in \Delta_2 \times [0, 1/2]$.

The corresponding vectors $(x_1, x_2, x_3, r)$ are also defined in an analogous manner via  \eqref{eq:l and x} and \eqref{eq:r_and_x}.

We denote by $S^{(k)} \subset \Sph^1$ a set of definition of $F_{k}$, considered as a subset of the initial circle $S^{(0)}$, for any $k \in \N^*$. Obviously, the lengths $S^{(k)}$ diminish along the renormalization process since $S^{(k)} \subset S^{(k-1)}$.

\begin{remarkimp}\label{rem:onestep}
From  the proof of Theorem \ref{thm:renormalization_process} follows that one step (for example, $F \mapsto {R}_3 F$) of the renormalization process corresponds to one step of the fully subtractive algorithm for the triple $(l_1, l_2, l_3) \in \Delta_2$:
\begin{equation}\label{eq:l_jchange}
\left[l_1^{(1)}: l_2^{(1)}: l_3^{(1)}\right]=[l_1-l_3: l_2-l_3: l_3].
\end{equation}

The renormalization process does not depend on the parameter $\tau$ (although the moment it stops, does depend on $\tau$, see Theorem \ref{thm:renormalization_process}). In restriction to the coordinates $x_j$,  the map \eqref{eq:l_jchange} is the Rauzy subtractive algorithm: 
\begin{equation*}
[x_1: x_2: x_3] \mapsto [x_1': x_2': x_3]=[x_1:x_2: x_3-x_1-x_2].
\end{equation*}

The fully subtractive algoritm is defined for all triples of $l_j$, and one of the lengths can be bigger that $1/2$. Hence the Rauzy subtractive algorithm can be expanded to any triple $(x_1, x_2, x_3)$ with $x_j \in [-1,1]$, not necessarily positive, and it always continues with the index $j$ in play for $x_j=\max\{x_j\}_{j=1}^3$. 

Define the simplex $\Delta_2^{\pm}$ as a convex hull of the points $(1,1,-1), (1,-1,1)$ and $(-1,1,1)$. Then the Rauzy gasket is a part of $\Delta_2^{\pm}$ on which the fully subtractive algorithm is chaotic, and it is the complement of the three basins of attraction. This idea has been formulated in \cite{AS13} by P. Arnoux and S. Starosta, see in particular their Figure 10. In the following we interpret the renormalization process in terms of triangle tiling billiards. Indeed, it can be seen as acting on the space of orbits of periodic triangle tiling billiards (via \eqref{eq:correspondance}), moving from one tiling to another, with the set of right triangles being invariant. 
\end{remarkimp}

\subsection{Minimality in the family $\CETthree$.}\label{subs:proof_of_minimality}
The goal of this paragraph is to give a new proof of

\begin{theorem}[\cite{Olga}]\label{thm:minimality}
A map $F^{l_1, l_2, l_3}_{\tau} \in \CETthree$ is minimal if and only if $\tau=\frac{1}{2}$ and $(x_1,x_2,x_3) \in \mathcal{R}$.
\end{theorem}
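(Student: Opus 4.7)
The plan is to deduce both implications from the renormalization of Theorem \ref{thm:renormalization_process} together with the integrability characterization of Lemma \ref{lem:integrability}. The key observation is that minimality of $F$ is inherited by each renormalized map $R_j F$, since the latter is a rescaling of the first-return map of $F$ on a positive-length subinterval.

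For the direction $(\Leftarrow)$, I assume $\tau = 1/2$ and $(x_1, x_2, x_3) \in \mathcal{R}$. Then $r = 0$ is preserved under renormalization, and the condition $(x_1^{(k)}, x_2^{(k)}, x_3^{(k)}) \in \mathcal{R}$ (stable under the Rauzy subtractive algorithm by definition) translates into the strict inequalities $\min_j l_j^{(k)} < 1/4$ and $\max_j l_j^{(k)} < 1/2$ at every step $k$. Combined with $\tau^{(k)} = 1/2$, this ensures both that the hypotheses of Theorem \ref{thm:renormalization_process} persist, so the renormalization iterates indefinitely, and, via point 3 of that theorem, that no $F_k$ ever acquires a $2$-periodic interval. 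Consequently $F$ has no periodic orbit. Minimality then follows from a standard Keane-type argument: the nested intervals $S^{(k)}$ have lengths $\prod_i x_{j_i}^{(i-1)}$ that shrink to zero along the Rauzy subtractive orbit, so any $F$-invariant closed proper subset would induce, via the first-return correspondence, a nontrivial invariant set for each $F_k$, which is incompatible with the combined absence of periodic orbits and of wandering intervals in $\CETthree$.

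For the direction $(\Rightarrow)$, I assume $F$ is minimal. If either $\max_j l_j > 1/2$ or $\tau \leq \max_j l_j$, then Lemma \ref{lem:integrability} yields periodic intervals or a rotational invariant subcircle, contradicting minimality. Hence the hypotheses of Theorem \ref{thm:renormalization_process} apply, and by the first-return principle minimality propagates to $R_j F$; iterating, the renormalization continues indefinitely and no $F_k$ admits a $2$-periodic interval. If $\tau < 1/2$, then $r > 0$ and the recursion $r^{(k+1)} = r^{(k)}/|S^{(k)}_{t_{k+1}}|$ forces $r^{(k)}$ to grow strictly; bounded by $1/2$, it cannot do so forever, so at some finite step $\tau^{(k)} \leq \max_j l_j^{(k)}$ and Lemma \ref{lem:integrability} intervenes, contradicting minimality. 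Hence $\tau = 1/2$. Finally, if $(x_1, x_2, x_3) \notin \mathcal{R}$, the fully subtractive orbit must pass through a point with either some $l_j^{(k)} = 1/2$ (placing $F_k$ in the integrable regime of Lemma \ref{lem:integrability}) or some $\min_j l_j^{(k)} = 1/4$ (producing, by point 3 of Theorem \ref{thm:renormalization_process}, a $2$-periodic interval at the next renormalization); both outcomes contradict minimality.

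The main obstacle lies in the final step of $(\Leftarrow)$: upgrading the combinatorial statement that the renormalization never creates periodic orbits to the dynamical statement that $F$ is minimal. This requires a quantitative control of the shrinkage $|S^{(k)}| \to 0$ along Rauzy subtractive orbits in $\mathcal{R}$ together with a Keane-style exclusion of wandering intervals in the flipped IET family $\CETthree$. All other steps reduce to direct applications of Theorem \ref{thm:renormalization_process} and Lemma \ref{lem:integrability}.
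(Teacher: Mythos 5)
Your proposal follows essentially the same route as the paper: both directions are run through the renormalization of Theorem \ref{thm:renormalization_process} and the integrability dichotomy of Lemma \ref{lem:integrability}, with minimality forcing the renormalization to continue forever, forcing $r^{(0)}=0$, and forcing $l_{t_k}^{(k)}<\tfrac14$ at every step (which is exactly membership in $\mathcal{R}$); the sufficiency direction is discharged, as in the paper, by the standard criterion of Lemma \ref{lem:standard} (the paper's stated fallback being a direct appeal to the Arnoux--Rauzy theorem). The one presentational difference is that the paper re-derives the identity $r^{(k)}=r^{(0)}/|S^{(k)}|$ by exhibiting the covector $(1,1,1,-2)$ as invariant under the transposed-inverse renormalization matrices $B_j$, whereas you read the recursion straight off point 2 of Theorem \ref{thm:renormalization_process}; both are legitimate.

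There is, however, one step that does not follow as written. In the necessity argument you say the recursion forces $r^{(k)}$ "to grow strictly; bounded by $1/2$, it cannot do so forever." A strictly increasing sequence bounded by $\tfrac12$ can of course exist forever; monotonicity plus boundedness is not a contradiction. The correct deduction, which is the one the paper makes, is that minimality gives $|S^{(k)}|\to 0$ via Lemma \ref{lem:standard}, so $r^{(k)}=r^{(0)}/|S^{(k)}|\to\infty$ whenever $r^{(0)}>0$, contradicting $r^{(k)}\in[0,\tfrac12]$. Since you already invoke $|S^{(k)}|\to 0$ elsewhere, the fix is immediate, but as stated the step is a non sequitur. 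A smaller quibble: in the last paragraph of $(\Rightarrow)$ the dichotomy for leaving $\mathcal{R}$ should read that the subtractive orbit eventually has $\min_j l_j^{(k)}\ge\tfrac14$ (either by halting with strict inequality or by hitting $\tfrac14$ exactly), which with $r^{(k)}=0$ triggers point 3 of Theorem \ref{thm:renormalization_process}; the case "$l_j^{(k)}=\tfrac12$" is the obtuse boundary and should be stated as $\max_j l_j^{(k)}\ge\tfrac12$ if you want to include it separately.
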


The proof of this Theorem that we give with P. Hubert in \cite{Olga} was based first, on Theorem \ref{thm:Arnoux-Rauzy} by Arnoux-Rauzy and second, on a "miracle". By explicitely studying the Rauzy graphs of $4$-IET with flips,  we have proved the existence of some invariant of these graphs that implied  the hyperbolicity of the Rauzy-Nogueira induction in the neighbourhood of the repelling hyperplane $\{\tau=\frac{1}{2}\}$. Although, we think that the standard Rauzy-Nogueira induction is not the most appropriate tool to study the families of fully flipped maps. Indeed, already after one step of this induction the induced map is not anymore a fully flipped map. The renormalization process we propose in paragraph \ref{subs:renormalization} is better adapted to such families, and corresponding Rauzy graphs are much smaller. For the family $\CETthree$, such graph is one vertex. 

\smallskip

Here is a standard

\begin{lemma}\label{lem:standard}
Consider a map $F \in \CETthree$ and the renormalization process for this map. Then a map $F$ is minimal if and only if the renormalization process is infinite, and $\lim_{k \rightarrow \infty} |S^{(k)}|=0$.
 \end{lemma}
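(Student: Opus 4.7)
The overarching tool is the Rokhlin tower structure that $k$ steps of renormalization impose on $\Sph^1$. Unwinding the definition $F_k = R_{t_k} \circ \cdots \circ R_{t_1} F$, each step presents the circle as a disjoint union of tower levels $F^j(T_i^{(k)})$, where the $T_i^{(k)} \subset S^{(k)}$ are the intervals of continuity of $F_k$ and the $n_i^{(k)}$ are the corresponding return times; all tower levels have length at most $|S^{(k)}|$, every $F$-orbit enters $S^{(k)}$ within at most $\max_i n_i^{(k)}$ iterations, and the first return of $F$ to $S^{(k)}$ coincides with $F_k$. The plan is to prove both implications by moving information between the base dynamics $F$ and the induced dynamics $F_k$ through this tower.

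For the forward direction, assume $F$ is minimal. If the renormalization stops at step $k$, then Lemma \ref{lem:integrability} supplies a proper closed $F_k$-invariant subset of $S^{(k)}$: either a periodic subinterval (in the completely periodic case) or a rotational invariant subarc (in the obtuse case of point 2). Lifting this set through the Rokhlin tower produces a proper closed $F$-invariant subset of $\Sph^1$, contradicting minimality; hence the renormalization runs forever. To see $|S^{(k)}| \to 0$, note that the sequence is nested and monotone; if $L := \lim_k |S^{(k)}| > 0$, then by Remark \ref{rem:onestep} the parameters $(l_1^{(k)},l_2^{(k)},l_3^{(k)})$ follow the fully subtractive algorithm without ever triggering its stopping conditions, yet the total shrinkage stays bounded below by $L$. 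The only way this can happen is for the parameter trajectory to accumulate on $\partial \Delta_2$, and I would extract a limit dynamics on $S^\infty := \bigcap_k S^{(k)}$ that yields a proper closed $F$-invariant subset, again contradicting minimality.

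For the reverse direction, assume the renormalization is infinite and $|S^{(k)}| \to 0$. Fix an arbitrary open $J \subset \Sph^1$ and any $p \in \Sph^1$. Choose $k$ with $|S^{(k)}| < |J|/3$; since all tower levels at depth $k$ have length at most $|S^{(k)}|$, the set $J$ contains at least one entire tower level $F^{j_0}(T_{i_0}^{(k)})$. The forward $F$-orbit of $p$ hits $S^{(k)}$ in bounded time, and thereafter coincides, under first returns, with an $F_k$-orbit on $S^{(k)}$; it remains to show this $F_k$-orbit visits $T_{i_0}^{(k)}$. This is the main obstacle: one cannot invoke minimality of $F_k$ without circularity. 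I would resolve it by a further refinement, passing to a still deeper step $k' > k$ chosen so that each $T_i^{(k)}$ contains an entire tower level at depth $k'$, and then propagating the statement ``the orbit visits some piece at the deeper scale'' upward through the nested tower. A combinatorial check that all three symbols appear infinitely often in the index sequence $(t_k)$ --- which must hold under the assumption $|S^{(k)}|\to 0$, since otherwise some $l_j^{(k)}$ would stabilize as the strict minimum and the subtractive contraction would fail --- ensures that every interval of continuity is eventually forced to be visited, closing the argument and proving minimality of $F$.
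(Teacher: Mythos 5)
The paper itself offers no proof of Lemma \ref{lem:standard}: it is introduced with the words ``Here is a standard [lemma]'' and used as a black box, so there is no argument of the author's to compare yours against. Your Rokhlin-tower skeleton is the standard route and is the right one: stopping of the renormalization contradicts minimality via Lemma \ref{lem:integrability} (a periodic interval, or the $2$-periodic intervals $K_1$ of its point 2, for the return map $F_k$ lifts to a periodic interval for $F$); the towers over the continuity intervals of $F_k$ tile $\Sph^1$ with all levels of length at most $|S^{(k)}|$ (this follows from the measure count $|S_3|+2|J_3^1|+2|J_3^2|=1$ in the proof of Theorem \ref{thm:renormalization_process}, propagated inductively); and you correctly isolate the crux of the converse, namely that ``$J$ contains a full tower level'' is not enough --- one must show the $F_k$-orbit actually reaches the base $T_{i_0}^{(k)}$ of that level's column.

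That said, the two places where you write ``I would extract\dots'' and ``I would resolve it by\dots'' are exactly where the content of the lemma lives, and as written they are programs, not proofs. For the converse, passing to a deeper scale $k'$ and ``propagating upward'' is circular on its own: the statement you need is a \emph{primitivity} statement, that for $k'\gg k$ every column of the depth-$k'$ tower passes through \emph{every} base $T_i^{(k)}$ (once a column enters a base it automatically climbs the whole depth-$k$ column above it, so this suffices). This is the positivity of the product of incidence matrices of the substitutions $\sigma_{t_{k+1}}\circ\cdots\circ\sigma_{t_{k'}}$ of Proposition \ref{prop:SYMBOLIC}, and it does hold as soon as all three indices occur among $t_{k+1},\dots,t_{k'}$; you should verify this on the three explicit matrices rather than assert it. Your reason why all three indices occur infinitely often is also stated backwards: if the index $j$ occurs only finitely often, then $l_j^{(k)}$ is eventually \emph{never} the minimum, so the total amount subtracted after step $k_0$ is at most $l_j^{(k_0)}\le\frac12|S^{(k_0)}|$ and $|S^{(k)}|$ stays bounded below --- that is the computation to write down (and it simultaneously repairs the vaguest step of your forward direction, since by Remark \ref{rem:onestep} the same monotonicity shows that an infinite renormalization with $|S^{(k)}|\not\to 0$ forces an index to drop out, after which one exhibits a non-dense orbit directly instead of appealing to an unspecified ``limit dynamics on $S^\infty$''). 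With those two computations supplied the proof closes; without them it remains a (correct) outline.
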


Now we are ready to prove Theorem \ref{thm:minimality}.
\begin{proof}
Take a map $F \in \CETthree$ with a vector of parameters defined by $\boldsymbol{\lambda}$. If the renormalization process reaches the $k$-th step, then by Theorem \ref{thm:renormalization_process} and Remark \ref{rem:onestep}, for the map $F_k \in \CETthree, k \in \N^*$ defined by \eqref{eq:RENORMALIZATION_DEF} we have 

\begin{equation*}
\boldsymbol{\lambda}^{(k)}=A_{t_k} \boldsymbol{\lambda}^{(k-1)},
\end{equation*}
where $t_k \in \mathcal{N}_{\Delta}$ are the indices of intervals in play and the matrices $A_j, j \in \mathcal{N}_{\Delta}$ are defined explicitely by
\begin{equation*}
A_1:=\begin{pmatrix}
1&0&0&0\\
-1&1&0&0\\
-1&0&1&0\\
-1&0&0&1
\end{pmatrix}, \; \; \; \; 
A_2:=\begin{pmatrix}
1&-1&0&0\\
0&1&0&0\\
0&-1&1&0\\
0&-1&0&1
\end{pmatrix}, \; \; \; \; 
A_3:=\begin{pmatrix}
1&0&-1&0\\
0&1&-1&0\\
0&0&1&0\\
0&0&-1&1
\end{pmatrix}.
\end{equation*}

Define now $B_j:=\left(A_j^{-1}\right)^T, j \in \mathcal{N}_{\Delta}$. Then 
\begin{equation*}
B_1=\begin{pmatrix}
1&1&1&1\\
0&1&0&0\\
0&0&1&0\\
0&0&0&1
\end{pmatrix}, \; \; \; \; 
B_2=\begin{pmatrix}
1&0&0&0\\
1&1&1&1\\
0&0&1&0\\
0&0&0&1
\end{pmatrix}, \; \; \; \; 
B_3=\begin{pmatrix}
1&0&0&0\\
0&1&-1&0\\
1&1&1&1\\
0&0&0&1
\end{pmatrix}.
\end{equation*}

A map $F \in \CETthreehalf$ if and only if $(\boldsymbol{\lambda}, \boldsymbol{v}^{\perp})=0$ for $\boldsymbol{v}^{\perp}:=(1,1,1,-2)$. Moreover, the vector $\boldsymbol{v}^{\perp}$ is  \emph{invariant} for all three matrices $B_j, j \in \mathcal{N}_{\Delta}$, i.e. $B_j \boldsymbol{v}^{\perp}=\boldsymbol{v}^{\perp}$. This implies

\begin{multline*}
\left(\boldsymbol{\lambda}^{(0)}, v^{\perp}
\right)=\left(
A_{t_1}^{-1} \cdot \ldots \cdot A_{t_k}^{-1} \boldsymbol{\lambda}^{(k)}, v^{\perp}
\right)= 
\left(
\boldsymbol{\lambda}^{(k)}, B_{t_k} \cdot \ldots B_{t_1} v^{\perp} 
\right)= \left(
\boldsymbol{\lambda}^{(k)}, v^{\perp} 
\right)
= \left|S^{(k)}\right|-2 \tau^{(k)}  \left|S^{(k)}\right|.
\end{multline*}

This calculation gives that 
\begin{equation*}
 \tau^{(k)}=\frac{1}{2}-\frac{\left(
\boldsymbol{\lambda}^{(k)}, v^{\perp} 
\right)}{ \left|S^{(k)}\right|}.
\end{equation*}

We see from here that $r^{(k)}=\frac{r^{(0)}}{|S^{(k)}|}$. Suppose now that $F$ is minimal. Hence necessarily by Lemma \ref{lem:integrability}, $F$ satisfies (infinitely) the conditions of Theorem \ref{thm:renormalization_process}. Then, by Lemma \ref{lem:standard}, one obtains that if $\left(
\boldsymbol{\lambda}^{(k)}, v^{\perp} 
\right) \neq 0$, then $r^{(k)}$ tends to $-\infty$ while $k \rightarrow \infty$ which is impossible since $r^{(k)} \in [0, \frac{1}{2}]$. Hence necessarily  $\left(
\boldsymbol{\lambda}^{(k)}, v^{\perp} 
\right)=0$ and $\tau^{(0)}= \tau^{(k)}=\frac{1}{2}$. 
Then, for $F \in \CETthreehalf$ to be minimal, by Theorem \ref{thm:renormalization_process}, for every $k \in \N^*$ the following inequality should hold:
\begin{equation}\label{eq:inequalityformin}
l_{t_k}^{(k)} <\frac{1}{4}-\frac{r^{(k)}}{2}.
\end{equation}
Since $r^{(k)}=0$, this implies $l_{t_k}^{(k)} <\frac{1}{4}$ for all $k \in \N^*$. In terms of parameters $x_{t_k}$ these are equivalent to $x_{t_k}^{(k)}>1-x_{t_k}^{(k)}$ which, by definition gives $(x_1, x_2, x_3) \in \mathcal{R}$. 

To prove the inverse statement, if  $F \in \CETthreehalf$ with the parameters $(x_1, x_2, x_3) \in \mathcal{R}$, one can directly reference the result by Arnoux and Rauzy, see Theorem \ref{thm:Arnoux-Rauzy}. Or, alternatively, we see that the renormalization process is defined infinitely and $\left|S^{(k)}\right| \rightarrow 0$. This proves the minimality of $F$ by Lemma \ref{lem:standard}.

\end{proof}


\section{Classification of dynamics of triangle tiling billiards.}\label{sec:tiling billiards}

We use the renormalization process $R$ on the family $\CETthree$ and tiling billiard foliations in order to completely describe the dynamics of triangle tiling billiards.

\subsection{Vocabularly: tiling billiards and the family $\CETthree$.}\label{subs:vocabularly}

We now make the connection between triangle tiling billiards and maps in the family $\CETthree$ discussed above explicit. We provide a vocabularly between these, which was for the most part established in \cite{BDFI18}. We add to it the two last lines.

\bigskip

\begin{tabular}{|c|c|}
  \hline
\makecell{Triangle tiling billiards} &\makecell{$\CETthree$ (\emph{and translation})}\\
  \hline
\makecell{angles of a tile \\ $\alpha, \beta, \gamma$}&\makecell{parameters $(l_1, l_2, l_3)\in \Delta_2$ (\emph{via rescaling }\eqref{eq:correspondance})}\\
  \hline
 \makecell{oriented distance $d$ from a segment \\ of a trajectory to the circumcenter of a tile}&\makecell{$\tau \in \Sph^1 $ (\emph{via} $d=\cos \pi \tau$, \emph{see} [Proposition 1, \cite{Olga}])}\\  \hline
\makecell{relative position of a tile with respect \\ to the folded trajectory} &\makecell{$p \in \Sph^1$ (\emph{via folding }$\mathcal{F}$)}\\  
\hline
\makecell{starting tile $\theta_0$\\ of fixed orientation}&\makecell{$p_0 \in \Sph^1$ (\emph{via folding}, $p_0 \in \mathcal{F}(\theta_0) \cap \mathcal{C}$)} \\
\hline
\makecell{the set $V$ of vertices\\ and a corresponding set $\mathcal{F}(V)$}&\makecell{$\mathcal{C}(p_0):=\{n \alpha + m \beta + p_0, n,m \in \Z\}$ \\ (\emph{by identification} $\mathcal{C} \simeq\Sph^1$)}\\
\hline
\makecell{ray foliation $\mathcal{R}_{p_0}$ with $p_0=\mathcal{F}(v_0)$}&\makecell{action of a subfamily with fixed $(l_1, l_2, l_3) \in \Delta_2$\\ and varying $\tau$, on the subset $\mathcal{C}(p_0) \subset \Sph^1$}\\
\hline
\makecell{parallel foliation $\mathcal{R}_{\tau}, \tau \in \Sph^1$}&\makecell{action of a subfamily with fixed $(l_1, l_2, l_3) \in \Delta_2$\\ and varying $\tau(\varepsilon)$, on the set $\mathcal{C}(p(\varepsilon)),$ \\ here $\tau(\varepsilon)=\tau_0+2 \varepsilon$ and $p(\varepsilon)=p_0+\varepsilon$} \\
\hline
\end{tabular}

\bigskip

All of these connections follow from \cite{BDFI18} and the discussions above. The only calculation is that of the parameters $\tau(\varepsilon)$ and $p(\varepsilon)$ in the last line of the table. It follows from the definition of the coordinate $\tau$ in \cite{BDFI18} in a straightforward way.

\subsection{Symbolic dynamics of triangle tiling billiards.}\label{subs:symbolic_dynamics_proofs}

First, as a corollary of Theorem \ref{thm:renormalization_process}, we  give a simple proof of points 4. and 5. of Theorem \ref{thm:triangle_tiling_billiards_info}. We remind the reader that initially 4. was announced as a $4n+2$ Conjecture in \cite{BDFI18} and a first attempt of a proof was given in \cite{Olga}.

\begin{theorem}[$4n+2$ Conjecture]\label{thm:one-more-time}
Points 4. and 5. of Theorem \ref{thm:triangle_tiling_billiards_info} hold.
\end{theorem}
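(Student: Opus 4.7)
Since a word of the form $s^2$ with $|s|=2n+1$ odd has length $4n+2$, point 4 will be a formal consequence of point 5, so I will concentrate on proving point 5. Via the dictionary of Section \ref{subs:vocabularly}, this becomes the assertion that the symbolic code in $\mathcal{A}_\Delta^{\mathbb{N}}$ of any periodic orbit of $F\in\CETthree$ arising from a genuine billiard trajectory has the form $\overline{s^2}$ with $|s|$ odd. My plan is to induct on the number of applications of the renormalization of Theorem \ref{thm:renormalization_process} needed to drop $F$ into the integrable regime of Lemma \ref{lem:integrability}. The recursion terminates because $R_jF\in\CETthree$ and a non-trivial periodic $F$-orbit descends under the first return map to $S_j$ to a strictly shorter periodic orbit of $R_jF$.

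The base case will follow directly from Lemma \ref{lem:integrability}: the $2$-periodic intervals $K_j$ of \eqref{eq:Jj} consist of zero-displacement points (Remark \ref{rem:connectionbilliards}) and are excluded because they do not correspond to billiard trajectories, while the remaining periodic orbits can be read off the explicit interval decompositions and the Rauzy--Nogueira diagram \eqref{eq:comp_per_map} given in the proof of Lemma \ref{lem:integrability}. A case-by-case inspection will show that each such code is of the form $\overline{s^2}$ with $|s|$ odd, the prototype being the $6$-periodic orbit with code $\overline{(abc)^2}$.

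For the inductive step, let $j$ denote the interval in play, and suppose by induction hypothesis that the induced orbit of $R_jF$ has code $\overline{(s')^2}$ with $|s'|$ odd. The original code will be obtained as the image of $(s')^2$ under a substitution $\Phi$ which sends each induced letter $J_i$ with $i\neq j$ to a single letter of $\mathcal{A}_\Delta$ and each induced letter $J_j$ to one of two three-letter blocks, both ending in the letter corresponding to $I_3$ but starting differently depending on whether the orbit point lies in $J_j^1$ or $J_j^2$, as computed in the proof of Theorem \ref{thm:renormalization_process}. The parity count is immediate: $|\Phi(s')| = n_1 + n_2 + 3 n_j \equiv n_1 + n_2 + n_j = |s'| \equiv 1 \pmod 2$, so the lifted half-length is odd.

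The hard part of the argument, which I expect to require the most technical care, will be the \emph{coherence} step: showing that whenever $s'_k = J_j$, the induced orbit visits the same sub-interval $J_j^1$ or $J_j^2$ at times $k$ and $k+|s'|$, so that $\Phi((s')^2)=(\Phi(s'))^2$ is a genuine square rather than merely an aperiodic word. The half-period map $\Psi := (R_jF)^{|s'|}$ acts as a free involution on the induced orbit and is piecewise orientation-reversing, being a composition of an odd number of such maps. I will try to argue by contradiction: if $\Psi$ swapped $J_j^1$ with $J_j^2$ at some $J_j$-visit, the mismatched three-letter blocks in the lift would propagate around a full induced period through the piecewise translation $F^2$ and force a periodicity relation shorter than $2|s'|$, contradicting the minimality of $|s'|$. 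Once coherence is established, the lifted code is $\overline{(\Phi(s'))^2}$ with odd half-length, which closes the induction and yields both point 5 and (by the observation in the first paragraph) point 4.
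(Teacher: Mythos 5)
Your overall strategy --- reduce to the symbolic dynamics of $\CETthree$, renormalize down to the integrable regime of Lemma \ref{lem:integrability}, and lift the code back up through substitutions replacing each letter by one or three letters so that the half-length stays odd --- is the same as the paper's, and your parity count is exactly the content of Proposition \ref{prop:SYMBOLIC}. But there are two genuine gaps. First, your base case is wrong as stated: besides the $2$-periodic zero-displacement intervals, the integrable regime contains point 2 of Lemma \ref{lem:integrability}, where $F^2$ restricted to an interval acts as a rotation $R_\kappa$. When $\kappa\in\Q$ this produces bona fide periodic intervals of $F$ whose periods can be divisible by $4$ and whose codes are not odd squares, so the claim that ``a case-by-case inspection will show that each such code is of the form $\overline{s^2}$ with $|s|$ odd'' is false for the circle maps themselves. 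These intervals correspond to \emph{drift-periodic}, not periodic, billiard trajectories; the paper excludes them by perturbing $(l_1,l_2,l_3)$ so that $\kappa\notin\Q$ and invoking the stability of periodic billiard trajectories (point 3 of Theorem \ref{thm:triangle_tiling_billiards_info}). Without that extra step your induction has no valid base.

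Second, the coherence step, which you rightly flag as the crux, is not resolved by your sketch. If the half-period map sent a visit of $J_j^1$ to a visit of $J_j^2$, the lifted word would simply fail to be a square --- its minimal period would be \emph{longer} relative to its length --- and no periodicity relation shorter than $2|s'|$ of the \emph{induced} word is forced, so there is nothing to contradict in the minimality of $|s'|$. The paper avoids the problem by proving a stronger \emph{dynamical} statement and propagating it through the renormalization: the periodic interval $J$ of $F$ satisfies $F^m(J)=J$ with $F^m|_J$ an orientation-reversing isometry (``flipped on itself''), where $2m$ is the period. This yields both halves of point 5 at once: the setwise invariance $F^{t+m}(J)=F^t(J)$ forces the itinerary to repeat with period $m$ (the square structure), and since every branch of $F$ reverses orientation, $F^m|_J$ being orientation-reversing forces $m$ odd. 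If you want to keep your symbolic induction, you should strengthen the inductive hypothesis from ``the code is an odd square'' to ``the periodic interval is flipped onto itself by the half-period return map,'' which is what the terminal case of Lemma \ref{lem:integrability} (the intervals $K_j$ and the fully flipped permutation at the end of the Rauzy diagram \eqref{eq:comp_per_map}) actually delivers.
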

\begin{proof}
It is sufficient to prove the statements 4. and 5. of Theorem \ref{thm:triangle_tiling_billiards_info} for the symbolic dynamics of any map $F \in \CETthree$.  Indeed, if some triangle tiling billiard trajectory is periodic, there exists a \emph{periodic interval} for a corresponding map in $\CETthree$.\footnote{The inverse is not true since periodic intervals of $F \in \CETthree$ may also define drift-periodic orbits of tiling billiards.}In this case, the map $F$ is not minimal, and hence, the renormalization process stops for $F$. Then, by Theorem \ref{thm:renormalization_process} and Lemma \ref{lem:integrability}, the periodic interval is necessarily flipped on itself or comes as a periodic orbit of a rational rotation $R_{\kappa}$ (see point 2. in Lemma \ref{lem:integrability}). But in the latter case, a map $F$ can be perturbed by a slight change of parameters $(l_1, l_2, l_3) \in \Delta_2$ in order for $\kappa=\frac{l_3}{l_2+l_3} \notin \Q$. Then, the corresponding periodic interval disappears which is not the case for periodic orbits of triangle tiling billiards, see point 3. in Theorem \ref{thm:triangle_tiling_billiards_info}. Indeed, this second case defines drift-periodic orbits.
\end{proof}

\smallskip

One may give a much more precise description of symbolic dynamics of triangle tiling billiards than that of Theorem \ref{thm:one-more-time} with the help of the following

\begin{proposition}\label{prop:SYMBOLIC}
Consider one step of the renormalization process on $\CETthree$. Then for any orbit of the induced map $R_j F, j \in \mathcal{N}_{\Delta}$, the symbolic code of a corresponding orbit of $F$ is obtained via the substitution $\sigma_j$, where 
\begin{align}
\sigma_1: \; \;  \left\{
\begin{array}{cc}
a \mapsto bca, \textit{if a precedent symbol was not}\; \;  b, \\
a \mapsto cba, \textit{if a precedent symbol was not}\; \;  c, \\
b \mapsto b,\\
c \mapsto c. 
\end{array}
\right.;
 \nonumber
\\
\sigma_2: \; \;  \left\{
\begin{array}{cc}
a \mapsto a, \\
b \mapsto acb, \textit{if a precedent symbol was not}\; \;  a, \\
b \mapsto cab, \textit{if a precedent symbol was not}\; \;  c, \\
c \mapsto c
\end{array}
\right.;
\label{eq:sigmas}\\
\sigma_3: \; \;  \left\{
\begin{array}{cc}
a \mapsto a,\\
b \mapsto b,\\
c \mapsto bac, \textit{if a precedent symbol was not}\; \;  b,\\
c \mapsto abc, \textit{if a precedent symbol was not}\; \;  a. 
\end{array}
\right.  \nonumber
\end{align}
Consequently, if $F_k$ is defined by \eqref{eq:RENORMALIZATION_DEF} then the symbolic code of any orbit of $F$ is deduced from a symbolic code of a corresponding orbit of $F_k$ by applying to it  a substitution $\sigma_{t_1} \circ \ldots \sigma_{t_k}$. 
\end{proposition}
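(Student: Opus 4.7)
The plan is to establish $\sigma_3$ by direct inspection of the chains of iterates of $F$ produced in the proof of Theorem \ref{thm:renormalization_process}; the cases $\sigma_1$ and $\sigma_2$ follow by cyclically permuting the roles of $a,b,c$, and the $k$-step substitution is then immediate by induction on $k$. So I would focus entirely on the case $j=3$.

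Recall that $S_3$ splits as $J_3^2 \sqcup J_1 \sqcup J_2 \sqcup J_3^1$ and that the first-return map of $F$ to $S_3$ coincides with $F$ on $J_1 \cup J_2$ and with $F^3$ on $J_3^1 \cup J_3^2$. After regluing the endpoints and rescaling, $J_1$ becomes the new interval $I_1$ of $R_3 F$, $J_2$ the new $I_2$, and $J_3^1 \cup J_3^2$ the new $I_3$. Reading off the symbols of $F$ along one full return is then a mechanical exercise. Since $J_1 \subset I_1$ and $F(J_1) \subset S_3$, the return time on $J_1$ is one and the single letter read is $a$, so $\sigma_3(a)=a$; likewise $\sigma_3(b)=b$. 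For $J_3^1 \subset I_2$, the chain $J_3^1 \to I_2^- \subset I_1 \to (l_1+l_2, l_1+\tau) \subset I_3 \to S_3$ recorded in Theorem \ref{thm:renormalization_process} yields the three-letter word $bac$, and for $J_3^2 \subset I_1$ the chain $J_3^2 \to I_1^+ \subset I_2 \to (l_1+\tau, 1) \subset I_3 \to S_3$ yields $abc$. This produces the two possible expansions $c \mapsto bac$ (when the point sits in $J_3^1$) and $c \mapsto abc$ (when it sits in $J_3^2$).

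The remaining task is to express the choice between these two expansions in terms of the preceding letter of the $R_3 F$-orbit. Using the formulas for $F$ on $I_1$ and $I_2$ one computes $R_3 F(J_1) = F(J_1) = (\tau, l_1+\tau-l_3)$ and $R_3 F(J_2) = F(J_2) = (\tau-l_2, \tau-l_3)$. The hypothesis $\tau > \max l_j$ forces $\tau > l_3$, so $R_3 F(J_1)$ is disjoint from $J_3^2 = (\tau-l_2, l_3)$; the hypothesis $\tau \leq 1/2 \leq 1-l_3 = l_1+l_2$ forces $\tau - l_3 \leq l_1+l_2-l_3$, so $R_3 F(J_2)$ is disjoint from $J_3^1 = (l_1+l_2-l_3, l_1+\tau-l_3)$. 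Consequently a point carrying $R_3 F$-symbol $c$ preceded by the letter $b$ must lie in $J_3^2$ and is expanded as $abc$, while one preceded by $a$ must lie in $J_3^1$ and is expanded as $bac$; if preceded by $c$ either case can occur a priori, which is exactly the overlap allowed by the two clauses of $\sigma_3$.

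I expect the main obstacle to lie not in the bookkeeping of the chains of iterates, which is read off directly from Theorem \ref{thm:renormalization_process}, but in the last step: the substitution $\sigma_3$ must be stated as a context-dependent rule precisely because both $c \mapsto bac$ and $c \mapsto abc$ occur, and showing that the preceding letter selects between them reduces to the two disjointness checks $R_3 F(J_1) \cap J_3^2 = \emptyset$ and $R_3 F(J_2) \cap J_3^1 = \emptyset$ carried out above. Once these are in place, passing from one step to $F_k = R_{t_k} \circ \cdots \circ R_{t_1} F$ is a routine induction, each step composing with the corresponding $\sigma_{t_k}$ on the right.
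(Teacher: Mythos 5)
Your proposal is correct and follows the same route as the paper's proof: both read $\sigma_3(a)=a$, $\sigma_3(b)=b$ and the two expansions $c\mapsto bac$ (on $J_3^1\subset I_2$) and $c\mapsto abc$ (on $J_3^2\subset I_1$) directly off the chains of iterates in the proof of Theorem \ref{thm:renormalization_process}, and then obtain $\sigma_1,\sigma_2$ by relabelling and the $k$-step statement by induction. In fact you supply a detail the paper leaves implicit — the disjointness checks $F(J_1)\cap J_3^2=\emptyset$ and $F(J_2)\cap J_3^1=\emptyset$ showing that the preceding letter really does select the correct expansion — where the paper only remarks that ``$\sigma_3$ is defined conditionally.''
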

\begin{proof}
The proof follows from the proof of Theorem \ref{thm:renormalization_process}, and we use the notations coming from there. Suppose that the induced map is $R_3 F \in \CETthree$ ($j=3$). Then any orbit of the map $F$ passes by a Poincaré section $S_3$ and has a corresponding orbit in $R_3 F$. Moreover, for any point $p \in J_1 \cup J_2$, its $F$- and $R_3 F$-orbits coincide, hence $\sigma_3(a)=a, \sigma_3(b)=b$. Finally, $J_3^1 \subset I_2, F(J_3^1) \subset I_1, F^2(J_3^1) \subset I_3$ and $J_3^2  \subset I_1, F(J_3^2) \subset I_2, F^2(J_3^2) \subset I_3$. Since both $J_3^1$ and $J_3^2$ both have the symbolic code $c$, $\sigma_3$ is defined conditionally. This finishes the proof.
\end{proof}

\subsection{Complete description of the dynamics of triangle tiling billiards.}

Now we are ready to prove Theorem \ref{thm:complete_classification} which is a much stronger version of Theorem \ref{thm:exceptional-zero measure} proven in \cite{Olga}. 

\begin{proof}
First, via the relations \eqref{eq:correspondance} and \eqref{eq:l and x}, we have $\rho_{\Delta}=(x_1, x_2, x_3)$. We now study the dynamics of a subfamily of maps in $\CETthree$ with varying $\tau$ and fixed $(x_1, x_2, x_3)$, which corresponds to the dynamics of a tiling billiard on a fixed tiling. Take a map $F$ in this family.

\smallskip

\textbf{Step 1.} First of all, if the renormalization process stops for $F$, then $F$ is integrable (see Remark \ref{rem:INTEGRABILITY}), i.e. all the corresponding tiling billiard trajectories are either periodic or linearly escaping. Indeed, we have that $\tau^{(k)} \leq \max\{l_j^{(k)}\}_{j=1}^3$ or $l_j^{(k)}>\frac{1}{2}$  for some $j \in \mathcal{N}_{\Delta}$. In both cases, the dynamics of the map $F_k$ is integrable, and hence is that of $F$. 

If $\rho_{\Delta} \notin \mathcal{R}$, the renormalization process will necessarily stop, see Remark \ref{rem:onestep} and the proof of Theorem \ref{thm:minimality}.

\smallskip

\textbf{Step 2.} Take $\rho_{\Delta} \notin \mathcal{R}$. The linearly escaping behaviour exists on a corresponding tiling if and only 
if for some $k \in \N^*$, the map $F_k$ verifies the conditions of point 2. in Lemma \ref{lem:integrability}. An additional calculation shows that it is indeed true for all $\rho_{\Delta} \in \mathcal{R} \setminus \mathcal{E}$. The argument goes as follows.

Suppose that there exists some $k \in \N^*$ such that $l_i^{(k+1)} \neq 0$ for all $i \in \mathcal{N}_{\Delta}$ and 
\begin{equation}\label{eq:obtuse_becoming}
l_j^{(k+1)}>\frac{1}{2} |S^{(k+1)}|, \; \; \textit{and} \; \; \forall m<k \; \; \max\{l_i^{(m)}\}_{i=1}^3 \in [0, \frac{1}{2}).
\end{equation} 

In the above relation, necessarily $j=t_k$. Indeed, since
$\max \{l_j^{(k)}\}_{j=1}^3<\frac{1}{2}$ for $j \neq t_k$, we have
\begin{equation*}
l_j^{(k)}-l_{t_k}^{(k)}<\frac{1}{2}\left(
1-2l_{t_k}^{(k)}
\right) 
\end{equation*} which is equivalent to $ l_j^{(k+1)}<\frac{1}{2}$. 
Although, it is possible that \eqref{eq:obtuse_becoming} holds for $j=t_k$. This condition can be rewritten as
\begin{equation}
l_{t_k}^{(k+1)}>\frac{1}{2}|S^{(k+1)}| \Longleftrightarrow 
l_{t_k}^{(k)}>\frac{1}{2}(|S^{(k)}|-2l_{t_k}^{(k)}) \Longleftrightarrow 
l_{t_k}^{(k)}>\frac{1}{4} |S^{(k)}|.
\end{equation}

But the last inequaity holds for all $\rho_{\Delta} \notin \mathcal{R}_{\Delta}$ for some $k \in \N^*$. This implies that if  $l_i^{(k+1)} \neq 0$ for all $i \in \mathcal{N}_{\Delta}$ then the linearly escaping behavior does occur on the triangle tiling defined by $\rho_{\Delta}$. Indeed, it suffices to take $\tau^{(k+1)}=\tau^{(0)}=\frac{1}{2}$, by Lemma \ref{lem:integrability}.

The case which is left is to study is what happens if for some  $i \neq t_k,  l_i^{(k)}=l_{t_k}^{(k)}$ (and hence $ l_i^{(k+1)}=0$). First, $l^{(k)}_1=l^{(k)}_2=l^{(k)}_3=\frac{1}{3}$ is equivalent to $\rho \in \mathcal{E}_{\Delta}$. Since the dynamics on the equilateral triangle tiling is $6$-periodic, then for any $\rho_{\Delta} \in \mathcal{E}_{\Delta}$, by Theorem \ref{thm:renormalization_process}, all of the tiling billiard trajectories on the tiling defined by $\rho_{\Delta}$, are periodic.

Otherwise, if there exists only one $j \neq k$ such that of $l^{(k)}_j=l^{(k)}_{t_k}$ coincide, without loss of generality we can suppose $t_k=3$ and $j=2$. Then $l^{(k)}_3=l^{(k)}_2 \in [\frac{1}{4}, \frac{1}{3})$ and $l^{(k)}_1 \in (\frac{1}{3}, \frac{1}{2}]$. Take $\tau^{(0)}=\frac{1}{2}$, then $\tau^{(k)}=\frac{1}{2}$. Then a map $F_k$ is explicitely verified to have two types of orbits: fully flipped intervals of periods $6$ (corresponding to periodic orbits) and a periodic interval of period $4$ which corresponds to a periodic linear drift, see Figure \ref{fig:drifty}.\footnote{Our argument also shows that $4$ is the shortest period of the drift behaviour in a triangle tiling billiard.} This implies that $F$ has necessarily drift periodic orbits.

\smallskip

\textbf{Step 3.} If $\rho_{\Delta} \in \mathcal{R}$ and $\tau \neq \frac{1}{2}$, all corresponding triangle tiling billiard orbits are periodic by Lemma \ref{lem:integrability}. Indeed, the renormalizaiton  stops at some step $k \in \N^*$ and $\max\{l^{(k)}_j\}<\frac{1}{2}$ are all smaller than $\frac{1}{2}$. For $\tau=\frac{1}{2}$, $F$ is minimal by Theorem \ref{thm:minimality}, and the corresponding trajectories escape.\footnote{In Theorem \ref{thm:exceptional_trajectories} we show that their escape is non-linear.} The inverse is true as well: escaping trajectories exist only for $\tau=\frac{1}{2}$.

\smallskip
\textbf{Step 4.} Finally, as already shown in \cite{BDFI18}, drift-periodic behaviour only occurs if $(l_1, l_2, l_3) \in \Q^3$. This also follows obviously from renormalization. Moreover, the arguments above show that for any tiling such that $(l_1, l_2, l_3) \in \Q^3 \setminus \mathcal{E}$ the drift-periodic trajectories indeed exist, and for $(l_1, l_2, l_3) \in \mathcal{E}$ they do not.

\smallskip

\textbf{Step 5.} The statements about symbolic dynamics follow from Lemma \ref{lem:integrability} and Theorem \ref{thm:renormalization_process}. Indeed, for a tiling with $\rho_{\Delta} \in \mathcal{R}$ the set of possible obtained trees $\{G_{\Delta}^{\delta}\}$ with $\delta$ - periodic trajectories, is infinite. Indeed, there exists a sequence of periodic trajectories with monotonously growing periods by renormalization. The set $\{G_{\Delta}^{\delta}\}$ is although countable.\footnote{This set can be explicitely calculated via the substitutions $\sigma_j, j \in \mathcal{N}_{\Delta}$ defined in Proposition \ref{prop:SYMBOLIC}.} Then, we show that the number of possible periodic dynamical behaviours (and hence, contoured trees, by Theorem \ref{thm:main}) is finite on any tiling except that with $\rho_{\Delta} \in \mathcal{R}$. If $\rho_{\Delta} \in \mathcal{E}$ this is, indeed, true, since the renormalization process defines the list of possible periodic trajectories uniquely, from one $6$-periodic trajectory. 

Then, if  $\rho_{\Delta} \notin \mathcal{E}$, the renormalization process stops at some obtuse triangle tiling on the step $k$. On this tiling, for $\tau=\frac{1}{2}$, one obtains zero possible periodic behaviours since the corresponding map goes into the point 2. of Lemma \ref{lem:integrability} (only linear escape).\footnote{Then, by renormalization for $\tau=\frac{1}{2}$, there always is linear escape in obtuse tilings.} For smaller $\tau$, the point $1.$ of Lemma \ref{lem:integrability} applies. The only thing that one now needs to prove that in this case, the periods of all possible symbolic codes of trajectories are bounded. This is true since for 
$\tau^{(k)}=1-\max\{l_j^{(k)}\}$ all the trajectories are periodic and the number of their combinatorial behaviors is bounded. The other combinatorial behaviors are obtained by contraction of flowers inside these trajectories, hence one obtains a finite number of trees. 

Finally, the statement about symbolic dynamics of linear escaping trajectories follows directly from point 2. in Lemma \ref{lem:integrability} and Proposition \ref{prop:SYMBOLIC}.
\end{proof}

\textbf{Question.} Given a triangle tiling, what is a list of possible trees that billiard trajectories on this tiling contour?  Theorem \ref{thm:renormalization_process} and Proposition \ref{prop:SYMBOLIC} above give an algorithm to compute the symbolic behavior of trjaectories and hence, the corresponding trees. But we wonder if a tree can be calculated in a more direct way. 

\begin{remark}
The set $\mathcal{E}$ is the set of preimages of a point  $[1:1:1] \in \Delta_2$ under the fully subtractive algorithm. Here is a list of preimages up to level $3$. 

\smallskip
   \begin{tikzpicture}
      [
	 level 2/.style={sibling distance=10em},
 	 level 3/.style={sibling distance=5em},      
       level distance=6em,
      every node/.style={shape=rectangle,draw}, grow=right]
      \node{$[1:1:1]$}
      child{node{$[1:2:2]$}
			child{node{$[2:3:4]$}
								child{node{$[4:6:7]$}}      	
								child{node{$[3:5:7]$}}      	
									child{node{$[2:5:6]$}}      		
			}      
      		child{node{$[1:3:3]$}
						child{node{$[3:4:6]$}}      		
						child{node{$[1:4:4]$}}      		
      		}
      };
   \end{tikzpicture}
   
\smallskip

All the trajectories on corresponding tilings are periodic. For example, a point $[1:2:2]$ corresponds to a tiling by triangles with angles $36^{\circ}, 72^{\circ}, 72^{\circ}$ and all billiard trajectories in it are periodic with periods $6$ or $10$. The question whether the equilateral triangle tiling is the only tiling permitting \emph{only} periodic trajectories was initially asked by Serge Troubetzkoy. Theorem \ref{thm:complete_classification} gives a negative answer to it.
\end{remark}

\begin{figure}
\centering
\includegraphics[scale=0.27]{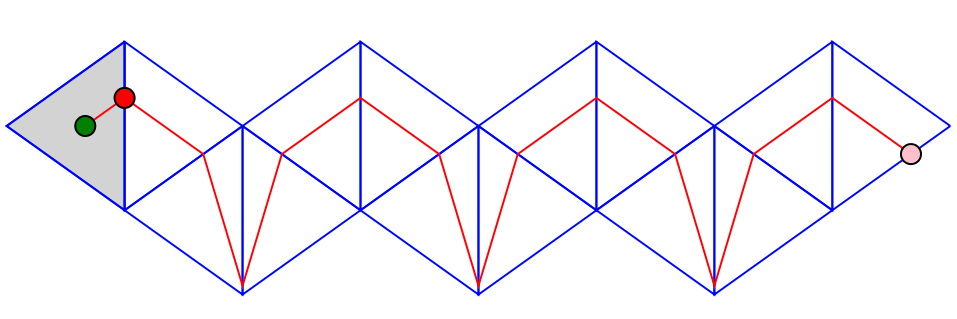}
\caption{A drift-periodic trajectory of period $4$, with for $\gamma=\beta =  50^{\circ}, \alpha = 80^{\circ} $.}\label{fig:drifty}
\end{figure}

\section{Arithmetic orbits of Arnoux-Rauzy surfaces and exceptional trajectories. }\label{sec:arithmetic_orbits_and_exceptional_trajectories}
While interested in \cite{HW18} in the dynamics of real-rel leafes of Arnoux-Yoccoz surfaces, P. Hooper and B. Weiss conjectured the convergence of the arithmetic orbits of the Arnoux-Yoccoz map to the Rauzy fractal, under reparametrization. Subsequently, P. Baird-Smith, D. Davis, E. Fromm and S. Iyer, following the connection between the arithmetic orbits and trajectories of tiling billiards they have discovered, restated the Hooper-Weiss Conjecture in terms of triangle tiling billiards. In this Section we prove this Conjecture. 

\subsection{Exceptional trajectories pass by all tiles.}\label{subs:exceptional_trajectories_go_everywhere}
We are especially interested in the exceptional trajectories of triangle tiling billiards since they are connected to arithmetic orbits of the Arnoux-Rauzy maps, see Section \ref{sec:arithmetic_orbits}. We remind our reader that by definition, the \textbf{exceptional trajectories } are those that are defined in the triangle tilings with $\rho_{\Delta} \in \mathcal{R}$ and pass through the circumcenters of crossed tiles. 

\begin{theorem}\label{thm:exceptional_trajectories}
For any $\rho_{\Delta} \in \mathcal{R}$ and any tiling billiard trajectory $\delta$ on a corresponding tiling passing by a circumcenter of the tile $\theta_0$, the following holds:
\begin{itemize}
\item[1.] if $\delta$ doesn't pass by any vertex of a tiling, then it passes by the interiors of \textbf{all} tiles.
\item[2.] if $\delta$ passes by some vertex $v \in V$ (is a singular ray) there exist $5$ additional singular rays in a corresponding flower such that the union of these six rays passes by \textbf{all} tiles, and this union doesn't pass by any other vertex.
\end{itemize}
\end{theorem}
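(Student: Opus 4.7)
The plan is to approximate the exceptional trajectory $\delta$ by a family of periodic trajectories with diverging periods whose enclosed trees exhaust the plane, and then apply the Tree Conjecture (Theorem \ref{thm:main}) to transfer the ``visits every enclosed tile'' property back to $\delta$.

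For point 1, I would include $\delta$ in its parallel foliation $\mathcal{P}^{\delta}$. Since $\delta$ passes through the circumcenter of every tile it crosses and $\rho_{\Delta} \in \mathcal{R}$, the corresponding map in $\CETthree$ belongs to $\CETthreehalf$ and is minimal by Theorem \ref{thm:minimality}. Perturbing the parameter $\tau$ slightly to $\frac{1}{2} - \varepsilon$ with $\varepsilon > 0$, the nearby parallel leaves $\delta_{\varepsilon}$ are periodic by Theorem \ref{thm:complete_classification}, and Theorem \ref{thm:renormalization_process} together with Remark \ref{rem:onestep} imply that the renormalization depth $K(\varepsilon)$, and hence the period $P(\varepsilon)$, diverges as $\varepsilon \to 0$. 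By the Tree Conjecture, each $\delta_{\varepsilon}$ passes by every tile of its enclosed tree $G_{\Delta}^{\delta_{\varepsilon}}$. I would then argue that every fixed tile $\theta$ lies in $G_{\Delta}^{\delta_{\varepsilon}}$ for sufficiently small $\varepsilon$, and that $\delta_{\varepsilon}$ reaches $\theta$ within an initial segment on which the sequence of tiles visited by $\delta_{\varepsilon}$ coincides with that of $\delta$; this forces $\delta$ itself to visit $\theta$.

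For point 2, I would work with the ray foliation $\mathcal{R}_{p}$ at $p = \mathcal{F}(v)$. Since $\rho_{\Delta} \in \mathcal{R}$ implies $l_{j} < \frac{1}{2}$ for each $j$ (i.e., each tile is acute), every inscribed folded tile contains the center of $\mathcal{C}$; consequently the diameter through $p$ crosses each of the six tiles incident to $v$ and unfolds into exactly six singular rays emanating from $v$ (compare Figure \ref{fig:alternation} and the case $s=6$ of Proposition \ref{prop:list of possible local behaviours PTT}). Using the central symmetry of $\mathcal{R}_{p}$ around $v$ (Proposition \ref{prop:nohungry}) together with the argument of point 1 adapted to each individual ray (via approximation by periodic trajectories in slightly perturbed parallel foliations), one concludes that the union of these six rays covers every tile. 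The absence of any further vertex on the union would follow from the arithmetic properties of the set $\mathcal{F}(V) \subset \mathcal{C}$ guaranteed by the minimality of $F$ together with the symmetries of $\CETthreehalf$ emphasized at the start of Section \ref{sec:arithmetic_orbits}.

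The main obstacle is coupling the geometric growth of the enclosed trees $G_{\Delta}^{\delta_{\varepsilon}}$ with the symbolic agreement window between $\delta$ and $\delta_{\varepsilon}$. One must verify that the tree size $|G_{\Delta}^{\delta_{\varepsilon}}|$, controlled by the substitution chain $\sigma_{t_{1}} \circ \cdots \circ \sigma_{t_{K(\varepsilon)}}$ of Proposition \ref{prop:SYMBOLIC}, grows in step with the window on which the sequences of tiles visited by $\delta$ and $\delta_{\varepsilon}$ coincide, so that any fixed tile $\theta$ is simultaneously enclosed and visited within the shared portion. I expect the matching to follow transparently from the hierarchical structure of the renormalization: $\delta_{\varepsilon}$ is obtained from $\delta$ by ``closing up'' the renormalization at level $K(\varepsilon)$, so the agreement window is essentially the whole finite tree $G_{\Delta}^{\delta_{\varepsilon}}$ itself.
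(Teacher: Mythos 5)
There is a genuine gap, and it sits exactly where your argument needs to do the real work. Your plan for point 1 rests on the assertion that every fixed tile $\theta$ eventually lies in $G_{\Delta}^{\delta_{\varepsilon}}$ for $\varepsilon$ small, i.e.\ that $\bigcup_{\varepsilon}\Omega^{\delta_{\varepsilon}}=\R^2$. Nothing in the renormalization argument gives you this: knowing that the periods of the nested periodic leaves $\delta_{\varepsilon}$ diverge tells you the enclosed trees grow without bound in \emph{size}, not that they exhaust the plane. A priori the union $U=\bigcup_{\varepsilon}\Omega^{\delta_{\varepsilon}}$ could be a proper sub-domain whose boundary is a union of non-periodic leaves of $\mathcal{P}^{\delta}$; by Theorem \ref{thm:complete_classification} those boundary leaves all fold into the single chord through the center of $\mathcal{C}$, and deciding whether that central chord unfolds to \emph{one} leaf or to \emph{several} disjoint leaves is precisely the content of the theorem. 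So the step you hope will ``follow transparently'' is circular, and the window-matching issue you flag in your last paragraph is secondary to this one (though it is also real: the tile $\theta$ might be visited by $\delta_{\varepsilon}$ outside the portion on which $\delta_{\varepsilon}$ and $\delta$ cross the same tiles, and the limit of the segments $\delta_{\varepsilon}\cap\theta$ is the circumcenter segment of $\theta$, which belongs to \emph{some} leaf folding into the central chord but not obviously to $\delta$).

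The paper avoids all of this with a local argument that you should compare with. Suppose $\delta$ crosses a tile $\theta$ but not its neighbour $\theta^e$, and let $\delta'$ be the leaf through the circumcenter of $\theta^e$. The two singular leaves of $\mathcal{P}^{\delta}$ separating $\delta$ from $\delta'$ inside $\theta$ and $\theta^e$ are forced (by folding) to pass through the \emph{same} vertex $v\in e$, and since they are not escaping they bound a periodic petal $\delta_{\tau_1}$. The family $\{\delta_{\tau}\}_{\tau\in[\tau_1,1/2)}$ of nested periodic trajectories then passes by \emph{both} $\theta$ and $\theta^e$ for every $\tau$, and squeezes onto both $\delta$ and $\delta'$ as $\tau\to\frac{1}{2}$; hence $\delta\cap\delta'\neq\emptyset$, so either $\delta=\delta'$ (contradicting $\delta\cap\theta^e=\emptyset$) or $\delta\cap\delta'=\{v\}$ and one lands in the singular case of point 2, where the same analysis produces the six rays. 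Note that this proof never invokes the Tree Conjecture; the ``connectivity across one edge'' contradiction replaces the global exhaustion claim your proposal leaves unproved. Your point 2 as written inherits the same gap, since it defers to ``the argument of point 1 adapted to each ray.''
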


\begin{proof}
First, for any $\rho_{\Delta} \in \mathcal{R}$, the corresponding triangles are acute. Consider a folding map $\mathcal{F}=\mathcal{F}(\theta_0)$. Let $l$ be a chord in a bellow such that $\mathcal{F}(\delta) \subset l$.

Suppose that $\delta$ doesn't pass by any singularity in a tiling. This implies $l \cap \mathcal{F}(V)=\emptyset$. Suppose first that $\delta$ doesn't pass by all of the triangles. Hence there exists some tile $\theta$ in a tiling and its edge $e$ such that $\delta \cap \theta \neq \emptyset$ and $\delta \cap \theta^e=\emptyset$. Consider a trajectory $\delta'$ passing by a circumcenter of $\theta^e$ in the same parallel foliation $\mathcal{P}^{\delta}$. Then $\delta' \neq \delta$ and $\delta \cap e = \delta' \cap e = \emptyset$.

Consider now two singular segments of the foliation $\mathcal{P}^{\delta}$ in the tiles $\theta$ and $\theta^e$. One can easily see from the folding that the only way these segments may behave is to pass by the \emph{same} vertex $v \in e$. Then, the corresponding singular trajectories are periodic by Theorem \ref{thm:complete_classification} and have to coincide since $\delta$ and $\delta'$ escape. We denote a corresponding periodic petal by $\delta_{\tau_1}$, see Figure \ref{fig:explanation_escaping}. Now consider a family $\{\delta_{\tau}\}_{\tau \in [\tau_1, 1/2]}$ of trajectories starting by the segments in $\theta$. Here $\delta_{\frac{1}{2}}=\delta$. Analogously to the above argument, the trajectory $\delta_{\tau}$ is periodic and passes by $\theta^e$ for any $\tau \neq \frac{1}{2}$(since $\delta$ and $\delta'$ are escaping and belong to the same foliation). Moreover, we see that $\Omega_{\delta_{\tau_-}} \subset \Omega_{\delta_{\tau_+}}$ for any $\tau_-, \tau_+ \in  [\tau_1, 1/2]$ such that $\tau_-<\tau_+$. 

Hence, by passing to the limit, the trajectories $\delta$ and $\delta'$ can be both approached as a set of nested trajectories $\{\delta_{\tau}\}$ with growing $\tau, \tau \rightarrow \frac{1}{2}$. Hence $\delta \cap \delta'\neq \emptyset$. If $\delta$ is non-singular, then $\delta=\delta'$ and $\delta=\lim_{\tau \rightarrow \frac{1}{2}} \delta_{\tau}$ and $\delta$ passes by all the triangles.

Otherwise, if $\delta \cap \delta' \neq \emptyset$ then necessarily $\delta \cap \delta' = \{v\}, v\in V$ and $\delta$ and $\delta'$ are singular rays in some unbounded separatrix flower. Then the parallel foliaiton $\mathcal{P}^{\delta}$ has $6$ singular rays going out in the tiles neighbouring to $v \in V$ since all the tiles are acute and the rays pass by a vertex and a circumcenter. Analogously to previous arguments, each of the sectors defined by these rays is foliated by sequences of periodic orbits with growing periods.  Each ray separately spirals non-linearly to infinity (in positive or negative time).

Finally, a singular trajectory $\delta$ passing by a curcumcenter of a tile can't pass by two vertices of the tiling since there are no rational relationships between the angles of the tile with $\rho_{\Delta} \in \mathcal{R}$.
\end{proof}

\begin{figure}
\centering
\includegraphics[scale=0.7]{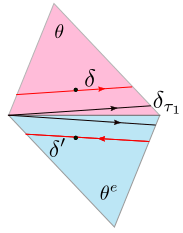}
\caption{Two neighbouring tiles $\theta$ and $\theta^e$ and trajectories $\delta$ and $\delta'$ passing by circumcenters of the tiles. The trajectory $\delta_{\tau_1}$ is a periodic loop containing $e$. }
\label{fig:explanation_escaping}
\end{figure}

Obviously, a trajectory passing by all points can't be linearly escaping.
Hence the Theorem \ref{thm:exceptional_trajectories} implies that all of the exceptional trajectories (singular and non-singular) are non-linearly escaping which proves our conjecture with P. Hubert from \cite{Olga}.

\smallskip

It can be interesting in the future to study the growing fractal forms to which the exceptional trajectories converge after reparametrization. We do it in the following for the family of exceptional trajectories corresponding to the Arnoux-Yoccoz map.

\subsection{A missing link: the Arnoux-Yoccoz map and the Rauzy fractal.}\label{subs:Arnoux-Yoccoz_Rauzy}

Consider the Arnoux-Yoccoz map $T^{\aaa} \in \AR$ defined in paragraph \ref{subs:ARfamily}. To this map, via Lemma \ref{lem:ARconnection}, we associate a map $F^{\aaa} \in \CETthreehalf$ with a triple $(l_1, l_2, l_3) \in {\Delta}_2$ of parameters defined by

\begin{equation}\label{eq:Arnoux-Yoccoz}
l_1:=\frac{1-\aaa}{2}, \; \; l_2:=\frac{1-\aaa^2}{2}, \; \;  l_3:=\frac{1-\aaa^3}{2}
\end{equation}

and a periodic triangle tiling (via the vocabularly established in paragraph \ref{subs:vocabularly}) with the angles of tiles defined by

\begin{equation}\label{eq:Tribonacci_triangle}
\alpha=\frac{\pi}{2} (1-\aaa), \beta=\frac{\pi}{2} (1-\aaa^2), \gamma=\frac{\pi}{2} (1-\aaa^3).
\end{equation}

In other words, $\rho_{\Delta}=(\aaa, \aaa^2, \aaa^3)$ with $\aaa$ defined by \eqref{eq:Arnoux-Yoccoz} and $\rho_{\Delta}$ defined by \eqref{eq:rho_delta}.

Here the angles approximatively are equal to $\alpha \approx 41^{\circ}, \beta \approx 63^{\circ}, \gamma \approx 76^{\circ}$. We call such a triangle a \textbf{Tribonacci triangle} and a corresponding billiard the \textbf{Tribonacci (triangle tiling) billiard}. In some sense, this billiard is the simplest one from all those that admit exceptional trajectories, because of its autosimilarity properties that we discuss in the following.

\smallskip

As discussed already in Section \ref{sec:arithmetic_orbits}, the symbolic dynamics of orbits of the Tribonacci billiard coincides with the arithmetic orbits of the Arnoux-Yoccoz map. In this paragraph we prove the convergence of such arithmetic orbits to the Rauzy fractal.  We first give some reminders about the classical Rauzy substitution and the Rauzy fractal.

\bigskip

A \textbf{Tribonacci substitution} $\sigma_R$ is a map on the words in the alphabet $\mathcal{N}_{\Delta}=\{1,2,3\}$ defined as the extension of the following map:
\begin{equation*}
\sigma_R: \; \;  \left\{
\begin{array}{cc}
1 \mapsto 12 \\
2 \mapsto 13 \\
3 \mapsto 1 
\end{array}
\right..
\end{equation*}

The substitution $\sigma_R$ has a unique fixed point $w_R \in {\mathcal{N}_{\Delta}}^{\N}$ (i.e. $w_R$ is an infinite word such that $\sigma_R(w_R)=w_R$), which starts as $w_R:=1213121121312 \ldots $.

\smallskip

We interpret the sequence $w_R$ as an infinite ladder in the space $\R^3$ with standard cartesian coordinates (we fix a standard basis $e_1=(1,0,0), e_2=(0,1,0)$ and $e_3=(0,0,1)$). Each subsequent symbol $(w_R)_j \in \mathcal{N}_{\Delta}, j\in \N$ is interpreted as an addition of the step $e_{(w_R)_j}$ to the growing ladder. We color an endpoint of each of these added vectors in one of three colors. The infinite ladder constructed in this way has a principal direction. After projecting on a plane orthogonal to this direction, we consider the image of the set of endpoints. This set is, by definition, the \textbf{Rauzy fractal}, a self-similar set defined by G. Rauzy in $1981$, see \cite{Rauzy}.

\smallskip

To a classic Tribonacci substitution $\sigma_R$ one also associates a sequence of \textbf{Tribonacci numbers}, i.e. the sequence of lengths of iterations of the word $123$ under the action of the substitution $\sigma_R$:

\begin{equation}\label{eq:tribonacci_substitution}
T_{n+4}:=\left|\sigma_R^n(123)\right|, n \in \N.
\end{equation}

It is standard (and trivial) that for all $n$, after setting $T_1=T_2=T_3:=1$ we have

\begin{equation}\label{eq:def_tribonacci}
T_{n+3}=T_{n+2}+T_{n+1}+T_{n},
\end{equation}

which can also be seen as the definition. The sequence $\{T_n\}$ is a generalization of the Fibonacci sequence (hence the name). The first $20$ terms of the Tribonacci sequence \footnote{Tribonacci sequence is the A000213 sequence of the on-line encyclopedia of integer sequences, see \url{https://oeis.org/A0000213} for more details.} are
\begin{equation*}
1, 1, 1, 3, 5, 9, 17, 31, 57, 105, 193, 355, 653, 1201, 2209, 4063, 7473, 13745, 25281, 46499,...
\end{equation*}

\bigskip

The following is based on an important self-similarity property of the number $\aaa \in \R$ defined by \eqref{eq:Arnoux-Yoccoz} which has been already used in many contexts. The point $\rho_{\Delta}=(\aaa, \aaa^2, \aaa^3) \in \Delta_2$ is a $3$-periodic point of the Rauzy subtractive algorithm (and a corresponding triple \eqref{eq:Arnoux-Yoccoz} a $3$-periodic point of the fully subtractive algorithm). Indeed, we have the following calculation:
\begin{equation*}
\left(\aaa, \aaa^2, \aaa^3\right) \mapsto \frac{1}{\aaa}\left(\aaa-\aaa^2-\aaa^3, \aaa^2, \aaa^3\right)=\left(\aaa^3, \aaa^2, \aaa\right).
\end{equation*}

In the context of the renormalization on $\CETthree$, we see that
 $R_3 R_2 R_1 F^{\aaa}=F^{\aaa}$. Actually, the map $R_1 F^{\aaa}$ is the same as $F^{\aaa}$ but with the labels of the intervals of continuity  changed. As an abstract tiling, the Tribonacci tiling is a fixed point of the renormalization process. From Theorem \ref{thm:minimality}, the map $F^{\aaa}$ is minimal. Hence, by Proposition \ref{prop:SYMBOLIC}, the symbolic dynamics of its generic point is an invariant point of a substitution $\sigma:=\sigma_1 \circ \sigma_2 \circ \sigma_3$ with $\sigma_j$ defined explicitely by \eqref{eq:sigmas} for all $j \in \mathcal{N}_{\Delta}$. All the corresponding (passing through circumcenters of tiles, in any direction) tiling billiard trajectories are escaping, as already has been noticed in \cite{BDFI18}.

\bigskip

In order to control the relabelling, define a following map 
$\upsilon_{\mathrm{rel}}$ on the alphabet $\mathcal{A}_{\Delta}$. We define $\upsilon_{\mathrm{rel}}$ on the letters by
\begin{equation*}
\upsilon_{\mathrm{rel}}: 
 \; \;  \left\{
 \begin{array}{cc}
 a \mapsto b, \\
 b \mapsto c, \\
 c \mapsto a,
 \end{array}
 \right.
\end{equation*}
and extend this definition to all words in $\mathcal{A}_{\Delta}$. 

Define now a substitution $\varsigma_R:=\upsilon_{\mathrm{rel}} \circ \sigma_3$ on periodic words in the alphabet $\mathcal{A}_{\Delta}$. We remind our reader that the substitution $\sigma_3$  has been defined in Proposition \ref{prop:SYMBOLIC}, see \eqref{eq:sigmas}, by
\begin{equation}\label{eq:sigma_1}
\sigma_3: \; \;  \left\{
\begin{array}{cc}
a \mapsto a \\
b \mapsto b \\
c \mapsto bac, \textit{if a precedent symbol was not}\; \;  b \\
c \mapsto bca, \textit{if a precedent symbol was not}\; \;  a. \\
\end{array}
\right.
\end{equation}

\smallskip

Since the words in the orbit $\{\varsigma_R^j(cba)\}_j$ do not contain two equal letters subsequently (since they all correspond to Tribonacci tiling billiard trajectories as discussed above), one can define $\varsigma_R$ as 
\begin{equation}\label{eq:varsigma}
\varsigma_R: \; \;  \left\{
\begin{array}{cc}
a \mapsto b\\
b \mapsto c \\
c \mapsto cba, \textit{if a precedent symbol is}\; \;  a \\
c \mapsto bca, \textit{if a precedent symbol is}\; \;  b\\
\end{array}
\right..
\end{equation}

We now define the \textbf{factorization map} $\upsilon_{\mathrm{fac}}$ on the words in $\mathcal{A}_{\Delta}^2$ (or, equivalently, on the even-length words in $\mathcal{A}_{\Delta}$).  Define $\upsilon_{\mathrm{fac}}: \mathcal{A}_{\Delta}^2 \rightarrow \mathcal{N}_{\Delta}$ as first, defining it explicitely on letters by 
\begin{align*}
\upsilon_{\mathrm{fac}} (ab)=\upsilon_{\mathrm{fac}}(ba):=3,\\
\upsilon_{\mathrm{fac}} (ac)=\upsilon_{\mathrm{fac}}(ca):=2,\\
\upsilon_{\mathrm{fac}} (cb)=\upsilon_{\mathrm{fac}}(cb):=1,
\end{align*}
and then extending it to words. 

\smallskip

Define for any map $\varphi: (\mathcal{A}_{\Delta}^2)^{\N} \rightarrow (\mathcal{A}_{\Delta}^2)^{\N}$ its \textbf{factorization} $\varphi^*: (\mathcal{N}_{\Delta})^{\N} \rightarrow (\mathcal{N}_{\Delta})^{\N}$
as the solution of the following commutative relationship: 
\begin{equation*}
\upsilon_{\mathrm{fac}} \circ \varphi =  \varphi^* \circ \upsilon_{\mathrm{fac}}.
\end{equation*}

The connection between $\sigma_R$ and $\varsigma_R$ is now apparent through factorization.

\begin{proposition}\label{prop:periods}
The following holds for the factorizations  $\sigma_j^*, j \in \mathcal{N}_{\Delta}$, $\upsilon_{\mathrm{rel}}^*$ and $\varsigma_R^*$ of the substitutions $\sigma_j,  j \in \mathcal{N}_{\Delta}$, $\upsilon_{\mathrm{rel}}$ and $\varsigma_R$:
\begin{enumerate}
\item[1.] these  factorizations  are well defined, 
\item[2. ] even though the substitutions $\sigma_j$ are defined only for periodic words, their factorizations $\sigma_j^*$ are well defined for non-periodic words,
\item[3.] $\varsigma_R^*=\sigma_R$.
\end{enumerate}
\end{proposition}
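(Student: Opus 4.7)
My plan is to prove all three statements simultaneously by a short case analysis on the six ordered pairs in $\mathcal{A}_{\Delta}^2$. The key observation is that the map $\upsilon_{\mathrm{fac}}$ records only the \emph{unordered} pair $\{x,y\}$, equivalently the \emph{missing} letter in $\{a,b,c\}$; this forgets exactly the information that makes the rules for $\sigma_j$ conditional on the precedent letter.

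For point 1, well-definedness of $\varphi^*$ (with $\varphi \in \{\sigma_j, \upsilon_{\mathrm{rel}}, \varsigma_R\}$) amounts to showing that $\upsilon_{\mathrm{fac}}\circ \varphi$ is constant on the fibers of $\upsilon_{\mathrm{fac}}$. For $\upsilon_{\mathrm{rel}}$ this is immediate: being a cyclic permutation of the alphabet, it transports unordered pairs to unordered pairs and produces $\upsilon_{\mathrm{rel}}^*\colon 1\mapsto 2,\ 2\mapsto 3,\ 3\mapsto 1$. For each $\sigma_j$ I would tabulate $\sigma_j(xy)$ on the six pairs $xy\in\mathcal{A}_{\Delta}^2$, choosing each of the two branches of the conditional rule whenever it applies, and then factorize. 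A typical case: for $\sigma_3$ applied to $ca$, the two branches give $bac\cdot a$ and $abc\cdot a$, and both factor as $3\cdot 2=32$; similarly $bc$ and $cb$ (the two representatives of $1$) both factor to $31$. Carrying this out systematically yields $\sigma_3^*\colon 1\mapsto 31,\ 2\mapsto 32,\ 3\mapsto 3$, and analogous tables give $\sigma_1^*$ and $\sigma_2^*$.

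Point 2 then follows essentially for free. The only reason $\sigma_j$ needed periodicity was to supply a precedent for the first letter when applying the conditional rule; but the tables above show that both branches produce the same factorization. Consequently $\sigma_j^*$ needs no precedent at all and extends unambiguously to non-periodic words. For point 3 I use the functoriality built into the defining relation $\upsilon_{\mathrm{fac}}\circ\varphi = \varphi^*\circ\upsilon_{\mathrm{fac}}$: since $\varsigma_R = \upsilon_{\mathrm{rel}}\circ\sigma_3$, we get $\varsigma_R^* = \upsilon_{\mathrm{rel}}^*\circ\sigma_3^*$, and composition of the two tables gives $1\mapsto 31\mapsto 12$, $2\mapsto 32\mapsto 13$, $3\mapsto 3\mapsto 1$, which is the Tribonacci substitution $\sigma_R$.

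The only real obstacle is the bookkeeping, not any conceptual difficulty. Conceptually, the two conditional branches of the rule for $\sigma_j(c)$ produce images that differ by an orientation flip of the last pair they emit, and $\upsilon_{\mathrm{fac}}$ is precisely the quotient that identifies $xy$ with $yx$; so the ambiguity is engineered to disappear under factorization, and the identity $\varsigma_R^*=\sigma_R$ falls out as a clean byproduct.
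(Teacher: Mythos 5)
Your proposal is correct and follows essentially the same route as the paper: both arguments verify by direct tabulation that $\upsilon_{\mathrm{fac}}\circ\sigma_j$ is constant on the fibers of $\upsilon_{\mathrm{fac}}$ (i.e.\ that the two conditional branches and the two orderings $xy$, $yx$ all factorize to the same word), read off $\sigma_1^*,\sigma_2^*,\sigma_3^*$ and $\upsilon_{\mathrm{rel}}^*$, and obtain $\varsigma_R^*=\upsilon_{\mathrm{rel}}^*\circ\sigma_3^*=\sigma_R$ by composing. Your closing remark that the conditional ambiguity is exactly an orientation flip killed by the quotient $xy\sim yx$ is a nice conceptual summary of why the verification works, but the substance of the argument is the same.
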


\begin{proof}
This is a simple verification. First, for the definition of $\sigma_1^*$, we study the action of $\sigma_1$ on two-letter words. Indeed, we have the following relations
\begin{align*}
\; \; \; \; \sigma_1 (ab) = \; \;  \left\{
\begin{array}{cc}
cbac \\
bcab \\
\end{array}
\right.,\\
\sigma_1 (ba) = bcba.
\end{align*}

These three equations factorize correctly into one equation $\sigma_1^*(3)=13$ which proves that $\sigma_1^*(3)$ is well-defined.

Similarly, $\sigma_1(ac)=cbac$ or $bcac$ and $\sigma_1(ca)=cbca$ and $\sigma_1^*(2)=12$ is well defined. Finally, since $\sigma_1(bc)=bc, \sigma_1(cb)=cb$ then  $\sigma_1^*(1)=1$. This defines the map $\sigma_1^*$ on the elements of the set $\mathcal{N}_{\Delta}$ and then on all words of this alphabet by extension. By analogously proceeding with $\sigma_2$ and $\sigma_3$ one obtains well-defined maps:
\begin{equation*}
\sigma_1^*: \; \;  \left\{
\begin{array}{cc}
1 \mapsto 1 \\
2 \mapsto 12 \\
3 \mapsto 13 
\end{array}
\right.,\;\;\;
\sigma_2^*: \; \;  \left\{
\begin{array}{cc}
1 \mapsto 21 \\
2 \mapsto 2 \\
3 \mapsto 23 
\end{array}
\right.,\;\;\;
\sigma_3^*: \; \;  \left\{
\begin{array}{cc}
1 \mapsto 31 \\
2 \mapsto 32  \\
3 \mapsto  3
\end{array}
\right..
\end{equation*}

Then obviously, the factorization of the map $\upsilon_{\mathrm{rel}}$ is given by 

$$
 \upsilon_{\mathrm{rel}}^*: \; \;  \left\{
\begin{array}{cc}
1 \mapsto 2 \\
2 \mapsto 3 \\
3 \mapsto 1 
\end{array}
\right..
$$

The final calculation gives that $\varsigma_R^*= \upsilon_{\mathrm{rel}}^* \circ \sigma_3^*= \sigma_R$. For $\sigma=(\sigma_1 \circ \sigma_2 \circ \sigma_3)$ we have $\sigma^*=\sigma_1^* \circ \sigma_2^* \circ \sigma_3^* = \sigma_R^3$.
\end{proof}

\begin{remark}
One can also associate
$1^+$ to $bc$, $1^-$ to $cb$,
$2^+$ to $ca$, $2^-$ to $ac$,
$3^+$ to $ab$, $3^-$ to $ba$. In this case one defines a substitution on \emph{cyclic} words in $\{1^+, 1^-, 2^+, 2^-, 3^+, 3^-\}$. By identifying $1^+=1^-, 2^+=2^-, 3^+=3^-$, we get the substitutions in Proposition above. 
\end{remark}

\begin{example}
The image $\sigma(\overline{cba})$ (as that of the periodic word $cbacba$) is calculated as follows:
\begin{equation*}
\overline{cba} \xmapsto{\sigma_3}  \overline{bacba} \xmapsto{\sigma_2} \overline{cabacacba} \xmapsto{\sigma_1} \overline{cbcabcbacbcacbcba}.
\end{equation*}
The corresponding relabelled sequence of images gives with $w_j=\varsigma_R^{j-1}\left(\overline{cba}\right), j \in \N^*$: 

\begin{equation*}
w_1=\overline{cba} \xmapsto{\varsigma_R} w_2=\overline{cbacb} \xmapsto{\varsigma_R}
w_3= \overline{bcacbcbac} \xmapsto{\varsigma_R}
w_4=\overline{cbcabcbacbcacbcba}
\end{equation*}

%


Since the word $\upsilon_{\mathrm{fac}} (cbacba)=123$, from the Proposition \ref{prop:periods} follows that $\upsilon_{\mathrm{fac}} (w_j)=\sigma_R^{j-1}(123)$. 

All of the words $w_j$ correspond to the symbolic trajectories in the Tribonacci billiard, and these words are the \emph{only} possible behaviors of trajectories on such tilings, see paragraph \ref{subs:vocabularly} and Theorem \ref{thm:renormalization_process}. \end{example}

\bigskip

The example above and Proposition \ref{prop:periods} already sketch the connection between the symbolic dynamics of the Tribonacci billiard and the Rauzy fractal. In the following, we make this connection precise. For this, we choose the markings of the periodic words $w_j$ in order to be able to treat them as strings of letters and not as cyclic words. Before doing so, we introduce the following notations: we write $U_1=U_2$ for two elements $U_1, U_2 \in \mathcal{A}_{\Delta}^{\N}$ if their corresponding cyclic words are equal and we write $U_1\equiv U_2$ if these two elements coincide symbol by symbol as elements in $\mathcal{A}_{\Delta}^{\N}$.

\smallskip

Define a sequence of words $\{s_j\}_{j=-2}^{\infty}$,   $ s_j \in \mathcal{A}_{\Delta}^{\N}$ with $s_j \equiv s_j^{1} \ldots s_j^{l_j}$ with $s_j^i \in \mathcal{N}_{\Delta}, i =1, \ldots, l_j$. Here $l_j=|s_j| \in \N$. First let $s_{-2}:\equiv a, s_{-1}: \equiv b, s_{0}: \equiv c, s_1:\equiv cba$.

Then, for any $j \in \N^*$ we define the word $s_{j+1}$ in a following reccurent way from the word $s_j$ already defined.
If $s_{j}^1 \neq c$, let $s_{j+1} :\equiv \varsigma_R (s_{j})$. Otherwise, if $s_{j}^1 = c$, $s_j= c \; s_j^2 \; \ldots \; s_j^{l_j}$ and we define a string $s_{j+1}$ (still equal to $\varsigma_R (s_{j})$ as a cyclic word) by shifting its beginning two steps to the right with respect to $\varsigma_R (s_{j})$. Indeed, we have 
\begin{equation*}
\varsigma_R (s_j) \equiv k_1^j \; k_2^j \; a \; \varsigma_R(s_j^2) \ldots \varsigma_R(s_j^{l_j}) = a \; \varsigma R(s_j^2)\; \ldots \varsigma_R(s_j^{l_j}) \; k_1^j \; k_2^j =: s_{j+1}.
\end{equation*}

Here $(k_1^j, k_2^j)=(b,c)$ if $s_j^{l_j}=b$ and $(k_1^j, k_2^j)=(c,b)$ if $s_j^{l_j}=a$, by the definition \eqref{eq:varsigma} of the substitution $\varsigma_R$. Define the cyclic words $w_j:=s_j^2$. Obviously, as cyclic words, they are as above, the images of the word $\overline{abc}$ under the substitution $\varsigma_R$. Denote by $P_j$ the length of the word $w_j$, i.e. $P_j=2 l_j$ with $l_j=|s_j|$. We also define the word $w_{\infty}$ as a fixed point of $\varsigma_R$.

Very importantly for the following, we consider $s_j$ as string words\footnote{The string words $s_j$ coincide with the symbolic codes of a singularity for the maps $F^{\boldsymbol{a}}_r$ in the family of real-lef deformations for the Arnoux-Yoccoz map $F^{\boldsymbol{a}}$, with the parameter $r \rightarrow 0$ as $j \rightarrow \infty$.}, and we define $w_j:=s_j^2$ as cyclic words. 

\smallskip

We introduce several additional notations. Let $W \subset \mathcal{A}_{\Delta}^{\N}$ be defined as $W:=\{w_j, w_j \in  \mathcal{A}_{\Delta}^{\N}\}$.

Moreover, for any word $w \in \mathcal{A}_{\Delta}^{\N}$, if this word finishes by a symbol or a word $\varkappa$, we denote by $w^{\varkappa}$ this same word without its last symbol or last word $\varkappa$.  By definition, $w= w^{\varkappa} \varkappa$.

\begin{example}
The next $4$ elements of the sequence $\{s_j\}, $ are 
\begin{align*}
s_2: \equiv acbcb,\\
s_3:  \equiv bcbacbcac,\\
s_4: \equiv cbcacbcbacbcabcba,\\
s_5: \equiv acbcabcbacbcacbcbacbcabcbcacbcb.
\end{align*}
\end{example}

Now we are ready to prove Theorem \ref{thm:convergence to the Rauzy fractal}. We actually prove a following (stronger) statement.

\begin{theorem}[Combinatorics of Tribonacci billiards]\label{thm:big}
Consider the Tribonacci billiard. Take an oriented trajectory $\delta_{AY}$\footnote{The subscript AY is here to remind that $\delta_{AY}$ corresponds to an arithmetic orbit of the Arnoux-Yoccoz map.} in this tiling passing by a circumcenter of some tile it crosses. Suppose first that $\delta_{AY}$ is not singular. Then the following holds for the trajectories in the parallel foliation $\mathcal{P}^{\delta_{AY}}$:
\begin{itemize}
\item[1.] all of the leaves (except for $\delta_{AY}$) in  $\mathcal{P}^{\delta_{AY}}$ are \emph{periodic} tiling billiard trajectories and $\delta_{AY}$ passes by all tiles,
\item[2.] for any periodic trajectory $\delta$ (once oriented as turning counterclockwise), there exists $j \in \N^*$ such that a word $w_j=\varsigma_R^{j-1}(\overline{acb}), w_j \in W$ caracterizes its symbolic dynamics, and $w_j=s_j^2$. Moreover, $\upsilon_{\mathrm{fac}}(w_j)=\sigma_R^{j-1}(123)$ and $\upsilon_{\mathrm{fac}}(w_{\infty})=w_R$.
The period of $\delta$ is then a doubled Tribonacci number $2 T_{j+3}$, see equation \eqref{eq:def_tribonacci},
\item[3.] any fixed periodic trajectory $\delta \in \mathcal{P}^{\delta_{AY}}$ with combinatorics $w_j$ defines a unique family  $\Gamma_{\delta}=\{\gamma_k, k \in \N\}$ of flowers $\gamma_k$ in $\mathcal{P}^{\delta_{AY}}$ (except for $\gamma_2$ which is not a flower but a petal of $\gamma_3$) with pistils in vertices $v_k \in V$ \footnote{For the exceptional case of $\gamma_2$, we define $v_2=v_3$ and the petal $\gamma_2$ is attached to $v_3$.} that satisfy the following properties:
\begin{itemize}
\item[a.] if $j \geq 3$, the trajectory $\delta$ with combinatorics $w_j$ is contracted onto the flower $\gamma_j$ with the same combinatorics, if $=1$ then $\delta$ contracts on the flower $\gamma_0$, and if $j=2$ it contracts on the flower $\gamma_1$ (inside), and outside onto the petal $\gamma_2$,
\item[b.] all of the flowers in $\Gamma$ pass by all of the the six tiles in $\theta \subset \Theta_{v_1}=\cup_{\theta \ni v} \theta$,
\item[c.] for all $k \in \N^*, k \neq 2$, a flower (petal) $\gamma_k$ has combinatorics $w_k$,
\item[d.] the flower $\gamma_0=v_0 \in V$ is a vertex, $\gamma_1$ is a one-petal flower, $\gamma_2$ is a petal of a two-petal flower $\gamma_3$, all $\gamma_k$ with $k \geq 4$ are flowers with three petals,  
\end{itemize}
\item[4.] the family $\Gamma_{\delta}$ has the following autosimilarity properties: 
\begin{itemize}
\item[4.1] for any $k \geq 4$, a flower $\gamma_k$ has three petals with combinatorics $w_{k-3}, w_{k-2}, w_{k-1}$, and is contained in one petal of the flower $\gamma_{k+1}$ with \emph{three} petals of combinatorics correspondingly $w_{k-2}, w_{k-1}, w_k$; a flower $\gamma_3$ has two petals of combinatorics $w_2, w_1$, and a flower $\gamma_1$ also has one petal of combinatorics $w_1$,
\item[4.2] the string symbolic words  $s_k^2, j \in \N \cup \{-2,-1\}$ of all $\gamma_k \in \Gamma_{\delta}$ satisfy the following relationships, depending on the value of $\varepsilon(k):= k\; \mathrm{mod}\;3$,
\begin{equation}\label{eq:formulation}
(s_{k-3} \cdot s_{k-3})^{\ast} \; \dagger \; (s_{k-2}^2)^{\star} \; \ast \; (s_{k-1}^2)^{\dagger} \; \star  \equiv s_{k-3} \cdot ((s_k)^2)^{s_{k-3}} =  s_k^2= w_k,
\end{equation}
where $(\ast, \star, \dagger): \N^* \rightarrow  \mathcal{A}_{\Delta}^3$ is defined explicitely by $(\ast, \star, \dagger)=(c,a,b)$ if $\varepsilon= 0$, $=(a,b,c)$ if $\varepsilon=1$, $=(b,c,a)$ if $\varepsilon=2$.
Moreover, the edges inserted in between for any flower $\gamma_k$ meet in the same point $v_k$ defined above (for all $k \neq 2$ this point is the pistil of the flower $\gamma_k \in \Gamma$). On each new step of the construction, the pistil $v_{k+1}$ is uniquely defined as a vertex such that first, $v_{k+1} \notin \Omega^{\gamma_k}$ and such that $v_{k+1}$  belongs to the edge $e$ defined as follows. The edge $e$ is crossed by the smallest of the three petals of the flower $\gamma_k$ on the half of its length (in the symbolic code \eqref{eq:formulation}, it corresponds to the middlepoint $\cdot$  marked in the code \eqref{eq:formulation},
\end{itemize}
\item[6.] for any flower $\gamma_k, k \geq 4$, we denote by $\Omega_k^1, \Omega_k^2, \Omega_k^3$ the unions of all the tiles by which pass its petals with combinatorics $w_{k-1}, w_{k-2}$ and $w_{k-3}$ correspondingly.\footnote{In other words, the biggest, the middle and the smallest petals of the flower $\gamma_k$.} Then  for a matrix $A=\begin{pmatrix}
-a&1\\
-1-a^2&-1
\end{pmatrix}$\footnote{Here $A=B^{-1}$ with $B$ defined on p. $151$ of the initial article \cite{Rauzy} by G. Rauzy, where he defines his fractal for the first time.} one has up to an isometry,
\begin{align*}
\Omega_{k+1}^1=\Omega_k^1 \cup \Omega_k^2 \cup \Omega_k^3,\\
\Omega_{k+1}^2=A \Omega_{k}^1,\\
\Omega_{k+1}^3=A \Omega_{k}^2.
\end{align*}

This implies that the sequence of curves $A^{-k} \gamma_k$ approximates the Arnoux-Rauzy curve, and the sets of all barycenters of tiles in the partition $A^{-k} \Omega_k^1 \cup A^{-k}\Omega_k^2 \cup A^{-k}\Omega_k^3$ of $A^{-k} \Omega_{k+1}^1$, give better and better approximations of the Rauzy fractal  with its standard partition into $3$ cells. Finally, a sequence of curves $\{A^{-k} \delta_{AY}\}_{k \in \N^*}$ on the plane converges to the Arnoux-Rauzy curve, in restriction to the fundamental domain which is a limit set of the sets $A^{-k} \Omega_{k+1}$ in the Haudorff topology. The distance $d (\theta_n, \theta_0)$ between the triangle $\theta_n$ that $\delta_{AY}$ visits at its $n$th iteration and its initial triangle $\theta_0$ verifies 
$$
d(\theta_n, \theta_0) \sim C \cdot \sqrt{n},  n \rightarrow \infty.
$$
\end{itemize}

Moreover, if $\delta_{AY}$ is singular (in some point $v \in V$) then the corresponding foliation  $\mathcal{P}^{\delta_{AY}}$ has $5$ additional singular entering the tiles in $\Theta_v$. Each of the sectors defined by these rays is foliated by sequences of periodic orbits with growing periods that approach Rauzy fractal, up to the reparametrization described above.
\end{theorem}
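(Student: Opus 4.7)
The plan is to combine the renormalization machinery of Section \ref{sec:trop_cool} with the self-similarity specific to the Tribonacci parameter $\boldsymbol{a}$, and to read off the geometry through the folding map $\mathcal{F}$ and the contraction-onto-flower procedure of Proposition \ref{prop:treetoflower}.

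First, for item 1, I would apply Theorem \ref{thm:complete_classification} (point 2) together with Theorem \ref{thm:exceptional_trajectories}: since $\rho_{\Delta}=(\boldsymbol{a},\boldsymbol{a}^2,\boldsymbol{a}^3)\in\mathcal{R}$, any leaf of $\mathcal{P}^{\delta_{AY}}$ passing through a circumcenter is escaping, and any other leaf is periodic; the only direction in the folded picture for which the trajectory hits circumcenters is $\tau=\tfrac{1}{2}$, so $\delta_{AY}$ is the unique exceptional leaf and, by Theorem \ref{thm:exceptional_trajectories}, crosses every tile. Item 2 would then follow from the self-similarity $R_3 R_2 R_1 F^{\boldsymbol{a}}=F^{\boldsymbol{a}}$ (modulo the relabeling $\upsilon_{\mathrm{rel}}$), which identifies the renormalization-plus-relabeling step with the substitution $\varsigma_R$. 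Applying Proposition \ref{prop:SYMBOLIC} inductively, every periodic orbit of $F^{\boldsymbol{a}}$ is obtained from the $6$-periodic orbit $\overline{acb}$ (which exists at the deepest renormalization level, cf.\ Lemma \ref{lem:integrability} and Remark \ref{rem:INTEGRABILITY}) by iteration of $\varsigma_R$. The period computation $P_j=2T_{j+3}$ then follows from Proposition \ref{prop:periods} (which gives $\varsigma_R^{\ast}=\sigma_R$) together with the defining identity \eqref{eq:tribonacci_substitution}.

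For items 3 and 4, I would use the Tree Conjecture (Theorem \ref{thm:main}) and the recursive contraction of $\delta$ onto a sequence of flowers described in the proof of Proposition \ref{prop:treetoflower}. The parallel foliation $\mathcal{P}^{\delta_{AY}}$ organizes the periodic orbits into nested families; at the deepest level one sees a vertex $v_0$ (the $\gamma_0$), then a single petal around $v_1$ (this is $\gamma_1$), and so on. The renormalization $R_3$ acts as a first-return map on the interval in play $S_3$, and under folding this interval corresponds precisely to the $\Theta_{v}$-local picture of the Tribonacci tile. Thus each renormalization step glues a new ``outer'' flower around the previous configuration, whose three petals are (by Proposition \ref{prop:SYMBOLIC} applied to $\sigma_3$) the $\varsigma_R$-images of the three previous petals, yielding the decomposition \eqref{eq:formulation}. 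The location of the new pistil $v_{k+1}$ is forced by the condition that the central subword $s_{k-3}\cdot s_{k-3}$ of the new symbolic code of $\gamma_{k+1}$ consists of the symbols crossed around a single vertex on the boundary of the smallest petal, giving the rule in 4.2. The main obstacle here is bookkeeping: one must carefully track how the conditional substitution rules in \eqref{eq:sigmas}--\eqref{eq:varsigma} interact with the cyclic-to-string convention for $s_j$ so that the identity \eqref{eq:formulation} holds symbol-by-symbol and not only up to cyclic permutation.

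Item 6 is the heart of the statement and is where I expect the real work. The Rauzy matrix $A$ arises because the composition $R_3R_2R_1$ returns the tiling to itself up to a global affine map of the plane that is, on the barycentric lattice, the inverse companion of the Tribonacci polynomial. Concretely, the displacement set $\{\pm l_j\}$ of Remark \ref{rem:connectionbilliards} is an integer basis of $H$, and $A$ describes how one step of $\varsigma_R$ permutes and adds these generators; that $A=B^{-1}$ for Rauzy's original $B$ then follows by comparing with \cite{Rauzy} since $\varsigma_R^{\ast}=\sigma_R$. The three inclusions $\Omega_{k+1}^{i}=A\,\Omega_k^{i-1}$ (with $\Omega^1$ absorbing the union) are obtained by pushing forward the flower identities \eqref{eq:formulation} through $\mathcal{F}^{-1}$ and using that in one $\varsigma_R$-step the three petals rotate roles while the outermost petal becomes the union. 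Convergence of $A^{-k}\gamma_k$, and of the partitions $A^{-k}\Omega_{k+1}^{i}$, to the Rauzy fractal and its canonical $3$-piece subdivision then comes from standard substitution-theoretic arguments (\cite{Rauzy, ABB11}) once this inflation rule is established. Finally, the $\sqrt{n}$ bound $d(\theta_n,\theta_0)\sim C\sqrt{n}$ follows from the fact that $A$ has one expanding eigenvalue and two complex conjugate eigenvalues on a circle of radius $\boldsymbol{a}^{-1/2}$ (the product of all three eigenvalues being $\pm 1$): on the unique expanding direction the Arnoux--Yoccoz orbit stays bounded, so displacement of $\delta_{AY}$ in the plane is governed entirely by the isometric two-dimensional component, where a random-walk-type estimate gives the $\sqrt{n}$ growth. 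For the singular case I would invoke directly Theorem \ref{thm:exceptional_trajectories} (point 2) to obtain the five companion rays and apply the argument above sector by sector.
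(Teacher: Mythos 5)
Your proposal follows essentially the same route as the paper's proof: items 1--2 via Theorem \ref{thm:exceptional_trajectories}, the self-similarity $R_3R_2R_1F^{\boldsymbol{a}}=F^{\boldsymbol{a}}$, Proposition \ref{prop:SYMBOLIC} and Proposition \ref{prop:periods}; items 3--4 via the contraction-onto-flowers procedure of Proposition \ref{prop:treetoflower}; item 6 by identifying the inflation rule with Rauzy's substitution. The one substantive difference of route is minor: the paper pins down the petal combinatorics $w_{k-1},w_{k-2},w_{k-3}$ by observing that the only admissible periods are doubled Tribonacci numbers and that the Tribonacci sequence is a number system, whereas you derive it from the action of one renormalization step on the petals themselves; both arguments work and yours is arguably closer to the substitution structure.

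The one place where your argument as written would fail is the justification of the asymptotics $d(\theta_n,\theta_0)\sim C\sqrt{n}$ at the end of item 6. You assert that the relevant matrix has two complex conjugate eigenvalues of modulus $\boldsymbol{a}^{-1/2}$ \emph{and} that the two-dimensional component is isometric, so that a random-walk-type estimate gives $\sqrt{n}$; these claims are mutually inconsistent and the second is false. The $3\times 3$ substitution matrix has Perron eigenvalue $\boldsymbol{a}^{-1}$ and a complex pair of modulus $\boldsymbol{a}^{1/2}<1$ (so that the product of all three is $\pm 1$); correspondingly the $2\times 2$ matrix $A$ of the statement has $\det A=1+\boldsymbol{a}+\boldsymbol{a}^2=\boldsymbol{a}^{-1}$ and a complex eigenvalue pair of modulus $\boldsymbol{a}^{-1/2}>1$, i.e.\ $A$ is a genuine expansion composed with a rotation, not an isometry. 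If the planar component were isometric the displacement would remain bounded, not grow like $\sqrt{n}$. The correct mechanism is self-similarity rather than a random walk: one renormalization step multiplies the number of tiles visited by $\approx\boldsymbol{a}^{-1}$ while multiplying the diameter of the enclosing region $\Omega_k$ by $\approx\boldsymbol{a}^{-1/2}$, and comparing these two growth rates along the sequence $n=T_k$ yields $d(\theta_n,\theta_0)\sim C\sqrt{n}$. With that correction the rest of your item 6 (and the sector-by-sector treatment of the singular case) matches the paper.
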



\begin{proof}
We consider the parallel foliation $\mathcal{P}^{\delta_{AY}}$. We now show how the renormalization process defined in Section \ref{sec:trop_cool} translates to a construction of a growing sequence of flowers in this foliation, and completely describes the dynamics of $\delta_{AY}$.

The point 1. has been already proven in Theorem \ref{thm:exceptional_trajectories}. The point 2. follows from the renormalization (Theorem \ref{thm:renormalization_process}) and Proposition \ref{prop:SYMBOLIC} as explained before, the symbolic dynamics of the map $F^{\boldsymbol{a}}$ in the alphabet $\mathcal{A}_{\Delta}$ is given by the sequence $w_j=\varsigma_R^{j-1}(\overline{acb})$ and $w_{\infty}$. By Proposition \ref{prop:periods}, since $\varsigma^*_R=\sigma_R$ we obviously have that $|w_j|=2 T_{j+3}$ with the Tribonacci sequence defined by \eqref{eq:def_tribonacci}. 

By Theorem \ref{thm:bounded_flower_conjecture}, and since all of the trajectories $\delta \in \mathcal{P}^{\delta_{AY}}, \delta \neq \delta_AY$ are periodic, then the trajectories on one side of $\delta_{AY}$ have the same winding. 

Fix a trajectory $\delta$ with symbolic dynamics $w_j$. In order to construct the family $\Gamma_{\delta}$, we proceed in the following way. We contract  $\delta$ inside onto some flower, then choose the biggest petal of this flower, and a periodic trajectory approaching this petal from inside. This periodic trajectory contracts on another flower etc. Thus we construct a sequence of flowers $\gamma_k$ with diminishing periods till $\gamma_0=\{v_0\}$. Moreover, we already know that the only periods possible belong to the set of doubled Tribonacci numbers. Since the set of Tribonacci numbers is a number system, this implies that for $k \geq 4$, the petals of the flower $\gamma_k$  have combinatorics $w_{k-1}, w_{k-2}, w_{k-3}$ (since their lengths are $2T_{k+2}, 2T_{k+1}$ and $2 T_k$), for any step $k \geq 4$.\footnote{In the process of contraction, if one doesn't choose the biggest flower, not all of the periods (and combinatorics) will be realized.} The combinatorics of a sequence of flowers $\{\gamma_j\}$, with small indices (for $j \leq 4$), follows from explicit calculation, see Figure \ref{fig:NICE_FIGURE}. This finishes the proof of point 3. Indeed, by construction, all of the curves $\gamma_k$ pass by the six tiles in $\Theta_{v_0}$.

\smallskip

The family $\Gamma_{\delta}$ splits up the plane into open domains of trajectories with the same symbolic dynamics. The more these zones approach $\delta_{AY}$, the more the corresponding period grows. The point 4.1 follows from point 3.

\begin{figure}
\centering
\includegraphics[scale=0.18]{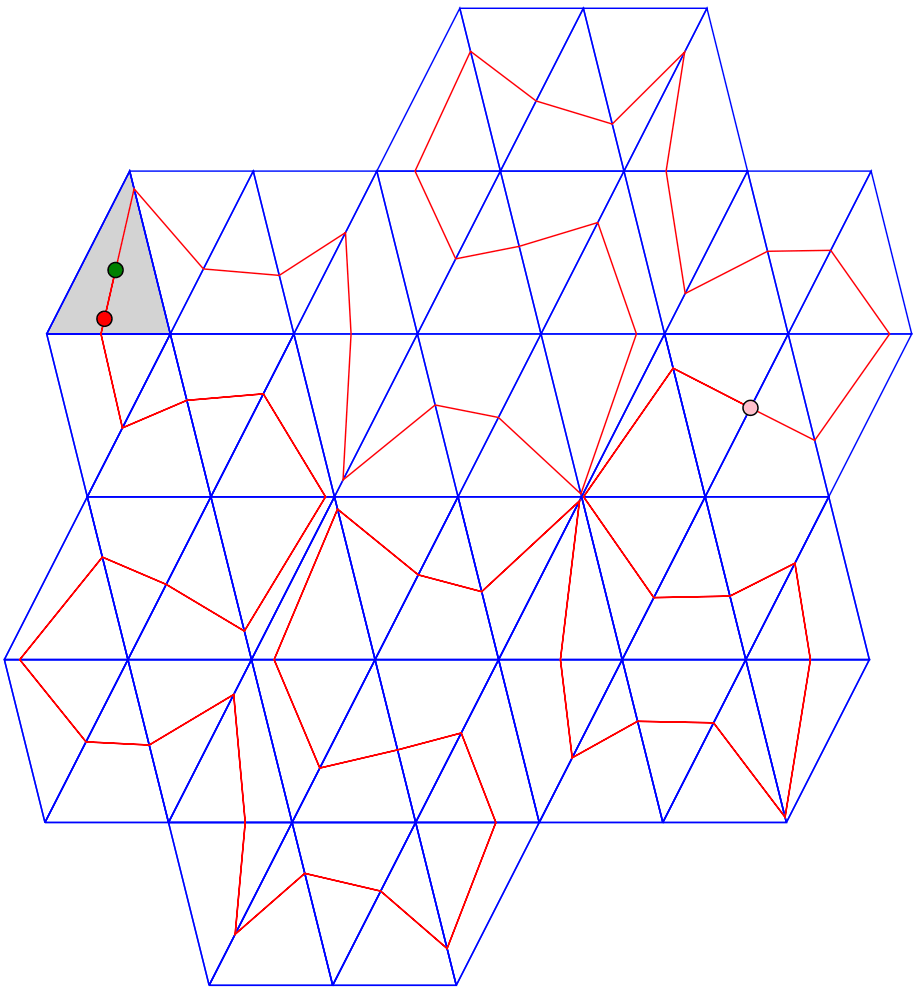}
\includegraphics[scale=0.05]{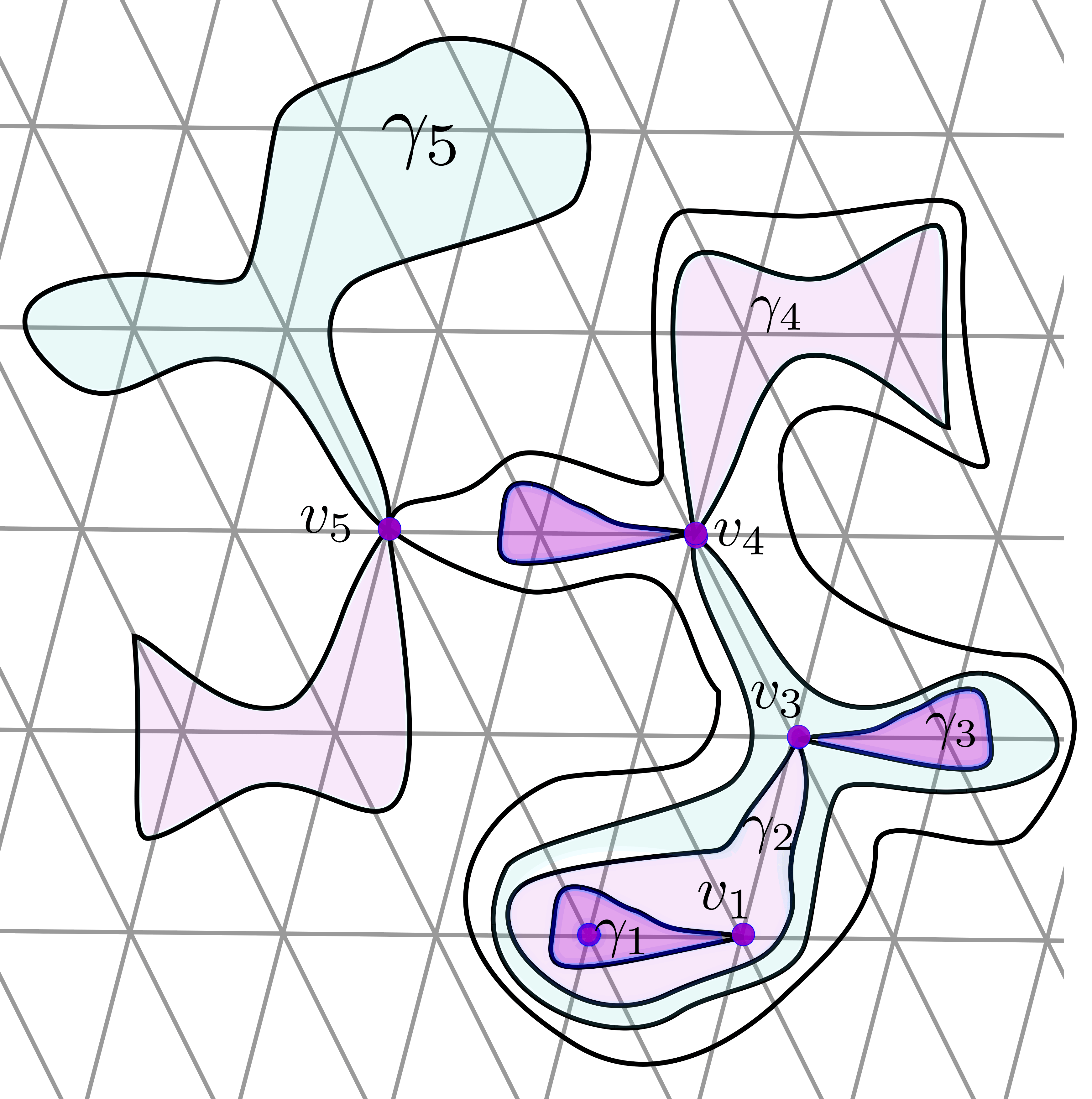}
\caption{\emph{Flowers} $\gamma_0, \ldots, \gamma_5$ \emph{in the foliation} $\mathcal{P}^{\delta_{AY}}$. On the left, we take any periodic trajectory $\delta$ of period $62 = 2 \cdot 31$. Then consider the foliation $\mathcal{P}^{\delta_{AY}}$ inside the domain $\Omega^{\delta}$. On the right is presented the structure of the flowers obtained by contraction of $\delta$. The vertices $v_j, j \in \{1,2,3,4,5\}$ are marked. Moreover, one can see a flower $\gamma_1$ (one petal), a petal $\gamma_2$ of the flower $\gamma_3$ (with two petals), and finally, the flowers $\gamma_4$ and $\gamma_5$ with three petals. The zones of equal symbolic behavior are colored in the same colors. Such a renormalization construction expands to all the plane. On the right, we didn't draw the exact trajectories but curves with equal symbolic codes. We note that all $\gamma_j$ pass by all of the tiles $\theta \subset \Theta_{v_0}$ (those crossed by $\gamma_1$).}
\label{fig:NICE_FIGURE} 
\end{figure}

One easily checks the statement 4.2. explicitely for all $k \leq 3$. Now take a flower $\gamma_4$ with three petals, and a vertex $v_4$. We look at the combinatorics of this flower by fixing the initial starting position to make a turn of the smallest of its petals (with combinatorics $w_1$). Then, the two sides of the equation \eqref{eq:formulation} correspond to the symbolic code of this flower. By the symbolic code of a flower we understand a symbolic code of a family of periodic trajectories approaching it from\emph{ outside. }

Indeed, the word $s_1 \cdot (s_4^2)^{s_1}$ obviously coincides cyclically with $w_4$. The left-hand side of \eqref{eq:formulation} coincides with $w_4$ as well since a flower is a union of three petals, in the presented order, which can be verified explicitely. The junctions $\ast, \star, \dagger$ correspond to the three edges that are crossed by a close periodic trajectory (and not contained in the flower itself). These are three edges such that $\ast \cap \star \cap \dagger=\{v_4\}$. 
Here $\ast=a, \star=b, \dagger=c$.

Now, the equation \eqref{eq:formulation} for any $k$ follows analogously. It suffices to say that the flower $\gamma_k$ is mapped 
to the next flower $\gamma_{k+1}$ va renormalization\footnote{The renormalization process has been defined on the orbits of the maps in $\CETthree$ but via the vocabularly in paragraph \ref{subs:vocabularly} it transfers to tiling billiard foliations.} and the pistil $v_k$ is mapped to the pistil $v_{k+1}$. The vertices $\{v_k\}$ are related to the symbolic dynamics in a following way. For any $k$ there exists a unique edge $e_k$ which is crossed by a smallest petal of $\gamma_k$ in the middle of its symbolic dynamics (starting from the vertex $v_k$)\footnote{This edge corresponds to a break point $\cdot$ in the equation \eqref{eq:formulation}.}. The vertex of this edge contained outside $\Omega^{\gamma_k}$ is exactly $v_{k+1}$, via renormalization.

Concerning point 5., the relationships between the sets $\Omega_k^j$ follow obviously from above. Indeed, at each new step of construction of a flower $\gamma_{k+1}$, its biggest petal "eats up" the flower $\gamma_k$, and the smaller petals of $\gamma_{k+1}$ are obtained from two biggest petals of $\gamma_k$ via renormalization. Then, since the square of the renormalization is the Rauzy substitution, the reparametrization matrix is the same as that in \cite{Rauzy}. All of the rest follows from standard results and arguments.

For now we didn't use the fact that $\delta_{AY}$ is non-singular. The difference between the non-singular and singular cases, is that in the first case $\delta_{AY}$ passes by all triangles in the tiling. In the second case, is is stopped in a vertex. But the limit set is also a Rauzy fractal but in this case this fractal grows only in some sector bounded by separatrix rays in the same flower as $\delta_{AY}$.
\end{proof}

This proof can be generalized to a more general case in order to prove the results on the convergence of other exceptional trajectories to fractals, at least for the periodic points in the Rauzy gasket. It is an interesting question to study convergence for all $\rho \in \mathcal{R}$.

\newpage
\begin{center}
{\textbf{ Part III.-- Generalizations and open questions.}}
\end{center}
\section{Dynamics of quadrilateral triangle tiling billiards.}\label{subs:Q}
The theory of tiling billiards in cyclic quadrilateral tilings is in many ways analogous to that of triangle tiling billiards. Indeed, a folding map into a disk is well defined (see Section \ref{sec:locally foldable tilings and associated billiard foliations}) as well as tiling billiard foliations (see Section \ref{sec:foliations}). Moreover, the connection with a family of fully flipped maps on the circle persists (see paragraph \ref{subs:CETandTriangles}). Although, the renormalization process we define for $\CETthree$ in Section \ref{sec:trop_cool} doesn't seem to extend (at least, in a straightforward way) to the family $\CETfour$. In this Section, we discuss the challenges and open questions. 

\subsection{Tree conjecture for quadrilateral tiling billiards.}\label{subs:tree_quadrilaterals}

Analogously to Conjecture \ref{conj:tree} for triangle tiling, we formulate 

\begin{conjecture}[Tree conjecture for cyclic quadrilateral tilings]\label{conj:tree_quadrilaterals}
Take any periodic trajectory $\delta$ of a cyclic quadrilateral billiard. Then the set $G_{\square}^{\delta}:={\Omega}^{\delta} \cap \Lambda_{\square}$ is a \emph{tree} (as a subgraph of $\Lambda_{\square}$).
\end{conjecture}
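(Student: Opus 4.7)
The plan is to mirror the strategy that worked for triangle tilings in Part I, with the reduction already in place: Proposition \ref{prop:treetoflower} is stated for both triangle and cyclic quadrilateral tilings, so Conjecture \ref{conj:tree_quadrilaterals} is equivalent to the Bounded Flower Conjecture for cyclic quadrilateral tilings, and the latter is what I would target. As noted after Proposition \ref{prop:list of possible local behaviours PTT}, every bounded flower at a vertex of a cyclic quadrilateral tiling has at most two petals, always meeting tiles in the four-tile star $\Theta_v$. The list of topological obstructions to rule out is therefore shorter than for triangles: a one-petal obstruction in which the only petal meets two neighbouring tiles but leaves their common edge in its exterior, and several two-petal obstructions distinguished by the relative nesting of the enclosed domains and by how each petal sits with respect to the common edge of its pair.

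First I would construct, for each hypothetical obstruction, a family of three or four periodic trajectories in the parallel foliation $\mathcal{P}^\gamma$ that pinch the flower $\gamma$ simultaneously from inside and outside, as done in paragraphs \ref{subs:MAIN} and \ref{subs:REMAIN}. This produces a system of equations on cyclic words in the alphabet $\mathcal{A}_{\square}^2$. The natural substitute for the square property, which is unavailable here, comes from the family $\CETfour$: every such auxiliary trajectory is a periodic orbit of a map $F=R_\tau\circ F_0\in\CETfour$, and the involution $F_0$ forces a symmetry relation on the accelerated code identifying letter positions paired by $F_0$. I expect this involutive symmetry, together with length counting and the richer per-symbol information carried by $\mathcal{A}_{\square}^2$, to suffice to exclude the one-petal obstruction and the nested two-petal obstructions by the same parity-style contradictions (``the word ends in two incompatible symbols'') that appeared in the proofs of Propositions \ref{prop:first} through \ref{prop:last}.

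The hard part will be the non-nested two-petal obstruction. There, both the $\CETfour$ involution and the folding data pair the two petals with each other, and neither immediately yields an incompatibility. My plan is to replace the ray-foliation symmetry used in Proposition \ref{prop:nohungry} by a direct geometric constraint coming from the folding map $\mathcal{F}$: the two petals of the flower fold to two chords of the circumscribed circle $\mathcal{C}$ passing through the common point $\mathcal{F}(v)$, and cyclic inscribability forces relations among the arcs cut on $\mathcal{C}$ by $\mathcal{F}(V)$ (in the spirit of Ptolemy) which must be compatible with the putative obstruction. I expect this last step to require a case division according to which pair of opposite angles at $v$ is largest, since $\mathcal{F}(\Theta_v)$ carries no reflective symmetry in general, and it is conceivable that the most skewed cyclic quadrilaterals will only be closed by a perturbative argument exploiting the stability of periodic trajectories from Theorem \ref{thm:cyclic_quadrilateral_tiling_billiards_info} together with the density of $\mathcal{F}(V)$ on $\mathcal{C}$ guaranteed by Lemma \ref{lemma:folding}.
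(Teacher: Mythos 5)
The paper does not prove this statement: Conjecture \ref{conj:tree_quadrilaterals} sits in Part III among the open questions, and the author says explicitly that the only available step is the one you use — the reduction via Proposition \ref{prop:treetoflower} to the Bounded Flower Conjecture for cyclic quadrilateral tilings — and that the proof could not be completed. So there is no proof of the paper's to compare yours against; your text is a research plan, and the question is whether it closes the gaps the author identifies. It does not, for two concrete reasons.

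First, your substitute for the square property does not exist. The exclusion arguments in paragraphs \ref{subs:MAIN} and \ref{subs:REMAIN} hinge on every periodic accelerated code being a square $s^2$ of a word of odd length; for cyclic quadrilaterals this fails (the paper notes period-$4$ orbits already on the square tiling, and Figure \ref{fig:Qasym} shows a highly asymmetric code), and the involution $F_0$ does not rescue it: a periodic orbit of $F=R_\tau\circ F_0\in\CETfour$ is in general not $F_0$-invariant, so there is no canonical pairing of letter positions in its code forced by $F_0$. Without an exact combinatorial identity of this kind, the ``length counting plus incompatible final symbol'' contradictions have nothing to bite on. Second, for the non-nested two-petal obstruction you propose to use only data coming from the folding map (chords through $\mathcal{F}(v)$, arc relations on $\mathcal{C}$). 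The remark at the end of Section \ref{subs:Density conjecture} — that there exist locally foldable tilings on which the Tree Conjecture is false — shows that no argument relying solely on folding and inscribability can succeed; whatever closes this case must use something specific to the periodic cyclic quadrilateral tiling beyond the existence of $\mathcal{F}$. Both pillars of the triangle proof, the square property and the central symmetry of the ray foliation (Proposition \ref{prop:nohungry}), are explicitly noted in the paper to break for quadrilaterals, and your proposal replaces neither with an actual argument.
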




\smallskip

The Tree Conjecture for cyclic quadrilateral tilings seems to hold, based on simulations of dynamics.  Although, we didn't yet manage to prove it. By Proposition \ref{prop:treetoflower}, it is sufficient to prove the Bounded
Flower Conjecture for cyclic quadrilateral tilings. Even though one can prove easily the analogue of Proposition \ref{prop:list of possible local behaviours PTT}, the global symbolic behavior of quadrilateral tiling billiards seems to be much more complicated than that of triangle tilings.The trajectories in quadrilateral tilings are not symmetric, e.g. their symbolic codes do not necessarily belong to the set $\left\{4n+2, n \in \N^*\right\}$ since already on the square tilings there exist  $4$-periodic orbits. This is far to be an only example: there exist higly asymmetric trajectories, see for example that on Figure \ref{fig:Qasym}.

\begin{figure}
\centering
\includegraphics[scale=0.3]{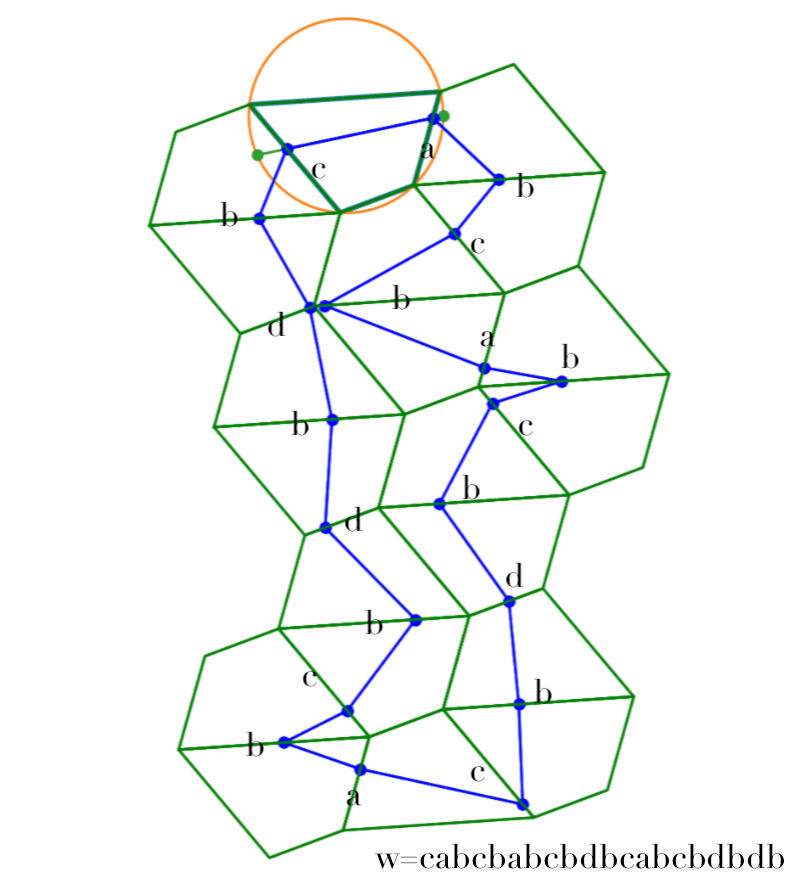}
\caption{Quadrilateral billiard trajectory with a symbolic code $w=cabcbabcbdbcabcbdbdb$.}
\label{fig:Qasym}
\end{figure}

\bigskip

We suspect that the analogue of the renormalization process that we have defined in Section \ref{sec:trop_cool} for $\CETthree$ still can be defined for $\CETfour$, even though with a more complicated combinatorics. This process should correspond to the contraction of flowers in the parallel foliation that has been disucssed in the proof of Proposition \ref{prop:treetoflower} and in the proof of Theorem \ref{thm:big}. We hope to obtain this process by contraction of measured foliations on the projective plane onto traintracks. 

\subsection{Density property for triangle and quadrilateral tiling billiards.}\label{subs:Density conjecture}
Periodic trajectories of triangle tiling billiards pass by all of the triangles they contour, by Theorem \ref{thm:main}. This behaviour may be generalized to hold for any, not necessarily periodic trajectory. Indeed, every non-periodic trajectory \emph{constructs dynamically} two graphs, both of which are trees.

\bigskip

Consider a (not necessarily periodic) trajectory $\delta$ in a triangle (or cyclic quadrilateral) tiling billiard. Define a subset $V(\delta) \subset V$ as $V(\delta):=\left\{
v \in V \mid \exists e \in E, e \ni v, \delta \cap e \neq \emptyset
\right\}$ and a coloring map $\mathcal{L}_{\delta}: V(\delta) \rightarrow \{0, 1\}$ step by step, as follows. 

\smallskip

First, pick some edge $e \in E$ that is crossed by $\delta$. Denote its extremities $w_0$ and $b_0$, in any arbitrary order. Add $w_0 \in \mathcal{L}^{-1}_{\delta}(0), b_0 \in \mathcal{L}^{-1}_{\delta}(1)$. To pass from step $j$ to the step $j+1$, we add  $b_{j+1} \in \mathcal{L}^{-1}_{\delta}(1), w_{j+1} \in \mathcal{L}^{-1}_{\delta}(0)$ in such a way that the following conditions hold:

\begin{itemize}
\item[•] either  $b_j=b_{j+1}$ or $w_j=w_{j+1}$,
\item[•] $b_j b_{j+1} \cap \delta = w_j w_{j+1} \cap \delta = \emptyset$,
\item[•]  $b_j w_{j+1} \cap \delta \neq \emptyset$,  $w_j b_{j+1}\cap \delta \neq \emptyset$.\footnote{Some of the edges here may be empty (degenerate into vertices). It may also happen for some $k<j-1$ that $b_j=b_k, k<j-1$.}
\end{itemize}

Define a subgraph $G_k^{\delta}$ of $\Lambda$ ($\Lambda=\Lambda_{\Delta}, \Lambda_{\square}$), $k \in \{0,1\}$ as a graph with the set of vertices coinciding with $\mathcal{L}^{-1}_{\delta}(k)$ and two vertices are connected by an edge of  $\Lambda$, if such an edge exists. 

\begin{theorem}[Density property]\label{thm:forest}
For any nonsingular triangle tiling billiard trajectory $\delta$, at least one of the graphs $G_k^{\delta}$ is a tree (say, $G_0^{\delta}$). A trajectory is periodic if and only if $G_1^{\delta}$ has a unique cycle in it.\footnote{In this case, of course, $G_0^{\delta}=G^{\delta}_{\Delta}$ defined in Theorem \ref{thm:main} and $G_0^{\delta} \subset \Omega^{\delta}$.} A trajectory $\delta$ is not periodic if and only if both of the graphs $G_j^{\delta}$ are trees, $j=0,1$.
\end{theorem}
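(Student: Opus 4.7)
The plan is to interpret $\mathcal{L}_\delta$ as a side-labeling of vertices relative to the simple curve $\delta$, and then to read off the three claims of the theorem from the topology of $\R^2 \setminus \delta$ together with the Tree Conjecture (Theorem~\ref{thm:main}).

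First I would establish well-definedness of the coloring. By item~1 of Theorem~\ref{thm:triangle_tiling_billiards_info}, $\delta$ is a simple arc, so $\R^2 \setminus \delta$ has two open connected components $P_0$ and $P_1$: for periodic $\delta$, one of them is $\Omega^\delta$; for non-periodic $\delta$, both are unbounded and simply connected, using that non-periodic trajectories genuinely escape on both ends (item~2 of the same theorem). The definition of $\mathcal{L}_\delta$ exactly encodes this separation: the conditions $b_jb_{j+1} \cap \delta = w_jw_{j+1} \cap \delta = \emptyset$ force same-color endpoints onto the same side of $\delta$, while $b_jw_{j+1} \cap \delta \neq \emptyset$ places them on opposite sides. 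An induction along the trajectory from the initial edge $w_0b_0$ then shows $\mathcal{L}_\delta^{-1}(k) = V(\delta) \cap P_k$, and that the process exhausts $V(\delta)$, because $V(\delta)$ is connected along $\delta$ (non-singularity of $\delta$ prevents any vertex from lying on $\delta$ itself).

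Next, in the periodic case, I would take WLOG $P_0 = \Omega^\delta$. The Tree Conjecture says every interior tile is visited, so every interior vertex is a corner of some visited tile and hence incident to a crossed edge; therefore $\mathcal{L}_\delta^{-1}(0) = V \cap \Omega^\delta$ and $G_0^\delta = G_\Delta^\delta$ is a tree by Theorem~\ref{thm:main}. For $G_1^\delta$, the vertex set $V(\delta) \cap P_1$ is the outer first layer around $\Omega^\delta$; a direct tile-by-tile check, using the local combinatorics of Proposition~\ref{prop:list of possible local behaviours PTT} together with the centrally-symmetric structure of the tiling, assembles the induced $\Lambda_\Delta$-edges between color-$1$ vertices into a single closed polygonal loop $C^\star$ winding once around $\Omega^\delta$. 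Since $P_1 \simeq \R^2 \setminus \overline{\Omega^\delta}$ has first homology generated by $C^\star$, the same local check rules out any further independent cycle, so $C^\star$ is the unique cycle of $G_1^\delta$.

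In the non-periodic case, I would argue by contradiction: if $G_0^\delta$ contained a cycle $C$ bounding a topological disk $D \subset \R^2$, then the edges of $C$ join pairs of color-$0$ vertices and hence are not crossed by $\delta$, so $\delta \cap C = \emptyset$; as $\delta$ is unbounded and $D$ is compact, $\delta \subset \R^2 \setminus D$. Restricting the parallel foliation $\mathcal{P}^\delta$ to $D$, every leaf meeting $D$ is bounded, hence periodic by item~2 of Theorem~\ref{thm:triangle_tiling_billiards_info}. Contracting $\delta$ against $C$ along $\mathcal{P}^\delta$, as in the proof of Proposition~\ref{prop:treetoflower}, would then produce a periodic leaf $\delta' \in \mathcal{P}^\delta$ whose accelerated symbolic code in $\mathcal{A}_\Delta^2$ matches a finite segment of that of $\delta$ along $C$; combined with the square property of periodic codes (Theorem~\ref{thm:one-more-time}) and the renormalization of Section~\ref{sec:trop_cool}, this forces the full symbolic code of $\delta$ to close up as a square, making $\delta$ periodic, a contradiction. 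The symmetric argument rules out cycles in $G_1^\delta$, so both graphs are infinite trees, and combining the two cases yields the three equivalences of the theorem. \emph{The hard part} will be running the contraction argument rigorously on the hypothetical pocket $D$ for a general non-periodic $\delta$: for linearly escaping trajectories the explicit symbolic formulas of Theorem~\ref{thm:complete_classification} suffice, but for non-linearly escaping exceptional trajectories one must invoke Theorem~\ref{thm:exceptional_trajectories} (each such trajectory visits every tile) to exclude the existence of any enclosed pocket in the first place.
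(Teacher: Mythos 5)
Your treatment of the periodic case (reduce to Theorem \ref{thm:main} for $G_0^{\delta}$, then identify $G_1^{\delta}$ as the outer layer with a single surrounding cycle) and of the exceptional case (invoke Theorem \ref{thm:exceptional_trajectories} to kill any enclosed pocket) matches the paper's argument. The genuine gap is in the remaining case, linearly escaping $\delta$, which is exactly the case the paper needs a new ingredient for. Two steps of your contradiction argument fail there. First, ``restricting $\mathcal{P}^{\delta}$ to $D$, every leaf meeting $D$ is bounded'' is false: the boundary cycle $C$ consists of edges not crossed by $\delta$, but nothing prevents \emph{other} leaves of $\mathcal{P}^{\delta}$ from crossing those edges, so an unbounded leaf can enter and leave $D$. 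Second, and more seriously, the conclusion ``this forces the full symbolic code of $\delta$ to close up as a square, making $\delta$ periodic'' is a non sequitur: producing a periodic leaf $\delta'$ near $\delta$ whose code agrees with a long segment of the code of $\delta$ proves nothing, because for a linearly escaping $\delta$ the foliation $\mathcal{P}^{\delta}$ genuinely contains periodic leaves coexisting with (and accumulating on) the escaping ones --- see point 2 of Lemma \ref{lem:integrability} and Remark \ref{rem:INTEGRABILITY}. The square property is a property of periodic trajectories and cannot be transferred to $\delta$ from a neighbour; the sign-code comparisons of Section \ref{subs:MAIN} always compare two \emph{periodic} curves flanking a separatrix, and that hypothesis is not available here.

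What is missing is the paper's actual tool for this case: the classification of \emph{unbounded} flowers (Proposition \ref{prop:UNBOUNDED}), proved via the central symmetry of the ray foliation $\mathcal{R}_p$ about its pistil (Proposition \ref{prop:nohungry}), which excludes the hungry-flower and opposite-tile configurations for separatrices containing an escaping ray. Contracting $\delta$ inside a hypothetical pocket then only ever meets flowers from the allowed list, and the tree property follows as in Proposition \ref{prop:treetoflower}. Deferring to ``the explicit symbolic formulas of Theorem \ref{thm:complete_classification}'' does not substitute for this: those formulas describe the code of $\delta$ itself, not the planar position of the uncrossed tiles, which is what a cycle in $G_0^{\delta}$ is about. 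So the proposal is sound on two of the three branches but does not close the linearly escaping branch.
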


The proof of the Density property follows the same strategy as the proof of Theorem \ref{thm:main}, we give here a sketch of its proof. Consider the parallel foliation $\mathcal{P}^{\delta}$ and perturb $\delta$ in it onto singular trajectories.

\smallskip

If $\delta$ is periodic, the two singular trajectories $\gamma_+, \gamma_-$ approaching $\delta$ are well defined (there are no accumulating trajectories in the neighbourhood of $\delta$ in $\mathcal{P}^{\delta}$). One of them (say, $\gamma_-$) is a bounded flower inside $\Omega^{\delta}$, and another one is a petal of a bigger (not necessarily bounded) flower. In this case, the statement of the Density conjecture follows directly from Theorem \ref{thm:main}. Indeed, since the graph $G_1^{\delta}$ uniquely defined by $G_0^{\delta}$ as the set of vertices at distance $1$ from $G_0^{\delta}$, and $G_0^{\delta}$ is a tree, $G_1^{\delta}$ has a unique cycle in it.

\smallskip

Now suppose that $\delta$ is escaping to infinity. If $\delta$ is exceptional then the Density property follows from Theorem \ref{thm:exceptional_trajectories}. Indeed, $\delta$ is an only non-bounded leaf in $\mathcal{P}^{\delta}$. In this case, the sets $G_k^{\delta}, k=0,1$ are the spanning trees of the initial graph $\Lambda_{\Delta}$. 

\smallskip

Finally, suppose $\delta$ is linearly escaping. In order to finalize the proof now, one classifies possible topological behaviours of unbounded flowers. The Proposition below finishes the proof.

\begin{proposition}\label{prop:UNBOUNDED} 
Consider an unbounded flower $\gamma$ in $v \in V$ with $s$ separatrix segments in $\Theta_v$. Suppose that at least one of these segments defines an escaping ray. Then, up to change of orientation, $\gamma$ has one of the types listed on Figure \ref{fig:unboundedflowers}. More precisely, for $s=2$ there are possible: two behaviours; for $s=4$: two behaviours with $0$ or $1$ bounded petals; for $s=6$: three behaviours  with $0,1$ or $2$ bounded petals.
\end{proposition}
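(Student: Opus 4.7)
The plan is to follow the same overall strategy as for the Bounded Flower Conjecture (Theorem \ref{thm:bounded_flower_conjecture}), reduced to an enumeration of topological configurations compatible with (a) the local pictures of Proposition \ref{prop:list of possible local behaviours PTT}, (b) the non-self-intersection of leaves in the foliation $\mathcal{P}^{\gamma}$, and (c) the symmetry of the ray foliation from Proposition \ref{prop:nohungry}. First I would enumerate, for each $s\in\{2,4,6\}$, the admissible pairs $(p,r)$ with $2p+r=s$ and $r\geq 1$ (where $p$ counts bounded petals and $r$ counts escaping rays): for $s=2$ only $(0,2)$; for $s=4$ the pairs $(1,2)$ and $(0,4)$; for $s=6$ the pairs $(2,2)$, $(1,4)$ and $(0,6)$. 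This already matches the counts stated in the proposition, so the task reduces to showing that \emph{for each} admissible $(p,r)$ there is exactly one topological type, up to orientation reversal $\tau\mapsto -\tau$.

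For the cases with at least one bounded petal, I would approximate each petal from both sides in $\mathcal{P}^{\gamma}$ by non-singular periodic trajectories and run the same sign-code arguments used in Propositions \ref{prop:first}--\ref{prop:last}. The obstructions from Figure \ref{fig:one big picture with what can go wrong} transfer essentially verbatim, with the difference that the role of an outer loop can now be played by an escaping ray; the contradiction from the square property of periodic words still rules out any alternative interleaving. Hungry configurations, i.e.\ those in which a tile $\theta\ni v$ crossed by a petal would contain $\theta^v$ inside its interior, are excluded for free by Proposition \ref{prop:nohungry}, since a hungry petal forces the centrally-symmetric leaf of $\mathcal{R}^{\gamma}$ to intersect $\gamma$ outside $v$.

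For the purely unbounded cases $(p,r)=(0,s)$ the square property is unavailable, but the classification follows directly from two inputs: the local enumeration of Proposition \ref{prop:list of possible local behaviours PTT} (which gives the two types for $s=2$ and unique types for $s=4,6$), and point 1 of Proposition \ref{prop:nohungry} forcing opposite tiles to carry centrally-symmetric rays. Each local picture then admits a unique extension to a global flower consisting of escaping rays, because along each ray the foliation $\mathcal{R}_p$ with $p=\mathcal{F}(v)$ is defined by a single chord in $\mathcal{D}$ and cannot close up (otherwise the ray would be a petal, contradicting $p=0$). Realizability of each configuration is then checked by exhibiting an explicit example: drift-periodic and linearly escaping trajectories from Theorem \ref{thm:complete_classification} for $s=2,4$ with some petals, and the six-ray exceptional separatrix flowers from Theorem \ref{thm:exceptional_trajectories} for $s=6$, $(0,6)$.

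The main obstacle will be the mixed cases $(p,r)=(1,2)$ and $(2,2)$: here one must ensure that the \emph{relative} placement of the bounded petal(s) and the two escaping rays is uniquely determined. I expect the cleanest way to handle this is to first approximate the bounded part by a periodic trajectory $\delta$ close to $\partial(\cup_j \Omega^{\gamma_j})$, reduce the question to the bounded classification of Figure \ref{fig:petals} applied to $\delta$, and then argue that the two rays must occupy the unique free ``sector'' in $\Theta_v$ left available by the local picture of Proposition \ref{prop:list of possible local behaviours PTT}, again up to the global orientation swap. Any other placement forces either a hungry-tile obstruction or a sign-code contradiction of the type encountered in Proposition \ref{prop:last}, completing the enumeration.
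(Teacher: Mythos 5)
Your overall frame (enumerate the pairs $(p,r)$ with $2p+r=s$, then pin down one topological type per admissible pair using the local classification, the symmetry of the ray foliation, and approximation by nearby leaves) is the right shape, and your count of types agrees with the statement. But the step where you claim the sign-code arguments of Propositions \ref{prop:first}--\ref{prop:last} transfer ``essentially verbatim, with the role of an outer loop played by an escaping ray'' has a genuine gap. Those arguments rest entirely on the square property, which is a property of \emph{periodic} trajectories: each auxiliary curve $\delta$, $\deltain$, $\deltaout$ must be a closed leaf of $\mathcal{P}^{\gamma}$ whose cyclic sign code is a square. Once the flower has escaping rays, the leaves on the unbounded side of a petal --- in particular any leaf that would encircle the whole flower, or one squeezed between a bounded petal and a ray --- are themselves escaping, have no cyclic code, and the word equations cannot even be written down. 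The same problem undermines your proposed reduction of the mixed cases $(1,2)$ and $(2,2)$ to Figure \ref{fig:petals}: a \emph{periodic} trajectory close to $\partial(\cup_j\Omega^{\gamma_j})$ from the outside need not exist, and Theorem \ref{thm:bounded_flower_conjecture} only speaks about flowers all of whose separatrices are petals.

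The paper's proof avoids this entirely. After Proposition \ref{prop:list of possible local behaviours PTT} fixes the local picture, only two global obstructions survive: for $s=4$, a closed petal through two opposite tiles coexisting with two rays; for $s=6$, two rays through neighbouring tiles with two bounded petals of different orientations. In both, there is a tile $\theta_0\ni v$ carrying an \emph{unbounded ray} of $\gamma$ while $\theta_0^v$ sits inside a bounded petal of $\gamma$; point 1 of Proposition \ref{prop:nohungry} then produces the centrally symmetric separatrix through $\theta_0^v$ in the ray foliation, which is an unbounded ray starting inside a petal of $\gamma$ and therefore must cross $\gamma$ away from $v$ --- impossible for leaves of one foliation. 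So the entire proof is the symmetry argument you reserved for the ``hungry'' subcase, applied with the ray (rather than a petal) in the role of the object whose opposite tile gets swallowed; no square-property input is needed. I would reorganise your proof around that observation: check that every configuration you still need to exclude exhibits such a ray/opposite-tile pair, and drop the sign-code machinery for this proposition.
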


\begin{proof}
The starting point of the proof is the Proposition \ref{prop:list of possible local behaviours PTT} that lists possible local behaviours of separatrix segments. It is left to exclude the following two obstructions for the behaviour of some unbounded flower $\gamma$. 
\begin{itemize}
\item[•] $s=4$ and there exists a closed petal in $\gamma$ passing by two opposite triangles,
\item[• ]$s=6$ and there exist two unbounded separatrix rays passing by neighbouring triangles, and the two bounded petals of the flower have different orientations. 
\end{itemize}

Both of these cases are excluded by a common symmetry argument. In both of the obstructions above, there exists a tile $\theta_0, \theta_0 \ni v$ such that $\gamma \cap \theta_0$ defines an unbounded separatrix ray and $\theta_0^v$ is contained inside some petal of $\gamma$. By point 1. in Proposition \ref{prop:nohungry}, one considers a symmetric flower $\gamma^v$ in the ray foliation. Then $\gamma$ and $\gamma^v$ necessarily intersect outside $v$ which gives a contradiction. 
\end{proof}

\begin{figure}
\centering
\includegraphics[scale=0.06]{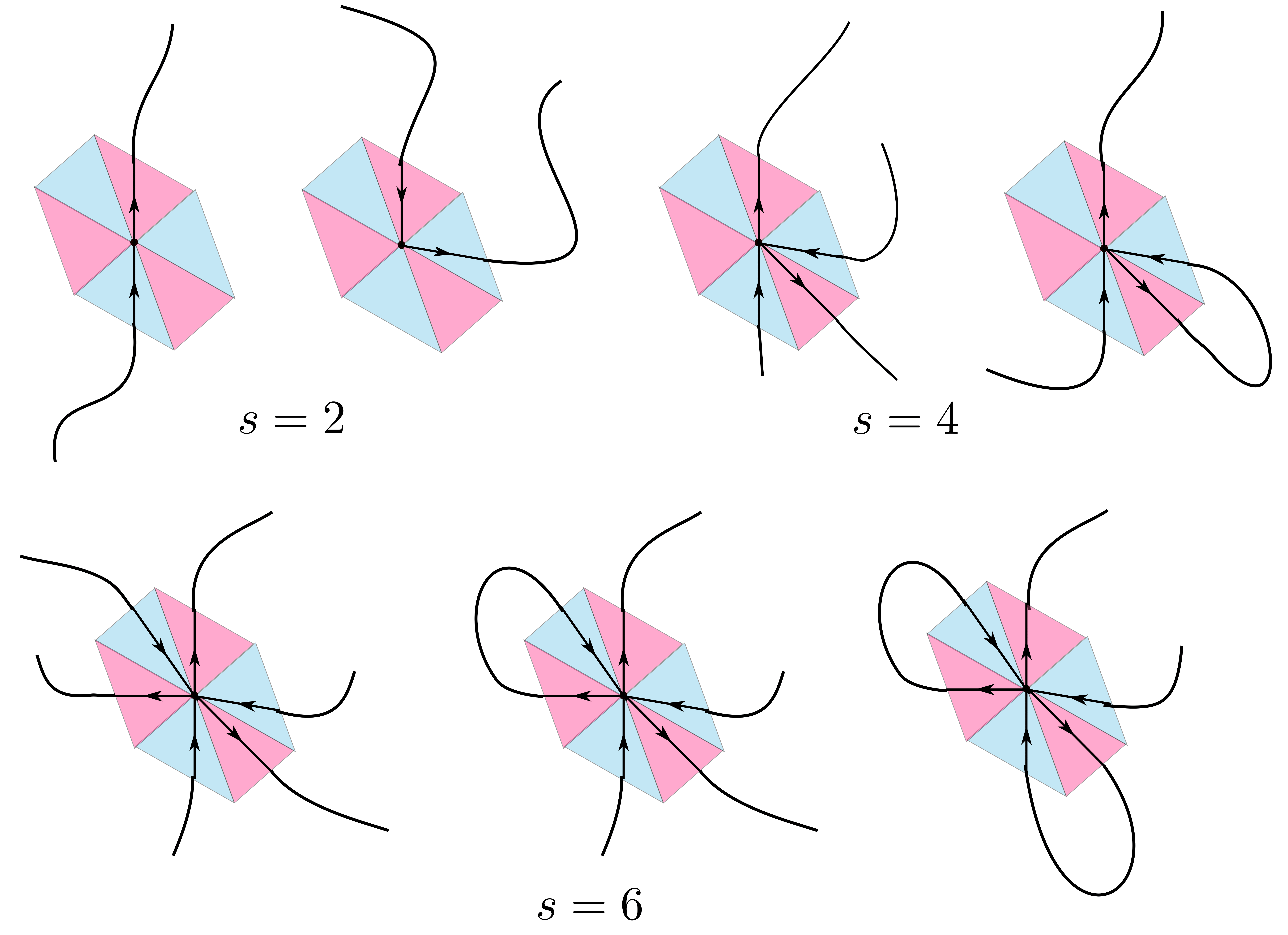}
\caption{Possible behaviors of unbounded flowers in parallel triangle tiling billiard foliations.}
\label{fig:unboundedflowers}
\end{figure}

\bigskip

\begin{conjecture}
Density property holds for quadrilateral tiling billiards.
\end{conjecture}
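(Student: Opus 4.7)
The plan is to mimic the proof strategy for triangle tiling billiards, cutting the Density property into its three natural cases (periodic, linearly escaping, non-linearly escaping) and reducing each case to a local statement about flowers in parallel and ray foliations. Since Proposition~\ref{prop:treetoflower} is already stated (and its proof is valid) for cyclic quadrilateral tilings, Conjecture~\ref{conj:tree_quadrilaterals} is equivalent to the Bounded Flower Conjecture for $\square$-tilings, so the periodic case of the Density property reduces to this statement. For the non-periodic case, one would follow the scheme of Proposition~\ref{prop:UNBOUNDED}: classify the possible topological types of unbounded flowers in $\mathcal{P}^{\delta}$, and show that in each admissible configuration, each connected component of the complement $\R^2 \setminus \cup_{\gamma \in \mathcal{P}^{\delta}}\gamma$ meets the tiling graph $\Lambda_{\square}$ in a tree.

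First I would carry out the local combinatorial classification: fix a vertex $v \in V$ of a cyclic quadrilateral tiling and show that a flower $\gamma \in \mathcal{P}_{\tau}$ with pistil in $v$ has $s \in \{0,2,4\}$ separatrix segments, at most one per tile containing $v$, with a small finite list of combinatorial types (this is the $\square$-analogue of Proposition~\ref{prop:list of possible local behaviours PTT}, and follows directly from folding into the circumcircle of a base tile). Next, I would upgrade Proposition~\ref{prop:nohungry} to the quadrilateral setting. The key input is that around any $v \in V$ there is a pairing of the four tiles of $\Theta_v$ into two pairs which are centrally symmetric with respect to $v$; folding sends each such pair to a pair of quadrilaterals symmetric about a diameter of $\mathcal{C}$ passing through $\mathcal{F}(v)$. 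Consequently, the ray foliation $\mathcal{R}_{\mathcal{F}(v)}$ is centrally symmetric at $v$ (up to reversal of orientation in opposite tiles), and the ``hungry flower'' configurations (in which a flower passes through some tile $\theta_0$ and encloses its opposite $\theta_0^v$) are forbidden by the same argument as in the triangle case.

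The remaining obstructions to the Bounded Flower Conjecture will be those topological configurations in which no hungry tile is present. Here the triangle-case argument of Propositions~\ref{prop:first}--\ref{prop:last} used in a crucial way the \emph{square property} of accelerated sign codes, coming from point~5 of Theorem~\ref{thm:triangle_tiling_billiards_info}; this is exactly the tool that \emph{fails} for $\square$-tilings, where $4$-periodic orbits already exist on the square tiling and the sign alphabet has twelve rather than six letters. The hard part will therefore be to find a replacement for the square property. My proposal is to extract, for each local obstruction, a finite set of \emph{compatibility equations} between the accelerated sign codes of the three perturbed periodic trajectories $\delta_{\mathrm{in}}, \delta_{\mathrm{out}}, \delta$ coming from foliating $\Omega^\gamma$ and a small neighbourhood of $\gamma$, and to combine them with the cyclicity constraints coming from the folding map sending $V$ into a single circle $\mathcal{C}$. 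Concretely, each sign code is forced to close up into a word whose abelianisation corresponds to a zero vector in the homology of the tiling graph $\Lambda_{\square}$, since the trajectory returns to itself in the plane; I expect that this closure condition, together with the local sign data around the pistil and the symmetry supplied by Proposition~\ref{prop:nohungry}$_\square$, rules out all remaining obstructions.

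Finally, for the non-periodic case I would redo the analysis of unbounded flowers as in Proposition~\ref{prop:UNBOUNDED}: any unbounded flower with a hungry configuration is forbidden by the ray-foliation symmetry, and the short remaining list of admissible unbounded flower types yields, for each non-periodic trajectory $\delta$, a bipartition of $V(\delta)$ such that at least one of $G_0^{\delta}, G_1^{\delta}$ is a tree, exactly as in the triangle case. The main obstacle, as indicated above, is the loss of the square property; overcoming it through a closure-in-$H_1(\Lambda_{\square})$ argument is the central open step, and is the reason the conjecture is not proven in the present work.
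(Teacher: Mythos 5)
The statement you are asked about is a \emph{conjecture} in the paper: the author explicitly leaves the Density property for cyclic quadrilateral tilings open, offering only the remark that it is a stronger form of the (also unproven) Tree Conjecture for quadrilateral tilings. Your text is therefore being compared not against a proof but against an acknowledged open problem, and indeed what you have written is a research program rather than a proof --- you say so yourself in your last sentence. As a program it is broadly aligned with the paper's stated strategy (reduce to the Bounded Flower Conjecture via Proposition~\ref{prop:treetoflower}, classify local separatrix configurations with $s \in \{0,2,4\}$, then exclude global obstructions), so there is no disagreement of approach; but it cannot be accepted as a proof of the conjecture.

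Beyond the admitted missing step, there is a concrete error in the part you present as settled. You claim that around any vertex $v$ of a cyclic quadrilateral tiling the four tiles of $\Theta_v$ pair up into two pairs centrally symmetric with respect to $v$, and hence that the ray foliation $\mathcal{R}_{\mathcal{F}(v)}$ inherits the central symmetry used in Proposition~\ref{prop:nohungry} to kill hungry flowers. This is false in general: the notion of \emph{opposite tiles in a vertex} is defined in the paper only for triangle tilings, because in a quadrilateral tiling the two same-colored tiles at $v$ are related by a translation (their angles at $v$ sum to $\pi$ but need not be equal), so no point reflection about $v$ exchanges them. The paper states explicitly that \emph{both} special tools of the triangle case --- the square property of sign codes \emph{and} the symmetry of the ray foliation centered at a singularity --- break for cyclic quadrilateral tilings. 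So your plan loses not one but two of its pillars, and the hungry-flower exclusion must also be reproved by some other means. The suggested replacement (a closure condition in $H_1(\Lambda_{\square})$ for the sign codes of $\delta_{\mathrm{in}}, \delta_{\mathrm{out}}, \delta$) is plausible as a direction but is not formulated precisely enough to evaluate, let alone to close the argument.
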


This Conjecture is a stronger form of Conjecture \ref{conj:tree_quadrilaterals}. The Density property for tiling billiards can be reformulated in terms of scissor cuts.

\bigskip

\textbf{Reformulated Density property.}
Consider a periodic (triangle or cyclic quadrilateral) tiling of the plane and fold the plane into a bellow. Then, cut along some line in the bellow. Then, the plane "falls into" an infinite number of connected components. The Density property is equivalent to the fact that \emph{none} of these components contain a full triangle.

\smallskip

Does the Density property (and hence, the Tree Conjecture) have a simpler proof based on this interpretation? 

\begin{remark}
A difficulty in proving such a statement directly is that when one makes a cut of the bellow, one does not cut out one trajectory but an infinite number of them. Moreover, the Density property doesn't follow purely from folding since there exist locally foldable tilings on which the Tree Conjecture is false.
\end{remark}

The next statement follows obviously from Theorem \ref{thm:main}. We present its proof in relation to the Reformulated Density property.

\begin{proposition}\label{prop:tree_trianglesimplecase}
There is no triangle tiling billiard trajectory $\delta$ that crosses the tiles $\theta^e, e=a,b,c$ and doesn't cross the tile $\theta$, surrounded by them.
\end{proposition}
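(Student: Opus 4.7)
My plan is to apply the folding map $\mathcal{F}=\mathcal{F}(\theta)$ from Lemma~\ref{lemma:folding} based at the surrounded tile $\theta$ and to exploit a simple half-plane observation inside the bellow. Under this folding, $\theta$ is fixed pointwise and each neighbor $\theta^e$ for $e\in\{a,b,c\}$ maps to a triangle $\mathcal{F}(\theta^e)$ inscribed in the circumscribed circle $\mathcal{C}$ of $\theta$ and sharing edge $e$ with $\theta$. The trajectory $\delta$ folds to a connected subset $\mathcal{F}(\delta)$ of some line $l$; since $\mathcal{F}|_{\theta}=\id$, the assumption that $\delta$ does not cross $\theta$ becomes $\mathcal{F}(\delta)\cap\mathrm{int}(\theta)=\emptyset$.

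The key geometric observation is that for every edge $e$ of $\theta$, the folded neighbor $\mathcal{F}(\theta^e)$ lies in the closed half-plane $H_e^+$ bounded by the line through $e$ and containing $\theta$. Indeed, the third vertex of $\mathcal{F}(\theta^e)$ is the reflection of the vertex of $\theta$ opposite $e$ across the perpendicular bisector of $e$; since this perpendicular bisector is orthogonal to $e$, the reflection preserves each of the two half-planes bounded by line $e$, so the new vertex stays on the same side of $e$ as $\theta$.

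Being a connected subset of $l$ disjoint from the open convex set $\mathrm{int}(\theta)$, the set $\mathcal{F}(\delta)$ lies in a single connected component of $l\setminus\mathrm{int}(\theta)$. For a nonsingular $\delta$, the line $l$ misses the vertices of $\theta$, and this component, restricted to the disk $\mathcal{D}$ bounded by $\mathcal{C}$, is contained in exactly one of the three ``sliver'' regions of $\mathcal{D}\setminus\theta$, each bounded by one edge of $\theta$ and the opposite arc of $\mathcal{C}$; call the relevant one the sliver opposite an edge $e^*$. Since that sliver lies in $H_{e^*}^-$, we get $\mathcal{F}(\delta)\subset H_{e^*}^-$, while $\mathcal{F}(\theta^{e^*})\subset H_{e^*}^+$, so the intersection $\mathcal{F}(\delta)\cap\mathcal{F}(\theta^{e^*})$ is confined to the line through $e^*$, which meets $\mathcal{F}(\theta^{e^*})$ only along the shared edge $e^*\subset\partial\theta$. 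Hence $\delta$ does not enter the interior of $\theta^{e^*}$, contradicting the hypothesis.

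The main obstacle is the reduction to a single sliver in the degenerate case where $l$ passes through a vertex of $\theta$ (which can occur only when $\delta$ is singular at that vertex): the relevant component of $l\cap\mathcal{D}\setminus\mathrm{int}(\theta)$ may a priori span two adjacent slivers meeting at the vertex, and one handles it either by a short supporting-line analysis at the vertex or by approximation of $\delta$ by nonsingular trajectories in the parallel foliation $\mathcal{P}^{\delta}$ of paragraph~\ref{subs:parrad}.
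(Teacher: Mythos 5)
Your overall strategy --- fold at $\theta$, observe that each folded neighbour $\mathcal{F}(\theta^e)$ stays in the closed half-plane $H_e^+$ on the $\theta$-side of the line through $e$, and confine $\mathcal{F}(\delta)$ to a single sliver of $\mathcal{D}\setminus\theta$ --- is viable and genuinely different in its finish from the paper's proof, which two-colours the vertices of the tiling according to the side of the oriented chord $l$ on which their folded images lie and derives a contradiction among the six vertices of $\theta\cup\theta^a\cup\theta^b\cup\theta^c$. Your half-plane observation is correct: reflection in the perpendicular bisector of $e$ preserves each half-plane bounded by the line through $e$, so $\mathcal{F}(\theta^e)\subset H_e^+$; and the sliver argument does close the proof \emph{once} $\mathcal{F}(\delta)$ is known to avoid $\mathrm{int}(\theta)$.

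The gap sits exactly there. The step ``since $\mathcal{F}|_{\theta}=\id$, the assumption that $\delta$ does not cross $\theta$ becomes $\mathcal{F}(\delta)\cap\mathrm{int}(\theta)=\emptyset$'' is not valid as written: $\mathcal{F}$ is far from injective, and the images of the pieces of $\delta$ lying in \emph{other} tiles can land inside $\mathrm{int}(\theta)$. Indeed your own observation shows that each $\mathcal{F}(\theta^e)$ shares the edge $e$ with $\theta$ and lies on the same side of the line through $e$ as $\theta$, so all three folded neighbours overlap $\mathrm{int}(\theta)$; since $\delta$ crosses each $\theta^e$, so that $\mathcal{F}(\delta)$ contains the full segment $l\cap\mathcal{F}(\theta^e)$, there is no a priori reason for $\mathcal{F}(\delta)$ to miss $\mathrm{int}(\theta)$. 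The claim is true but must be earned from the hypotheses: because $\delta$ crosses $\theta^e$ along the whole segment $\mathcal{F}^{-1}\bigl(l\cap\mathcal{F}(\theta^e)\bigr)$ and does not enter $\theta$, it never crosses the shared edge $e$, so the chord $l$ meets the interior of no edge of $\theta$; hence $l$ does not separate the three (folded) vertices of $\theta$, these all lie in one closed half-plane bounded by $l$, and therefore $l\cap\mathrm{int}(\theta)=\emptyset$, which finally gives $\mathcal{F}(\delta)\subset l$ disjoint from $\mathrm{int}(\theta)$. This repaired step is essentially the first half of the paper's colouring argument; with it in place, your sliver confinement and the paper's observation that at least one of the three outer folded vertices must lie on the same side of $l$ as the vertices of $\theta$ are two equivalent conclusions resting on the same geometric fact $\mathcal{F}(\theta^e)\subset H_e^+$. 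Your deferral of the singular case to a perturbation inside $\mathcal{P}^{\delta}$ is acceptable.
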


\begin{proof}
Take any trajectory $\delta$, and consider its folding into a chord $l$ in the disk $\mathcal{D}$. We color each vertex $v \in V$ of the plane in one of the two colors depending on what side the vertex $\mathcal{F}(v)$ is with respect to the oriented chord $l$.\footnote{This coloring coincides with $\mathcal{L}_{\delta}$ on the set $V(\delta)$.}

Suppose now that $\delta$ as in the assumption exists. Then all of the vertices of $\theta$ are colored in the same color. Although, the vertices $A', B',C' $ of the tiles $\theta^e$ with $e=a,b,c$ that do not belong to $\theta$ are all colored in the opposite color. This is impossible since at least one of these three vertices lies on the same side of the chord $l$ as $A,B$ and $C$, by folding.
\end{proof}

\subsection{Symbolic dynamics of maps in  $\CETn$.}\label{subs:dynamics_CETn}
Even though there exists no periodic tiling by $n$-gones with $n \geq 5$, a geometric interpretation of the dynamics of maps in $\CETn$ exists.

\smallskip

Consider some cyclic polygon $P$ with $n$ sides and take $\tau \in \Sph^1$.  This data defines a map $F$ of \textbf{reflections in the circumcircle} as follows. Consider a chord in the disk bounded by the unit circle and connecting $0$ to $\tau$. Denote the sides of $\mathcal{P}$ by reading the boundary in a counterclockwise order, by $a_1, a_2, \ldots, a_n$. We put $a_{n+1}:=a_1$. For any $X \in \Sph^1$ we inscribe the polygon $\mathcal{P}$ in its circumcircle in such a way that the vertex $A:=a_1 \cap a_n$ is placed exactly in $X$. The map $F$ sends a polygon into a congruent polygon of different orientation sharing one side with ${P}$. A label of the side is defined by a positive intersection of $\mathcal{P}$ with a chord defined by $\tau$, see Figure \ref{fig:reflection_in_the_circumcircle}.\footnote{The dynamics of this system is equivalent  to that of the dynamics of a triangle (or cyclic quadrilateral) tiling billiard for $n=3$ (or $4$).} For any $n$, the data $({P}, \tau)$ defines  a map $F\in \CETn$. See \cite{Olga} for more details.

\begin{figure}
\centering
\includegraphics[scale=0.8]{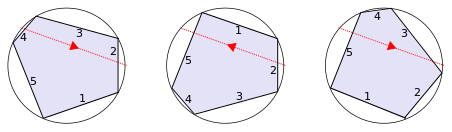}
\caption{Two iterations of a system of reflections of a pentagon in its circumcircle. After each iteration, one changes the direction of the chord defined by the parameter $\tau$ to its opposite. The symbolic code of the orbit $(X, F(X), F^2(X))$ in this case is $w= a_4a_2 \;\; a_2a_5$ for some $F \in \CETfive$.}
\label{fig:reflection_in_the_circumcircle}
\end{figure}

\bigskip

The following definition is inspired by our discussion with Pierre Dehornoy. 

Consider an alphabet $\mathcal{A}_n^2:=\{a_i a_j \mid i,j=1, \ldots n, i \neq j\}$.\footnote{Of course, $\mathcal{A}_n^2$ coincides with $\mathcal{A}_{\Delta}$ ($\mathcal{A}_{\square}$) for $n=3$($n=4$).} 
We say that the \textbf{winding} is a map $\wind: \mathcal{A}_n^2 \rightarrow \{0,1,-1\}$ defined on the letters by $\wind(a_i a_j)=1$ if $j=i+1$, $\wind(a_i a_j)=-1$ if $i=j+1$, and $\wind(a_i a_j)=0$ otherwise. Then the winding map is enlarged to all the words in the alphabet $\mathcal{A}_n^2$ by additivity.

\begin{remark}
The winding map is a generalization of the sign map defined in paragraph \ref{subs:MAIN}.
\end{remark}

\smallskip
 
For this paragraph, we define \textbf{periodic trajectories} in the system of reflections in the circumcircle as those  trajectories that are stable under a small perturbation of the polygon $\mathcal{P}$.\footnote{We give such a definition since the the drift-periodic trajectories of tiling billiards in triangles and quadrilaterals correspond to periodic trajectories in reflections in the circumcircle system and we want to exclude this case. Another way of approaching periodic trajectories is to exclude the cases of polygons with rationally dependent angles.}

One defines a \textbf{ winding of a periodic trajectory} of the system of reflections in the circumcircle as the winding of its symbolic code, and a \textbf{winding of a periodic trajectory} as a winding of a minimal period of its symbolic code.

\begin{example}
For a trajectory going clockwise around a vertex in a triangle (cyclic quadrilateral) tiling, its winding is equal to $6$ (or $4$). 
\end{example}

\begin{lemma}
The following holds :
\begin{enumerate}
\item[1.] a winding of a simple closed curve that doesn't touch the vertices in the triangle (cyclic quadrilateral) tiling is equal to $\pm 6$ ($\pm 4$) depending on its orientation,
\item[2.] 
for any $n \in \N, \geq 3$, the winding of a periodic trajectory in a system of reflections in the circumcircle is well-defined, i.e. doesn't depend on the string representation of the period. This winding is equal to $\pm 2n$ if $n$ is odd, and to $\pm n$ if $n$ is even. 
\end{enumerate}
\end{lemma}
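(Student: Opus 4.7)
Part 1 has a natural reformulation on the planar dual graph $\Gamma^*$ of the tiling (vertices of $\Gamma^*$ are the tiles, edges are the shared sides, and faces of $\Gamma^*$ correspond to the vertices of the tiling). Since six triangles (four cyclic quadrilaterals) meet at each vertex, $\Gamma^*$ is $3$-valent with hexagonal faces (resp.\ $4$-valent with quadrilateral faces). A simple closed curve $\delta$ avoiding the vertices corresponds to a closed walk on $\Gamma^*$, and each letter $a_j a_k$ of the accelerated symbolic code records the local turn of the walk at a vertex of $\Gamma^*$: $\wind(a_j a_k) = \pm 1$ encodes a counter-clockwise (resp.\ clockwise) turn around the common vertex of the two sides, and $\wind = 0$ (only in the cyclic quadrilateral case) encodes a straight-through step from one side of a quadrilateral to the opposite one.

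My plan is then to prove by induction on the number $k$ of faces of $\Gamma^*$ enclosed by the walk that $\wind(\delta) = \pm 6$ (triangle case) or $\pm 4$ (cyclic quadrilateral case). The base case $k=1$ is immediate, as the oriented boundary of a single hexagonal (resp.\ quadrilateral) face has six (resp.\ four) turns all of the same sign. The inductive step consists in peeling a face $f$ off the enclosed region, replacing the sub-walk $\gamma \subset \partial f$ that lies on $\delta$ by the complementary arc $\gamma' \subset \partial f$. A local computation shows that this surgery preserves the signed turn count: the interior turns along $\gamma'$ all carry a common sign, while the two ``junction turns'' at the endpoints of the replaced portion flip in exactly the complementary way, so that the winding is unchanged. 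This local cancellation is the main technical step; it is a finite case analysis indexed by how many consecutive edges of $\partial f$ previously lay on $\delta$, and one can double-check a representative instance by hand on two adjacent hexagons of the honeycomb dual. The sign $\pm$ reflects the orientation of $\delta$.

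For Part 2, well-definedness is immediate: any two string realisations of the minimal period of the accelerated code differ by a cyclic shift, and $\wind$ is invariant under such shifts. To compute the value, I would work with the geometric model of reflections of $\mathcal{P}$ in its circumcircle. Each iteration of $F$ reflects $\mathcal{P}$ across one of its sides and reverses its orientation, so a periodic orbit returning to the initial position \emph{and} orientation has minimum period even when $n$ is odd. Two consecutive reflections compose into a rotation of $\mathcal{P}$ around the common vertex of the corresponding sides, whose sign is recorded precisely by $\wind(w_i w_{i+1})$; over one full period these elementary rotations compose to the identity, and their signed count is exactly the winding. My plan is to apply the same dual-graph Gauss--Bonnet strategy as in Part 1 on the unfolded universal cover of the polygon dynamics (the plane tiling of Part 1 for $n=3,4$, and the analogous abstract reflection cover for $n \geq 5$), so that a periodic orbit lifts to a simple closed walk with total accumulated rotation $\pm 2\pi$ for $n$ even (yielding $\wind = \pm n$) and $\pm 4\pi$ for $n$ odd (yielding $\wind = \pm 2n$), the doubling reflecting the orientation-reversal inherent in a single reflection. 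The main obstacle is constructing the appropriate unfolded cover for general $n$ and identifying the correct notion of ``face'' in it, to which the Gauss--Bonnet count can be applied.
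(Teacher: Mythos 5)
Your overall strategy is aligned with the paper's, but both halves of the proposal leave the decisive step unproved, and in Part 2 one of your quantitative claims is inconsistent with the base case. For Part 1, the paper identifies $\frac{1}{6}\wind(\delta)$ (resp.\ $\frac{1}{4}\wind(\delta)$) with the turning number of the normal vector $v^{\perp}_{\delta}$, decomposes $\delta$ into elementary loops around vertices, and concludes from the fact that a simple closed curve has turning number $\pm 1$. Your face-peeling induction is a legitimate combinatorial substitute, but the entire content lies in the surgery step you only assert ("a finite case analysis\dots one can double-check a representative instance"), and the induction itself is not set up soundly: a simple closed curve in the plane may revisit a tile, so the associated closed walk on $\Gamma^*$ need not be a simple cycle, and peeling a face can disconnect the enclosed region or render the boundary walk non-simple, so the inductive hypothesis need not apply to the surgered curve. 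You would need to argue that one can always peel a face keeping the region simply connected and the walk admissible, or switch to the paper's additive turning-number bookkeeping, which handles non-simple dual walks for free.

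For Part 2, the well-definedness via cyclic-shift invariance is fine and matches the paper. But your claimed dichotomy --- total accumulated rotation $\pm 2\pi$ for $n$ even and $\pm 4\pi$ for $n$ odd --- already fails at $n=3$: the elementary loop around a vertex of a triangle tiling has total turning exactly $\pm 2\pi$ (six tiles, angle sum $2(\alpha+\beta+\gamma)=2\pi$) yet winding $\pm 6 = \pm 2n$, so the factor of $2$ cannot come from the rotation being $4\pi$. The correct source of the $2n$ versus $n$ dichotomy is the number of tiles surrounding a vertex in the unfolding: for $n$ odd all $n$ angles of each orientation must appear around a vertex (giving $2n$ tiles and local covering degree $n-2$), whereas for $n$ even the alternating angles of a cyclic $n$-gon already sum to a multiple of $\pi$, so only $n$ tiles are needed. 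Finally, you explicitly defer "constructing the appropriate unfolded cover for general $n$," which is precisely the content of the paper's argument (unfolding the orbit to a broken trajectory in an immersed, self-intersecting tiling and observing that periodicity in the reflection system means returning to the same tile of the unfolding); to be fair, the paper's own proof of this point is itself only a sketch along these lines.
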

 
 \begin{proof}
For point 1., consider a vector $v^{\perp}_{\delta}$ orthogonal to the curve $\delta$ and count the (algebraic) number of turns this vector makes when it moves along $\delta$. One can easily see that this number is exactly $\frac{1}{6} \wind(\delta)$ for the triangle tiling billiard and $\frac{1}{4} \wind(\delta)$ for cyclic quadrilateral tiling billiard by decomposing $\delta$ into the sum of loops.

For point 2., we first observe that the winding of a periodic trajectory is well-defined. Second, the only change in winding is done by the words that use subsequent letters. Even though for $n>4$ the corresponding tiling doesn't exist, one still can unfold the trajectory to some broken trajectory in a tiling with self-intersections. When one comes back to the same tile in the system of reflections, one comes back to the same tile on this unfolding as well.
 \end{proof}
 
\begin{conjecture}[Winding Conjecture]\label{conj:winding}
For any map $F \in \CETn$, a winding number of its periodic trajectory doesn't depend on a trajectory and is an invariant $\wind(F)$ of the map. Moreover, $\wind(F)=\pm 2n$ (if $n$ is odd), or to  $\wind(F)=\pm n$ (if $n$ is even).
\end{conjecture}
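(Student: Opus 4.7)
The plan is to recast $\wind$ as the rotation angle of the ambient plane induced by the composition of reflections along a periodic orbit, and then exploit the periodicity to pin down its value. Via paragraph \ref{subs:dynamics_CETn}, each iterate $F$ of a map in $\CETn$ acts on the plane as a reflection of the inscribed cyclic $n$-gon $\mathcal{P}$ across one of its sides, so after $p$ iterates $F^p$ is a product of $p$ planar reflections.

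First I would observe that if an orbit of $F$ has period $p$, then $F^p = \id$ on the orbit. A product of an odd number of reflections is orientation-reversing and cannot be the identity for generic $\mathcal{P}$; hence $p$ is even. Then $F^p$ is a rotation of $\R^2$, and being the identity, its rotation angle $\Theta(F^p)$ lies in $2\pi \Z$. The crucial computation is to show that $\Theta(F^p) = \pi \cdot \wind(w) + R(w)$, where $w \in (\mathcal{A}_n^2)^{\N}$ is the accelerated symbolic code of the orbit and $R(w)$ is a $\Z$-linear combination of the interior angles $\alpha_1, \ldots, \alpha_n$ of $\mathcal{P}$ whose coefficients depend only on the counts of the symbols $a_i a_j$ with $|i-j| \not\equiv \pm 1 \bmod n$. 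Concretely, each adjacent-sides pair $a_i a_j$ with $|i-j| \equiv 1 \bmod n$ contributes $\pm 2 \alpha_{*}$ to the total rotation, where $\alpha_*$ is the interior angle at the vertex shared by $a_i$ and $a_j$ and the sign is exactly $\wind(a_i a_j)$; telescoping against the angle-sum identity of $\mathcal{P}$ then yields the $\pi \cdot \wind(w)$ term.

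Second, the identity $\Theta(F^p) \in 2\pi \Z$, combined with the fact that $\alpha_1, \ldots, \alpha_n$ can be varied independently subject only to $\sum_k \alpha_k = (n-2)\pi$, forces each coefficient in $R(w)$ to vanish and pins $\wind(w)$ modulo an integer dividing $2n$. The announced values $\pm 2n$ (odd $n$) and $\pm n$ (even $n$) are then verified on the simplest periodic orbit, one turning once around a single vertex in the reflection-in-circumcircle picture; the parity dichotomy arises because for odd $n$ the orbit must wind twice before the polygon returns to its original orientation, whereas for even $n$ a half-turn of the polygon already reverses orientation. Invariance of $\wind$ across all periodic orbits of a single $F$ then follows from its integrality, combined with local constancy under perturbation of the orbit within the open set of periodic initial conditions guaranteed by Theorem \ref{thm:cyclic_quadrilateral_tiling_billiards_info}.

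The main obstacle lies in the case $n \geq 5$, where no tiling model is available and the structural results of Part I (bellow, Tree Conjecture) apply only abstractly on the disk. Pairs $a_i a_j$ with $|i-j| \geq 2$ genuinely occur in symbolic codes (see Figure \ref{fig:reflection_in_the_circumcircle}), and their contribution to the rotation angle, though well-defined geometrically, has no simple combinatorial counterpart in $\wind$; bounding $R(w)$ without such a counterpart is where the delicate analysis is concentrated. For $n=3,4$ the argument simplifies dramatically: by the Tree Conjecture (Theorem \ref{thm:main}) for $n=3$ and its conjectural analogue for $n=4$, every periodic trajectory bounds a simply connected tree of tiles and is a simple closed curve in the plane, so its tangent vector makes exactly one full turn, and summing the direction changes $\pm 2\alpha_k$ at each refraction yields $\wind = \pm 2n$ or $\pm n$ directly from the angle-sum identity of the polygon.
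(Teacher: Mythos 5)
This statement is an open conjecture in the paper, not a theorem: the author deduces it for $n=3$ from the Tree Conjecture and the Density property (Theorems \ref{thm:main} and \ref{thm:forest}), states a belief that it holds for $n=4$, and explicitly has ``no idea'' for $n>4$. Your proposal does not close that gap. The rotation-angle bookkeeping in your second step is sound as far as it goes: writing $\Theta(F^p)$ as an integer combination of the $\alpha_k$ plus $\pi\cdot\wind(w)$, and using that $\Theta(F^p)\in 2\pi\Z$ is locally constant while the angles range over an open subset of the hyperplane $\sum_k\alpha_k=(n-2)\pi$, you can conclude that the coefficients of the $\alpha_k$ are all \emph{equal} (not that they vanish -- only the restriction to the hyperplane is constrained), hence $\Theta(F^p)=c(n-2)\pi+d\pi$ for integers $c,d$ read off the code. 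This yields only a congruence on $\wind(w)$ modulo a fixed integer; it does not select the value $\pm 2n$ or $\pm n$, and, more fatally, it says nothing about whether two \emph{different} periodic orbits of the \emph{same} map satisfy the congruence with the same representative.

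That last point is where the proposal genuinely fails. You claim invariance of $\wind$ across all periodic orbits of a fixed $F$ ``from its integrality, combined with local constancy under perturbation.'' Local constancy (point 3 of Theorem \ref{thm:triangle_tiling_billiards_info}) only connects orbits inside one periodic cylinder; distinct periodic intervals of the same map carry distinct symbolic codes and are not joined by any path of periodic orbits along which the winding could be transported. The assertion that all these cylinders share one winding is precisely the content of the conjecture -- for $n=3$ it is equivalent to the statement that a periodic trajectory never contours a region containing an uncrossed tile, i.e.\ to the Tree Conjecture, whose proof occupies all of Part I and uses tools (the square property of symbolic codes, the central symmetry of the ray foliation) that have no known analogue for $n\geq 4$. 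Your concluding paragraph implicitly concedes this by invoking the Tree Conjecture for $n=3$ and its ``conjectural analogue'' for $n=4$; for $n\geq 5$ there is not even a candidate substitute, and the non-adjacent pairs $a_ia_j$ with $|i-j|\geq 2$, whose rotation contributions you correctly identify as the obstruction, are exactly where the difficulty you have not resolved is concentrated.
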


In terms of tiling billiards, this conjecture states that periodic orbits obtained by the same scissor cut have the same orientation with respect to infinity. The Winding Conjecture is our attempt to generalize the Tree Conjecture for any family $\CETn$, for all $n \geq 3$.  

From Theorems \ref{thm:main} and \ref{thm:forest} it follows, that the Winding Conjecture holds $n=3$. We believe that the Winding Conjecture holds for $n=4$, and we have no idea for $n>4$. Winding Conjecture concerns the behavior of the asymptotic cycle for families of translation surfaces. The difficulty is that these families are not generic and the maps in them are typically not minimal, so classical results do not apply.

\bigskip

\textbf{Problem.} Give an explicit description of minimal maps in $\CETn$ for any $n \geq 3$.

\smallskip

This Problem is answered for $n=3$ (see Theorem \ref{thm:minimality} above). Already for $n=4$ this question is open. In \cite{Olga} it has been shown that for $n=3$ and $n=4$ minimal maps in $\CETn$ belong to the hyperplane $\tau=\frac{1}{2}$. We wonder if one can provide a homological argument to prove this statement. Can one calculate a Hausdorff dimension of the set of minimal maps in $\CETfour$ and describe a possible analogue of the Rauzy gasket in this next dimension?

For $n \geq 5$ one may exhibit the examples of minimal maps in $\CETn$ outside the hyperplane $\{tau=1/2\}$. One could speculate that such a behavior of the family $\CETn$ (minimality implying $\tau=\frac{1}{2}$ for $n=3,4$ but only for these $n$) is related to the famous Novikov's conjecture on the chaotic sections of $3$-periodic surfaces, i.e. genus $3$ subsurfaces of a $3$-torus. Indeed, the squares of the maps in $\CETn$ for $n=3,4$ are interval exchange transformations corresponding to genus $3$ flat surfaces. 

\smallskip

To conclude, in this last Section we made an attempt to clarify the relationship between the Tree Conjecture (Density property, Winding Conjecture) for tiling billiards and the existence of renormalization in the family $\CETn$. For $n=4$, both of the questions are open. 

\newpage
\begin{center}
\textbf{Appendix: some comments on \emph{Triangle tiling billiards and the exceptional family of their escaping trajectories: circumcenters and the Rauzy gasket}.}
\end{center}

As pointed out before, while working on this article we have found a mistake in one of the proofs of our previous work with P. Hubert \cite{Olga}. This mistake does not influence the principal results of \cite{Olga} for triangle tiling billiards, except for the proof of $4n+2$ Conjecture. It is important for us to mention what was wrong in our arguments.

The present work gives a new set of tools for the study of triangle tiling billiards, and reproves all of the results in \cite{Olga}, in a simpler way. 
We remind our reader that the work \cite{Olga} approached the maps in the family $\CETthree$ with a tool of a standard Rauzy-Nogueira induction.

\smallskip


Here we revisit the proof of the [Proposition 6, \cite{Olga}] which is the key statement for the proof of the $4n+2$ Conjecture (4. and 5. in Theorem \ref{thm:triangle_tiling_billiards_info}) announced in \cite{Olga}. The statement in itself is correct, but a proof we propose in \cite{Olga} has a hole in it. We remind the statement as well as the idea of the initial proof, and then point out where the mistake is hiding.

\begin{proposition}\cite{Olga}\label{prop:prop6}
Take $F=F_{\tau}^{l_1, l_2, l_3} \in \CETthree$ such that $\frac{l_i}{l_j} \notin \Q$ for any $i \neq j$. Suppose that the Rauzy-Nogueira induction stops for $F$ at some $4$-interval exchange transformation $F'$. Then for any interval $Y \subset I$ of continuity for $F'$ such that $F'(Y)=Y$, the restriction $F' \mid_Y$ is an involution.
\end{proposition}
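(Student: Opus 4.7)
The argument rests on a dichotomy: since $F'|_Y$ is an isometric bijection of the interval $Y$ onto itself, it is either the identity or a reflection. The plan is to exclude the identity case. Suppose for contradiction $F'|_Y = \id$; let $n$ be the constant return time of points of $Y$ to $\mathrm{dom}(F')$, so that $F^n|_Y = \id$. Since $n$ is the \emph{first} return time and $F^n(x) = x \in Y \subset \mathrm{dom}(F')$, the value $n$ must coincide with the minimal $F$-period of each $x \in Y$; any smaller $F$-period would give an earlier return to $\mathrm{dom}(F')$, contradicting the minimality of $n$. Hence $Y$ is a minimally $n$-periodic interval of $F$, and since each $F|_{I_j}$ is orientation-reversing, the requirement $F^n|_Y = \id$ forces $n$ to be even.

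I next invoke the structure of periodic intervals of $F$ furnished by the renormalization process of Theorem \ref{thm:renormalization_process} together with Lemma \ref{lem:integrability}. Under the hypothesis $l_i/l_j \notin \Q$, the rational-rotation sub-case of Lemma \ref{lem:integrability}(2) is excluded, since $\kappa = l_3/(l_2+l_3)$ would then be irrational; consequently every periodic interval $Z$ of the renormalized map $F_k$ at which the process halts has minimal period $\equiv 2 \pmod 4$. Moreover, each such $Z$ satisfies the structural property $F_k^{|Z|/2}(Z) = Z$ with $F_k^{|Z|/2}|_Z$ a non-trivial involution: the two-periodic intervals $K_j$ trivially have it, while the longer orbits, arising as first-return orbits of the flipped $2$-IET with combinatorics $\begin{pmatrix} \overline{I_1^-} & I_1^+ \\ I_1^+ & \overline{I_1^-} \end{pmatrix}$, have it by direct inspection of the flipping combinatorics. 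Un-renormalizing back through the odd return times $1$ and $3$ from Theorem \ref{thm:renormalization_process} then propagates both the $4m{+}2$ congruence and the involutive self-pairing up to $F$ itself: every periodic interval $Y$ of $F$ with minimal period $p$ satisfies $p \equiv 2 \pmod 4$, $F^{p/2}(Y) = Y$, and $F^{p/2}|_Y$ is a non-trivial involution.

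Applying this to our $Y$: we obtain $n = p \equiv 2 \pmod 4$ together with $F^{n/2}(Y) = Y \subset \mathrm{dom}(F')$. Hence the orbit of every $x \in Y$ returns to $\mathrm{dom}(F')$ at time $n/2 < n$, contradicting the minimality of $n$ as the first return time. Therefore $F'|_Y$ cannot be the identity, and must be a reflection, i.e., an involution, as required.

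The hardest step is the structural claim that every periodic interval $Y$ of $F$ satisfies $F^{p/2}(Y) = Y$ with $F^{p/2}|_Y$ a non-trivial involution. In contrast to the mere $4m{+}2$ congruence of periods (which itself needs an extra argument beyond $F$ being fully flipped, because in general $F^{p/2}$ need not be a self-map of a periodic interval), this involutive self-pairing is a genuinely geometric feature of the fully-flipped dynamics. Via the folding construction of Lemma \ref{lemma:folding}, one may interpret it as the central symmetry of the folded chord with respect to the midpoint of its intersection with the bellow, which exchanges the two halves of the $F$-orbit of $Y$. The induction on renormalization depth, combined with the explicit base-case computation from Lemma \ref{lem:integrability}(1), is the technical heart of the argument and, in particular, forces the number of $J_3$-visits in any periodic orbit of the induced map to be even, thereby preserving the $4m{+}2$ congruence through each un-renormalization step.
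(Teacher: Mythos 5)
Your overall strategy --- rule out $F'|_Y=\id$ by showing that the half-period flip structure of periodic intervals would force an earlier return to $\mathrm{dom}(F')$ --- is an attractive alternative to what the paper actually does here. (Note that the paper does not supply a complete proof of this proposition: it recalls the backward Rauzy--Nogueira argument of \cite{Olga}, exhibits the case where that argument breaks, and asserts that the statement can be salvaged by a larger case-by-case study of Rauzy--Nogueira graphs, the clean route being the independent proof of the $4n+2$ property via the new renormalization.) Your preliminary steps are correct: $F'|_Y$ is an isometry of $Y$ onto itself, hence $\id$ or a flip; and if it is $\id$, then the first return time $n$ equals the minimal $F$-period of points of $Y$ and is even.

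The gap is in the final step. What the renormalization actually furnishes is a statement about \emph{maximal} periodic intervals: if $\tilde Y$ is the maximal interval containing $Y$ on which $F^n=\id$, then $F^{n/2}(\tilde Y)=\tilde Y$ and $F^{n/2}|_{\tilde Y}$ is a flip $\iota$. Your $Y$ is merely an interval of continuity of the terminal map $F'$; it need not be maximal, nor symmetric about the centre of $\tilde Y$, so all one gets is $F^{n/2}(Y)=\iota(Y)$, the mirror image of $Y$ inside $\tilde Y$ --- in general a different interval, with no reason to meet $\mathrm{dom}(F')$. Concretely, if $J:=\mathrm{dom}(F')$ cuts $\tilde Y=(a,b)$ at a point $\lambda$ with $\lambda<\frac{a+b}{2}$, and all the other intervals $F^i(\tilde Y)$, $1\le i\le n-1$, $i\neq n/2$, avoid $J$, then every $x\in(a,\lambda)$ has constant return time $n$, $F^n|_{(a,\lambda)}=\id$, and $F^{n/2}\bigl((a,\lambda)\bigr)=(a+b-\lambda,b)$ is disjoint from $J$: your contradiction evaporates even though every fact you invoke (the half-period flip of $\tilde Y$, evenness, minimality of $n$) holds. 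Excluding this configuration is precisely the hard content of the proposition, and it must use something finer about which induced maps $F'$ can arise from $\CETthree$ --- indeed the statement is false for $\CETfour$, as the Appendix shows, so no argument at the level of generality you work at can close this. A secondary, more benign issue: your verification of the structural claim only treats two of the terminal cases of Lemma \ref{lem:integrability}; the sub-case $\tau\in(l_2,l_1]$, handled in the paper via the Rauzy diagram \eqref{eq:comp_per_map}, also needs checking, though that part is routine once one observes that $F^t(Z)=Z$ for $0<t<p$ forces $t=p/2$ with a flip.
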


A proof of this proposition we propose in \cite{Olga} goes as follows. Take some map $F'$ with  $F' \mid_Y=\mathrm{id}$, follow  backwards the Rauzy-Nogueira induction and prove that such a path can't end up on a map in $\CETthree$. The argument is correct for all possible outcomes except for the case when $
F'=\begin{pmatrix}
Y&*&*&\bar{X}\\
Y&*&*&\bar{X}
\end{pmatrix}.
$ In the argument of the proof, one argues that the back-ward path has to go up into $Y$ losing to some (flipped) $Z$, as in

\begin{equation}\label{eq:path_back}
\begin{pmatrix}
Y&*&*&\bar{X}\\
Y&*&*&\bar{X}
\end{pmatrix} 
\leftarrow
\begin{pmatrix}
\bar{Y}&\ldots &\bar{Z}\\
\bar{Z}&\ldots &\bar{Y}
\end{pmatrix}.
\end{equation}

Then one concludes $Z=X$, and such combinatorics is indeed not possible for a map in the family $\CETthree$. A mistake in this reasoning is that for a matrix represented by the right-hand side of \eqref{eq:path_back} its number of columns may potentially be smaller than $4$, i.e. $Z$ is not necessarily equal to $X$. Indeed, there exist fully flipped $4$-interval exchange transformations with periodic orbits 
for which this happens, here is an example of the the Rauzy-Nogueira path for one of those maps:
 
\begin{equation}\label{eq:chain_quadrrilaterals}
\begin{pmatrix}
\bar{Z}&\bar{W}&\bar{X}&\bar{Y}\\
\bar{Y}&\bar{Z}&\bar{W}&\bar{X}
\end{pmatrix} 
 \xrightarrow[]{X>Y}
\begin{pmatrix}
\bar{Z}&\bar{W}&{Y}&\bar{X}\\
Y&\bar{Z}&\bar{W}&\bar{X}
\end{pmatrix} 
 \xrightarrow[]{W>Y}
 \begin{pmatrix}
\bar{Z}&\bar{Y}&\bar{W}&\bar{X}\\
\bar{Y}&\bar{Z}&\bar{W}&\bar{X}
\end{pmatrix} 
 \xrightarrow[]{Z>Y}
 \begin{pmatrix}
{Y}&\bar{Z}&\bar{W}&\bar{X}\\
{Y}&\bar{Z}&\bar{W}&\bar{X}
\end{pmatrix}.
\end{equation} 
 
Once the Rauzy-Nogueira induction stops, one re-iterates this induction on a smaller interval. Even though the statement of Proposition \ref{prop:prop6} still holds,  additional arguments have to be applied. By following the methods of \cite{Olga}, one can finish the proof with a more precise study of Rauzy-Nogueira graphs but our proof loses its interest since it becomes a case-by-case study of a big graph. 

\smallskip
Finally, a chain given in \eqref{eq:chain_quadrrilaterals} can be modified in order to construct a counterexample to Proposition \ref{prop:prop6} for the maps in the family $\CETfour$. Indeed, it suffices to add a fifth column $\begin{pmatrix}
\bar{V}\\
\bar{V}
\end{pmatrix}
$ to every matrix in a chain. Moreover, a matrix
\begin{equation*}
\begin{pmatrix}
\bar{Z}&\bar{W}&\bar{X}&\bar{Y}&\bar{V}\\
\bar{Y}&\bar{Z}&\bar{W}&\bar{X}&\bar{V}
\end{pmatrix}
\end{equation*}
corresponds to the dynamics of a map in $\CETfour$. It suffices to define $I_1:=Z, I_2:=W, I_3:=X, I_4:=Y \cap V$. This illustrates how the orbits of periods different from $4n+2$ may appear in cyclic quadrilateral tiling billiards, see paragraph \ref{subs:tree_quadrilaterals}. Moreover, the proof of the integrability result for $\CETfour$ (Proposition 9 in \cite{Olga}) is not finished because of the problem above. It also can be finished by an explicit graph study but we hope to find a simpler proof in the future.

\bigskip

To conclude, all of the statements of \cite{Olga} for triangle tiling billiards are correct, even though the proof of the $4n+2$ Conjecture is not finished in \cite{Olga}, but finished in the present work. Although, the work \cite{Olga} doesn't provide any understanding on the dynamics of quadrilateral tilings. The statement concerning the symbolic dynamics of trajectories (point 2, of Theorem 7 in \cite{Olga}) is false, and the proof of the integrability (Proposition 9) is not finished. Although, we strongly believe that the integrability property does hold for almost all quadrilateral tiling billiards, and reflects an interesting subcase of some version of Novikov's conjecture, see discussion at the end of Section \ref{subs:Q}.

\bigskip

\begin{center}
{ACKNOWLEDGEMENTS}
\end{center}
I am grateful to Pierre Arnoux, Dmitry Chelkak, Charles Fougeron, Pascal Hubert, Victor Kleptsyn, Paul Mercat, Julien Lavauzelle, Pierre Dehornoy, Valente Ramirez, Ferrán Valdez and Barak Weiss for friendly discussions on different stages of my creative process. I am especially grateful to Ofir David for his animations illustrating billiard foliations. The research for this paper has been done in many places and many countries (anyway, trees are everywhere...). I am especially thankful to IRMAR at University of Rennes 1, where I have spent the last two years, as well as to CIRM in Marseille where I have fallen in love with the Tree Conjecture presented in an amazing talk\footnote{One can see a video of the talk in \cite{Davis_talk}.} by Diana Davis, on the 14th February 2017.

\Addresses
\end{document}